\newenvironment{abstracts}{%
  \ifx\maketitle\relax
    \ClassWarning{\@classname}{Abstract should precede
      \protect\maketitle\space in AMS document classes; reported}%
  \fi
  \global\setbox\abstractbox=\vtop \bgroup
    \normalfont\Small
    \list{}{\labelwidth\z@
      \leftmargin3pc \rightmargin\leftmargin
      \listparindent\normalparindent \itemindent\z@
      \parsep\z@ \@plus\p@
      
      \itemsep\medskipamount
    }%
}{%
  \endlist\egroup
  \ifx\@setabstract\relax \@setabstracta \fi
}
\newcommand{\abstractin}[1]{%
  \otherlanguage{#1}%
  \item[\hskip\labelsep\scshape\abstractname.]%
}
\newcommand{\eqdef}{\stackrel{\scriptscriptstyle\rm def}{=}}
\theoremstyle{plain}
\newtheorem{mainthm}{Theorem} 
\newtheorem{thm}{Theorem}[section]
\newtheorem{cor}[thm]{Corollary}
\newtheorem{lem}[thm]{Lemma}
\newtheorem{prop}[thm]{Proposition}
\newtheorem{claim}{Claim}[thm]
\theoremstyle{definition}
\newtheorem{dfn}[thm]{Definition}
\newtheorem{example}[thm]{Example}
\newtheorem{question}{Question}
\newtheorem{rem}[thm]{Remark}
\numberwithin{equation}{section}
\newcommand{\supp}{\operatorname*{supp}}
\newcommand{\lV}{\left\lVert}
\newcommand{\rV}{\right\rVert}
\let\oldmarginpar\marginpar
\renewcommand\marginpar[1]{\-\oldmarginpar[\raggedleft\tiny #1]%
{\raggedright\tiny #1}}
\newcommand{\relationarrow}[4]{\raise1ex\hbox{$\overset{\text{#1}}{\longrightarrow}$}\hspace{#3}\lower1ex\hbox{$\underset{\text{#2}}{\longleftarrow}$}\hspace{#4}\lower1ex\hbox{$/$}}
\let\oldtocsection=\tocsection
\let\oldtocsubsection=\tocsubsection
\renewcommand{\tocsection}[2]{\hspace{0em}\bf\oldtocsection{#1}{#2}}
\renewcommand{\tocsubsection}[2]{\hspace{1.8em}\oldtocsubsection{#1}{#2}}
\let\oldtocsubsubsection=\tocsubsubsection
\renewcommand{\tocsubsubsection}[2]{\hspace{4.2em}\oldtocsubsubsection{#1}{#2}}
\title[Finitude of physical measures]{Finitude of physical measures  for random maps}
\date{}
\author[Barrientos]{Pablo G.~Barrientos}
\address[Pablo G.~Barrientos]{Instituto de Matem\'atica e Estat\'{\i}stica, UFF Rua M\'ario Santos Braga s/n—Campus Valonguinhos, Niter\'{o}i, Brazil}
\email{pgbarrientos@id.uff.br}
\author[Nakamura]{Fumihiko Nakamura}
\address[Fumihiko Nakamura]{Faculty of Engineering, Kitami Institute of Technology, Hokkaido, 090-8507, JAPAN}
\email{nfumihiko@mail.kitami-it.ac.jp}
\author[Nakano]{Yushi Nakano}
\address[Yushi Nakano]{Graduate School of Science, Hokkaido University, Hokkaido, 060-0810, Japan}
\email{yushi.nakano@math.sci.hokudai.ac.jp}
\author[Toyokawa]{Hisayoshi Toyokawa}
\address[Hisayoshi Toyokawa]{Faculty of Engineering, Kitami Institute of Technology, Hokkaido, 090-8507, JAPAN}
\email{h\_toyokawa@mail.kitami-it.ac.jp.}
\subjclass[2020]{Primary 	37A30, 37C40, 37H05; Secondary 37A50, 37C30, 60J05}
\keywords{Palis conjecture;  physical measures; random maps; absolutely continuous ergodic stationary probability measures;  Markov operators; Markov processes}
\begin{document}

\begin{abstracts}
\abstractin{english}
For random compositions of independent and identically distributed measurable maps on a Polish space, we study the existence and finitude of absolutely continuous ergodic stationary probability measures (which are, in particular, physical measures) whose basins of attraction cover the whole space almost everywhere.
We characterize and hierarchize such random maps in terms of their associated Markov operators, as well as show the difference between classes in the hierarchy by plenty of examples, including additive noise, multiplicative noise, and iterated function systems. We also provide sufficient practical conditions for a random map to belong to these classes. For instance, we establish that any continuous random map on a compact Riemannian manifold with absolutely continuous transition probability has finitely many physical measures whose basins of attraction cover Lebesgue almost all the manifold. 
\end{abstracts}

\maketitle
\thispagestyle{empty}

\thispagestyle{empty}

\section{Introduction}

Uniform hyperbolicity was originally intended to encompass a residual, or at least a dense subset of all smooth dynamical systems~\cite{Anosov1967, Smale1967}, although it was soon realized that this is not true~\cite{AS1970,Newhouse1970}.
Uniformly hyperbolic systems are structurally stable~\cite{Anosov1967} and admit a very precise topological description of their behavior: there are finitely many compact transitive invariant subsets such that every forward orbit of the system accumulates on one of them~\cite{Smale1967}.
The dynamics near such attractors may be quite chaotic and thus essentially unpredictable after a long period of time.
However, Sinai, Ruelle and Bowen demonstrated that these attractors are the supports of physical measures and thus behave well from a statistical point of view~\cite{Bowen1975, Ruelle1976, Sinai1972}.
Kifer further showed that such systems are stochastically stable (i.e.~the physical measures continuously vary) under small random perturbations~\cite{Kifer1974}.
Building on this background, in a conference in honor of Douady in 1995 (refer to~\cite{Palis2000,Palis2005}), Palis developed a global picture recovering, in a more probabilistic formulation, much of the paradigm of uniform hyperbolic systems.
Namely, Palis conjectured that every smooth dynamical system can be approximated by systems having only finitely many attractors, which support physical measures that describe the time averages for Lebesgue almost all points and are stochastically stable under small random perturbations.

An important contribution to the stochastic part of the global Palis conjecture was provided by Brin--Kifer~\cite{BK1987} and Ara\'ujo~\cite{Araujo2000}.
Under some natural nondegenerate assumptions on noise, they found finitely many absolutely continuous ergodic stationary measures (in particular physical measures) with pairwise disjoint supports and whose statistical basins of attraction cover the whole ambient space almost everywhere.
From now on, we refer to this property as \emph{finitude of physical measures} or {\tt(FPM)} for short\footnote{All the measures considered in this paper will be probabilities.
Also, the context of this paper is the study of absolutely continuous ergodic stationary probabilities (which are, in particular, physical).
For that reason, the property is simply called ``finitude of physical measures''.}.
 See Definition~\ref{dfn:11} for a more precise description.
To be more specific on the nondegenerate assumptions, Brin and Kifer assumed that the transition probability has a continuous density, while Ara\'ujo assumed that the transition probability has a density whose support includes a ball whose diameter is uniformly bounded from below.
In the last two decades, their nondegenerate conditions appeared as the main assumption of many works on stochastic stability around the Palis conjecture (especially of systems without uniform hyperbolicity); see e.g.,~\cite{Araujo2001, APP2014, AT2005, BV1996, BV2006}.

In this paper, we will refine the Brin--Kifer and Ara\'ujo conditions by showing that {\tt (FPM)} follows merely if one assumes that the transition probability is absolutely continuous.
This is a significant improvement for applications.
For instance, random dynamical systems generated by additive noise (the most common noise in applications; see Remark~\ref{rm:0302} for details) are some examples that satisfy these conditions.
Our proof is quite different from~\cite{Araujo2000, BK1987}.
 It is based on Markov operators theory, which enables us to obtain
much stronger properties than {\tt (FPM)}, such as the exponential decay of the annealed correlation functions
 of each physical measure for some iterate of the random map.
 Moreover, we give a \emph{necessary and sufficient} condition for {\tt (FPM)} in terms of Markov operators, extending a previous work by Inoue and Ishitani~\cite{II1991} for Perron--Frobenius operators.
That is, we introduce the notion of \emph{mean constrictivity} for Markov operators and show its equivalence with both {\tt (FPM)} and the property of \emph{asymptotic periodicity in mean} introduced by Inoue and Ishitani.
This is also a generalization of the result by Lasota, Li and Yorke in~\cite{LLY1984} for the equivalence between constrictivity and asymptotic periodicity of Markov operators, which served as a stepping-stone for several later papers studying the existence of absolutely continuous invariant density for stochastic operators (see e.g.,~\cite{LM}).
This functional approach allows us to give a hierarchy of classes of Markov operators, which implies, together with plenty of examples indicating the difference between the classes, that {\tt (FPM)} are much weaker than the Brin--Kifer and Ara\'ujo conditions, refer to Figures~\ref{fig:hierarchy} and~\ref{fig:subhierarchy} (and Remark~\ref{rem:AraujoD}).
For instance, we include some random dynamical systems generated by finitely many continuous maps (so-called iterated function systems) and by multiplicative noise with a common fixed point (another important class of noise in applications).
These systems never have absolutely continuous transition~probabilities.

\subsection{Finitude of physical measures {\tt(FPM)}}\label{ss:1.1}
Let $X$ be a Polish space equipped with a probability measure $m$ on the Borel $\sigma$-field $\mathscr B$ of $X$.
Let $(T , \mathscr A, p)$ be a probability space and consider the product space $( \Omega , \mathscr F, \mathbb P)=(T^\mathbb{N}, \mathscr{A}^\mathbb{N},p^\mathbb{N})$.
In this paper, we will deal with a measurable map $f: T\times X \to X$ where we denote $f_t=f(t,\cdot)$ for $t\in T$ and consider the following nonautonomous iterations
\[
f^0_\omega=\mathrm{id} \quad \text{and} \quad f^n_\omega=
f_{\omega_{n}}\circ \dots \circ f_{\omega_1} \
\ \text{for}
\ n\in \mathbb N \ \text{and $\omega =(\omega _1, \omega _2, \ldots )\in \Omega$}.
\]
Since we consider the Bernoulli probability $\mathbb{P}=p^\mathbb{N}$ on $\Omega$, the sequence $\{ \omega =(\omega _1, \omega _2, \ldots ) \mapsto \omega_n\} _{n\geq 1}$ of noises at each step is an independent and identically distributed random process.
Thus, the sequence $\{f_\omega ^n(x_0)\} _{n\geq 0}$ can be   viewed   as a (time-homogeneous) discrete Markov chain $\{ X_n\} _{n\geq 0}$ on $( \Omega , \mathscr F, \mathbb P)$   valued in $X$   with initial distribution $X_0(\omega)=x_0$
and transition probability given by
\begin{equation}\label{eq:0302a}
P(x,A)= p(\{t\in T: \, f_t(x)\in A\})=\mathbb{P}(\{\omega\in \Omega: f_\omega(x)\in A\}) \quad \text{for $x\in X$, \ $A\in\mathscr{B}$}.
\end{equation}
Recall that a nonnegative function $Q(x,A)$ defined for $x \in X$ and $A \in \mathscr{B}$ is called a \emph{(Markov) transition probability} if
\begin{enumerate}[label=(\roman*)]
\item \label{eq:Markov1} $Q(x,\cdot)$ is a probability measure for every fixed $x\in X$,
\item \label{eq:Markov2} $Q(\cdot, A)$ is a $\mathscr{B}$-measurable function for every fixed $A\in \mathscr{B}$.
\end{enumerate}
We shall also use the $n$-th transition probability $P^n(x,A)$ of the process $\{ X_n\} _{n\geq 0}$~given~by
\begin{align*}
P^n(x,A)& =
\mathbb{P}(\{\omega\in \Omega: f^n_\omega(x)\in A\})
\end{align*}
which is also a Markov transition probability.
We refer to~\cite{Arnoldbook} and~\cite{MT2012} for general theories of random dynamical systems and Markov processes, respectively.
A probability measure $\mu$ on $X$ is said to be a \emph{stationary measure} of $f$ if
\begin{equation} \label{def:stationary}
 \mu(A) = \int (f_t)^{}_*\mu(A)\, dp \quad \text{for every $A \in \mathscr{B}$}.
\end{equation}
Here, $g^{}_*\nu$ denotes the pushforward of a measure $\nu$ by a measurable map $g$, that is, $g^{}_*\nu (A)=\nu( g^{-1}A)$ for $A\in \mathscr B$.
It is well known that $\mu$ is a stationary measure of $f$ if and only if $\mathbb{P}\times \mu$ is an invariant measure for the skew-product
\begin{equation}\label{eq:skew}
F: \Omega\times X \to \Omega\times X, \quad F(\omega,x)=(\sigma\omega,f_\omega(x))
\end{equation}
where $\sigma$ is the shift operator defined on $\Omega$ (see~\cite{O83}).
Moreover, we say that $\mu$ is ergodic if $\mathbb{P}\times \mu$ is an ergodic probability measure of $F$.
This is equivalent to asking that any $A\in\mathscr{B}$ such that $P(x,A)\geq 1_A(x)$ where $1_A$ is the indicator function has $\mu$-measure 0 or 1 (see~\cite[Appendix A.1]{Kifer1986} and Appendix~\ref{appendix:B}).
Finally, recall that a $\nu$-null set is a measurable set that has $\nu$-measure zero, and a measure $\nu$ is said to be absolutely continuous with respect to a measure $\lambda$ if any $\lambda$-null set is $\nu$-null.

Our goal is to find a condition on $f$ under which {\tt (FPM)} holds.
\begin{dfn}\label{dfn:11}
We say that $f$ satisfies {\tt (FPM)} if there exist finitely many ergodic stationary probability measures $\mu _1,\dots, \mu_r$ of $f$ such that
\begin{enumerate}[topsep=-0.1cm, label=\arabic*)]
 \item they are absolutely continuous with respect to $m$;
 \item they have pairwise disjoint supports (up to an $m$-null set);
 \item for any $\mathscr{B}$-measurable bounded real-valued function $\psi$,
  \begin{equation*}\label{eq:10100a}
m\left(B_\omega(\mu _1, \psi)\cup \dots \cup B_\omega(\mu_r, \psi)\right)=1 \quad \text{for $\mathbb P$-almost every $ \omega \in \Omega$}
\end{equation*}
where \\[-0.5cm]
\begin{equation*}\label{eq:10100b}
\qquad B_\omega(\mu _i, \psi)=\left\{ x\in X : \, \lim _{n\to \infty} \frac{1}{n} \sum _{j=0}^{n-1} \psi(f_{ \omega}^{j}(x)) = \int \psi \, d\mu _i
\right\} \quad \text{for $i=1,\dots,r$}.
\end{equation*}
\end{enumerate}
\end{dfn}

\begin{rem} \label{rem:fiberwise}
Since $X$ is a Polish space, we have a metric $d$ compatible with the topology of $X$ such that $(X,d)$ is a complete separable metric space.
In such a case, the convergence in the weak* topology on the space of probability measures on $X$ is countably determined by a set $S$ of bounded Lipschitz (with respect to the metric $d$) functions.
See Proposition~\ref{prop:countable-determined-weak}.
Then, the \emph{fiberwise statistical basin of attraction} $B_\omega(\mu_i)$ of $\mu_i$ can be written as
\[
B_{\omega}(\mu_i) \coloneqq \left\{x\in X : \lim_{n\to\infty}\frac{1}{n}\sum_{j=0}^{n-1}\delta_{f_\omega^j (x)}=\mu_i \right\} = \bigcap_{\psi\in S} B_\omega(\mu_i,\psi) \quad \text{for $i=1,\dots,r$}
\]
where the limits of measures are taken in the weak* topology.
Thus, item 3) in Definition~\ref{dfn:11} implies that the union of these fiberwise statistical basins of attraction $\mathbb{P}$-almost surely covers $X$ up to a set of null $m$-measure.
That is,
\begin{equation}\label{eq:3}
 m(B_{\omega}(\mu_1)\cup\cdots\cup B_{\omega}(\mu_r)) = 1 \quad\text{for $\mathbb{P}$-almost every $\omega\in\Omega$}.
\end{equation}
On the other hand, in Proposition~\ref{prop:IFS-circle} we show an example where~\eqref{eq:3} does not imply 3) even under the assumption that  1) and 2)  hold.
See also Remark~\ref{rem:fpm}.
\end{rem}

\begin{rem} It is not difficult to show (see Lemma~\ref{lem-mu1}) that, for the measure $\mu_i$ above, $B_\omega(\mu_i)$ has full $\mu_i$-measure for $\mathbb{P}$-almost all $\omega\in \Omega$ and $i=1,\dots,r$.
Moreover, since $\mu_i$ is absolutely continuous with respect to $m$, it holds that $m(B_\omega(\mu _i))>0$ for $\mathbb{P}$-almost every $\omega\in \Omega$.
Therefore, $\mu_i$ is a \emph{physical measure} with respect to the reference measure~$m$ in the sense that $\mathbb{P}$-almost surely the fiberwise statistical basin of attraction $B_\omega(\mu_i)$ has a positive $m$-measure for each $i=1, \ldots ,r$.
See more details on the notion of physical measures for random maps in  {Remark~\ref{rem:physical2}}.
\end{rem}

In the deterministic case, that is, when $f_t=g$ for all $t\in T$ with some $g:X\to X$, {\tt(FPM)} means that the dynamics of $g$ can be statistically understood by finitely many ergodic invariant physical probability measures that describe the time average of almost all points in $X$.
When and where this property holds attracted great interest in dynamical systems theory, and we refer e.g.~to~\cite{BDV2006, Palis2000}.
Notice that there are some obstacles to the finitude of physical ergodic invariant probability measures, such as the so-called Newhouse domains~\cite{Colli1998,GST1993,Leal2008,Newhouse1979,PV1994} in which generically infinitely many attractors coexist.
Moreover, recently Berger has realized that the coexistence of infinitely many attractors is locally Kolmogorov typical in parametric families of endomorphisms of surfaces and diffeomorphisms of higher dimensional manifolds, see~\cite{BR2021b,BR2021a,barrientos2023typical,Berger2016,Berger2017}.
In contrast to the Palis conjecture, the so-called Takens' last problem asked if it is possible to construct a persistent class of dynamics of a compact manifold where time averages do not exist (named as historic behavior; cf.~\cite{Ruelle2001,Takens2008}) on a Lebesgue positive measure set.
 Some important advances in this question have been obtained in~\cite{KS2017} where the authors constructed a locally dense class of $C^r$-surface diffeomorphisms ($r\geq 2$) with historic behavior on a positive Lebesgue measure set.
 This result has been extended to $C^\infty$ and real analytic surface diffeomorphisms in~\cite{BB2020} and to $C^r$-diffeomorphisms with $r\geq 1$ in dimension three and higher in~\cite{Barrientos2021} (see also~\cite{KNS2021} for a specific three-dimensional example).

Ara\'ujo~\cite{Araujo2000} gave a quite useful sufficient condition {(see~(1) below)} for {\tt(FPM)} for the following class of random maps.
First of all, here $X$ is a compact Riemannian manifold
 and $T$ is the unit ball of a Euclidean space with $m$ and $p$ being the normalized Lebesgue measures on $X$ and $T$, respectively.
 Now,
\begin{enumerate}[label=(\arabic*),leftmargin=1cm]
 \item $f: T \times X\to X$ is a continuous map and $f_{t }$ is a $C^1$-diffeomorphism for every~$t\in T$;
there are $n_0\in \mathbb N$ and a positive number $\xi_0$ such that for every $n\geq n_0$ and $x\in X$
\begin{enumerate}[label=(\Alph*),leftmargin=1cm]
\item $\left\{ f^n_\omega (x) : \omega \in \Omega \right\}$ contains the ball of radius $\xi_0$ and centered at $f^n_0(x)$;
\item $P^n(x,\cdot)$ is absolutely continuous with respect to $m$.
\end{enumerate}
\end{enumerate}
Here,
we wrote $f^n_0$ for the usual $n$-th iteration of the single map $f_0 : X\to X$.
Condition~(A) above is a topological requirement.
On the other hand, since the $n$-th and $(n-1)$-th transition probabilities are related by the recurrence
\begin{equation} \label{eq:recurence}
P^{n}(x,A)=\int P^{n-1}(y,A) P(x,dy)
\end{equation}
we have condition (B) by only requiring that $P^{n_0}(x,\cdot)$ be absolutely continuous with respect to $m$ for all $x\in X$.
A greater requirement is the continuity of $f$ in item~(1).
For instance, such a requirement is not necessary for the approach by Brin and Kifer~\cite[Section 2]{BK1987} to get {\tt(FPM)}.
These authors dealt with abstract Markov chains $\{ X_n\}_{n\geq 0}$ on a compact Riemannian manifold $X$ with transition probability $P(x,A)$ having a continuous density in the following sense:
\begin{enumerate}[resume] \label{Brin--Kifer}
 \item there are an integer $n_0\geq 1$ and a nonnegative function $p(x, y)$ that is continuous in both variables
such that for any $x\in X$ and  $A \in \mathscr{B}$,
\[
P^{n_0}(x,A)=\int_{A} p(x,y) \, dm(y).
\]
 \end{enumerate}
Thus, $P^{n_0}(x,\cdot)$ is absolutely continuous with respect to the normalized Lebesgue measure~$m$ \emph{for all} $x\in X$ as in the Ara\'ujo's condition (B).
Although this abstract approach seems more general, this is not the case because any Markov chain in a Polish space can be represented by a random map as in~\eqref{eq:0302a} (cf.~\cite{JKR2015, Kifer1986}).

We will prove that condition~(1) and condition~(2), respectively, imply that there are $n_0\in\mathbb N$ and a nonnegative function $p(x,y)$ such that
$P^{n_0}(x,dy)=p(x,y)\,dm(y)$ and $x \mapsto p(x,\cdot)$ is a continuous map from $X$ to $L^1(m)$.
See Remarks~\ref{rem:gAraujo} and~\ref{rem:gBrin--Kifer}.
Here $L^1(m)=L^1(X,\mathscr{B},m)$ denotes, as usual,
the Banach space of all real-valued $\mathscr{B}$-measurable functions $\varphi$ on~$X$ whose $L^1$-norm $\|\varphi \|\coloneqq\int _X\vert \varphi \vert dm$ is bounded,
where two functions that coincide with each other $m$-almost everywhere are identified.
In view of this, the following theorem is a notable improvement of Brin--Kifer's and Ara\'ujo's sufficient conditions to get {\tt(FPM)}:

\begin{mainthm} \label{thm:A}
Let $(X , \mathscr B, m)$ and $(T, \mathscr A, p)$ be a compact Polish probability space and a probabilistic space, respectively.
Consider a measurable map $f:T\times X\to X$ and let $P^n(x,A)$ be the $n$-th transition probabilities for the associated Markov chain induced by $f$. Assume that for some $n_0\in \mathbb{N}$,
\begin{equation*}
P^{n_0}(x,dy)=p(x,y)\,dm(y) \quad \text{such that} \quad x \in X \mapsto p(x,\cdot)\in L^1(m) \ \ \text{is continuous}.
\end{equation*}
Then, $f$ satisfies {\tt(FPM)}.
\end{mainthm}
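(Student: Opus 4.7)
The plan is to reduce Theorem~\ref{thm:A} to the characterization of {\tt(FPM)} via Markov operators announced in the introduction. Let $P:L^1(m)\to L^1(m)$ denote the Markov (transfer) operator associated with the transition probability $P(x,\cdot)$; under the hypothesis we have, for every density $\varphi\in L^1(m)$,
\begin{equation*}
P^{n_0}\varphi(y) \;=\; \int_X p(x,y)\,\varphi(x)\,dm(x).
\end{equation*}
The first step is to show that $P^{n_0}$ is \emph{constrictive} in the sense of Lasota--Li--Yorke: the image $P^{n_0}(D(m))$ of the set $D(m)$ of densities is contained in a norm-compact subset of $L^1(m)$. For this, the set $K=\{p(x,\cdot):x\in X\}$ is the continuous image of the compact space $X$ in $L^1(m)$ and therefore compact, so its closed convex hull $\overline{\mathrm{co}}(K)$ is compact by Mazur's theorem; and for each density $\varphi$ the element $P^{n_0}\varphi$ is a Bochner integral of $x\mapsto p(x,\cdot)$ against the probability measure $\varphi\,dm$, hence lies in $\overline{\mathrm{co}}(K)$.

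Once constrictivity of $P^{n_0}$ is established, the Lasota--Li--Yorke theorem cited in the introduction yields the asymptotic periodicity of $P^{n_0}$: there exist finitely many densities $g_1,\dots,g_N$ with pairwise disjoint supports and a permutation $\pi$ of $\{1,\dots,N\}$ such that $P^{n_0} g_i = g_{\pi(i)}$, and every $\varphi\in D(m)$ satisfies
\begin{equation*}
\Bigl\|P^{kn_0}\varphi - \sum_{i=1}^N c_i(\varphi)\, g_{\pi^k(i)}\Bigr\|\;\longrightarrow\; 0 \quad \text{as } k\to\infty.
\end{equation*}
Averaging over the cycles of $\pi$ and over the block $\{0,1,\dots,n_0-1\}$ of iterates of $P$, one then produces finitely many $P$-invariant densities $h_1,\dots,h_r$ with pairwise disjoint supports such that the Cesàro means $\tfrac1n\sum_{k=0}^{n-1}P^k\varphi$ converge in $L^1(m)$ to an element of $\mathrm{span}\{h_1,\dots,h_r\}$ for every density $\varphi$.

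This last property is exactly the \emph{mean constrictivity} (equivalently, \emph{asymptotic periodicity in mean}) of $P$ mentioned in the introduction. Invoking the main functional theorem of the paper equating mean constrictivity with {\tt(FPM)} then delivers the absolutely continuous ergodic stationary probability measures $\mu_1,\dots,\mu_r$ with densities $h_1,\dots,h_r$ and mutually disjoint supports, whose fiberwise basins of attraction cover $X$ up to an $m$-null set. The main obstacle I expect is the passage from asymptotic periodicity of $P^{n_0}$ to mean constrictivity of $P$ itself: the $P$-ergodic decomposition has to be extracted from the cyclic $P^{n_0}$-structure on $\{g_1,\dots,g_N\}$ using the ergodicity criterion $P(x,A)\ge 1_A(x)$ recalled in the preamble, and some care is needed to identify the resulting $P$-invariant densities with the true ergodic components of the skew-product $F$ so that the basin condition in Definition~\ref{dfn:11} holds, rather than merely the existence of finitely many absolutely continuous ergodic stationary measures.
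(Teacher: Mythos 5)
Your overall reduction is the same as the paper's: prove a (uniform) constrictivity-type property for the annealed operator and then invoke the equivalence of mean constrictivity with {\tt(FPM)} (Theorem~\ref{thm:C}, via Theorem~\ref{thm:D}). Where you genuinely differ is in the compactness step. The paper proves Theorem~\ref{thm:B} by showing that the hypothesis is equivalent to strong/ultra Feller continuity plus absolute continuity of $P^{n_0}(x,\cdot)$ (Proposition~\ref{prop:ultra-Feller+B}), then deduces uniform integrability of $\{p(x,\cdot)\}$ by a compactness covering argument and invokes a criterion of Wu for compactness of $\pi^2=P^{2n_0*}$ together with Schauder duality. Your argument --- $K=\{p(x,\cdot):x\in X\}$ is norm-compact, $\overline{\mathrm{co}}(K)$ is compact by Mazur, and $P^{n_0}\varphi$ is the barycenter (Bochner integral) of $x\mapsto p(x,\cdot)$ against $\varphi\,dm$, hence lies in $\overline{\mathrm{co}}(K)$ for every $\varphi\in D(m)$ --- is correct and more elementary; indeed, splitting an arbitrary $\varphi$ into positive and negative parts it shows outright that $P^{n_0}$ is a compact operator, recovering the eventual compactness of Theorem~\ref{thm:B}. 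One small point you leave implicit: to feed this into Theorem~\ref{thm:C} you must identify the operator induced by the transition probability with the annealed Perron--Frobenius operator $\mathcal L_f$ of the random map; this is the content of Remark~\ref{rem:P=Lf} and should be said.

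The middle of your argument, however, is an unnecessary detour, and it is precisely where your self-declared ``main obstacle'' lives. You do not need Lasota--Li--Yorke asymptotic periodicity of $P^{n_0}$, nor the averaging over cycles of $\pi$ and over the block $\{0,\dots,n_0-1\}$, nor any hand-extraction of $P$-invariant densities with disjoint supports. Mean constrictivity in the sense of Definition~\ref{dfn:1011} only asks that the Ces\`aro averages $A_n\varphi$ approach a fixed compact set, and this is immediate from your first step: for $n> n_0$ one has
\begin{equation*}
A_n\varphi=\Big(1-\tfrac{n_0}{n}\Big)\,\frac{1}{n-n_0}\sum_{i=n_0}^{n-1}P^i\varphi+\frac{1}{n}\sum_{i=0}^{n_0-1}P^i\varphi,
\end{equation*}
where the first average lies in the convex compact set $\overline{\mathrm{co}}(K)$ (each $P^i\varphi$, $i\ge n_0$, does) and the remainder has norm at most $2n_0/n$; hence $d(A_n\varphi,\overline{\mathrm{co}}(K))\to 0$ uniformly in $\varphi\in D(m)$. (Equivalently: your compact set is already a constrictor for $P$ itself, and the paper records that constrictivity implies mean constrictivity.) Once {\tt(MC)} is in hand, the ergodic decomposition into finitely many absolutely continuous ergodic stationary measures with disjoint supports \emph{and} the covering of $X$ by their fiberwise statistical basins are exactly what Theorem~\ref{thm:C} supplies (via {\tt(FED)}/{\tt(APM)} and Proposition~\ref{prop:APM-FPM}); so the concern that one might obtain ``merely the existence of finitely many absolutely continuous ergodic stationary measures'' without the basin condition is already discharged by the cited equivalence and does not need to be re-proved inside your argument.
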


In the following series of remarks, we will show some easily checkable assumptions from which Theorem~\ref{thm:A} is applicable, allowing us to compare with the previous sufficient conditions from the literature.

We say that $f: T\times X\to X$ is a \emph{continuous random map} if $f$ is measurable and $f_t=f(t,\cdot):X\to X$ is continuous for $p$-almost every $t\in T$.

\begin{rem} \label{rem:gAraujo}
Firstly, {\tt(FPM)} follows from the assumption:
 \begin{enumerate}[label=(\roman*), leftmargin=1cm]
 \item \label{item:araujo} {\it Let $(X , \mathscr B, m)$ and $(T, \mathscr A, p)$ be a compact Polish probability space and a probabilistic space, respectively, such that for some $n_0\in \mathbb N$, it holds that
 \begin{enumerate}
 \item \label{item:Feller0} $f:T\times X\to X$ is a continuous random map,
 \item $P^{n_0}(x,\cdot)$ is absolutely continuous with respect to $m$ for all $x\in X$.
 \end{enumerate}}
\end{enumerate}
This generalizes Ara\'ujo's result in~\cite{Araujo2000}. The fact that~(i) implies {\tt(FPM)} follows directly from Corollary~\ref{cor:rem} and Theorem~\ref{thm:A}. 
In Proposition~\ref{prop:multi:ex1}, we prove that, in the context of (i), neither condition (a) nor condition (b) is sufficient to obtain~{\tt(FPM)}, and we also show that these conditions are not necessary for~{\tt(FPM)}. 
Furthermore, (i)~extends Ara\'ujo's result by removing condition~(A). In Theorem~\ref{AAraujo2000-generalization} (1), we provide another sufficient condition for {\tt(FPM)} that incorporates condition~(A).
\end{rem}

\begin{rem} \label{rem:gBrin--Kifer}
Secondly, {\tt(FPM)} holds under the following assumption:
\begin{enumerate}[label=(\roman*), leftmargin=1cm] \stepcounter{enumi}
 \item \label{item:L1-cont} \emph{Let $(X,\mathscr{B},m)$ be a compact probability Polish space and
$P^{n_0}(x,dy)=p(x,y)\, dm(y)$ for some $n_0\in\mathbb N$, where $p(\cdot,y)$ is a continuous function on $X$ for $m$-almost every $y\in X$.}
 \end{enumerate}
This clearly generalizes Brin--Kifer's result in~\cite{BK1987}.
The proof of this observation follows from the Schaff\'e--Riez theorem, cf.~\cite{kusolitsch2010theorem}. Indeed, if $\{x_n\}_{n\in\mathbb N}$ is a converging sequence to $x$, then by the continuity of $p(\cdot,y)$ we get $p(x_n,y) \to p(x,y)$ and thus the Schaff\'e--Riez theorem implies that $\|p(x_n,\cdot)-p(x,\cdot)\| \to 0$ as $n\to\infty$. This means that $p(x,\cdot)$ varies continuously on $L^1(m)$ with respect to $x$. Now, Theorem~\ref{thm:A} implies~{\tt(FPM)}.
See also
Theorem~\ref{AAraujo2000-generalization} (2) for another weakening of Brin--Kifer's condition.
\end{rem}

\begin{rem}\label{rm:0302}
One of the most important noises in real applications that satisfy the assumption in Theorem~\ref{thm:A} is the random dynamical system generated by
 \emph{additive noise with absolutely continuous distribution}.
 Here, $X=T$ is a {compact} Lie group where the algebraic multiplication is denoted by "$+$", $m$ is the Haar measure, and $p$ is an absolutely continuous probability with respect to $m$.
 The torus $\mathbb R^d/\mathbb Z^d$ is often considered in the literature as an example.
 The random map $f$ is defined by $f_t(x) =f_0(x) +t $ for some continuous map $f_0:X\to X$.
 As a slight abuse of notation, we denote the density function of $p$ by $p(x)$.
Then, $P(x,A)$ can be written as
\[
P(x,A) = \int _A p(y-f_0(x))\, dm(y) \quad \text{for $x\in X$ and $A\in\mathscr B$}
\]
(cf.~\cite[Equation (10.5.5)]{LM}).
Hence, $P(x,\cdot )$ is absolutely continuous with respect to $m$ for all $x\in X$.
Notice that if the support of $p(x)$ does not include any open ball centered at $0$ or $p(x)$ is not continuous, then Ara\'ujo's condition (A) and Brin--Kifer's condition (2) are violated in general.
Furthermore, it seems difficult to have the condition in Theorem~\ref{thm:A} with $n_0=1$ because $x\in X\mapsto p(y-f_0(x))\in \mathbb R$ is not continuous in general.
However, surprisingly, by virtue of Remark~\ref{rem:gAraujo} we know that the condition in Theorem~\ref{thm:A} is always satisfied (and thus {\tt(FPM)} holds).
In fact, by Corollary~\ref{cor:rem}, the condition in Theorem~\ref{thm:A} holds with $n_0=2$.
\end{rem}

\begin{rem} \label{rem:multiplicative-noise}
Another important class of noise that appears in real applications is \emph{multiplicative noise with absolutely continuous distribution} (cf.~\cite{Athreya2003, Sumi2021}).
For instance, consider the case when $X=T=[0,1]$, $m$ is the Lebesgue measure and $p$ is an absolutely continuous probability with respect to $m$ with density function $p(x)$. Define $f_t(x) = t g(x) $, where $g$ is some continuous map on $X$.
It is straightforward to see that $P(x,A)$ is of the form
\[
P(x,A) = \int _A p\left(\frac{y}{g(x)}\right)dm(y) \quad \text{for $x\in X$ and $A\in\mathscr B$}
\]
(cf.~\cite[Equation (10.7.5)]{LM}).
Therefore, if $g$ is bounded away from zero, then $P(x,\cdot )$ is absolutely continuous with respect to $m$, and by Remark~\ref{rem:gAraujo}, {\tt(FPM)} holds. Actually, the condition in Theorem~\ref{thm:A} holds (with $n_0=2$).
In fact, this (together with Theorem~\ref{thm:B}) generalizes~\cite[Theorem 10.7.1]{LM}.
On the other hand, in contrast to additive noise, if $g(x)=0$ for some $x\in X$, then the absolute continuity of $P(x,\cdot )$, the condition in Theorem~\ref{thm:A} and {\tt(FPM)} can fail to hold.
See Section~\ref{subsec:multiple} for details.
\end{rem}

\begin{rem}\label{rmk:IFS}
Finally, we remark that the important class of random dynamical systems generated by iterated function systems with probabilities (see Section~\ref{ss:e} for its formal definition) does not meet the assumption in Theorem~\ref{thm:A}.
Indeed, in this case, we have that $T$ is a finite set $\{1,\dots,k\}$ and thus
\[
P(x,\cdot)= \sum_{i=1}^k p_i \delta_{f_i(x)} \quad \text{where $p_i =p(\{ i\})>0$}
\]
which cannot be absolutely continuous with respect to $m$ in general.
However, some of them will be in the range of Section~\ref{s:MO} to get {\tt(FPM)}, in which several weaker versions of the condition in Theorem~\ref{thm:A} are given.
\end{rem}

The proof of Theorem~\ref{thm:A} is based on the analysis of the annealed Perron--Frobenius operator associated with the random dynamical system generated by $f$.
We will show that this operator belongs to the class of constrictive Markov operators, which has been extensively studied in the literature.
This observation allows us to generalize Theorem~\ref{thm:A} in terms of Markov operators.
In particular, we can obtain new practical sufficient conditions implying {\tt(FPM)}, as indicated in Theorem~\ref{AAraujo2000-generalization}.
As we will explain in Remark~\ref{rmk:6.7}, such conditions generalize the sufficient condition studied in the work by Ara\'ujo and Ayta\c{c}~\cite{AA2017} to get uniform ergodicity (a stronger property than {\tt(FPM)}).

\subsection{Markov operators}\label{s:MO}
In order to provide a general definition of Markov operator, assume that $(X,\mathscr{B},m)$ is any abstract probability space (not necessarily a Polish space as in the previous subsection). Let $D(m)=D(X,\mathscr{B},m)$ be the space of density functions, that is,
\[
 D(m) =\left\{h \in L^1(m) : h \geq 0 \ \text{$ m$-almost everywhere and $\Vert h \Vert =1$} \right\}.
 \]
An operator $P: L^1(m)\to L^1 (m)$ is called a \emph{Markov operator} if $P$ is linear, positive (i.e.~$P\varphi \geq 0$ $m$-almost everywhere if $\varphi \geq 0$ $m$-almost everywhere)
and
 \begin{equation}\label{eq:0219}
\int P\varphi \, dm = \int \varphi \, dm \quad
\text{ for every $\varphi \in L^1(m)$}.
\end{equation}
Note that a  positive linear operator $P$ on $L^1(m)$ is a Markov operator\footnote{Any Markov operator $P$ is a bounded operator:
Given $\varphi \in L^1(m)$, consider
$\varphi _+\coloneqq \max\{\varphi ,0\}$ and  $\varphi _-\coloneqq \max\{ -\varphi ,0\}$.
Then, $P\varphi _+, P\varphi _- \geq 0$,
 and thus
$\Vert P\varphi \Vert \leq \int (P\varphi _+ + P\varphi _- )\, dm = \int \varphi _+ dm + \int \varphi _-dm = \Vert \varphi \Vert$.
} if and only if $P(D(m)) \subset D(m)$.
It is not difficult to see that this is equivalent to $P^*1_X = 1_X$, where $P^*$ is the adjoint operator of $P$, that is, $P^*$ is a bounded linear operator on $L^\infty (m)\cong (L^1(m))^*$ given by $$\int P^*\psi \cdot \varphi \, dm = \int \psi \cdot P\varphi \, dm \quad \text{for $\psi\in L^\infty (m)$ and $\varphi\in L^1(m)$.}$$
Recall that $L^\infty(m)=L^\infty(X,\mathscr{B},m)$ is the Banach space of $\mathscr{B}$-measurable, $m$-essentially bounded, real-valued functions defined on $X$ where, as usual, two functions that coincide with each other $m$-almost everywhere are identified.

A key property of Markov operators for the purpose of this paper is \emph{constrictivity}.
\begin{dfn}\label{dfn:1011}
A sequence $(Q_n)_{n\geq 1}$ of Markov operators on $L^1(m)$ is called
 \begin{enumerate}
 \item {\it constrictive} if
 there exists a compact set $F$ of $L^1(m)$ such that for any $h \in D(m)$,
\[
  \lim_{n\to \infty} d(Q_n h,F)=0;
\]
 \item {\it uniformly constrictive} if
 there exists a compact set $F$ of $L^1(m)$ such that
\[
\lim _{n\to \infty} \sup _{h \in D(m)} d(Q_n h, F)=0.
\]
\end{enumerate}
Here $d(\varphi,F)=\inf_{\psi\in F}\|\varphi -\psi \|$.
In particular, a Markov operator $P$ on $L^1(m)$ is called { 
\begin{enumerate}
 \item[\tt(C)] {\it constrictive} if the sequence $(P^n)_{n\geq 1}$ is constrictive.
 \item[\tt(UC)] {\it uniformly constrictive} if the sequence $(P^n)_{n\geq 1}$ is uniformly constrictive.
 \item[\tt(MC)] {\it mean constrictive} if the sequence
 $(A_n)_{n\geq 1}$
 is constrictive, where $A_n$ is given by
 \[
A_n\varphi = \frac{1}{n}\sum_{i=0}^{n-1}P^i\varphi \quad \text{for $\varphi \in L^1(m)$}.
\]
 \end{enumerate}}
\end{dfn}
The compact set $F$ above is called a \emph{constrictor}.
These conditions appeared in the context of mean ergodic theorems, see~\cite{Emelyanov, LM}.
Notice that, by definition, we have
\[
\text{ \tt (UC) \ \ $\Rightarrow$ \ \ (C) \ $\Rightarrow$ \ \ (MC).}
\]
See also Figure~\ref{fig:hierarchy} for a global picture.
Furthermore, the uniform constrictivity of $P$ is known to be equivalent to the quasi-compactness of $P$ in $L^1(m)$, cf.~\cite[Theorem 2]{Bart95}.
Recall that an operator $Q$ is called \emph{quasi-compact} if there is a compact linear operator $R$ such that $\|Q^n-R\|_{\rm op}<1$ for some $n\in \mathbb{N}$.
Hence, the class of quasi-compact operators contains the subclass of \emph{eventually compact operators}, that is, the linear operators $Q$ such that $Q^n$ is compact for some $n\in\mathbb{N}$.

\subsubsection{Perron--Frobenius operators}
One of the most important examples of Markov operators is the \emph{Perron--Frobenius operator} induced by a nonsingular transformation $g:X\to X$.
Recall that $g$ is nonsingular (with respect to the reference measure $m$ on~$X$) if the preimage of any $m$-null set by $g$ is $m$-null.
The Perron--Frobenius operator $\mathcal{L}_{g}: L^1(m) \to L^1(m)$ of $g$ is defined by the formula
\[
 g_*m_\varphi(A) = \int_A \mathcal{L}_g\varphi \, dm \quad
\text{ for all $\varphi \in L^1(m)$ and $A\in \mathscr{B}$}
\]
where $m_\varphi$ is the finite signed measure given by
\begin{equation}\label{eq:mphi}
 m_\varphi(A) =\int _A \varphi \, dm \quad \text{for $A\in \mathscr{B}$.}
\end{equation}
It is easy to see that $\mathcal{L}_g$ is a Markov operator and $m_{\mathcal{L}_g\varphi}=g_*m_\varphi$.
As in this example, Markov operators $P$ naturally appear in the study of (random) dynamical systems, and $( P^n\varphi ) _{n\geq 0}$ is interpreted as the evolution of density functions driven by the system.
We refer to~\cite{baladi2000positive, baladi2018dynamical,Emelyanov,foguel2007ergodic,LM}.

Let $f: T \times X\to X$ be as in Section~\ref{ss:1.1}.
 We simply write $\mathcal{L}_{t}$ for the Perron--Frobenius operator of $f_{t}=f(t,\cdot)$ for $t\in T$ and define the \emph{annealed Perron--Frobenius operator} ${\mathcal{L}}_f: L^1(m) \to L^1(m)$ by
\[
 {\mathcal{L}}_f \varphi (x) =\int \mathcal{L}_{t} \varphi(x) \, dp.
\]
Then, it is straightforward to see that ${\mathcal{L}}_f$ is also a Markov operator.
Now we are in a position to state two of our main results.

\begin{mainthm} \label{thm:B} Under the assumption of Theorem~\ref{thm:A}, $\mathcal{L}_f$ is eventually compact, in particular, uniformly constrictive.
\end{mainthm}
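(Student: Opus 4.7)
\smallskip
\noindent\textbf{Proof plan.} The plan is to identify the iterate $\mathcal{L}_f^{n_0}$ as an integral operator with kernel $p(x,y)$, prove directly that this operator is compact on $L^1(m)$ via a Mazur-type convex-hull argument, and finally invoke the theorem of Bart cited just after Definition~\ref{dfn:1011} to deduce uniform constrictivity from eventual compactness.

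\smallskip
\noindent\textbf{Step 1: identifying the kernel.} I would first check, by testing against $\psi \in L^\infty(m)$ and using the duality $\mathcal{L}_f^* = U_f$ with $U_f\psi(x) = \int \psi \circ f_t(x)\, dp(t)$, that $\mathcal{L}_f^n$ is the annealed Perron--Frobenius operator associated with the $n$-th transition probability $P^n(x,\cdot)$, namely
\[
\int_X \psi(y)\cdot\mathcal{L}_f^n\varphi(y) \, dm(y) \;=\; \int_X \varphi(x)\int_X \psi(y)\, P^n(x,dy)\, dm(x).
\]
The hypothesis $P^{n_0}(x,dy)=p(x,y)\,dm(y)$ combined with Fubini then yields
\[
\mathcal{L}_f^{n_0}\varphi(y) \;=\; \int_X p(x,y)\,\varphi(x)\,dm(x) \qquad \text{for $m$-a.e.\ } y.
\]

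\smallskip
\noindent\textbf{Step 2: compactness of $\mathcal{L}_f^{n_0}$.} Since $X$ is compact and $x \mapsto p(x,\cdot)$ is continuous into $L^1(m)$, the image $K := \{p(x,\cdot) : x \in X\}$ is compact in $L^1(m)$, and by Mazur's theorem its closed convex hull $C := \overline{\mathrm{co}}(K)$ is also compact. For any $\varphi \in D(m)$, the expression
\[
\mathcal{L}_f^{n_0}\varphi \;=\; \int_X \varphi(x)\, p(x,\cdot)\, dm(x)
\]
is a Bochner integral of the continuous $L^1(m)$-valued map $x \mapsto p(x,\cdot)$ against the probability measure $\varphi\cdot m$, and therefore lies in $C$. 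A positive/negative-part decomposition extends this to the unit ball: for $\varphi$ with $\|\varphi\| \leq 1$, writing $\varphi = \varphi_+ - \varphi_-$ with $\|\varphi_\pm\|\leq 1$ shows that $\mathcal{L}_f^{n_0}\varphi$ lies in the continuous image of $\{(s,t)\in[0,1]^2: s+t\leq 1\}\times C\times C$ under $(s,t,h_1,h_2)\mapsto s h_1 - t h_2$, which is compact. Hence $\mathcal{L}_f^{n_0}$ is a compact operator on $L^1(m)$, i.e.~$\mathcal{L}_f$ is eventually compact. This is in particular quasi-compact (take the compact operator $R=\mathcal{L}_f^{n_0}$ itself, so that $\|\mathcal{L}_f^{n_0}-R\|_{\mathrm{op}}=0<1$), and Bart's theorem then delivers uniform constrictivity.

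\smallskip
\noindent\textbf{Main subtlety.} The only nonroutine point is justifying the Bochner-integral reading in Step 2: one needs to know that $x\mapsto p(x,\cdot)$ is strongly measurable as an $L^1(m)$-valued map. This follows because the continuous image of the compact Polish space $X$ into $L^1(m)$ is separable, so Pettis' measurability theorem applies; boundedness of $\|p(x,\cdot)\|_{L^1}\equiv 1$ then gives Bochner integrability against any finite measure on $X$, and the identification of the Bochner integral with the pointwise kernel expression obtained in Step 1 is a routine application of Fubini's theorem and $L^\infty$-duality.
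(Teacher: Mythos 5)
Your argument is correct, but it takes a genuinely different route from the paper's. The paper does not work with the kernel directly in $L^1(m)$: it first reformulates the hypothesis via Proposition~\ref{prop:ultra-Feller+B} (strong Feller plus $m$-nonsingularity $\Leftrightarrow$ ultra Feller plus absolute continuity $\Leftrightarrow$ $L^1$-continuity of $x\mapsto p(x,\cdot)$), proves in Claim~\ref{claim:compact} that the family $\{p(x,\cdot):x\in X\}$ is uniformly integrable by combining pointwise uniform absolute continuity with total-variation continuity and compactness of $X$, and then invokes a result of Wu on uniformly integrable kernels to get that $\pi^2=(P^{n_0*})^2$ is compact on $L^\infty(m)$, finishing with Schauder's theorem to conclude that $\mathcal{L}_f^{2n_0}$ is compact. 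Your proof stays in $L^1(m)$: the range $K=\{p(x,\cdot):x\in X\}$ is compact by continuity on the compact space $X$, Mazur's theorem makes $\overline{\mathrm{co}}(K)$ compact, and since $\mathcal{L}_f^{n_0}\varphi=\int\varphi(x)\,p(x,\cdot)\,dm(x)$ (a Bochner integral, which your duality computation identifies correctly without ever needing a jointly measurable version of $p$), the image of the unit ball lands in a fixed compact set. This is more elementary (no appeal to Wu's theorem, no Schauder, no ultra-Feller detour) and it is slightly stronger, giving compactness already of $\mathcal{L}_f^{n_0}$ rather than of $\mathcal{L}_f^{2n_0}$. What the paper's route buys in exchange is generality and reusability: Theorem~\ref{thm:B-Markov} is stated for an arbitrary Markov operator assuming only strong Feller continuity of some transition probability $P^{n_0}(x,A)$, which is the form needed for Corollary~\ref{cor:rem} (the generalization of Ara\'ujo's condition, where only absolute continuity plus Feller continuity is assumed and ultra Feller continuity is produced at $2n_0$ via Remark~\ref{rem1}) and for Proposition~\ref{prop:Felle+UC=D}; your argument as written requires the $L^1$-continuous density hypothesis of Theorem~\ref{thm:A} itself. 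The final step — eventual compactness implies quasi-compactness, hence {\tt(UC)} by Bart's theorem — is the same in both proofs.
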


We will obtain the above result by proving an equivalent but apparently more general theorem stated in terms of Markov operators and Markov processes.
See Theorem~\ref{thm:B-Markov} together with Proposition~\ref{prop:ultra-Feller+B} and Remark~\ref{rem:P=Lf}.

In the setting of Theorem~\ref{thm:B}, since $\mathcal{ L}_f$ is quasi-compact, we can obtain exponential decay of annealed correlation functions, in the sense that there are finitely many absolutely continuous ergodic stationary measures $\mu_1, \ldots ,\mu_r$ with pairwise disjoint supports and constants $k\in \mathbb N$, $C>0$, $\kappa \in (0,1)$ such that for any $\varphi , \psi\in L^\infty(m)$, $n\in \mathbb N$ and $ j=1,\ldots ,r$, we have
\begin{equation}\label{eq:expmixing}
 \left\vert \int \psi \circ f^{kn}_\omega \cdot \varphi \, d(\mathbb P\times \mu _j) - \int \psi\, d\mu_j \int \varphi\, d\mu _j\right\vert \leq C\kappa ^{kn}\Vert \varphi \psi\Vert _{\infty}.
\end{equation}
Refer to, for example,~\cite{Buzzi1999,Liverani1995} for background and~\cite{BG2009} for a proof\footnote{Although the paper~\cite{BG2009} only dealt with deterministic maps, one can show the claim by literally repeating the argument in~\cite[Appendix B]{BG2009} with $\mathcal L_f$ instead of a Perron--Frobenius operator $\mathcal L_g$ of a nonsingular map $g:X\to X$, after realizing the duality $ \int \psi \circ f^{n}_\omega \cdot \varphi \, d(\mathbb P\times m) = \int \psi \cdot \mathcal L_f^n \varphi \, dm$ corresponding to $ \int \psi \circ g^{n} \cdot \varphi \, dm = \int \psi \cdot \mathcal L_g^n \varphi \, dm$.}.
Actually, this claim with $r=k=1$ was shown by Ara\'ujo and Ayta\c c in~\cite{AA2017} under a bit stronger assumption than Ara\'ujo's conditions~(A) and~(B) in a different manner (using a purely probability-theoretic technique); see also Remark~\ref{rmk:6.7}.
Furthermore, the quasi-compactness of $\mathcal L_f$ may lead to other several limit theorems (such as central limit theorem, large deviation principle, local limit theorem and almost sure invariance principle) via the so-called Nagaev--Guivarc'h perturbative spectral method, refer to~\cite{ANV2015,Gouezel2015,HH2001}.

\begin{mainthm} \label{thm:C}
Let $(X , \mathscr B, m)$ and $(\Omega , \mathscr F, \mathbb{P})=(T^\mathbb{N}, \mathscr A^\mathbb{N}, p^\mathbb{N})$ be a Polish probability space and the infinite product space of a probability space $(T, \mathscr A, p)$, respectively.
Consider a measurable map $f:T\times X\to X$.
Then, the followings are equivalent:
\begin{enumerate}[label=(\roman*), leftmargin=1cm]
\item
${\mathcal{L}}_f$ is mean constrictive;
\item
$f$ satisfies {\tt(FPM)}.
\end{enumerate}
Moreover, if $f$ satisfies any of the above equivalent conditions and $\mu_1,\dots,\mu_r$ denote the measures that appear in the property {\tt(FPM)}, then it holds
 \begin{enumerate}[ leftmargin=1cm]
\item
$(\mathbb{P}\times m)\left(B(\mu _1)\cup \dots \cup B(\mu_r)\right)=1$;
\item
$\mathbb{P}\left( B_x(\mu_1) \cup \dots \cup
B_x(\mu_r)\right) = 1$ for $m$-almost every~$x \in X $;
\item
$m\left(B_\omega(\mu _1)\cup \dots \cup B_\omega(\mu_r)\right)=1$ for $\mathbb P$-almost every $\omega \in \Omega$;
\end{enumerate}
where for each $i=1,\dots,r$,
\begin{equation*}
\begin{gathered}
B(\mu_i)=\left\{ (\omega,x)\in \Omega\times X : \, \lim _{n\to \infty} \frac{1}{n} \sum _{j=0}^{n-1} \delta_{f^j_\omega (x)} = \mu _i \right\} \\
B_\omega(\mu_i)=\left\{x\in X: \, (\omega,x)\in B(\mu_i)\right\} \ \ \text{and} \ \ B_x(\mu_i)= \left\{\omega\in \Omega: \, (\omega,x)\in B(\mu_i)\right\}.
\end{gathered}
\end{equation*}
\end{mainthm}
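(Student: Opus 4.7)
The plan is to route the equivalence through the Markov-operator correspondence between \emph{mean constrictivity} and \emph{mean asymptotic periodicity}, the extension of Inoue--Ishitani and Lasota--Li--Yorke promised earlier in the paper, which supplies a decomposition
$$A_n h \longrightarrow \sum_{i=1}^{r} \lambda_i(h)\, h_i \quad \text{in } L^1(m)$$
for finitely many densities $h_1,\dots,h_r\in D(m)$ with pairwise disjoint supports and a permutation $\alpha$ of $\{1,\dots,r\}$ satisfying $\mathcal{L}_f h_i=h_{\alpha(i)}$. The equivalence (i)$\Leftrightarrow$(ii) then becomes the translation of this $L^1$-decomposition into empirical-measure convergence, via a skew-product Birkhoff argument for $F:\Omega\times X\to\Omega\times X$.

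For (ii)$\Rightarrow$(i), set $h_i=d\mu_i/dm$. For $h\in D(m)$ and $\psi\in L^\infty(m)$, the duality
$$\int \psi \cdot A_n h\, dm = \int_{\Omega\times X} \frac{1}{n}\sum_{j=0}^{n-1}\psi(f_\omega^j(x))\,h(x)\,d(\mathbb{P}\times m),$$
together with the basin property (FPM)(3) and bounded convergence, yields weak $L^1$-convergence $A_n h \to \sum_i c_i(h) h_i$ with $c_i(h)\geq 0$, $\sum_i c_i(h)=1$. Scheff\'e's lemma upgrades weak to norm convergence since both sides lie in $D(m)$. The finite-dimensional simplex $F=\{\sum_i c_i h_i : c_i\geq 0,\ \sum_i c_i=1\}$ is therefore a compact constrictor for $(A_n)$.

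For (i)$\Rightarrow$(ii), averaging the $h_i$ over $\alpha$-orbits produces $\mathcal{L}_f$-invariant densities whose induced probability measures $\mu_i=h_i\,dm$ are absolutely continuous stationary measures with pairwise disjoint supports. Ergodicity of each $\mu_i$ follows from the criterion recalled in Appendix~\ref{appendix:B}: if $P(x,A)\geq 1_A(x)$ then $\mathcal{L}_f^{*} 1_A\geq 1_A$, and $\int (\mathcal{L}_f^* 1_A)\, h_i\, dm = \int 1_A\,(\mathcal{L}_f h_i)\, dm = \mu_i(A)$ combined with the indecomposability of the support of $h_i$ forces $\mu_i(A)\in\{0,1\}$. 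Lemma~\ref{lem-mu1} then gives that $B_\omega(\mu_i)$ has full $\mu_i$-measure for $\mathbb{P}$-a.e.\ $\omega$.

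The main obstacle, and the place where the local compactness of $X$ is used, is upgrading this basin property from $\mu_i$-a.e.\ to $m$-a.e., which is what (FPM)(3) demands and what items~(1)--(2) refine. First, applying the Birkhoff ergodic theorem to the skew product $F$ on $(\Omega\times X,\mathbb{P}\times\mu_i)$ against a countable dense family $\{\psi_k\}\subset C_0(X)$ yields $(\mathbb{P}\times\mu_i)(B(\mu_i))=1$. Next, for $m$-a.e.\ $x\in X$, applying mean constrictivity to a Lebesgue-type approximating sequence of densities concentrated on shrinking neighborhoods of $x$ and passing to the limit inside $A_n \mathcal{L}_f^{*}\psi$ shows that the empirical measures $n^{-1}\sum_{j<n}\delta_{f_\omega^j(x)}$ cluster $\mathbb{P}$-almost surely only at convex combinations of $\mu_1,\dots,\mu_r$; a zero--one-law/dominated-ergodic argument (using the extremality of the $\mu_i$ among invariant measures for $F$) singles out a single index $i$ for $\mathbb{P}$-a.e.\ $\omega$. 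This produces item~(2), from which (FPM)(3) follows by Fubini applied in one direction and item~(1) by Fubini applied in the other.
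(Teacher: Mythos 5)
Your proposal contains genuine gaps in both directions. In (ii)$\Rightarrow$(i), the basin property only gives, for $\mathbb P\times m$-a.e.\ $(\omega,x)$, convergence of the empirical averages against \emph{bounded continuous} test functions; after integrating, this yields weak* convergence of the measures $A_nh\,dm$ to $\sum_i c_i(h)\,\mu_i$, which is neither weak $L^1$-convergence of the densities $A_nh$ nor pointwise convergence. Your appeal to Scheff\'e is invalid: Scheff\'e's lemma requires almost-everywhere convergence, and weak (let alone weak*) convergence of densities to a density does not imply norm convergence (densities with small spikes spread evenly over the space converge weak* to an absolutely continuous limit while no subsequence is even uniformly integrable). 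The missing ingredient is precisely the uniform integrability of $\{A_nh\}$, i.e.\ the nontrivial content of {\tt(MC)}/{\tt(WAP)}. The paper circumvents this by proving instead that $\mathcal L_f$ satisfies {\tt(FED)}: the Radon--Nikod\'ym derivatives $h_i$ are ergodic invariant densities with disjoint supports, and $h=\frac1r(h_1+\dots+h_r)$ has the \emph{maximal support}, shown by applying the basin property to $1_{\supp h}$, using $\mathcal L_f^{n*}1_{\supp h}$ monotone (Remark~\ref{prop:invariant}) and Stolz--Ces\`aro to pass from Ces\`aro to sequential convergence; Theorem~\ref{thm:D} then gives {\tt(MC)}.

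In (i)$\Rightarrow$(ii), the crucial step — upgrading the basins from full $\mu_i$-measure to $m$-a.e.\ coverage of $X$ — is only gestured at, and the tools you invoke do not work at this level of generality. The functionals $\lambda_i$ are merely bounded on $L^1(m)$ (represented by $L^\infty$ functions), so passing to the limit along densities concentrated on shrinking neighborhoods of $x$ requires a Lebesgue differentiation theorem, which is not available for an arbitrary Borel probability $m$ on a Polish space; moreover, since $f$ is only \emph{measurable} (no continuity is assumed), weak* cluster points of the empirical measures $\frac1n\sum_j\delta_{f^j_\omega(x)}$ need not be invariant, so "clustering only at convex combinations of $\mu_1,\dots,\mu_r$" is unjustified, and no mechanism is given to convert annealed (averaged over $\omega$) information into a $\mathbb P$-a.s.\ statement for a fixed $x$. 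The paper's Proposition~\ref{prop:APM-FPM} handles this differently: setting $A_\omega=X\setminus(B_\omega(\mu_1)\cup\dots\cup B_\omega(\mu_r))$, it shows by contradiction (Claim~\ref{claim3}) that for $\mathbb P\times m$-a.e.\ $(\omega,x)$ some iterate satisfies $f^k_\omega(x)\notin A_{\sigma^k\omega}$, using $\int_{A_{\sigma^i\omega}}h_j\,dm=0$ together with $\|A_n1_X-\sum_j\lambda_j(1_X)h_j\|\to0$ from {\tt(APM)}, and then a tail argument transfers membership in $B_{\sigma^k\omega}(\mu_i)$ back to $x\in B_\omega(\mu_i)$. (A minor point: for the mean version {\tt(APM)} the densities $h_i$ are already ergodic and $\mathcal L_f$-invariant, so the permutation and orbit-averaging in your decomposition are unnecessary.) Items (1)--(2) then follow from Lemma~\ref{lem:equi-basin}, as you indicate.
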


We note that since uniform constrictivity implies mean constrictivity, Theorem~\ref{thm:A} is just a consequence of Theorems~\ref{thm:B} and~\ref{thm:C}.
In fact, conclusions (1)--(3) in Theorem~\ref{thm:C} also hold under the assumptions of Theorem~\ref{thm:A}.

Notice that $B(\mu_i)$ is basically the statistical basin of attraction of the measure $\mathbb{P}\times \mu_i$ for the skew-product map $F$ in~\eqref{eq:skew}.
Thus, the above theorem is actually a characterization of an absolutely continuous version of the Palis conjecture for the class of deterministic systems of the form~\eqref{eq:skew}, in terms of the annealed Perron--Frobenius operator of the random map $f$.
In particular, when $T$ (or $\Omega$) is a singleton, Theorem~\ref{thm:C} provides a characterization of the existence of finitely many $m$-absolutely continuous invariant probability measures for deterministic dynamics such that the union of their basins has full $m$-measure, in terms of the Perron--Frobenius operator.

\subsubsection{General Markov operators}
We next generalize the equivalence in Theorem~\ref{thm:C} to general Markov operators.
However, to do this, we need some preliminaries.

Let $P$ be a Markov operator on $L^1(m)$ and consider the adjoint operator $P^*$ on $L^\infty(m)$.
 We define the support of a real-valued function $h$ (up to an $m$-null set) by $\supp h = \{ x\in X : \, h(x) \not= 0\}$.
We say that $h$ is an \emph{invariant density} of $P$ (or a \emph{$P$-invariant density}) if $h \in D(m)$ and $Ph =h $.
As usual, $P^n$ and $(P^*)^n$ denote the $n$-th iterated of $P$ and $P^*$ respectively. Let $(P^n)^*$ be the adjoint operator of $P^n$.
Using recursively the duality relation, it is not difficult to see that $(P^n)^*=(P^*)^n$. For simplicity of notation, we will simply write $P^{n*}$ when no confusion can arise. Having this notation in mind, we say that a $P$-invariant density $h$ has the \emph{maximal support} if
\begin{align}\label{max.supp}
\lim_{n\to\infty}P^{n*}1_{\supp h}(x)=1 \quad \text{for $m$-almost all $x\in X$.}
\end{align}

A probability measure $\mu$ is called \emph{ergodic} if any $A\in\mathscr{B}$ such that $P^*1_A\geq 1_A$ satisfies $\mu(A)\in \{0,1\}$.
We will say that a $P$-invariant density $h \in D(m)$ is \emph{ergodic} if the probability measure $m_h$ given in~\eqref{eq:mphi} is ergodic. See Appendix~\ref{appendix:B} for more details on equivalent definitions.

\begin{mainthm}\label{thm:D}
{Let $(X,\mathscr{B},m)$ be an abstract probability space and consider a Markov operator $P: L^1(m) \to L^1(m)$. Then, the following conditions are equivalent:}
\begin{enumerate}
\item[\tt (MC)]
$P$ is mean constrictive;
\item[\tt (FED)]
$P$ admits finitely many ergodic $P$-invariant densities $h_1,\dots,h_r$ with mutually disjoint supports (up to an $m$-null set) and the invariant density function $h=\frac{1}{r}(h_1+\dots+h_r)$ has the maximal support;
\item[\tt (APM)]
There exist finitely many ergodic $P$-invariant densities $h_1,\dots,h_r$ with mutually disjoint supports (up to an $m$-null set) and positive bounded linear functionals $\lambda_1, \dots, \lambda_r$ on $L^1(m)$ such that
\begin{align*}
\lim_{n\to\infty} \bigg\| A_n\varphi-\sum_{i=1}^r\lambda_i(\varphi)h_i\bigg\| =0 \quad \text{for any $\varphi\in L^1(m)$}.
\end{align*}
\end{enumerate}
\end{mainthm}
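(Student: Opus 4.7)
The plan is to prove the cycle of implications \texttt{(APM)}$\Rightarrow$\texttt{(MC)}$\Rightarrow$\texttt{(FED)}$\Rightarrow$\texttt{(APM)}, in the spirit of the classical Lasota--Li--Yorke theorem and its mean-ergodic version by Inoue--Ishitani (which was carried out for Perron--Frobenius operators). The first implication \texttt{(APM)}$\Rightarrow$\texttt{(MC)} is immediate: since the $h_i$ have disjoint supports one has $\|\sum_i\lambda_i(\varphi)h_i\|=\sum_i|\lambda_i(\varphi)|\le\|\varphi\|$, so the set $F=\{\sum_{i=1}^{r}\lambda_{i}(\varphi)h_{i}:\varphi\in D(m)\}$ is a bounded subset of the finite-dimensional subspace $\mathrm{span}\{h_{1},\dots,h_{r}\}$ of $L^{1}(m)$, hence relatively compact, and its closure serves as a constrictor for $(A_{n})_{n\ge 1}$.

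For \texttt{(MC)}$\Rightarrow$\texttt{(FED)}, I would first pass from constrictivity to norm convergence of $(A_{n}\varphi)_{n\ge 1}$. Since $\|PA_{n}\varphi-A_{n}\varphi\|\le 2\|\varphi\|/n\to 0$, every limit point of $(A_{n}\varphi)$ lies in $\ker(I-P)$; combining norm-precompactness from \texttt{(MC)} with an Eberlein-type mean ergodic argument then yields $A_{n}\varphi\to E\varphi\in\ker(I-P)$ in $L^{1}$, where $E$ is a positive projection with $E(D(m))\subset D(m)$. The next task is the finite-dimensionality of $\ker(I-P)\cap D(m)$: any family of $P$-invariant densities with pairwise disjoint supports consists of elements at mutual $L^{1}$-distance $2$, each fixed by every $A_{n}$ and therefore lying in the constrictor $F$, and compactness of $F$ forces the family to be finite. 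A Hopf-type ergodic decomposition then produces finitely many ergodic $P$-invariant densities $h_{1},\dots,h_{r}$ with pairwise disjoint supports $A_{1},\dots,A_{r}$, and $\ker(I-P)\cap D(m)$ equals the convex hull of these.

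Setting $A=\bigcup_{i}A_{i}$ and $h=r^{-1}\sum_{i}h_{i}$, the final task for \texttt{(FED)} is the maximal support condition \eqref{max.supp}. The $P^{*}$-invariance of the stationary measure $m_{h}$ gives $P^{*}1_{A^{c}}=0$ on $A$, hence $P^{*}1_{A^{c}}\le 1_{A^{c}}$ a.e., and by positivity of $P^{*}$ the sequence $(P^{n*}1_{A^{c}})_{n\ge 0}$ is monotonically nonincreasing a.e. The mean ergodic convergence of $A_{n}1_{A^{c}}$ to an element of $\ker(I-P)$ supported in $A$ gives $\int_{A^{c}}A_{n}1_{A^{c}}\,dm\to 0$ and, by duality, $\|A_{n}^{*}1_{A^{c}}\|\to 0$. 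Since the Ces\`aro average of a nonincreasing nonnegative sequence dominates that sequence, $\|P^{n*}1_{A^{c}}\|\to 0$, and monotonicity promotes this to pointwise a.e.~convergence, giving $P^{n*}1_{A}(x)\to 1$ for $m$-a.e.~$x$.

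For \texttt{(FED)}$\Rightarrow$\texttt{(APM)}, the maximal support property forces any $P$-invariant density $g$ to be supported in $A$ (via $\int_{A}g\,dm=\int P^{n*}1_{A}\cdot g\,dm\to 1$ by dominated convergence), and combined with the $P^{*}$-absorption of each $A_{i}$ one checks that $g1_{A_{i}}$ is itself $P$-invariant; the ergodicity of $h_{i}$ inside $A_{i}$ then yields $g1_{A_{i}}=(\int_{A_{i}}g\,dm)h_{i}$, so $\ker(I-P)\cap L^{1}(m)=\mathrm{span}\{h_{1},\dots,h_{r}\}$. An ergodic theorem applied on each absorbing piece $A_{i}$, together with the transient mass on $A^{c}$ being driven into $A$ by maximal support, provides $L^{1}$-convergence $A_{n}\varphi\to E\varphi=\sum_{i}\lambda_{i}(\varphi)h_{i}$ with $\lambda_{i}(\varphi):=\int_{A_{i}}E\varphi\,dm$ giving the required positive bounded linear functionals. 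I anticipate that the most delicate step is the maximal support condition in \texttt{(MC)}$\Rightarrow$\texttt{(FED)}: upgrading the $L^{1}$-Ces\`aro convergence provided by constrictivity to the pointwise a.e.~convergence of $P^{n*}1_{A}$ hinges on the monotonicity $P^{(n+1)*}1_{A^{c}}\le P^{n*}1_{A^{c}}$, which becomes available only after one has identified \emph{all} ergodic invariant densities so that $A=\bigcup_{i}A_{i}$ is truly $P^{*}$-absorbing, making the two halves of that implication genuinely intertwined rather than sequential.
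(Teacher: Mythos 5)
Your cycle \texttt{(APM)}$\Rightarrow$\texttt{(MC)}$\Rightarrow$\texttt{(FED)}$\Rightarrow$\texttt{(APM)} is sound, but it takes a genuinely different route from the paper on the hard implication. The paper proves the longer chain of Theorem~\ref{thmeq} and runs the key step from the Dunford--Pettis-type condition \texttt{(AMC2)}: existence of an invariant density comes from the Straube-type Theorem~\ref{cor:E}, finiteness from the quantitative bound $m(E)\ge\delta$ for every set with $P^*1_E\ge 1_E$ (Claim~\ref{claim:Edelta}), and the maximal support from Proposition~\ref{propWAP}, a delicate analysis of $\varphi=\lim_n P^{n*}1_S$, $\psi=\lim_n P^{n*}1_E$, the sets $E_0,E,E_1,B_\varepsilon$ and a Zorn's-lemma choice of a maximal support; \texttt{(FED)}$\Rightarrow$\texttt{(APM)} is then delegated to the equivalence of maximal support with \texttt{(WAP)} from \cite{Toyokawa2020} plus Yosida--Kakutani. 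You instead extract relative norm compactness of $(A_n\varphi)$ directly from the constrictor, apply the Eberlein/Yosida--Kakutani mean ergodic theorem (all limit points are $P$-fixed since $\|PA_n\varphi-A_n\varphi\|\le 2\|\varphi\|/n$), observe that every invariant density is a fixed point of each $A_n$ and hence lies in the compact constrictor, so the pairwise $2$-separated ergodic densities are finitely many, and get maximal support from the monotonicity $P^{(n+1)*}1_{A^{c}}\le P^{n*}1_{A^{c}}$ together with the fact that a Ces\`aro average dominates a nonincreasing sequence. This is shorter and more classically functional-analytic; what the paper's detour buys is precisely the extra equivalent formulations of Theorem~\ref{thmeq} (\texttt{(AMC2)}, \texttt{(MCDP)}, \texttt{(WMC)}, \dots) and in particular the implication \texttt{(AC)}$\Rightarrow$\texttt{(MC)} needed for the hierarchy, which your argument does not give.

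Two places should be tightened. In the duality step of your maximal-support argument, $\int_{A^{c}}A_n1_{A^{c}}\,dm\to0$ only controls $\int_{A^{c}}A_n^*1_{A^{c}}\,dm$; to conclude $\|A_n^*1_{A^{c}}\|\to0$ also use $\int_{A}A_n^*1_{A^{c}}\,dm=\int_{A^{c}}A_n1_{A}\,dm\to0$ (equivalently apply the mean ergodic limit to $1_X$), which is legitimate because, by Krein--Milman on the compact convex set of invariant densities inside the constrictor, every invariant density is supported in $A=\bigcup_i A_i$. Your \texttt{(FED)}$\Rightarrow$\texttt{(APM)} is the thinnest step: mean ergodicity on each absorbing piece $A_i$ is fine (the restriction $P_{A_i}$ is a Markov operator fixing $h_i$, hence \texttt{(WAP)}, and the limit is $(\int_{A_i}\varphi\,dm)\,h_i$ by uniqueness of the invariant density of the conservative ergodic restriction), but the ``transient mass'' clause needs an actual argument: fix $k$, split $P^j\varphi=P^{j-k}(1_AP^k\varphi)+P^{j-k}(1_{A^{c}}P^k\varphi)$, note $\|1_{A^{c}}P^k\varphi\|=\int(1-P^{k*}1_A)\varphi\,dm\to0$ by dominated convergence (this is exactly where maximal support enters), let $n\to\infty$ and then $k\to\infty$, and verify that the resulting coefficients $\int_{A_i}P^k\varphi\,dm$ form a Cauchy sequence defining $\lambda_i(\varphi)$. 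These details are routine but must be written out; the paper avoids them by citing \cite{Toyokawa2020} as in Lemma~\ref{lemC3}.
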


The condition {\tt(APM)}, named as \emph{asymptotic periodicity in mean}, was introduced by Inoue and Ishitani for Perron--Frobenius operators in~\cite{II1991} as a weaker version of the classic property \emph{asymptotic periodicity}. In turn, asymptotic periodicity was introduced and shown to be equivalent to constrictivity in~\cite{LLY1984}.
The equivalence between the conditions {\tt(FED)} and {\tt(APM)} is the generalization of Inoue--Ishitani~\cite{II1991} to the case of general Markov operators.
Similarly, the equivalence between {\tt(APM)} and {\tt(MC)} is a generalization of the spectral decomposition theorem (on $L^1(m)$) that has been developed by Lasota, Li, Yorke, Komorn\'ik, Bartoszek during the eighties and nineties, and by Storozhuk, Toyokawa more recently. Refer to~\cite{Bartoszek2008,Toyokawa2020} and references therein.
Actually, we will prove in Theorem~\ref{thmeq} a more complete version of Theorem~\ref{thm:D} where we will provide a sequence of equivalences between {\tt(MC)} and, a priori, weaker conditions.

\begin{rem} \label{rem:ergodic-implies-MC} If $1_X$ is an ergodic $P$-invariant density, then $P$ is {\tt (MC)}. Indeed, since any two ergodic $P$-invariant densities are equal or have disjoint support up to an $m$-null set, see Proposition~\ref{prop:ergodic} in Appendix~\ref{appendix:B}, $1_X$ is actually the unique invariant ergodic density of $P$. Then $P$ satisfies~{\tt (FEM)} and consequently, by Theorem~\ref{thm:D}, $P$ is~{\tt(MC)}.
\end{rem}

\subsection{Hierarchy of classes of Markov operators}
We have considered the classes of \emph{uniform constrictive} {\tt(UC)}, \emph{constrictive} {\tt(C)} and \emph{mean constrictive} {\tt(MC)} Markov operators introduced in Definition~\ref{dfn:1011}.
In this subsection, we introduce in Definition~\ref{asy.const} a new class between them that we call \emph{asymptotic constrictivity} {\tt(AC)}.
In the next subsection, we will provide examples of random maps that show the difference between these classes. First, to show the global picture, we characterize the class of Markov operators in $L^1(m)$ for which there is an invariant density, which we denote~by {\tt(S)}.

\subsubsection{{Straube class}}
In~\cite{straube1981existence} Straube studied the existence of invariant densities for the Perron--Frobenius operator associated with a nonsingular transformation $g:X\to X$.
Namely, Straube showed that there exists a $g$-invariant probability measure which is absolutely continuous with respect to $m$ if and only if there exist $\delta>0$ and $0 < \alpha < 1$ such that $m(A)<\delta$ implies
$m(g^{-k}(A))<\alpha$ for all $k\geq 0$.
Recall that a $g$-invariant absolutely continuous probability measure corresponds to an invariant density of the Perron--Frobenius operator $\mathcal{L}_{g}$.
Moreover,
\[
m\left(g^{-k}(A)\right)= \int 1_A \circ g^k(x) \, dm
 = \int_A \mathcal{L}^k_{g} 1_X \, dm.
\]
More recently, Islam, G\'{o}ra and Boyarsky in~\cite{islam2005generalization} proved also similar necessary and sufficient conditions in terms of Markov operator for the existence of absolutely continuous stationary measures for a certain class of iterated function systems with probabilities.
The following theorem finally provides a
 characterization of the class {\tt(S)} in the spirit of {\tt (MC)}, generalizing the previous results in~\cite{islam2005generalization,straube1981existence}.
 A more complete version will be given in Theorem~\ref{cor:E}.

\begin{mainthm}
\label{GST}
For a Markov operator $P:L^1(m)\to L^1(m)$, the following assertions are equivalent:
\begin{enumerate}
\item \label{item1}
There exists an invariant density for $P$;
\item \label{item2}
There exist $\alpha\in(0,1)$ and $\delta>0$ such that
\[
\sup_{n\ge0}\int_AP^n1_X \, dm<\alpha \quad \text{for any $A\in\mathscr{B}$ with $m(A)<\delta$};
\]
\item \label{item3}
There exist $\alpha\in(0,1)$ and $\delta>0$ such that
\[
\sup_{n\ge0}\int_AA_n1_X \, dm<\alpha \quad \text{for any $A\in\mathscr{B}$ with $m(A)<\delta$};
\]
\end{enumerate}
Moreover, the equivalence also holds by taking $\alpha=1$ in all the above items.
\end{mainthm}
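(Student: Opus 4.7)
I would prove the equivalence via $(\ref{item1}) \Rightarrow (\ref{item2}) \Rightarrow (\ref{item3}) \Rightarrow (\ref{item1})$. The implication $(\ref{item2}) \Rightarrow (\ref{item3})$ is immediate from the Cesaro bound $\int_A A_n 1_X\, dm \leq \sup_{i < n} \int_A P^i 1_X\, dm$. For $(\ref{item1}) \Rightarrow (\ref{item2})$, let $h$ be a $P$-invariant density and, for $\eta > 0$, put $K_\eta = \{h \geq \eta\}$. Positivity of $P$ together with $1_{K_\eta} \leq h/\eta$ gives $P^n 1_{K_\eta} \leq h/\eta$ for all $n$; the splitting $1_X = 1_{K_\eta} + 1_{K_\eta^c}$ then yields
\[
\int_A P^n 1_X\, dm \;\leq\; \frac{1}{\eta}\int_A h\, dm + m(K_\eta^c).
\]
Since $\int h\, dm = 1$ forces $m(\{h=0\}) < 1$, and $m(K_\eta^c) \downarrow m(\{h=0\})$ as $\eta \downarrow 0$, one recovers (\ref{item2}) for any $\alpha \in (m(\{h=0\}), 1)$ by first choosing $\eta$ small and then $\delta$ small via the absolute continuity of $h\cdot m$.

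The substantive step $(\ref{item3}) \Rightarrow (\ref{item1})$ proceeds by constructing an invariant density through finitely additive averaging. Fix a Banach limit $\Lambda$ on $\ell^\infty(\mathbb{N})$ and define
\[
\mu(\psi) \;=\; \Lambda\bigl( n \mapsto \int \psi\, A_n 1_X\, dm \bigr) \qquad \text{for } \psi \in L^\infty(m),
\]
yielding a finitely additive probability $\mu \in (L^\infty(m))^*$ that vanishes on $m$-null sets. The bound $\|P A_n 1_X - A_n 1_X\|_{L^1} \leq 2/n$ combined with shift-invariance of $\Lambda$ shows $\mu$ is $P^{**}$-invariant. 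Apply the Yosida--Hewitt decomposition $\mu = \mu_{ac} + \mu_{pfa}$; the pure finite additivity of $\mu_{pfa}$ produces, for each $\varepsilon < \delta$, a set $E_\varepsilon$ with $m(E_\varepsilon) < \varepsilon$ and $\mu_{pfa}(E_\varepsilon) > \mu_{pfa}(X) - \varepsilon$, and hypothesis (\ref{item3}) then gives
\[
\mu_{pfa}(X) - \varepsilon \;<\; \mu(E_\varepsilon) \;\leq\; \sup_n \int_{E_\varepsilon} A_n 1_X\, dm \;<\; \alpha.
\]
Letting $\varepsilon \to 0$ yields $\mu_{pfa}(X) \leq \alpha$, hence $\mu_{ac}(X) \geq 1-\alpha > 0$.

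To upgrade $P^{**}$-invariance of $\mu$ into $P$-invariance of $\mu_{ac}$, decompose $P^{**}\mu_{pfa} = \eta_{ac} + \eta_{pfa}$ and use $P^{**}|_{L^1} = P$ in $P^{**}\mu = \mu$; Yosida--Hewitt uniqueness applied to $P\mu_{ac} + \eta_{ac} + \eta_{pfa} = \mu_{ac} + \mu_{pfa}$ yields $\mu_{ac} - P\mu_{ac} = \eta_{ac} \geq 0$. Since $P$ preserves the $L^1$-integral, this forces $P\mu_{ac} = \mu_{ac}$, and the normalized density of $\mu_{ac}$ is the sought invariant density. The ``$\alpha = 1$'' statement is recovered by the same scheme, the strict inequality $\sup_n \int_{E_\varepsilon} A_n 1_X\, dm < 1$ still ruling out $\mu_{pfa}(X) = 1$ provided the Banach limit (equivalently, a weak-$\ast$ cluster point) is chosen so as to attain the supremum on a concentrating sequence for $\mu_{pfa}$. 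I expect the main obstacle to lie in this transfer of invariance through the Yosida--Hewitt splitting: the splitting is not a priori $P^{**}$-invariant, but $P^{**}|_{L^1} = P$ keeps the countably additive part inside $L^1$, and Markov mass-preservation then converts sub-invariance into equality; a secondary delicate point is maintaining the strict inequality $\mu_{pfa}(X)<1$ in the $\alpha = 1$ case, where the uniform gap to $1$ is lost.
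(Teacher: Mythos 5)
For the $\alpha<1$ part your argument is correct and takes a genuinely different route from the paper's (the paper proves this as part of the more general Theorem~\ref{cor:E}). Your (1)$\Rightarrow$(2) via the level sets $K_\eta=\{h\ge\eta\}$, the Chebyshev bound $P^n1_{K_\eta}\le h/\eta$ and the splitting $1_X=1_{K_\eta}+1_{K_\eta^c}$ is more elementary than the paper's, which restricts $P$ to $S=\supp h$ (Proposition~\ref{prop:wap}), invokes the equivalence of weak almost periodicity with weak precompactness of $\{P_S^n1_S\}$ from \cite{Toyokawa2020}, and then applies Dunford--Pettis; both give an $\alpha$ essentially equal to $m(X\setminus\supp h)$ plus a small error. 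Your (3)$\Rightarrow$(1) via a Banach limit, the Yosida--Hewitt decomposition, the concentration of the purely finitely additive part on sets of small $m$-measure, and the upgrade of $P\mu_{ac}\le\mu_{ac}$ to equality by mass preservation is sound; the paper instead passes through its condition~(4) (an infimum over sets of large measure) and argues by contradiction with \cite[Theorem~4.6 and Lemma~4.5]{Krengel}: if no invariant density exists there is a strictly positive $\psi\in L^\infty(m)$ with $\|A_n^*\psi\|_\infty\to0$, whence $\int_EA_n1_X\,dm\to0$ on a fixed superlevel set $E$ of $\psi$ of measure $>\delta$. A Banach-limit argument in your spirit does appear in the paper, but only for the limsup conditions (5)--(6) of Theorem~\ref{cor:E}, following \cite{socala1988existence}; so your route buys a self-contained functional-analytic proof of (3)$\Rightarrow$(1) that avoids Krengel's dichotomy, at the cost of the Yosida--Hewitt machinery.

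The genuine gap is the $\alpha=1$ assertion, which is part of the statement. Your scheme only yields $\mu_{pfa}(X)-\varepsilon<\sup_n\int_{E_\varepsilon}A_n1_X\,dm<1$ for each concentrating set $E_\varepsilon$, hence $\mu_{pfa}(X)\le1$ in the limit, which is no contradiction: when $\alpha=1$ the hypothesis gives, for each fixed small set, a strict inequality with no uniform gap, so nothing prevents $\mu_{ac}=0$. The proposed remedy --- choosing the Banach limit ``so as to attain the supremum on a concentrating sequence for $\mu_{pfa}$'' --- is circular, since $\mu_{pfa}$ is only defined after the limit functional has been fixed, and there is no a priori candidate sequence of sets on which a cluster point would be forced to pick up the escaping mass. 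This is exactly where the paper's detour through condition~(4) and Krengel's theorem pays off: the contradiction there is produced on a single fixed set $E$ with $\int_EA_n1_X\,dm\to0$, which violates the $\alpha=1$ hypothesis just as well as the $\alpha<1$ one. To complete your approach for $\alpha=1$ you need an input of this kind (Krengel's alternative, or a Soca{\l}a-type argument as in the paper's treatment of its conditions (5)--(6)), not merely a more careful choice of Banach limit.
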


\subsubsection{Weakly almost periodic class}
Consider the class of Markov operators on $L^1(m)$ having an invariant density with the maximal support as in~\eqref{max.supp}.
This class was characterized in~\cite{Toyokawa2020} as the well-known class of Markov operators in $L^1(m)$ called weakly almost periodic.
\begin{dfn}\label{wap}
A Markov operator $P$ on $L^1(m)$ is {\it weakly almost periodic} if $(P^n\varphi)_{n\geq 1}$ is weakly precompact for any $\varphi\in L^1(m)$, that is, if any sequence $(P^{n_k}\varphi)_{k\geq 1}$ contains a further subsequence $(P^{n_{k_j}}\varphi)_{j\geq 1}$ that weakly converges to a function in~$L^1(m)$.
\end{dfn}

By the Dunford--Pettis theorem\footnote{\label{f:DP}
The theorem states that $F\subset L^1(m)$ is weakly precompact (i.e.~any sequence in $F$ contains a weakly converging subsequence in $L^1(m)$) if and only if $F$ is bounded (i.e.~there is $M>0$ such that $\Vert \varphi \Vert<M$ for all $\varphi \in F$) and uniformly integrable (i.e.~for any $\varepsilon >0$, there is $\delta >0$ such that $\int _A \varphi dm <\epsilon$ for any $\varphi \in F$ and $A\in\mathscr B$ with $m(A)<\delta$). Refer to e.g.~\cite[Theorem~4.7.18]{bogachev2007measure}.},
a Markov operator $P$ is weakly almost periodic
if and only if
\begin{enumerate}[leftmargin=2cm]
\item[{\tt(WAP)}] {\it for any $\varepsilon>0$ and $\varphi\in L^1(m)$, there is $\delta>0$ such that
\[
\int_A P^n\varphi\,dm<\varepsilon \quad \text{for any $n\in\mathbb{N}$ and $A\in\mathscr{B}$ with $m(A)<\delta$.}
\]}
\end{enumerate}

As mentioned, due to~\cite[Theorem~3.1]{Toyokawa2020}, we have that {\tt(WAP)} is equivalent to the
existence of a $P$-invariant density with the maximal support. Therefore, {\tt(WAP)} implies {\tt(S)}, and moreover, {\tt (MC)} implies {\tt(WAP)} from Theorem~\ref{thm:D} (see also Proposition~\eqref{prop:wap}).

\subsubsection{Asymptotically constrictive class}\label{sss:acc}
{An equivalent formulation for the class of constrictive Markov operators $P:L^1(m)\to L^1(m)$ is}
\begin{enumerate}[leftmargin=1.75cm]
\item[{\tt(C)}] \label{CDP} {\it for any $\varepsilon>0$ there is $\delta>0$ such that for any $\varphi\in D(m)$, there is $n_0\in\mathbb N$ satisfying
\[
\int_A P^n\varphi \,dm<\varepsilon \quad \text{for any $n\geq n_0$ and $A\in\mathscr{B}$ with $m(A)<\delta$.}
\]}
\end{enumerate}
One of the implications follows easily from the Dunford--Pettis theorem. Indeed, if $P$ is constrictive with a constrictor $F$, since $F$ is
compact (in particular, weakly precompact) in $L^1(m)$ and $\int _AP^n \varphi\, dm\leq d(P^n \varphi ,F) +\sup _{\phi \in F}\int _A\phi \, dm$ for each $\varphi\in D(m)$, $n\in \mathbb N$ and $A\in \mathscr B$, we immediately obtain {\tt (C)}.
One can find the proof of the converse e.g.~in~\cite{komornik1993asymptotic}.
Having in mind these equivalent formulations of {\tt (C)} and~{\tt (WAP)},
 we introduce the following middle property between constrictivity and weak almost periodicity, which we could come up with while looking for non-constrictive examples.

\begin{dfn}\label{asy.const}
A Markov operator $P$ on $L^1(m)$ is said to be {\it asymptotically constrictive} {\tt(AC)} if for any $ \varepsilon>0$, there is $\delta>0$
such that
\begin{equation}\label{dfn:AC}
\limsup _{n \rightarrow \infty}
\int_{A} P^{n} \varphi\, dm<\varepsilon \quad \text{for any $\varphi \in D(m)$ and $A \in \mathscr{B}$ with~$m(A)<\delta$.}
\end{equation}
\end{dfn}

It is straightforward to see that {\tt (C)} implies {\tt(AC)}.
We prove in Theorem~\ref{propC2}~that {\tt(AC)} implies {\tt(MC)}.
We summarize the hierarchy between classes in the following~figure.
\begin{figure}[H]
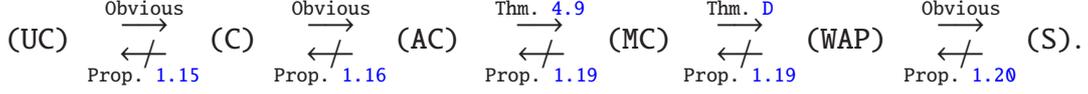
 ~\\[0.3cm]
\begin{center}
\text{\tt (UC) \
$\relationarrow{Obvious}{Prop.~\ref{prop:figa}}{-3em}{-1.8em}$
\ \ (C) \
$\relationarrow{Obvious}{Prop.~\ref{prop:contracting}}{-3em}{-1.8em}$
\ \ (AC) \
$\relationarrow{Thm.~\ref{propC2}}{Prop.~\ref{prop:rotations}}{-3.1em}{-1.8em}$
\ \ (MC) \
$\relationarrow{Thm.~\ref{thm:D}}{Prop.~\ref{prop:rotations}}{-2.8em}{-1.8em}$
\ \ (WAP) \
$\relationarrow{Obvious}{Prop.~\ref{prop:figb}}{-3em}{-1.8em}$
\ \ (S)}. \\[0.1cm]
\end{center}
\caption{The hierarchy of the classes between {\tt (UC)} and {\tt (S)}.}
\label{fig:hierarchy}
\end{figure}

\subsubsection{Sub-hierarchy in {\tt(UC)}} \label{sec:(UC)}
A transition probability $P(x,A)$ on $X\times \mathscr{B}$ is said to be \emph{$m$-nonsingular} if $P(x,A)=0$ for $m$-almost every $x\in X$ whenever $m(A)=0$.
It is well-known that the theories of Markov operators and Markov processes (transition probabilities) are intimately related.
 Indeed, given an $m$-nonsingular transition probability $P(x,A)$ for $x\in X$ and $A\in \mathscr{B}$,
 we can induce a unique Markov operator $P$ on $L^1(m)$ such that $P\varphi$ is the Radon--Nikod\'{y}m derivative with respect to $m$ of the finite signed measure
\[
 \mu_\varphi(A)= \int \varphi(x) P(x,A)\, dm, \quad \text{for $A\in \mathscr{B}$}.
\]
See~\cite{ito1964invariant} or~\cite[Proposition~V.4.2]{neveu1965mathematical} for more details on the construction. 
Moreover, it is easily seen $P^n(x,A)$ is also $m$-nonsingular and induces the iterates $P^n$ of $P$.

Conversely, given a Markov operator $P$ on $L^1(m)$, one can define
\[
 P(\cdot,A)=P^{*}1_A \quad \text{for each $A\in \mathscr{B}$}.
\]
 Notice that $P^{*}1_A$ is an equivalence class and, as a real-valued function, is only defined up to $m$-null sets. It is not hard to see that one may choose in each equivalence class $P^{*}1_A$ a $\mathscr{B}$-measurable function $P(x,A)$ on $X$, for every fixed $A\in\mathscr{B}$, such that it differs from an $m$-nonsingular transition probability only on a negligible set of points. Nevertheless, if $X$ is a Polish space, Neveu proved in~\cite[Proposition~V.4.4]{neveu1965mathematical} that these representations can be chosen appropriately so that $P(x,A)$ becomes a transition probability inducing the Markov operator $P$ on $L^1(m)$, as explained above. Taking into account this relation between Markov operators and Markov processes, in what follows, we introduce two classes of transition probabilities that ensure {\tt(UC)}.

First, we adapt the classical \emph{Doeblin} condition\footnote{Recall that $P(x,A)$ is said to satisfy the \emph{Doeblin} condition if there exist $n_0\geq 1$, $\varepsilon >0$, $\delta <1$ and a probability $\mu$ such that $P^{n_0}(x,A)>\varepsilon$ for all $x\in X$ and $A\in\mathscr{B}$ with $\mu(A)>\delta$ (cf.~\cite{MT2012}). If $\mu$ is in addition required to be absolutely continuous with respect to $m$, then this condition is equivalent to {\tt (D)}.}
for an $m$-nonsigular transition probability  $P(x,A)$ 
as follows:
 \begin{enumerate} \it
 \item[{\tt(D)}] there exist $n_0\geq 1$, $0<\varepsilon<1$, $\delta>0$ and a probability $\mu$ absolutely continuous with respect to $m$ such that $P^{n_0}(x,A)<\varepsilon$ for all $x\in X$ and $A\in\mathscr{B}$ with $\mu(A)<\delta$.
\end{enumerate}

The second class that we introduce is also (an equivalent condition to) a classical property widely discussed and studied in the literature of Markov processes under the name of \emph{uniform ergodicity} that we adapt to $m$-nonsingular transition probabilities: 
\begin{enumerate} \it
 \item[{\tt(D*)}] there exist $n_0\geq 1$, $0<\varepsilon<1$, $\delta>\frac{1}{2}$ and a probability $\mu$ absolutely continuous with respect to $m$ such that $P^{n_0}(x,A)<\varepsilon$ for all $x\in X$ and $A\in\mathscr{B}$ with $\mu(A)<\delta$.
\end{enumerate}
Dorea and Pereira~\cite{Dorea2006} introduced {\tt(D*)} without the absolute continuity of $\mu$
 as an equivalent condition to uniform ergodicity.
In Proposition~\ref{prop:equi-uni-erg} we will show equivalent conditions to {\tt(D*)} including
uniform ergodicity adapted to Markov operators in $L^1(m)$.
Furthermore, we will show that the condition {\tt(D*)} implies that the induced Markov operator $P$ in $L^1(m)$ only admits a unique invariant density, see Remark~\ref{rem:uni-erg}.

\enlargethispage{-1.45cm}
In Proposition~\ref{prop:UC} we will prove that {\tt(UC)} is the class of Markov operators $P$ on $L^1(m)$ that satisfy the following condition:
\begin{enumerate} \it
 \item[{\tt (UC)}] there are $n_0\geq 1$, $0<\varepsilon<1$, $\delta>0$ and a probability $\mu$ absolutely continuous with respect to $m$ such that $P^{n_0}(x,A)<\varepsilon$ for all $A\in\mathscr{B}$ with $\mu(A)<\delta$ and $m$-almost every $x\in X$ (depending on $A$).
\end{enumerate}
In view of these characterizations, one immediately obtains the following relations:
\begin{figure}[H]\label{fig:subuc}
\begin{center}
\text{\tt (D*) \
$\relationarrow{Obvious}{Prop~\ref{prop:dd}}{-2.8em}{-1.7em}$
\ \ (D) \
$\relationarrow{Prop~\ref{prop:UC}}{Prop~\ref{prop:ucd}}{-3.0em}{-1.8em}$
\ \ (UC)}.
\end{center}
\caption{The subhierarchy in {\tt (UC)}.}
\label{fig:subhierarchy}
\end{figure}
In the next subsection of examples, we demonstrate that neither of the converse implications mentioned above holds.

On the other hand, to prove Theorem~\ref{thm:B}, we first show that the assumption in Theorem~\ref{thm:A} is equivalent to the transition probability being $m$-nonsingular and strongly Feller continuous (see the definition at the beginning of Section~\ref{sec:thmB}). As noted in Proposition~\ref{prop:Felle+UC=D}, we conclude that any $m$-nonsingular transition probability that is strongly Feller continuous satisfies condition {\tt (D)}. As a consequence, the transition probability associated with a random map $f$, under the conditions of Ara\'ujo or Brin--Kifer, also satisfies {\tt (D)} (see Remark~\ref{rem:AraujoD}). Furthermore, such transition probabilities satisfy {\tt (D*)} under the conditions in Ara\'ujo--Ayta\c{c}~\cite{AA2017}. In fact, we generalize the argument in Ara\'ujo--Ayta\c{c} to show that a version of Ara\'ujo's condition (other than the one in Remark~\ref{rem:gAraujo}) is sufficient to obtain {\tt (UC)}. See Remark~\ref{rmk:6.7} for further details.

\subsection{Examples and counterexamples}\label{ss:e}
In this subsection, to complete {Remarks~\ref{rem:fiberwise} and}~\ref{rem:gAraujo}, and the list of reverse implications in Figures~\ref{fig:hierarchy} and~\ref{fig:subhierarchy} that fail, we consider several examples from the three aforementioned important classes of random dynamical systems: additive noise, multiplicative noise, and iterated function systems. We also explain how some of these examples can be easily modified to deterministic systems.

\subsubsection{Additive type noise}
\label{subsec:additive}
First, we consider some perturbed systems with additive type noise, which will indicate the reverse implications in Figure~\ref{fig:subhierarchy}.

Let $X$ and $T$ be the closed interval $[0,1]$ equipped with the  Lebesgue measure, denoted by $m$ and $p$, respectively. Consider a measurable map $f_0:X\to X$ and consider the random map $f:T\times X\to X$ given by $f(t,x)=f_t(x)$,
\[
f_t(x) =
\begin{cases}
0 \quad & \text{for} \ x=0,\\
f_0(x) +t \pmod{1} \quad & \text{for} \ x\neq 0.
\end{cases}
\]
In the following proposition $P(x,A)$ and $\mathcal{L}_f$ denote the transition probability and annelead Perron--Frobenius operator associated with this $f$ respectively.
\begin{prop}\label{prop:ucd}
$\mathcal L_f$ satisfies {\tt(UC)}, but $P(x,A)$ does not satisfy {\tt(D)}.
\end{prop}

Next, let us consider
 $X=X_-\cup X_+$ with $X_-=(-1,0]$, $X_+=(0,1]$ and $T=X_+$.
 We equip $X$ and $T$ with
 the normalized Lebesgue measures $m$ and $p$, respectively.
Let $\iota :X \to X$ be the involution
 given by $\iota (x) =x+1$ for $x\in X_-$ and $\iota (x) = x-1$ for $x\in X_+$, so that $\iota (X_-) = X_+$ and $\iota (X_+) = X_-$.
Let $f_0:X\to X$ be a measurable map satisfying that $f_0(X_-) \subset X_+$, $f_0(X_+) \subset X_-$ and $ f_0 \circ \iota = \iota \circ f_0 $ on $ X_+$.
Define the random map $\widetilde f: T\times X_+\to X_+$ by
\[
\widetilde f_t(x)
 = \iota \circ f_0(x)+t \pmod{1}
 \]
and let $f: T\times X\to X$ be the random map given by
\begin{equation}
f_t(x)
=
\begin{cases}
\iota \circ \widetilde f_t(x) & \text{for $x\in X_+$},\\
 \widetilde f_t\circ \iota (x) & \text{for $x\in X_-$}.
\end{cases}\label{eq:dd}
\end{equation}
See Figure~\ref{fig_eg_X}.
Then, the following proposition holds for the transition probability $P(x,A)$ associated with this $f$.
\begin{prop}\label{prop:dd} $P(x,A)$ satisfies {\tt(D)}, but does not satisfy {\tt(D*)}.
\end{prop}

\subsubsection{Multiplicative noise}\label{subsec:multiple}
We secondly focus on a family of perturbed systems with multiplicative noise.
As announced, these examples
prove the assertions in Remark~\ref{rem:gAraujo}.

Let us consider a random map $f:T\times X\to X$ given by the following multiplicative noise,
\begin{equation}\label{eq:0621a}
f_{t}(x)=(1-\varepsilon t) f_{0}(x), \quad x \in[0,1], \ \ t \in[0,1] \ \ \text{and} \ \ 0<\varepsilon<1.
\end{equation}
Here $f_{0}:[0,1] \to [0,1]$ is a measurable map and $X=T=[0,1]$ is equipped with the Lebesgue measure which, as before, we denote by $m$ and $p$ respectively.
The next three examples in Proposition~\ref{prop:multi:ex1} show that both conditions (a) and~(b) in Remark~\ref{rem:gAraujo} are neither sufficient nor necessary to get {\tt(FPM)}.
Realize that the first example
gives an important warning: the natural relaxation of the condition~(b) from ``for all $x\in X$'' to ``for $m$-almost every $x\in X$'' (denoted by \emph{almost-(b)}) is not useful to get {\tt(FPM)}.

\begin{prop}\label{prop:multi:ex1}
Consider the random map given in~\eqref{eq:0621a}.
\begin{enumerate}
\item
Let $f_0(x)=\frac{x}{2}$. Then, $f$ does not satisfy (b), but satisfies (a) and almost-(b). Moreover, {\tt(S)} does not hold. In particular, {\tt(FPM)} does not hold. However, we have that
\[
\lim _{n\to \infty} \frac{1}{n}\sum _{j=0}^{n-1} \delta _{f^j_\omega (x)} = \delta _0 \quad \text{for all $x\in X$ and $\omega \in \Omega$.}
\]
\item
Let $f_0(x)=\frac{x}{2}$ if $x\not=0$ and $f_0(0)=\frac{1}{2}$. Then, $f$ does not satisfy (a), but satisfies (b). Moreover, {\tt(S)} does not hold. In particular, {\tt(FPM)} does not hold.
\item
Let $f_0(x)=2x \text{ (mod 1)}$.
Then, $f$ does not satisfy (a) and~(b), but $\mathcal{L}_f$ satisfies {\tt (C)}. In particular, {\tt(FPM)} holds.
\end{enumerate}
\end{prop}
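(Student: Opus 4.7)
The plan is to verify conditions (a) and (b) by direct computation in each case and then attack each dynamical assertion separately: items~(1) and~(2) reduce to a simple contraction argument, while item~(3) requires proving a spectral property of $\mathcal L_f$. In item~(1), with $f_0(x)=x/2$, the map $(t,x)\mapsto(1-\varepsilon t)x/2$ is jointly continuous, so~(a) holds; since $f_t(0)=0$ for every $t$, one has $P^n(0,\cdot)=\delta_0$ and~(b) fails; for $x\neq 0$ the map $t\mapsto(1-\varepsilon t)x/2$ is an affine diffeomorphism onto $[(1-\varepsilon)x/2,x/2]$, so $P(x,\cdot)\ll m$, and the recurrence~\eqref{eq:recurence} propagates absolute continuity of $P^n(x,\cdot)$ for $x\neq 0$ and $n\geq 2$, giving almost-(b). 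A direct induction yields $f^n_\omega(x)=x\prod_{j=1}^n(1-\varepsilon\omega_j)/2^n\leq x/2^n$, so $f^n_\omega(x)\to 0$ uniformly in $\omega$; consequently $\delta_0$ is the only stationary probability measure, it is not absolutely continuous, and hence {\tt(S)} (a fortiori {\tt(FPM)}) fails. The same uniform convergence gives weak convergence of the Ces\`aro averages to $\delta_0$ for every $x$ and every $\omega$.

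Item~(2) is analogous. Condition~(a) fails at $x=0$ by construction; for~(b), $f_t(0)=(1-\varepsilon t)/2$ has uniform law on $[(1-\varepsilon)/2,1/2]$, and for $x\neq 0$ the argument of item~(1) applies, so $P(x,\cdot)\ll m$ for every $x$ and~(b) holds. After one step any orbit lies in $(0,1]$; from then on the dynamics coincides with that of item~(1), so the same contraction forces $\delta_0$ to be the unique stationary probability and {\tt(S)} fails.

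Item~(3) is the main case. The discontinuity of $f_0(x)=2x\pmod 1$ at $x=1/2$ invalidates~(a); since $f_t(0)=f_t(1/2)=0$, we get $P(0,\cdot)=P(1/2,\cdot)=\delta_0$, so~(b) fails. It remains to show that $\mathcal L_f$ is constrictive, after which {\tt(FPM)} follows from Theorem~\ref{thm:C}. Writing $a(u)=2u\pmod 1$, for $u\notin\{0,1/2\}$ the law of $f_t(u)$ under $p$ is uniform on $[a(u)(1-\varepsilon),a(u)]$, giving the explicit representation
\[
\mathcal L_f\varphi(y)=\int_0^1 K(u,y)\,\varphi(u)\,du,\qquad K(u,y)=\frac{1}{\varepsilon\,a(u)}\,1_{[a(u)(1-\varepsilon),\,a(u)]}(y).
\]
From this formula I would derive a Lasota--Yorke type inequality $V(\mathcal L_f\varphi)\leq \alpha V(\varphi)+C\|\varphi\|$ with $\alpha<1$, exploiting the expansion factor of the doubling map together with the smoothing produced by integrating in $t$. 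Iterating yields $V(\mathcal L_f^n\varphi)\leq C'\|\varphi\|$ uniformly for $\varphi\in D(m)$ and $n\geq n_0$; since the unit ball of $BV$ embeds compactly into $L^1(m)$, this set furnishes a constrictor, so $\mathcal L_f$ is constrictive and Theorem~\ref{thm:C} delivers {\tt(FPM)}.

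The principal obstacle is item~(3): with both~(a) and~(b) failing, Theorem~\ref{thm:A} and Remark~\ref{rem:gAraujo} are unavailable, and constrictivity must be extracted directly from the interplay between noise averaging and expansion. The delicate point is that $K(u,y)$ is unbounded as $u\to 0$ or $u\to 1/2$, so the Lasota--Yorke estimate has to absorb these singularities; this forces one to work with the full annealed operator rather than any individual $\mathcal L_t$.
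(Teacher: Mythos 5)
Your treatment of items (1) and (2) is correct but follows a different route from the paper: you rule out \texttt{(S)} by showing that the uniform contraction $f^n_\omega(x)\le 2^{-n}$ forces every stationary probability to be $\delta_0$, which is not absolutely continuous (this uses, implicitly, the correspondence of Remark~\ref{rem:anexo} between $\mathcal L_f$-invariant densities and absolutely continuous stationary measures), whereas the paper deduces the failure of \texttt{(S)} from the necessary condition $(\star)$ extracted from Theorem~\ref{GST} by exhibiting sets $A_0=(0,2^{-n_0}]$ of small measure with $P^{n_0}(x,A_0)=1$ for a.e.\ $x$. Your route is more elementary and equally valid; the verification of (a), (b) and almost-(b) matches the paper's Lemma~\ref{lem:multiple}. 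One small inaccuracy: in item (2), $\delta_0$ is \emph{not} stationary (since $f_t(0)=(1-\varepsilon t)/2\neq 0$), so the correct conclusion of your contraction argument is that no stationary probability exists at all; the failure of \texttt{(S)} still follows, so this is only a slip of wording.

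The genuine gap is item (3), which is the crux. The paper does not prove constrictivity of $\mathcal L_f$ by hand: it invokes \cite[Theorem 3.1]{Iwata2013} to get \texttt{(C)} and then applies Theorem~\ref{thm:C}. Your replacement is only a plan: the Lasota--Yorke inequality $V(\mathcal L_f\varphi)\le\alpha V(\varphi)+C\|\varphi\|$ is asserted, not derived, and deriving it is exactly where the difficulty sits. The kernel $K(u,y)=\frac{1}{\varepsilon a(u)}1_{[(1-\varepsilon)a(u),a(u)]}(y)$ blows up as $u\to 0$ and $u\to \tfrac12$, i.e.\ at the preimages of the common fixed point $0$, and a direct test shows the one-step estimate can fail: for $\varphi_\eta=\eta^{-1}1_{[0,\eta]}$ one computes that $\mathcal L_f\varphi_\eta$ is essentially the constant $\frac{1}{2\varepsilon\eta}\log\frac{1}{1-\varepsilon}$ on $(0,2\eta(1-\varepsilon)]$, so $V(\mathcal L_f\varphi_\eta)/V(\varphi_\eta)\to\frac{1}{2\varepsilon}\log\frac{1}{1-\varepsilon}$ as $\eta\to0$ while $\|\varphi_\eta\|=1$ stays bounded; this ratio exceeds $1$ once $\varepsilon$ is not small, so no one-step inequality with $\alpha<1$ holds uniformly on $D(m)$, and whether some iterate works requires a genuine analysis of the behaviour near the fixed point $0$ (note that the averaged expansion there, $\log 2+\frac1\varepsilon\int_{1-\varepsilon}^1\log s\,ds$, degrades as $\varepsilon$ grows). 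In short, the ``I would derive a Lasota--Yorke inequality'' step is the whole content of item (3); as written the proposal does not establish \texttt{(C)}, and it does not reproduce (or replace) the cited result on which the paper's proof rests.
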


Proposition~\ref{prop:multi:ex1} (1) provides a class of examples that does not satisfy {\tt(FPM)} but almost-(b) holds. However,
 this example still has a unique non-absolutely continuous physical measure. The next example shows that there is a drastic gap between (b) and almost-(b): finitude
versus infinitude.

Consider a random map $f$ under a multiplicative type noise, given by
\begin{equation}\label{eq:0707c}
f_t(x) =tx + (1- t)f_0(x) \quad x \in X=[0,1], \ \ t \in T=[0,1].
\end{equation}
Now, we consider a concrete $C^1$ map $f_0: X\to X$ with infinitely many sinks, which was essentially given by Ara\'ujo in~\cite[Example 1]{Araujo2001}.
Let $\phi : X \to \mathbb R$ be a $C^1$ function given by
\begin{equation}\label{eq:0707b}
\phi(x) =
X^4 \sin \frac{1}{X} \qquad \text{where $\displaystyle X=\frac{2}{\pi} \left(x-\frac{1}{2}\right)$}.
\end{equation}
Note that $\phi$ can be seen as a $C^1$ function on the Lie group $\mathbb{S}^1=\mathbb R/\mathbb Z$ under the identification of $\mathbb{S}^1$ with $X=[0,1]$.
Notice also that $\phi$ has (countably) infinitely many local maxima and minima, which accumulates to $\frac{1}{2}$.
Therefore, the one-time map $f_0$ of the gradient flow given by $\dot{x} = \nabla \phi (x)$ has infinitely many sinks that accumulate to $\frac{1}{2}$ and whose basin covers $X$ except the sources of $f_0$.

\begin{prop}\label{prop:0707}
 Consider the random map given in~\eqref{eq:0707c} with the time-one map $f_0$ of the gradient flow induced by the potential function~\eqref{eq:0707b}.
 Then, $f$ does not satisfy (b), but satisfies (a) and almost-(b). Moreover, there are infinitely many points $(s_k) _{k\geq 1} \subset X$ such that
 \[
 \mathfrak{B}(\delta_{s_k})=\left\{x\in X \, :\,
 \lim _{n\to \infty} \frac{1}{n}\sum _{j=0}^{n-1} \delta _{
 f^n_\omega (x)}
 =\delta _{s_k}
 \quad \text{for all $\omega\in\Omega$}
 \right\}
 \]
 is a non-empty open set (in particular, has an $m$-positive measure) for all $k\geq 1$, and $\bigcup _{k=1}^\infty \mathfrak{B}(\delta_{s_k}) =X$ up to an $m$-measure zero set.
In particular, {\tt(FPM)} does not hold.
\end{prop}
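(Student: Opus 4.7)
The plan is to reduce the proposition to the local behavior of the random dynamics near the sinks of $f_0$. Throughout, I read ``for all $\omega\in\Omega$'' in the definition of $G(\delta_{s_k})$ as ``for $\mathbb{P}$-almost every $\omega\in\Omega$'': the literal reading would force $G(\delta_{s_k})=\{s_k\}$ via the path $\omega=(1,1,\dots)$ with $f_1=\mathrm{id}$, contradicting the claimed cover. The assertions (a) and the failure of (b) are immediate: $f$ is continuous since $f_0\in C^1$, and at each sink $s_k$ one has $f_t(s_k)=s_k$ for every $t$, so $P^n(s_k,\cdot)=\delta_{s_k}$. For almost-(b), if $x$ is not a fixed point of $f_0$ then $t\mapsto f_t(x)=f_0(x)+t(x-f_0(x))$ is affine with nonzero slope, hence $P(x,\cdot)\ll m$; the fixed-point set of $f_0$ is the critical set of $\phi$, which is countable and $m$-null.

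Next I would construct an open neighborhood of each sink inside $G(\delta_{s_k})$. Enumerate the local maxima of $\phi$ as $\{s_k\}_{k\geq 1}$, and set $\lambda_k:=f_0'(s_k)=e^{\phi''(s_k)}\in(0,1)$. Pick a closed interval $U_k$ centered at $s_k$ and $\mu_k\in(\lambda_k,1)$ such that $|f_0(x)-s_k|\leq\mu_k|x-s_k|$ on $U_k$. The identity $f_t(x)-s_k=t(x-s_k)+(1-t)(f_0(x)-s_k)$ gives
\[
|f_t(x)-s_k|\leq \bigl(t+(1-t)\mu_k\bigr)|x-s_k|,\qquad x\in U_k,\ t\in[0,1],
\]
so $U_k$ is forward-invariant under every $f_t$, and by iteration
\[
|f_\omega^n(x)-s_k|\leq \prod_{i=1}^n\bigl(\omega_i+(1-\omega_i)\mu_k\bigr)\,|x-s_k|.
\]
A direct integration shows $\mathbb{E}[\log(\omega_1+(1-\omega_1)\mu_k)]<0$, since the integrand is nonpositive and strictly negative on $(0,1)$. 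The strong law of large numbers then forces the above product to zero $\mathbb{P}$-a.s., so $\frac{1}{n}\sum_{j=0}^{n-1}\delta_{f_\omega^j(x)}\to\delta_{s_k}$ weakly $\mathbb{P}$-a.s.\ for every $x\in U_k$. Therefore $U_k\subset G(\delta_{s_k})$, and $G(\delta_{s_k})$ is a non-empty open set.

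For the covering property, I use that $f_0$, as the time-one map of a one-dimensional gradient flow, is monotone with fixed points alternating between sinks $s_k$ and sources. Let $I_k=(a_k,b_k)$ be the open basin of $s_k$ for $f_0$, bounded by the two neighboring sources. For $x\in I_k$ and any $\omega$, the random orbit $y_n=f_\omega^n(x)$ lies in the closed segment with endpoints $y_n$ and $f_0(y_n)$, both in $I_k$, is monotone of the same type as $f_0^n(x)$, and thus converges to some $y_\infty\in\overline{I_k}$. Since $y_{n+1}-y_n=(1-\omega_{n+1})(f_0(y_n)-y_n)\to 0$, either $y_\infty$ is a fixed point of $f_0$ or $\omega_n\to 1$, and the latter has $\mathbb{P}$-probability zero. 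Monotonicity forbids convergence to $a_k$ or $b_k$, so $y_n\to s_k$ $\mathbb{P}$-a.s., giving $I_k\subset G(\delta_{s_k})$. Hence $\bigcup_k G(\delta_{s_k})\supset\bigcup_k I_k$ exhausts $X$ outside the $m$-null set of sources together with $\{1/2\}$. Finally, if {\tt(FPM)} held with absolutely continuous ergodic stationary measures $\mu_1,\dots,\mu_r$, then for $m$-a.e.\ $x\in G(\delta_{s_k})$ some $\mu_i$ would equal $\delta_{s_k}$, contradicting $\mu_i\ll m$; hence {\tt(FPM)} fails. The most delicate step is the monotone-convergence analysis, where one must rule out $\mathbb{P}$-a.s.\ that the random orbit gets trapped near a boundary source, which reduces to the same LLN computation.
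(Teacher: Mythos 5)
Your proof is correct, but it takes a genuinely different --- and in fact sounder --- route than the paper at the key step. The parts on (a), the failure of (b), and almost-(b) coincide with the paper's Lemma~\ref{lem:multiple2}: every $f_t$ fixes each fixed point of $f_0$, so $P^n(s_k,\cdot)=\delta_{s_k}$, while off the (countable, hence $m$-null) fixed-point set the map $t\mapsto f_t(x)$ is affine and nonconstant, so $P(x,\cdot)\ll m$. For the convergence to the sinks, the paper argues deterministically: it asserts that $f^n_\omega(x)$ lies in the closed interval with endpoints $s_k$ and $f_0^{n-1}(x)$ for \emph{every} $\omega$, and deduces $f^n_\omega(x)\to s_k$ for all $\omega$. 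That containment is the wrong way around: since $f_t(y)$ is a convex combination of $y$ and $f_0(y)$ and each $f_t$ is increasing, induction only places $f^n_\omega(x)$ between $x$ and $f_0^n(x)$ (the random orbit lags behind the deterministic one), and indeed $f_1=\mathrm{id}$, so for $\omega=(1,1,\dots)$ the orbit never moves. So your opening observation is right: the literal ``for all $\omega$'' version of $G(\delta_{s_k})$ cannot hold, and the almost-every-$\omega$ reading (which matches $G(\mu)$ in Remark~\ref{rem:physical} and is all that the failure of \texttt{(fpm)}/\texttt{(FPM)} requires) is what one can actually prove. Your proof of that version --- forward invariance of the interval between $x$ and $s_k$, monotonicity of the orbit, and the dichotomy ``$y_\infty\in\mathrm{Fix}(f_0)$ or $\omega_n\to 1$'' with $\{\omega:\omega_n\to1\}$ a $\mathbb{P}$-null set independent of $x$ --- is complete and replaces the paper's flawed sandwich argument. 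Two minor remarks: the LLN/contraction paragraph near $s_k$ is redundant, since the basin argument already yields the open set $I_k\subset G(\delta_{s_k})$; if you keep it, you should also check that the local maxima of $\phi$ are nondegenerate (at any critical point with $X\neq 0$ the second derivative is a nonzero multiple of $\sin(1/X)$, hence nonzero), so that a rate $\mu_k<1$ exists. And to get that $G(\delta_{s_k})$ is itself open, rather than merely containing an open set, note that every basin $I_j$ is forward invariant under each $f_t$ and the sources are fixed by each $f_t$, so in fact $G(\delta_{s_k})=I_k$ under your reading. Your derivation of the failure of \texttt{(FPM)} is fine and is essentially all the paper offers as well.
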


\subsubsection{Iterated function systems}\label{sss:ifs}
Finally, we consider some random maps generated by iterated function systems~(IFS).
These examples will disprove the reverse implications in Figure~\ref{fig:hierarchy}.

As explained in Remark~\ref{rmk:IFS}, an IFS with probabilities is a random map {$f: T\times X\to X$} on a finite set $T=\{1, 2, \ldots ,k\}$ with a probability measure $p$ where $p(\{i\})=p_i>0$ for $i=1,\dots,k$.
Notice that this setting allows the deterministic case, that is, the case $k=1$.
As before, set $\Omega =T^{\mathbb N}$ and $\mathbb P=p^{\mathbb N}$.
Then, $\mathbb P$ is the Bernoulli probability on $\Omega$,
and the corresponding annealed Perron--Frobenius operator is of the form
\[
 {\mathcal{L}}_f \varphi = \int \mathcal{L}_t \varphi \, dp(t) = \sum_{i=1}^k p_i \mathcal{L}_i\varphi \quad \text{for $\varphi\in L^1(m)$}
\]
where $\mathcal{L}_i$ is the Perron--Frobenius operator of $f_i=f(i ,\cdot )$ for $i=1,\dots, k$.
Throughout this subsection, we keep the setting and notations.

\vspace{0.1cm}

\noindent (a) {\it Random expanding maps.}
Let $X$ be the unit interval $[0,1]$ equipped with the Borel $\sigma$-field $\mathscr B$ and the normalized Lebesgue measure $m$.
Let $f_i$ be a piecewise $C^2$ nonsingular transformation with a finite partition for each $i=1,\dots, k$.
Assume the following expanding (on average) condition:
\begin{eqnarray}\label{expanding_condition}
\sum_{i=1}^k \frac{p_i}{\left\lvert f'_i(x)\right\rvert}
 <1
\quad\text{for every $x\in X$.}
\end{eqnarray}
When $x$ is a discontinuity point of some $f_i$, the left and right limits of $f'_i(x)$ are considered and~\eqref{expanding_condition} with these limits instead of $f_i'(x)$ are required.
Then, the following proposition holds for this $f$.
\begin{prop}\label{prop:figa}
${\mathcal{L}}_f$ satisfies {\tt(C)}, but does not satisfy {\tt(UC)}.
\end{prop}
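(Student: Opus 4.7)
My plan is to prove the two statements separately: constrictivity via a random Lasota--Yorke inequality on bounded variation functions, and failure of uniform constrictivity via the transition-probability characterization from Proposition~\ref{prop:UC}.

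For constrictivity, I would work on the Banach space $\operatorname{BV}(X)$ of bounded variation functions on $X = [0,1]$ with its standard norm $\|\cdot\|_{\operatorname{BV}}$. Under the expanding-on-average hypothesis~\eqref{expanding_condition} combined with the piecewise $C^2$ regularity, one establishes (following Pelikan's framework for random piecewise expanding maps) an inequality
$$\|\mathcal{L}_f^n\varphi\|_{\operatorname{BV}} \leq C\theta^n \|\varphi\|_{\operatorname{BV}} + D\|\varphi\|_1, \quad n \geq 1,\ \varphi \in \operatorname{BV}(X),$$
for some $\theta \in (0,1)$ and $C, D > 0$. This is obtained by estimating the total variation of $\mathcal{L}_f^n\varphi = \sum_{\omega \in T^n} p_\omega \mathcal{L}_{f_\omega^n}\varphi$ through the inverse branches of the compositions $f_\omega^n$, using \eqref{expanding_condition} iteratively to control expansion and a bounded distortion estimate for $C^2$ maps. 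Consequently, for any $\varphi \in D(m) \cap \operatorname{BV}(X)$, the iterates $\mathcal{L}_f^n\varphi$ eventually enter the $\operatorname{BV}$-ball $F_0 = \{\psi : \|\psi\|_{\operatorname{BV}} \leq D + 1\}$, which is relatively compact in $L^1(m)$ by Helly's selection theorem. A density argument using the $L^1$-contractivity of $\mathcal{L}_f$ and the $L^1$-density of $D(m) \cap \operatorname{BV}(X)$ in $D(m)$ extends this to every $\varphi \in D(m)$, so the $L^1$-closure of $F_0$ is a constrictor, giving {\tt(C)}.

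For the failure of {\tt(UC)}, I negate the condition in Proposition~\ref{prop:UC}: given any $n_0 \geq 1$, $\varepsilon \in (0,1)$, $\delta > 0$ and probability $\mu \ll m$, I must produce $A \in \mathscr{B}$ with $\mu(A) < \delta$ such that $\{x : P^{n_0}(x, A) \geq \varepsilon\}$ has positive $m$-measure. The essential observation is that for an IFS,
$$P^{n_0}(x, A) = \sum_{\omega \in T^{n_0}} p_\omega \, 1_A(f_\omega^{n_0}(x)), \quad p_\omega = p_{\omega_1} \cdots p_{\omega_{n_0}},$$
is purely atomic in the second argument. Setting $K = \max_i \sup_X |f_i'|$ (finite under piecewise $C^2$), each $f_\omega^{n_0}$ is piecewise $K^{n_0}$-Lipschitz. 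For any subinterval $U \subset X$, I take $A = \bigcup_{\omega \in T^{n_0}} f_\omega^{n_0}(U)$, so $m(A) \leq k^{n_0} K^{n_0} m(U)$; by absolute continuity of $\mu$, shrinking $m(U)$ makes $\mu(A) < \delta$, while by construction $P^{n_0}(x, A) = 1 > \varepsilon$ for every $x \in U$, a set of positive $m$-measure. This contradicts the characterization of {\tt(UC)}.

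The main obstacle is the careful derivation of the random Lasota--Yorke inequality under the stated pointwise form of \eqref{expanding_condition} and the clean reduction from $\operatorname{BV}$-densities to arbitrary $L^1$-densities; the disproof of {\tt(UC)} is, in contrast, a short computation leveraging the atomic nature of $P^{n_0}(x, \cdot)$ for finite IFSs, which fundamentally prevents a uniform absolute-continuity estimate.
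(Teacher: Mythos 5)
Your proposal is correct, and it is worth comparing the two halves with what the paper does. For constrictivity, the paper simply invokes Boyarsky--Levesque's Theorem~2 for random maps satisfying \eqref{expanding_condition}, whereas you reconstruct the result via a Pelikan-type random Lasota--Yorke inequality on $\operatorname{BV}$, Helly compactness of $\operatorname{BV}$-balls in $L^1(m)$, and a density/contraction argument to pass from $\operatorname{BV}$ densities to all of $D(m)$; this is the same underlying mechanism, just unpacked, and like the paper you ultimately lean on the known (and genuinely delicate, because of the factor-$2$ boundary terms in the one-step variation estimate) random Lasota--Yorke input rather than proving it. For the failure of {\tt(UC)}, the paper fixes a single point $x$ and takes $A$ to be the finite set $\{f^{n_0}_\omega(x):\omega\}$, noting $\mu(A)=0$ while $P^{n_0}(x,A)=1$; strictly speaking this only exhibits one bad point, while the characterization in Proposition~\ref{prop:UC} tolerates an $m$-null exceptional set of $x$ depending on $A$ (and the functional form $\sup_{\varphi\in D(m)}\int_A P^{n_0}\varphi\,dm$ is even equal to $0$ when $m(A)=0$). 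Your thickened version --- taking $A=\bigcup_\omega f^{n_0}_\omega(U)$ for a short interval $U$, bounding $m(A)\leq k^{n_0}K^{n_0}m(U)$ by the piecewise Lipschitz constant, shrinking $m(U)$ so that $\mu(A)<\delta$ by uniform absolute continuity of $\mu\ll m$, and getting $P^{n_0}(x,A)=1$ on the positive-measure set $U$ --- negates the quantifiers exactly and thus repairs this small looseness in the paper's own argument. In short: same strategy in spirit (atomicity of $P^{n_0}(x,\cdot)$ against absolute continuity), but your non-{\tt(UC)} construction is the more careful one, and your {\tt(C)} argument trades the paper's direct citation for a self-contained-in-outline spectral/compactness route whose only unproved ingredient is the standard random Lasota--Yorke estimate.
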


\enlargethispage{1cm}

\vspace{0.1cm}
\noindent (b) {\it Random contracting maps.}
Let $(X,\mathscr{B},m)$ be as in the previous example.
Consider the case $k=2$ and
\[
f_1(x)=\frac{x}{2}, \quad f_2(x)=\frac{x}{2}+\frac{1}{2} \quad \text{($x\in X$)} \quad \text{with} \quad p_1=p_2=\frac{1}{2}
\]
known as a special case of Bernoulli convolution~\cite{peres1996absolute}.
Then the following proposition holds for this $f$.
\begin{prop}\label{prop:contracting}
 ${\mathcal{L}}_f$ satisfies {\tt(AC)}, but does not satisfy {\tt(C)}.
\end{prop}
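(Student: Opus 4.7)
The plan is to identify $\mathcal{L}_f$ with the Koopman operator of the doubling map and then read off both conclusions from its ergodic properties. Explicitly, let $g\colon[0,1]\to[0,1]$ denote $g(y)=2y\bmod 1$. A direct computation of the Perron--Frobenius operators of the two branches gives $\mathcal{L}_1\varphi(y)=2\varphi(2y)\cdot 1_{[0,1/2]}(y)$ and $\mathcal{L}_2\varphi(y)=2\varphi(2y-1)\cdot 1_{[1/2,1]}(y)$, whose average $\mathcal{L}_f=\tfrac{1}{2}(\mathcal{L}_1+\mathcal{L}_2)$ simplifies to $\mathcal{L}_f\varphi(y)=\varphi(g(y))$. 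Iterating, $\mathcal{L}_f^n\varphi=\varphi\circ g^n$, and in particular the Lebesgue measure $m$ is $\mathcal{L}_f$-invariant.

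To verify {\tt(AC)}, I would fix $\varepsilon>0$, set $\delta=\varepsilon$ and, for $\varphi\in D(m)$ and $A\in\mathscr B$ with $m(A)<\delta$, apply the duality between the Perron--Frobenius operator $\mathcal{L}_g$ of $g$ and its Koopman operator:
\[
\int_A\mathcal{L}_f^n\varphi\,dm=\int 1_A\cdot(\varphi\circ g^n)\,dm=\int \mathcal{L}_g^n 1_A\cdot\varphi\,dm.
\]
Exactness of the doubling map with respect to $m$ yields $\mathcal{L}_g^n 1_A\to m(A)$ in $L^1(m)$, and since $\mathcal{L}_g$ is a Markov operator one also has the a priori bound $0\le\mathcal{L}_g^n 1_A\le 1$. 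Truncating $\varphi$ by $\min(\varphi,M)$ then upgrades the $L^1$ convergence to the paired limit
\[
\int \mathcal{L}_g^n 1_A\cdot\varphi\,dm \xrightarrow[n\to\infty]{} m(A)\int\varphi\,dm=m(A)<\varepsilon,
\]
which is exactly the condition defining {\tt(AC)}.

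To show {\tt(C)} fails, the idea is to break the uniformity in $\varphi\in D(m)$ by using concentrated spikes whose support is merely transported by iterates of $g^{-1}$. Given any $\delta>0$, pick $k\in\mathbb N$ with $1/k<\delta$ and set $\varphi_k=k\cdot 1_{[0,1/k]}\in D(m)$. Writing $A_n=g^{-n}([0,1/k])$, measure-preservation of $g$ gives $m(A_n)=1/k<\delta$, while
\[
\mathcal{L}_f^n\varphi_k=\varphi_k\circ g^n=k\cdot 1_{A_n},
\]
so $\int_{A_n}\mathcal{L}_f^n\varphi_k\,dm=1$ for every $n$. Taking $\varepsilon=1/2$ in {\tt(C)} yields a contradiction.

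The main subtlety is passing from the $L^1$ convergence of $\mathcal{L}_g^n 1_A$ to convergence of its pairing with a general density $\varphi\in D(m)\subset L^1(m)$; the uniform $L^\infty$ bound on $\mathcal{L}_g^n 1_A$ together with a standard truncation of $\varphi$ suffices. Everything else reduces to bookkeeping with the Koopman--Perron--Frobenius duality and the well-known exactness of the doubling map.
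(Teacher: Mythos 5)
Your proof is correct and follows essentially the same route as the paper: both identify $\mathcal{L}_f$ with the Koopman operator of the doubling map $g$ (the paper phrases this via $\mathcal{L}_f^*=\mathcal{L}_g$), deduce {\tt(AC)} from the asymptotic independence of $g$ with $\delta=\varepsilon$ (you via exactness of $m$ for $g$ plus a truncation of $\varphi$, the paper via mixing of the trivial density $1_X$), and refute {\tt(C)} with the same spike densities $k\,1_{[0,1/k]}$ whose mass is transported onto the moving sets $g^{-n}([0,1/k])$ of fixed small measure. The only cosmetic difference is that the paper isolates the ``mixing but not exact'' property of $1_X$ as a separate proposition (used again elsewhere) and offers it as an alternative reason why {\tt(C)} fails.
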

\begin{rem}
Note that each $f_i$ does not satisfy even {\tt (S)}: the Dirac measure at $0$ (resp.~$1$) is the only invariant probability measure of $f_1$ (resp.~$f_2$), but it is not absolutely continuous with respect to Lebesgue measure.
Namely, the random map $f$ has a property that each deterministic map $f_\omega$ does not have (such behaviors
 are called \emph{noise-induced phenomena}, and have been attracting attention among physicists, cf.~\cite{galatolo2020existence}).
This is contrastive to other examples in Section~\ref{sss:ifs},
which causes us to work a bit harder in Section~\ref{sss:dce}.
\end{rem}
\begin{rem}
A slightly weaker condition than {\tt (AC)} (i.e.~``some $ \varepsilon>0$'' instead of ``any $ \varepsilon>0$'') appeared in {\cite{Komornik1989,komornik1991asymptotic}} with the name of {\it smoothing} property, and was later called {\it almost constrictivity} in~\cite{komornik1993asymptotic}.
{Note that the definition of smoothing in~\cite{KL1987} is stronger than ones in~\cite{Komornik1989,komornik1991asymptotic}, whence the one in~\cite{KL1987} is indeed equivalent to constrictivity.}
The author of {\cite{Komornik1989}} asserted that a Markov operator $P$ is
{asymptotically periodic (or equivalently constrictive) when $P$ is}
smoothing.
However, this claim cannot be true because the example in Proposition~\ref{prop:contracting} satisfies the smoothing property, but does not satisfy {\tt(C)}.
We also remark that it was proven in~\cite{komornik1993asymptotic} that the almost constrictivity implies {\tt (WAP)}.
Figure~\ref{fig:hierarchy} shows that the results in this paper largely improved it.
\end{rem}

\vspace{0.1cm}
\noindent (c) {\it Random rotations.}
Now let
$X=\mathbb{S}^1=\mathbb{R}/\mathbb{Z}$ equipped with the Borel $\sigma$-field $\mathscr B$ and the normalized Lebesgue measure $m$.
Let $k=2$ and
consider two irrational rotations $f_1$ and $f_2$ with angles $\alpha$ and $\beta$, that is,
\[
f_1(x)=x+\alpha \pmod{1}
\quad {\rm and}\quad
f_2(x)=x+\beta \pmod{1}
\]
where $\alpha, \beta \in [0,1]$.
Let $p_1=p_2=\frac{1}{2}$.
\begin{prop}\label{prop:rotations}
The following hold.
\begin{enumerate}
\item {If $\alpha-\beta$ is irrational}, then ${\mathcal{L}}_{f}$ satisfies {\tt(C)}.
\item If $\alpha$ and $\beta$ are irrational numbers such that $\alpha-\beta$ is rational, then ${\mathcal{L}}_{f}$ satisfies {\tt(MC)} but does not satisfy {\tt(AC)}.
\item If $\alpha$ and $\beta$ are rational numbers, then ${\mathcal{L}}_{f}$ satisfies {\tt(WAP)} but does not satisfy {\tt(MC)}.
\end{enumerate}
\end{prop}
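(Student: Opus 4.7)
The plan is to diagonalise the annealed Perron--Frobenius operator $\mathcal{L}_f\varphi(x)=\frac{1}{2}(\varphi(x-\alpha)+\varphi(x-\beta))$ via the Fourier basis on $\mathbb{T}^1$: one has $\widehat{\mathcal{L}_f\varphi}(k)=m_k\hat\varphi(k)$ with $m_k=\frac{1}{2}(e^{-2\pi ik\alpha}+e^{-2\pi ik\beta})$, so that $|m_k|=|\cos(\pi k(\alpha-\beta))|$ and $m_k=1$ iff both $k\alpha,k\beta\in\mathbb{Z}$. Throughout, I use freely that $\|\cdot\|_{L^1}\le\|\cdot\|_{L^2}$ on $\mathbb{T}^1$, that $L^2\cap D(m)$ is $L^1$-dense in $D(m)$, and that $\mathcal{L}_f$ is an $L^1$-contraction, so Fourier/Hilbert-space statements propagate to the required $L^1$-conclusions.

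For (1), irrationality of $\alpha-\beta$ yields $|m_k|<1$ for every $k\ne 0$, and dominated convergence in Parseval's identity
\[
\|\mathcal{L}_f^n\varphi-1\|_{L^2}^2=\sum_{k\ne 0}|m_k|^{2n}|\hat\varphi(k)|^2
\]
gives $\mathcal{L}_f^n\varphi\to 1$ in $L^2$, hence in $L^1$, for every $\varphi\in D(m)\cap L^2$. An $L^1$-density argument extends this to every $\varphi\in D(m)$, so $F=\{1\}$ is a constrictor and {\tt (C)} holds.

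For (2), since $\alpha$ is irrational, $m_k=1\iff k=0$, so the fixed-point subspace of $\mathcal{L}_f$ on $L^2$ consists of the constants; the von Neumann mean ergodic theorem combined with the $L^1$-density argument gives $A_n\varphi\to 1$ in $L^1$ for every $\varphi\in D(m)$, proving {\tt (MC)}. To disprove {\tt (AC)}, write $\alpha-\beta=p/q$ in lowest terms and test, for each small $\eta>0$, with $\varphi_\eta=\eta^{-1}\mathbf{1}_{[0,\eta]}$ and $A_\eta=[0,\eta]$. Grouping the sum defining $\mathcal{L}_f^n\varphi_\eta$ by the residue of $kp$ modulo $q$ gives
\[
\mathcal{L}_f^n\varphi_\eta(x)=\sum_{j=0}^{q-1}c_{n,j}\,\varphi_\eta\bigl(x-n\beta-\tfrac{j}{q}\bigr),\qquad c_{n,j}=2^{-n}\!\!\!\sum_{\substack{0\le k\le n\\ kp\equiv j\,(\mathrm{mod}\,q)}}\!\!\binom{n}{k},
\]
and a standard roots-of-unity filter shows $c_{n,j}\to 1/q$ uniformly in $j$. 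Because $\beta$ is irrational, Weyl equidistribution supplies infinitely many $n$ with $n\beta\in(-\eta/2,\eta/2)\pmod 1$; along such $n$ the $j=0$ term alone contributes at least $c_{n,0}/2$ to $\int_{A_\eta}\mathcal{L}_f^n\varphi_\eta\,dm$. Hence $\limsup_n\int_{A_\eta}\mathcal{L}_f^n\varphi_\eta\,dm\ge 1/(4q)$ for all sufficiently small $\eta>0$, so {\tt (AC)} fails with $\varepsilon_0=1/(4q)$.

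For (3), a common denominator of $\alpha,\beta$ makes $H=\langle\alpha,\beta\rangle$ a finite subgroup of $\mathbb{T}^1$, so every $\mathcal{L}_f^n\varphi$ lies in the convex hull of the finite set $\{T_h\varphi:h\in H\}$, which is norm-compact in $L^1$; hence each orbit $(\mathcal{L}_f^n\varphi)_n$ is norm-precompact, a fortiori weakly precompact, giving {\tt (WAP)}. The fixed-point space of $\mathcal{L}_f$ on $L^2$ now consists of the $H$-invariant functions, and the mean ergodic theorem (in its Dunford--Schwartz $L^1$ form) yields $A_n\varphi\to P\varphi:=|H|^{-1}\sum_{h\in H}T_h\varphi$ in $L^1$. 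Testing with $\psi_\eta=\eta^{-1}\mathbf{1}_{[0,\eta]}$ one has $\int_{[0,\eta]}P\psi_\eta\,dm=1/|H|$ while $m([0,\eta])=\eta\to 0$, so the family $\{P\psi_\eta\}_{\eta>0}$ is not uniformly integrable and hence, by Dunford--Pettis, not relatively compact in $L^1$. Since lower semicontinuity of $d(\cdot,F)$ forces any putative constrictor $F$ to contain every $P\psi_\eta$, {\tt (MC)} must fail. The most delicate point will be case (2): the failure of {\tt (AC)} requires controlling simultaneously the smooth convergence $c_{n,j}\to 1/q$ and the erratic rotation $n\beta\pmod 1$ so that the $\limsup$ stays bounded away from zero uniformly as $\eta\downarrow 0$.
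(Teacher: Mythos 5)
Your proposal is correct, but it reaches the three statements by a genuinely different route from the paper's. For (1) the paper expands $\mathcal{L}_f^n\varphi$ as a binomially weighted average of iterates of the rotation by $\beta-\alpha$ applied to $\varphi(\cdot-n\alpha)$ and invokes a weighted mean ergodic theorem together with a commutation/isometry trick, whereas you diagonalise $\mathcal{L}_f$ as a Fourier multiplier and get asymptotic stability from Parseval plus dominated convergence; this is more self-contained. For the positive half of (2), the paper checks by a Fourier computation that $1_X$ is an \emph{ergodic} invariant density and then appeals to Remark~\ref{rem:ergodic-implies-MC} (i.e.\ the abstract implication {\tt(FED)}$\Rightarrow${\tt(MC)} of Theorem~\ref{thm:D}), while you prove directly, via von Neumann's mean ergodic theorem on $L^2$ and the $L^1$-contraction/density argument you flagged, that $A_n\varphi\to 1_X$ in $L^1$-norm for every density — a stronger conclusion obtained without the machinery of Theorem~\ref{thm:D}. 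For the negative half of (2), the paper tests with the union of $N$ short intervals on the grid $\{i/N\}$, for which $\mathcal{L}_f^n 1_B=1_{B+n\alpha}$ exactly, and lets $n_j\alpha\to 0$; you use a single short interval and instead control the binomial weights $c_{n,j}\to 1/q$ by a roots-of-unity filter combined with density of $\{n\beta\}$ — slightly more computational, but the crucial uniformity is present, since your lower bound $1/(2q)$ (or the safer $1/(4q)$) is independent of $\eta$, so {\tt(AC)} indeed fails. For (3), the paper gets {\tt(WAP)} by citing the equivalence with existence of an invariant density of maximal support and refutes {\tt(MC)} by exhibiting an $\mathcal{L}_f$-invariant indicator of arbitrarily small support, contradicting the Dunford--Pettis-type characterization in Theorem~\ref{thmeq}; you instead note that each orbit lies in the (norm-compact) convex hull of the finitely many translates $\{T_h\varphi: h\in H\}$, which gives {\tt(WAP)} elementarily, and you identify the Cesàro limit as the group average $Q\varphi=|H|^{-1}\sum_{h\in H}T_h\varphi$, whose images of concentrated densities are not uniformly integrable, so no compact constrictor can exist. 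The only ingredients you take for granted — transfer of mean ergodic limits from $L^2$ to $L^1$ by density and contraction, and the identification of the orthogonal projection onto $H$-invariant functions with the group average — are routine and were declared up front; the net effect is a proof that is largely independent of the paper's abstract theorems, at the price of somewhat more explicit Fourier/binomial computations.
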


\vspace{0.1cm}
\noindent (d) {\it Direct sums of random contraction and expanding map.}
Let $k=2$ and consider
the direct sum of random transformations, where one satisfies {\tt (S)}
 and the other does not.
That is, let $X= X_-\sqcup X_+$ and equip $X$ with a probability measure $m$ for which both $X_-$ and $X_+$ have a positive measure.
With $p_1=p_2=\frac{1}{2}$, define
\[
f_1(x)=\begin{cases}
\tau_1^-(x) & \text{for $x\in X_-$}\\
\tau_1^+(x) & \text{for $x\in X_+$}\\
\end{cases}
\quad {\rm and}\quad
f_2(x)=\begin{cases}
\tau_2^-(x) & \text{for $x\in X_-$}\\
\tau_2^+(x) & \text{for $x\in X_+$}\\
\end{cases}
\]
where the random dynamics generated by $\tau_1^+,\tau_2^+:X_+\to X_+$ (with equal probabilities) satisfies {\tt (S)},
and the random dynamics generated by $\tau_1^-,\tau_2^-:X_-\to X_-$ does not satisfy {\tt (S)}.
A typical example is the case when $\tau_1^+,\tau_2^+$ are one-dimensional piecewise $C^2$ nonsingular transformations with finite partitions
 satisfying the expanding condition~\eqref{expanding_condition} (with $\tau _i^+$ instead of $f_i$), and
$\tau _1^-(x) =\tau _2^-(x) = \frac{x}{2}$ on $X_-=[0,1]$.
Then, the following proposition holds for this $f$.
\begin{prop}\label{prop:figb}
${\mathcal{L}}_f$ satisfies {\tt(S)}, but does not satisfy {\tt(WAP)}.
\end{prop}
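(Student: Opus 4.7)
The plan is to exploit the fact that the decomposition $X=X_-\sqcup X_+$ is forward invariant under both $f_1$ and $f_2$ (since $\tau_i^\pm(X_\pm)\subset X_\pm$), so a short computation with the definition of the Perron--Frobenius operator shows that $\mathcal{L}_f$ respects the direct-sum decomposition $L^1(m)=L^1_-\oplus L^1_+$, where $L^1_\pm\coloneqq\{\varphi\in L^1(m):\varphi=0\text{ $m$-a.e.\ on }X_\mp\}$. Dually, since $f_\omega^{-1}(X_\pm)=X_\pm$, one has $\mathcal{L}_f^{*}1_{X_\pm}=\mathbb{P}(f_\omega(\cdot)\in X_\pm)=1_{X_\pm}$ and hence $\mathcal{L}_f^{n*}1_{X_+}=1_{X_+}$ for every $n\geq 0$.

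For \texttt{(S)}, set $c=m(X_+)$ and $m_+=m|_{X_+}/c$. After the natural identification $L^1_+\simeq L^1(X_+,m_+)$, the restriction of $\mathcal{L}_f$ to $L^1_+$ is precisely the annealed Perron--Frobenius operator of the random system $(\tau_1^+,\tau_2^+)$ on $(X_+,m_+)$. The assumption that the latter satisfies \texttt{(S)} yields some $h_+\in D(m_+)$ fixed by that operator; extending $c^{-1}h_+$ by zero to $X_-$ produces an element of $D(m)$ that is $\mathcal{L}_f$-invariant, so $\mathcal{L}_f$ satisfies \texttt{(S)}.

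For the failure of \texttt{(WAP)}, I would invoke the characterization recalled just after Definition~\ref{wap} (namely \cite[Theorem 3.1]{Toyokawa2020}): a Markov operator satisfies \texttt{(WAP)} iff it admits an invariant density of maximal support in the sense of~\eqref{max.supp}. The heart of the argument is the following \emph{dichotomy claim}: every $\mathcal{L}_f$-invariant density $h\in D(m)$ has $\supp h\subset X_+$ up to an $m$-null set. Indeed, decompose $h=h_-+h_+$ with $h_\pm=h\cdot 1_{X_\pm}$; invariance of $L^1_\pm$ under $\mathcal{L}_f$ forces $\mathcal{L}_f h_\pm=h_\pm$, and if $\|h_-\|>0$ then $h_-/\|h_-\|$ would be, under the analogous identification $L^1_-\simeq L^1(X_-,m_-)$, an invariant density for the random system $(\tau_1^-,\tau_2^-)$ on $(X_-,m_-)$, contradicting the hypothesis that this system fails \texttt{(S)}. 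Granted the claim, for any invariant density $h$ one has $\mathcal{L}_f^{n*}1_{\supp h}\leq \mathcal{L}_f^{n*}1_{X_+}=1_{X_+}$, which vanishes on the positive-measure set $X_-$; hence \eqref{max.supp} is violated for every invariant density and $\mathcal{L}_f\notin\texttt{(WAP)}$.

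The only mildly delicate step is the identification of $\mathcal{L}_f|_{L^1_\pm}$ with the annealed Perron--Frobenius operator of the induced random system on $X_\pm$; this reduces to a routine change-of-variables computation exploiting forward invariance of $X_\pm$. In the typical example with $\tau_i^-(x)=x/2$ one may alternatively give an entirely direct obstruction to \texttt{(WAP)}: starting from $\varphi\coloneqq 1_{X_-}/m(X_-)\in D(m)$, the iterates $\mathcal{L}_f^n\varphi$ concentrate near $0\in X_-$ with unit $L^1$-mass, so $\{\mathcal{L}_f^n\varphi\}_n$ is not uniformly integrable and the equivalent \texttt{(WAP)} condition fails.
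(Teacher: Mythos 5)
Your proof is correct and follows essentially the same route as the paper: extend an invariant density of the $X_+$ system by zero to get \texttt{(S)}, and refute \texttt{(WAP)} via the maximal-support characterization of \cite[Theorem~3.1]{Toyokawa2020}, using that restricting an invariant density to the invariant piece $X_-$ would produce an invariant density for the minus system, contradicting its failure of \texttt{(S)}. The only difference is presentational: you first prove that every $\mathcal{L}_f$-invariant density is supported in $X_+$ and then check explicitly (via $\mathcal{L}_f^{n*}1_{X_+}=1_{X_+}$) that such a density cannot have maximal support, a verification the paper's terser argument leaves implicit.
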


\vspace{0.1cm}
\noindent (e) {\it Random circle homeomorphisms.} A \emph{non-degenerate random walk} on a topological semigroup $S$ of homeomorphisms of the circle $\mathbb{S}^1$ is a random map $f:T\times \mathbb{S}^1\to \mathbb{S}^1$ where $(T,\mathscr{A},p)$ satisfies $\supp p =T$ and $\langle T\rangle^+=S$.
In the case where the random walk is non-degenerate on a \emph{group} of homeomorphisms of $\mathbb{S}^1$, it has been established that there is a unique stationary probability measure (refer to~\cite{deroin2007dynamique}). However, for a random walk that is non-degenerate only on a \emph{semigroup}, this does not generally hold. Nevertheless, Malicet demonstrated in~\cite[Theorem~B and proof of Proposition~4.9]{malicet2017random} that under the assumption that $S$ does not have a finite orbit on $\mathbb{S}^1$, the following conditions hold:
\begin{enumerate}
\item[(1)] the number of ergodic stationary measures is finite, denoted as $\mu_1,\dots,\mu_r$,
\item[(2)] these measures have pairwise disjoint topological supports and correspond exactly to the minimal invariant compact sets of $S$, and
\item[(3)] for any probability measure $\nu$ on $\mathbb{S}^1$,
$$\nu(B_\omega(\mu_1)\cup \dots \cup B_\omega(\mu_r))=1 \quad \text{for $\mathbb{P}$-almost every $\omega \in \Omega$.}$$
\end{enumerate}
Consider a random walk on a finitely generated group $G$ of homeomorphisms of the circle.
That is, an IFS of circle homeomorphisms generated by a symmetric set $T$ of homomorphisms (i.e., if $g\in T$, then $g^{-1}\in T$).
We assume that the group $G=\langle T\rangle=\langle T \rangle^+$ possesses an exceptional minimal set $K$ (i.e., a unique invariant minimal Cantor set for the action of ${G}$ on $\mathbb{S}^1$). In this scenario, we have only one stationary measure $\mu$ supported in $K$. Now, let us consider the reference measure $m=\frac{1}{2}(\mu+\eta)$ where $\eta$ is the normalized Lebesgue measure supported in a gap of $K$.

\begin{prop} \label{prop:IFS-circle}
 $f$ satisfies items 1) and 2) in Definition~\ref{dfn:11} and Equation~\eqref{eq:3} but does not satisfy {\tt(FPM)} with respect to $m$, i.e., 3) does not hold.
\end{prop}

\subsubsection{Deterministic systems}\label{sss:dce}
By slightly modifying examples in Section~\ref{sss:ifs}, one can easily make examples of deterministic dynamical systems that disprove the reverse implications in Figure~\ref{fig:hierarchy}, as follows.
Recall that the baker's transformation is a map $g: X\to X$ on $X=[0,1]^2$ (equipped with the Lebesgue measure $m$) defined by
$
g(x,y)=
(2x,\frac{y}{2})$ when $0\leq x<\frac {1}{2}$ and
$g(x,y)=(2x-2,\frac{y}{2}+\frac{1}{2})$ when $\frac {1}{2}\leq x\leq 1$.
\begin{prop}\label{prop:dce}
The following hold:
\begin{enumerate}
\item
Any one-dimensional piecewise $C^2$ nonsingular transformation with a finite partition satisfying~\eqref{expanding_condition}
is {\tt(C)} but not {\tt(UC)}.
\item The baker's transformation is {\tt (AC)} but not {\tt (C)}.
\item Any irrational rotations are {\tt(MC)} but not {\tt(AC)}.
\item Any rational rotations are {\tt(WAP)} but not {\tt(MC)}.
\item Define $g:X\to X$ as a measurable map preserving a splitting of $X$ by two positive measure sets $X_-$, $X_+$ such that the restriction of $g$ on $X_+$ satisfies {\tt (S)}
and the restriction of $g$ on $X_-$ does not.
 Then, $g$ is {\tt (S)} but not {\tt (WAP)}.
\end{enumerate}
\end{prop}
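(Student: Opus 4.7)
The plan is to reduce assertions (1), (3), (4), and (5) to the corresponding random-map propositions of Section \ref{sss:ifs} by degenerating the randomization, and to handle (2) (the baker's map) by a direct argument exploiting measure-preserving invertibility together with mixing.

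For (1), in Proposition \ref{prop:figa} take $k=1$ and $f_1=g$; the expanding-on-average condition \eqref{expanding_condition} reduces to the standard expanding condition $|g'(x)|>1$, and the proof of {\tt(C)} but not {\tt(UC)} applies without change. For (3) and (4), in Proposition \ref{prop:rotations} set $\alpha=\beta$, so that $\mathcal{L}_f$ equals the Perron--Frobenius operator $\mathcal{L}_g$ of the rotation $g$ by $\alpha$. If $\alpha$ is irrational then $\alpha-\beta=0\in\mathbb{Q}$, placing us in case (2) of that proposition, yielding {\tt(MC)} but not {\tt(AC)}; if $\alpha$ is rational then case (3) yields {\tt(WAP)} but not {\tt(MC)}. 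For (5), take $\tau_1^\pm=\tau_2^\pm$ in Proposition \ref{prop:figb}: the resulting annealed operator is $\mathcal{L}_g$ for the deterministic map $g$, and the same splitting argument shows {\tt(S)} (via the invariant density supported on $X_+$) but not {\tt(WAP)} (because the $g$-invariance of $X_-$ keeps any $P$-invariant density supported on $X_+$, violating the maximal-support characterization of {\tt(WAP)} from \cite{Toyokawa2020}: for $x\in X_-$ one has $P^{n*}1_{X_+}(x)=1_{X_+}(g^n(x))=0$ for every $n$). In each case one verifies by inspection that the proof in Section \ref{sss:ifs} does not rely on any feature specific to genuine randomization.

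For (2), let $g$ be the baker's transformation, which is measure-preserving, invertible, and strongly mixing for Lebesgue measure $m$. For {\tt(AC)}, mixing implies that for any $\varphi\in D(m)$ and $\psi\in L^\infty(m)$,
\[
\int \psi\cdot\mathcal{L}_g^n\varphi\,dm=\int (\psi\circ g^n)\varphi\,dm\longrightarrow\int\psi\,dm\cdot\int\varphi\,dm.
\]
Specializing $\psi=1_A$ gives $\lim_n\int_A\mathcal{L}_g^n\varphi\,dm=m(A)$ for every $A\in\mathscr{B}$; choosing $\delta=\varepsilon$ then yields $\limsup_n\int_A\mathcal{L}_g^n\varphi\,dm<\varepsilon$ uniformly over $\varphi\in D(m)$ and $A$ with $m(A)<\delta$, which is {\tt(AC)}. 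To exclude {\tt(C)}, note that invertibility and $m$-preservation give $\mathcal{L}_g\varphi=\varphi\circ g^{-1}$, whence $\|\mathcal{L}_g^n\varphi-1\|_1=\|\varphi-1\|_1$ for every $n\geq 0$ and every $\varphi\in L^1(m)$. On the other hand, if $\mathcal{L}_g$ were {\tt(C)} then the Lasota--Li--Yorke structure theorem, combined with ergodicity of $g$ (so $1$ is the only $\mathcal{L}_g$-invariant density) and the $K$-property of $g$ (which excludes nonconstant $\mathcal{L}_g^q$-invariant densities for every $q\geq 1$), would force $\mathcal{L}_g^n\varphi\to 1$ in $L^1$ for every $\varphi\in D(m)$; taking any $\varphi\neq 1$ in $D(m)$ contradicts the isometry identity.

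The main obstacle is the non-{\tt(C)} half of (2): it requires invoking both the Lasota--Li--Yorke asymptotic periodicity theorem and the $K$-property of the baker's map to rule out an asymptotic period greater than one. All other items are routine once one confirms that the proofs of the propositions cited in Section \ref{sss:ifs} do not use randomness in any essential way.
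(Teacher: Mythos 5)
Your proposal is correct. For items (1), (3), (4) and (5) it coincides with the paper's proof, which simply observes that the arguments of Propositions~\ref{prop:figa}, \ref{prop:rotations} and \ref{prop:figb} can be repeated verbatim with the randomization degenerated ($k=1$ with $f_1=g$, resp.\ $\alpha=\beta$, resp.\ $\tau_1^\pm=\tau_2^\pm$); your explicit check that those proofs use no genuine randomness, and your direct maximal-support argument for (5), are exactly what ``literally repeating the argument'' amounts to. The only genuine divergence is item (2). The paper disposes of the baker's map in one line: it is mixing but not exact with respect to $m$ (citing \cite{alexander1984fat}), after which the mechanism of Proposition~\ref{prop:contracting} applies --- mixing yields \texttt{(AC)} by the same $\delta=\varepsilon$ computation you perform, and non-exactness excludes \texttt{(C)} because a constrictive Markov operator whose trivial invariant density is mixing must be exact (\cite[Remark~5.5.1]{LM}). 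You instead make both halves self-contained: non-exactness is immediate from invertibility and measure preservation via the isometry $\|\mathcal{L}_g^n\varphi-1_X\|=\|\varphi-1_X\|$, and \texttt{(C)} is excluded by combining the Lasota--Li--Yorke asymptotic periodicity theorem with uniqueness of the invariant density and ergodicity of every power of $g$, forcing asymptotic period $r=1$, hence exactness, contradicting the isometry. This buys independence from the cited facts (you need neither \cite{alexander1984fat} nor the \cite{LM} remark, whose proof is essentially your $r=1$ argument), at the cost of heavier machinery; note that the K-property is more than you need, since mixing of $g$ already gives mixing, hence ergodicity, of each $g^q$, which is all your cycle argument uses. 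Two harmless imprecisions: the word ``uniformly'' in your \texttt{(AC)} verification is neither needed nor actually established --- the definition of \texttt{(AC)} only asks for the limsup bound for each fixed pair $(\varphi,A)$, which you have since the limit equals $m(A)<\varepsilon$ --- and the mixing identity for $\varphi\in L^1(m)$, $\psi\in L^\infty(m)$ requires the routine approximation of $\varphi$ by bounded functions.
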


\subsection{Questions} \label{sec:1.4}
We would like to leave questions on the notion of physical noise, a possible generalization of Theorem~\ref{thm:C} for more general probabilities, and statistical properties of Markov operators satisfying {\tt (AC)}.

\subsubsection{On the physical noise}
We call noise satisfying condition (b) in Remark~\ref{rem:gAraujo} as \emph{physical noise}.
This kind of noise is frequently used in the literature as indicated in Remarks~\ref{rm:0302} and~\ref{rem:multiplicative-noise}. As an important consequence of Remark~\ref{rem:gAraujo}, any continuous random perturbations of a dynamics on a compact space $X$ by a physical noise satisfies {\tt(FPM)}.
Proposition~\ref{prop:multi:ex1} provides a class of examples of continuous random perturbation of a dynamics by an \emph{almost physical noise} (that is, condition almost-(b)) which does not satisfy {\tt(FPM)}.
However,
 this example still has a unique non-absolutely continuous physical measure where any fiberwise statistical basin of attraction covers the full space up to a null set.
Then, a natural question is whether {\tt(FPM)} could be weakened by asking finitely many physical ergodic stationary measures instead of finitely many absolutely continuous ergodic stationary measures. 
Namely, we say that $f$ satisfies {\tt (fpm)} if Definition~\ref{dfn:11} holds but modifying 1) and 3) for the following conditions:
\begin{enumerate}
\item[1')] $\mu_i$ is a physical measure with respect to $m$ for $i=1,\dots,r$;
\item[3')] Equation~\eqref{eq:3} holds.
\end{enumerate}
\begin{rem} \label{rem:fpm} We observe that if $f$ satisfies {\tt (fpm)}, then {\tt (FPM)} also holds with respect to the reference measure $m'=\frac{1}{r}(\mu_1+\dots+\mu_r)$. Indeed, clearly~1) and~2) hold and~3) follows since, by the Birkhoff ergodic theorem, the support of $\mu_i$ is included in $B_\omega(\mu_i,\psi)$ for $\mathbb{P}$-almost every $\omega\in\Omega$, $i=1,\dots,r$ and any bounded function $\psi$.
\end{rem}
On the other hand, Proposition~\ref{prop:0707} shows that considering random perturbations by almost physical noise is still insufficient to get {\tt(fpm)}.

\begin{question}
Find a reasonable sufficient condition to get {\tt(fpm)} generalizing the physical noise.
\end{question}

\subsubsection{Generalization of Theorem~\ref{thm:C}}
Let $f:T\times X\to X$ be a $\mathbb{P}$-random map as defined in Appendix~\ref{sec:apendix} where $\mathbb{P}$ is a shift-invariant probability measure on $(\Omega,\mathscr{F})=(T^\mathbb{N},\mathscr{A}^{\mathbb{N}})$ but not necessarily a Bernoulli probability as before.
Consider the annealed Perron--Frobenius operator ${\mathcal{L}}_f$~given~by
\[
 {\mathcal{L}}_f \varphi = \int \mathcal{L}_\omega \varphi \, d\mathbb{P} \quad \text{for $\varphi \in L^1(m)$}
\]
where $\mathcal{L}_\omega$ is the Perron--Frobenius of $f_\omega=f_t$ for $(\omega_i)_{i\geq 0} \in \Omega$ with $\omega_0=t$. Then Theorem~\ref{thm:D} applies to ${\mathcal{L}}_f$. However, we cannot conclude Theorem~\ref{thm:C} from this because an invariant density of ${\mathcal{L}}_f$ does not correspond in general with a stationary measure with respect to $\mathbb{P}$. To be more clear, an invariant density $h$ for $ {\mathcal{L}}_f$ defines a probability measure $\mu$ by $d\mu=h\, dm$ which satisfies that
\begin{equation} \label{eq:stationary}
\mu(A) =\int (f_\omega)_*\mu(A) \, d\mathbb{P} \quad \text{for all $A\in \mathscr{B}$}.
\end{equation}
However, a stationary measure $\mu$ with respect to $\mathbb{P}$ is not necessarily a measure that satisfies~\eqref{eq:stationary}.
See, for instance,~\cite{matias_2021} where stationary measures for general Markov probabilities are characterized.
This stationary is necessary to obtain invariant measures for the skew-product $F$ in~\eqref{eq:skew} associated with $f$. One can see that if $\mu$ satisfies~\eqref{eq:stationary} and $\mathbb{P}$ is not a Bernoulli measure, $\mathbb{P}\times\mu $ is not in general an invariant measure of $F$ (cf.~\cite[Example~1.4.7]{Arnoldbook}).
Since the $F$-invariance of $\mathbb{P}\times\mu$ is essential for us to prove Theorem~\ref{thm:C}, we do not know how to obtain this result in this case.

\begin{question}
Is it possible to obtain a similar result to Theorem~\ref{thm:C} for a random iteration driven by a general probability $\mathbb{P}$ on $\Omega$ (in particular not of Bernoulli type) of a measurable map $f$ as above?
\end{question}

\subsubsection{Equivalent conditions to {\tt (AC)} and {\tt(UC)}} \label{sec:Questions-(AC)-(UC)}
We next consider the counterpart for {\tt (AC)} in the position of {\tt (APM)} for {\tt (MC)}. First, we introduce some definitions.

 An invariant density $h$ of a Markov operator $P: L^1(m) \to L^1(m)$ is called \emph{mixing} (resp.~\emph{exact}) if
\[
\lim_{n\to\infty}P^n\varphi = h\int \varphi \, dm \quad \text{weakly (resp.~strongly) in $L^1(m)$}
\]
 for any $\varphi\in L^1(m)$ whose support is included in $\supp h$.
{If $h$ is exact and the above converge occurs exponentially fast (i.e.~there are constants $C>0$, $0<\kappa <1$ independently of $\varphi$ such that $\Vert P^n\varphi - h\int \varphi \, dm\Vert \leq C\kappa^n \Vert \varphi \Vert$ for each $n$), we say that $P$ is exponentially exact.
}
 In particular, given a non-singular measurable map $g: X\to X$ and an $\mathcal L_g$-invariant density $h$, a probability measure $m_h$ given in~\eqref{eq:mphi} is mixing (resp.~exact) for $g$ if and only if $h$ is mixing (resp.~exact) for $\mathcal L_g$.
 See Appendix~\ref{appendix:D} for more details.

We also recall the well-known fact that
a Markov operator $P$ is {\tt (C)} if and only if
\begin{itemize}
\item[{\tt (AP)}] \label{AP} \it $P$ is \emph{asymptotically periodic}, that is,
there exist finitely many densities $g_1,\dots,g_r$ with mutually disjoint supports (up to an $m$-null set), positive bounded linear functionals $\lambda_1, \dots, \lambda_r$ on $L^1(m)$ and a permutation $\rho $ of $\{1,\ldots ,r\}$ such that $P g_i = g_{\rho (i)}$ for each $i=1,\dots,r$ and for $\varphi \in L^1(m)$,
\begin{equation}\label{eq:0630exa}
 P^{n} \big( \varphi - \sum _{i=1}^r \lambda _i(\varphi ) g_i\big) \to 0 \quad \text{strongly in $L^1(m)$ as $n\to\infty$.}
\end{equation}
\end{itemize}
It is also known that if $P$ satisfies {\tt (AP)} with $r=1$, then $g_1$ is exact.
Refer to e.g.~\cite{LM}.

Mimicking the above asymptotically periodic condition, we introduce the following class of Markov operators:
\begin{itemize}
\item[{\tt (APW)}] \label{APW} \it $P$ is \emph{asymptotically periodic in weak topology}, that is, {\tt (AP)} holds with ``weakly'' instead of ``strongly'' in~\eqref{eq:0630exa}.
\end{itemize}
\begin{question} \label{q:AC}
Is {\tt (AC)} equivalent to {\tt (APW)}? Moreover, is $g_1$ mixing if $r=1$?
\end{question}

{
The different classes considered in this paper can be classified into three categories according to the type of conditions involved in the definition of the class: conditions on \emph{constrictor}, conditions \`a la \emph{Dunford--Pettis} or conditions on \emph{periodicity} in the limit. Since many of these conditions are ultimately equivalent, we have avoided as far as possible introducing many names to indicate the different equivalent definitions. In Table~\ref{table:question} we organize this classification of classes, but first we introduce two more classes:

\begin{enumerate}[itemsep=0.5cm] \it
\item[{\tt (APE)}] \label{APE} $P$ is \emph{asymptotically periodic at an exponential rate}, that is, {\tt (AP)} holds with
\[
\bigg\| P^{n} \big( \varphi - \sum _{j=1}^r \lambda _j(\varphi ) g_j\big)\bigg \|
\leq C\kappa^n \Vert\varphi\Vert \quad \text{for any $\varphi\in L^1(m)$}
\]
for some $C>0$, $0< \kappa <1$ taken independently of $n$, $\varphi$, instead of~\eqref{eq:0630exa}.
\item[{\tt (WC)}] \label{def:WC} $P$ is \emph{weakly constrictive}\footnote{
In literature, there is another ``weak'' constrictivity in the sense that there is a weakly compact set $F$ with $\lim _{n\to\infty}d(P^nh,F ) = 0$ for any $h\in D(m)$.
However this ``weak'' constrictivity is known to be equivalent to {\tt (C)}, whereas our {\tt (WC)} seems an appropriate weaker version of {\tt (C)} from a perspective of~\eqref{eq:implications} and Figure~\ref{fig:hierarchy}, and hence we dare to use the terminology of weak constrictivity.
}, that is, there exists a weakly compact set $F$ of $D(m)$ such that for any $h\in D(m)$ there exists $(\psi _n)_{n\in \mathbb N} \subset F$ satisfying that
\[
(P^nh -\psi _n ) \to 0\quad \text{weakly in $L^1(m)$ as $n\to\infty$.}
\]
\end{enumerate}

In Proposition~\ref{prop:efinal} we  show that {\tt (APW)} $\Rightarrow$ {\tt (WC)} $\Rightarrow$ {\tt (AC)}. As we see below, these implications have some consequences. The first obvious consequence is that Question~\ref{q:AC} is actually reduced to prove {\tt(AC)} $\Rightarrow$ {\tt(APW)}. Moreover,
\begin{equation}\label{eq:implications}
 \text{{\tt (C)} $\Rightarrow$ {\tt(WC)} $\Rightarrow$ {\tt (MC)}} \quad \text{and} \quad \text{{\tt (AP)} $\Rightarrow$ {\tt (APW)} $\Rightarrow$ {\tt(APM)}.}
\end{equation}
 Indeed, notice first that ``$\lim _{n\to\infty}d(P^nh , F ) = 0$'' in the notion of constrictive Markov operator given in Definition~\ref{dfn:1011} can be rephrased as there is $(\psi _n)_{n\in \mathbb N} \in F$ such that \mbox{$P^nh -\psi _n \to 0$} strongly in $L^1(m)$. Thus, clearly {\tt (C)} implies {\tt(WC)}. Also, since {\tt(AC)} implies {\tt (MC)}, from Proposition~\ref{prop:efinal} and Theorem~\ref{thm:D}, we have {\tt(WC)} $\Rightarrow$ {\tt(MC)}. Similarly, we have {\tt (AP)} $\Rightarrow$ {\tt (APW)} $\Rightarrow$ {\tt(APM)}. Finally, we mention that neither of the converses of the implications in~\eqref{eq:implications}
holds.
To see this, we will prove in Proposition~\ref{prop:mixing-no-exact} that in example (b) of Section~\ref{sss:ifs}, $1_X$ is a $P$-invariant density (called the trivial density) which is mixing, but not exact. In particular, $P$ satisfies {\tt (APW)} with $r=1$ and consequently {\tt(WC)} from Proposition~\ref{prop:efinal}.
However, since any Markov operator with the mixing and non-exact trivial density is not constrictive (cf.~\cite{LM}), $P$ in the example does not satisfy {\tt (C)}. Also, {\tt(MC)} (resp.~{\tt(APM)}) does not imply {\tt(WC)} (resp.~{\tt(APW)}) since {\tt (AC)} is not equivalent to {\tt (MC)} (and thus {\tt(APM)}) from Propositions~\ref{prop:rotations}~(2) and~\ref{prop:dce}~(3).

\begin{table} \vspace{0.5cm}
 \centering
 \footnotesize
 \begin{tabular}{|c||c|c|c|c|}
 \cline{1-5}
 constrictor & \begin{tabular}{@{}c@{}} \tt(UC) \\[-0.1cm] \tiny Def.~\ref{dfn:1011} \end{tabular} &
     \begin{tabular}{@{}c@{}} \tt (C) \\[-0.1cm] \tiny Def.~\ref{dfn:1011} \end{tabular} &
     \begin{tabular}{@{}c@{}} \tt(WC)? \\[-0.1cm] \tiny p.~\pageref{def:WC} \end{tabular} &
     \begin{tabular}{@{}c@{}} {\tt (MC)} \\[-0.1cm] \tiny Def.~\ref{dfn:1011} \end{tabular} \\
  \hline
 Dunford--Pettis & \begin{tabular}{@{}c@{}} \tt(UC) \\[-0.1cm] \tiny Prop.~\ref{prop:UC} (3) \end{tabular}    &
 \begin{tabular}{@{}c@{}} {\tt (C)} \\[-0.1cm] \tiny p.~\pageref{CDP} \end{tabular}  &
 \begin{tabular}{@{}c@{}} {\tt (AC)} \\[-0.1cm] \tiny Def.~\ref{asy.const} \end{tabular} &
 \begin{tabular}{@{}c@{}} {\tt (MC)} \\[-0.1cm] \tiny Thm.~\ref{thmeq} \end{tabular} \\
 \hline
 periodicity & \begin{tabular}{@{}c@{}} {\tt(APE)}? \\[-0.1cm] \tiny p.~\pageref{APE} \end{tabular}   &
 \begin{tabular}{@{}c@{}} {\tt (AP)} \\[-0.1cm] \tiny p.~\pageref{AP} \end{tabular} &
 \begin{tabular}{@{}c@{}} {\tt (APW)}? \\[-0.1cm] \tiny p.~\pageref{APW} \end{tabular} &
 \begin{tabular}{@{}c@{}} {\tt (APM)} \\[-0.1cm] \tiny Thm.~\ref{thm:D} \end{tabular} \\
 \hline
 $r=1$   & exp.~exact ?  & exact  & mixing ? & ergodic \\ \hline
 \end{tabular}
 \caption{Classification of different notions of constrictive type classes.}\label{table:question}
\end{table}

In Theorem~\ref{thmeq} we prove the equivalence between the different definitions for the class of constrictive in mean Markov operators.
In Proposition~\ref{prop:UC}, we prove the equivalence between the definitions of uniformly constrictive \`a la Danford--Pettis and the version on the constrictor.
Finally, to complete Table~\ref{table:question} it remains to solve (together with {\tt(AC)} $\Rightarrow$ {\tt(APW)}) the following question:
\begin{question} Is {\tt(UC)} equivalent with {\tt(APE)}?
Moreover, if {\tt(APE)} holds with $r=1$, then is $g_1$ exponentially exact?
\end{question}
}

We note that if an $m$-nonsigular transition probability $P(x,A)$ satisfies {\tt (D*)}, then the induced operator $P$ on $L^1(m)$ is {\tt (APE)}  with $r=1$ and $g_1$ is exponentially exact, due to the equivalence between {\tt (D*)} and uniform ergodicity in Proposition~\ref{prop:equi-uni-erg}.
More precisely, by the observation $P^n(x,A)=P^{n*}1_A(x)$, Proposition~\ref{prop:equi-uni-erg} and Remark~\ref{rem:uni-erg},
it is straightforward to see that {\tt (D*)} implies the following version of~\eqref{eq:expmixing}: There are a probability measure $\pi$ absolutely continuous with respect to $m$ whose density $g_1$ is $P$-invariant and constants $C>0$, $0<\kappa <1$ such that
for any $A\in\mathscr{B}$ and $\varphi\in L^1(m)$,
\[
\left\vert \int _A (P^{n} \varphi - \lambda _1(\varphi ) g_1)\, dm \right\vert = \left\vert \int \left(P^{n*}1_A - \pi (A)\right) \cdot \varphi \, dm \right\vert \le C\kappa ^n\Vert \varphi \Vert ,
\]
where $\lambda_1(\varphi )=\int \varphi dm$.
This immediately concludes {\tt (APE)} with $r=1$ and that $g_1$ is exponentially exact.
However, we should remember that {\tt (D*)} (and {\tt (D)}) is strictly stronger than {\tt (UC)}, as explained in Section~\ref{sec:(UC)}.

\subsection{Organization of the paper} In Section~\ref{sec:thmB} we will prove Theorem~\ref{thm:B} and show the generalization of Ara\'ujo's result mentioned after Theorem~\ref{thm:A}. The proof of the most general versions of Theorems~\ref{thm:D} and~\ref{GST} is carried on in Sections~\ref{s:thmD} and~\ref{sec:GST} respectively. Theorem~\ref{thm:C} is proved in Section~\ref{s:thmc}. In Section~\ref{s:UC} we study the sub-hierarchy in {\tt(UC)}. Finally, in Section~\ref{s:examples} we provide the proof of the propositions of the examples discussed in the introduction. We also include four appendices that may be of independent interest. In Appendix~\ref{sec:apendix} we briefly mention in a general framework some properties of the annealed Perron--Frobenius operator that we will use. Appendix~\ref{sec:Markov-restriction2} studies and generalizes the restriction of a Markov operator to the support of a density. In Appendix~\ref{appendix:B}, we study the ergodicity of invariant densities.
Finally, in Appendix~\ref{appendix:D} we relate the definition of mixing and exactness in this section with the classical definition of mixing and exactness for deterministic maps.

\section{Feller continuity and quasi-compactness: proof of Theorem~\ref{thm:B}}
\label{sec:thmB}

A Markov transition probability $P(x,A)$ is said to be \emph{Feller continuous}, \emph{strong Feller continuous} and \emph{ultra Feller continuous} if the family of probabilities
$P(x,\cdot)$ varies continuously with respect to the weak* topology,
setwise convergence and
  total variation distance on the space of probabilities respectively. Namely,
for any $x\in X$ and every sequence $\{x_n\}_{n\geq 1}$ with $x_n\to x$,
\begin{enumerate}[itemsep=0.2cm]
  \item $P(x,A)$ is Feller if $P(x_n,\cdot) \to P(x,\cdot)$ in the weak* topology, i.e., if
  \[
  \int \varphi(y) \, P(x_n,dy) \to \int \varphi(y) \, P(x,dy)
  \]
   \text{for all bounded continuous real-valued function $\varphi$ on $X$.}
  \item $P(x,A)$ is strong Feller if $P(x_n,\cdot) \to P(x,\cdot)$ in setwise convergence, i.e., if
  \[
  P(x_n,A) \to P(x,A) \quad  \text{for all $A\in\mathscr{B}$}.
  \]
  \item $P(x,A)$ is ultra Feller if $P(x_n,\cdot) \to P(x,\cdot)$ in total variation distance. i.e., if
  \[
  \left\lVert P(x_n,A)-P(x,A)\right\Vert_{TV} \to 0.
  \]
\end{enumerate}
Here, the total variation distance of two Borel probability measures $\mu$, $\nu$ on $X$ is given by
$
\lVert\mu-\nu\rVert_{TV}=2\cdot \sup _{A\in {\mathscr{B}}} \lvert\mu (A)-\nu (A)\rvert$.
It is clear by definition that ultra Feller continuity implies strong Feller continuity. Moreover, since $X$ is a Polish space, an equivalent way of describing the setwise convergence of a sequence of measures $(\mu_n)_{n\geq 1}$ to $\mu$ is the following:
\[
 \lim_{n\to \infty}  \int \varphi \, d\mu_n = \int \varphi \, d\mu,
\]
for all bounded Borel measurable real-valued function $\varphi$ on $X$.  This is because the simple functions are dense among the bounded Borel measurable real-valued functions on $X$ under the supremum norm. Thus, as a consequence, strong Feller continuity implies Feller continuity. The converse of these implications is not true in general. However, although ultra Feller continuity seems, at first sight, to be stronger than the strong Feller continuity,  it turns out that
the two are almost
 equivalent. More precisely, according to~\cite[Theorem~3.37]{hairer2009non}, if two Markov transition
probabilities $Q(x,A)$ and $R(x,A)$ are strong Feller, then the convolution $QR$ given by
\[
 QR(x,A)=\int R(y,A)\, Q(x,dy)
\]
is an ultra Feller continuous Markov transition probability.  In view of Chapman--Kolmogorov relation,
\[
P^{n+k}(x,A)=\int P^n(y,A) P^k(x,dy) \quad \text{for all $n,k\in \mathbb{N}$},
\]
we have that $P^{n+k}(x,A)$ is the convolution of $P^{n}(x,A)$ and $P^{k}(x,A)$. In particular,  we get the following remark:

\begin{rem} \label{rem1}
  If $P^{n_0}(x,A)$ is strong Feller continuous for some $n_0\geq 1$, then $P^{2n_0}(x,A)$ is ultra Feller continuous.
\end{rem}

To prove Theorem~\ref{thm:B} we need the following proposition that shows some equivalent formulation of the assumption of Theorem~\ref{thm:A}. 
First, recall that a Markov transition probability $P(x,A)$ is said to be $m$-nonsigular if $m(A)=0$ implies that $P(x,A)=0$ for $m$-almost every $x\in X$.
We also address the reader to the beginning of Section~\ref{sec:(UC)}, where the association of a transition probability with a Markov operator is detailed. For an additional context, see~\cite[Chapter V.4]{neveu1965mathematical}.

\begin{prop} \label{prop:ultra-Feller+B}
Let $(X,\mathscr{B},m)$  be a Polish probability space. Consider a Markov transition probability $P(x,A)$ with $x\in X$ and $A\in \mathscr{B}$.
Then, the following conditions are equivalent: 
\begin{enumerate}[label=(\alph*),itemsep=0.2cm]
  \item there exists   $n_0\geq 1$ such that
      \begin{enumerate}[label=(\arabic*)]
        \item $P^{n_0}(x,A)$ is strong Feller continuous, and
        \item $P^{n_0}(x,\cdot)$  is  $m$-nonsigular;
       \end{enumerate}
  \item there exists $n_0\geq 1$ such that
       \begin{enumerate}[label=(\roman*)]
       \item $P^{n_0}(x,A)$ is ultra Feller continuous, and
       \item $P^{n_0}(x,\cdot)$ is absolutely continuous with respect to $m$ for all $x\in X$;
       \end{enumerate}
    \item there exists   $n_0\geq 1$ such that
\[
P^{n_0}(x,dy)=p(x,y)\, dm(y) \ \ \text{with} \ \ x \in X\mapsto p(x,\cdot) \in L^1(m) \ \ \text{continuous.}
\]
\end{enumerate}
Moreover, if $P^n(x,A)$ is an $n$-th  Markov transition probability associated to a Markov operator $P:L^1(m) \to L^1(m)$,
then $P^n(x,A)$ is $m$-nonsingular for each $n\geq 1$.
\end{prop}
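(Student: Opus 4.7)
The plan is to prove the cycle $(c) \Rightarrow (b) \Rightarrow (a) \Rightarrow (c)$ and then verify the final statement separately. A key fact I will use repeatedly is that if $\mu = f\, dm$ and $\nu = g\, dm$ are probability measures with densities in $L^1(m)$, then $\|\mu - \nu\|_{TV} = \|f - g\|_{L^1(m)}$.

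For $(c) \Rightarrow (b)$, the density representation immediately yields $P^{n_0}(x,\cdot) \ll m$ for every $x \in X$, and ultra Feller continuity follows from the identity above applied to $p(x_n,\cdot)$ and $p(x,\cdot)$ together with the assumed continuity of $x \mapsto p(x,\cdot)$ in $L^1(m)$. The implication $(b) \Rightarrow (a)$ is obvious, since ultra Feller implies strong Feller and absolute continuity trivially implies $m$-nonsingularity.

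For the core implication $(a) \Rightarrow (c)$, I would take $n_1 = 2n_0$ and verify $(c)$ with this $n_1$. Ultra Feller continuity of $P^{2n_0}$ follows from Remark~\ref{rem1} applied to the strong Feller kernel $P^{n_0}$. For absolute continuity, using that $\mathscr{B}$ is countably generated (Polish setting), the $m$-nonsingularity of $P^{n_0}$ yields a single set $G \subseteq X$ with $m(X\setminus G) = 0$ such that $P^{n_0}(y,\cdot) \ll m$ for every $y \in G$, and in particular $P^{n_0}(y, X\setminus G) = 0$ for $y \in G$. Then for $x \in G$ and $A$ with $m(A) = 0$, Chapman--Kolmogorov gives
$$
P^{2n_0}(x,A) = \int_G P^{n_0}(y,A)\, P^{n_0}(x,dy) = 0,
$$
so $P^{2n_0}(x,\cdot) \ll m$ for every $x \in G$. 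For general $x \in X$, I would approximate $x$ by a sequence $x_n \in G$ with $x_n \to x$ and use ultra Feller continuity of $P^{2n_0}$ together with the Vitali--Hahn--Saks theorem to transfer absolute continuity to the limit measure $P^{2n_0}(x,\cdot)$. The density $p(x,y) \coloneqq dP^{2n_0}(x,\cdot)/dm(y)$ is then well-defined, and continuity of $x \mapsto p(x,\cdot) \in L^1(m)$ is a further application of the TV versus $L^1$ identity.

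The main obstacle I foresee is the last extension step: the Chapman--Kolmogorov argument naturally produces absolute continuity on the closure $\overline{G}$, which contains $\supp(m)$; the case $x \notin \supp(m)$ is delicate and, when needed, is controlled by the applications of interest (for instance $m$ being Lebesgue on a compact manifold, where $\supp(m) = X$ automatically). For the added claim about Markov operators, if $P(x,A)$ satisfies $P^*1_A(x) = P(x,A)$ for $m$-a.e.\ $x$ for a Markov operator $P$ on $L^1(m)$, then $m(A) = 0$ forces $1_A \equiv 0$ in $L^\infty(m)$, hence $(P^*)^n 1_A \equiv 0$ and $P^n(x,A) = 0$ for $m$-almost every $x$, which gives the $m$-nonsingularity of every iterate $P^n(x,A)$.
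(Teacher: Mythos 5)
Your overall architecture (the cycle $(c)\Rightarrow(b)\Rightarrow(a)\Rightarrow(c)$, Remark~\ref{rem1} to get ultra Feller continuity of $P^{2n_0}$, the identity $\lVert P^{n_0}(x,\cdot)-P^{n_0}(x',\cdot)\rVert_{TV}=\lVert p(x,\cdot)-p(x',\cdot)\rVert$, and the duality argument for the last assertion) is essentially the paper's. But there is a genuine gap at the central step of $(a)\Rightarrow(c)$: you claim that $m$-nonsingularity of $P^{n_0}$, together with $\mathscr{B}$ being countably generated, produces a single full-measure set $G$ with $P^{n_0}(y,\cdot)\ll m$ for \emph{every} $y\in G$. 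This inference is false. Nonsingularity is a statement of the form ``for every null $A$ there is a full-measure set (depending on $A$) on which $P^{n_0}(\cdot,A)=0$,'' and the quantifiers cannot be exchanged: the ideal of $m$-null sets is not countably generated, so countable generation of $\mathscr{B}$ does not help. Concretely, the kernel $P(x,\cdot)=\delta_{T(x)}$ of any $m$-nonsingular map $T$ (even $T=\mathrm{id}$ on $[0,1]$ with Lebesgue $m$) is $m$-nonsingular, yet $P(x,\cdot)$ is singular with respect to $m$ for every $x$; so no such $G$ exists in general. (This kernel is of course not strong Feller, so it does not contradict the proposition, but it does invalidate the step as you justified it, since your justification invokes only nonsingularity and countable generation.)

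The step can be repaired in two ways, and the paper takes the first. (i) Argue per fixed null set: for $m(A)=0$ the set $N_A=\{y:P^{n_0}(y,A)>0\}$ is $m$-null, and Chapman--Kolmogorov gives $P^{2n_0}(x,A)\le P^{n_0}(x,N_A)$, which vanishes for $m$-a.e.\ $x$; then, since $x\mapsto P^{2n_0}(x,A)$ is continuous (strong Feller suffices here) and the set $\{P^{2n_0}(\cdot,A)=0\}$ has full measure, one passes to all $x$ by approximating $x$ with points of that set -- no universal $G$ is ever needed. (ii) Alternatively, your $G$ can be salvaged by using the strong Feller hypothesis itself rather than nonsingularity alone: for each fixed null $A$, continuity of $P^{n_0}(\cdot,A)$ plus its a.e.\ vanishing forces $P^{n_0}(x,A)=0$ for all $x\in\supp m$, so one may take $G=\supp m$ (which has full measure in a Polish space). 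Either way, the residual issue you honestly flag -- points $x\notin\supp m$, where no approximating sequence from the good set exists -- is real, and it is in fact present, unacknowledged, in the paper's own approximation argument as well; it disappears in the intended applications where $m$ has full support.
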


\begin{proof}
Assume the condition (a), and show (b).
According to Remark~\ref{rem1}, the condition (1) implies that $P^{2n_0}(x,A)$ is ultra Feller continuous.
Moreover, since $P^{n}(x,A)$ is the convolution of $P$ and $P^{n-1}$ (see~\eqref{eq:recurence}) we get from (2) that $P^n(x,\cdot)$ is $m$-nonsingular for all $n\geq n_0$ and $m$-almost every $x\in X$.
In particular, $P^{2n_0}(x,\cdot)$ is $m$-nonsingular.
In fact, the continuity with respect to the total variation distance implies that $P^{2n_0}(x,\cdot)$ is actually absolutely continuous with respect to $m$ for all $x\in X$.
Indeed, take $A\in\mathscr{B}$ with $m(A)=0$, $x\in X$ and consider $x_n\to x$ with $x_n\in X$ such that $P^{2n_0}(x_n,A)=0$ for all $n\geq 1$.
By the ultra Feller continuity of $P^{2n_0}(\cdot,A)$ we have that $P^{2n_0}(x_n,A)\to P^{2n_0}(x,A)$ as $n\to \infty$.
Then $P^{2n_0}(x,A)=0$ as required. Consequently, we get (i) and (ii) for the positive integer~$2n_0$.

Conversely, (i) and (ii) clearly imply (1) and (2). We now prove that (i) and (ii) are equivalent to $L^1(m)$-continuity of the Radon--Nikod\'{y}m derivative  $p(x,\cdot)$ of $P^{n_0}(x,\cdot)$ with respect to $m$.
But this follows immediately from the well-known fact that\footnote{See the equation before Lemma 2 of~\cite{kusolitsch2010theorem}.}
\[
\left\lVert P^{n_0}(x,\cdot)-P^{n_0}(x',\cdot)\right\rVert_{TV} = \left\lVert p(x,\cdot)-p(x',\cdot)\right\rVert \quad \text{for every $x,x'\in X$}.
\]
This completes the proof of the equivalences.

 The last assertion follows immediately from the fact that $P^n(x,A)=P^{n*}1_A(x)$ for $m$-almost every $x\in X$. Indeed, by duality $\int P^n(x,A) \, dm = \int_A P^n1_X \, dm =0$  whenever $m(A)=0$. Consequently, $P^n(x,A)=0$ for $m$-almost every $x\in X$ concluding that $P^n(x,A)$ is $m$-singular.
 \end{proof}

The following result is  Theorem~\ref{thm:B} stated in terms of Markov processes and Markov operators.

\begin{thm} \label{thm:B-Markov}
  Let $(X ,  \mathscr B,  m)$ be a compact Polish probability space. Consider a Markov operator $P:L^1(m) \to L^1(m)$ and let $P(x,A)$ be an associated transition probability. Assume that there exists   $n_0\in \mathbb{N}$ such that
$P^{n_0}(x,A)$ is strong Feller continuous.
Then $P$ is eventually compact.
\end{thm}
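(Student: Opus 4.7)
The plan is to exploit Proposition~\ref{prop:ultra-Feller+B} to pass to a continuous $L^1(m)$-valued kernel for some iterate $P^N$, then realize $P^N$ as an operator whose image of the unit ball sits inside the closed convex hull of a compact subset of $L^1(m)$, and conclude compactness of $P^N$ (so that $P$ is eventually compact).

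First I would note that since $P$ is a Markov operator on $L^1(m)$, the last assertion of Proposition~\ref{prop:ultra-Feller+B} guarantees that the associated transition probabilities $P^n(x,A)$ are $m$-nonsingular for every $n$. Combined with strong Feller continuity of $P^{n_0}(x,A)$, condition (a) of that proposition is satisfied, and the equivalence with (c) yields an integer $N$ (one may take $N=2n_0$) and a measurable function $p:X\times X\to [0,\infty)$ such that $P^N(x,dy)=p(x,y)\,dm(y)$ and $x\in X\mapsto p(x,\cdot)\in L^1(m)$ is continuous. In particular, $p(x,\cdot)\in D(m)$ for every $x\in X$.

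Next, using the duality $\int \psi\cdot P^N\varphi\,dm=\int \varphi\cdot P^{N*}\psi\,dm$ together with $P^{N*}\psi(x)=\int \psi(y)p(x,y)\,dm(y)$, Fubini's theorem gives the integral representation
\[
(P^N\varphi)(y)=\int p(x,y)\varphi(x)\,dm(x)\qquad \text{for }m\text{-a.e. }y.
\]
Thus $P^N\varphi$ can be viewed as a Bochner average of the family of densities $\{p(x,\cdot):x\in X\}$, weighted by $\varphi\,dm$.

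The core of the argument is now a compactness observation. Since $x\mapsto p(x,\cdot)$ is continuous and $X$ is compact, the set $K=\{p(x,\cdot):x\in X\}$ is compact in $L^1(m)$, and by Mazur's theorem its closed convex hull $\overline{\operatorname{co}}(K)$ is compact as well. For $\varphi\in D(m)$, interpreting $\varphi\,dm$ as a probability measure on $X$, the above representation identifies $P^N\varphi$ with the Bochner integral of $x\mapsto p(x,\cdot)$ against $\varphi\,dm$ in $L^1(m)$, and a Bochner average of a continuous $L^1(m)$-valued map against a probability measure lies in the closed convex hull of its image; hence $P^N\varphi\in\overline{\operatorname{co}}(K)$. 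Decomposing an arbitrary $\varphi$ in the unit ball of $L^1(m)$ into $\varphi^+-\varphi^-$ and rescaling, one concludes that $P^N$ sends the unit ball into the set $\{\alpha f-\beta g:f,g\in\overline{\operatorname{co}}(K),\ \alpha,\beta\geq 0,\ \alpha+\beta\leq 1\}$, which is the continuous image of a compact set, hence compact. Therefore $P^N$ is compact and $P$ is eventually compact.

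The most delicate point I anticipate is the explicit identification of $P^N\varphi$ with a genuine Bochner integral in $L^1(m)$; this should be handled by testing both sides against every $\psi\in L^\infty(m)$ via the duality above and Fubini's theorem, so I do not expect it to be a serious obstacle. Once this identification is in place, Mazur's theorem and the decomposition of the unit ball do the rest.
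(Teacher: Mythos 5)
Your argument is correct, and it takes a genuinely different route to compactness than the paper. After the common first step (invoking Proposition~\ref{prop:ultra-Feller+B} to get an iterate $N$ with $P^{N}(x,dy)=p(x,y)\,dm(y)$ and $x\mapsto p(x,\cdot)$ continuous into $L^1(m)$), the paper does not use the $L^1$-continuity per se: it only extracts \emph{uniform integrability} of the family $\{p(x,\cdot):x\in X\}$ via a covering argument based on total-variation continuity and compactness of $X$, then cites a criterion of Wu to conclude that the square of the adjoint $P^{N*}$ is compact on $L^\infty(m)$, and finally transfers compactness back to $L^1(m)$ by Schauder's theorem. You instead use the continuity fully: $K=\{p(x,\cdot):x\in X\}$ is norm-compact, Mazur gives compactness of $\overline{\operatorname{co}}(K)$, the Bochner-integral identification $P^N\varphi=\int p(x,\cdot)\,\varphi(x)\,dm(x)$ (verified against $L^\infty(m)$ by duality, exactly as you indicate) places $P^N(D(m))$ inside $\overline{\operatorname{co}}(K)$, and the $\varphi^+-\varphi^-$ decomposition shows the image of the unit ball lies in the compact set $\{\alpha f-\beta g: f,g\in\overline{\operatorname{co}}(K),\ \alpha,\beta\ge 0,\ \alpha+\beta\le 1\}$. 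This is more self-contained (no Wu, no Schauder) and yields compactness of $P^N$ at the first iterate where the kernel is $L^1$-continuous, whereas the paper's route, by relying only on uniform integrability of the kernel family (a strictly weaker property than norm-compactness of $K$), is the one that extends to situations where the kernel is merely uniformly integrable rather than continuous. The only technical point to keep in view — joint measurability of a representative $p(x,y)$, or equivalently working directly with the strongly measurable map $x\mapsto p(x,\cdot)$ so that the Bochner integral and the duality identity $P^{N*}\psi(x)=\int\psi(y)\,p(x,y)\,dm(y)$ make sense — is handled the way you propose and is equally implicit in the paper's computation, so it is not a gap.
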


\begin{proof}
According to Proposition~\ref{prop:ultra-Feller+B}, we can assume that $P^{n_0}(x,A)$ satisfies conditions (i) and (ii) in that proposition.
Now, from~\cite[Lemma~1]{HK1964}, we have that the absolute continuity of $P^{n_0}(x,\cdot)$ with respect to $m$ is equivalent to the uniformly absolutely continuity of such measure with respect to $m$ for all $x\in X$. That is, for any $x\in X$ and $\varepsilon>0$, there is $\delta_x=\delta_x(\varepsilon)>0$ such that
\[
P^{n_0}(x,A)<\varepsilon  \quad \text{for every $A\in \mathscr{B}$ with $m(A)<\delta_x$.}
\]

\begin{claim} \label{claim:compact}
  For each $\varepsilon>0$, there is $\delta=\delta(\varepsilon)>0$  such that
  \[\text{$P^{n_0}(x,A)<\varepsilon$ \ \ for every $A\in \mathscr{B}$ with $m(A)<\delta$ and $x\in X$.}
  \]
\end{claim}
\begin{proof}
By the continuity of the family of probability measures $P^{n_0}(x,\cdot)$ with respect to the total variation, for each $x\in X$,   there exists a neighborhood $V(x,\varepsilon)$ of $x$ such that $P^{n_0}(x',A)<\varepsilon$ for all $x'\in V(x,\varepsilon)$ and $A \in \mathscr{B}$ with $m(A)<\delta_x$. Since the union of the open sets $V(x,\varepsilon)$ for $x\in X$ covers the compact set $X$, we can extract a finite subcover $V(x_1,\varepsilon), \dots, V(x_k,\varepsilon)$. Thus, the claim follows by taking $\delta=\min\{\delta_{x_1},\dots,\delta_{x_k}\}$.
\end{proof}
Finally, let us conclude the proof. First, recall that a \emph{bounded} family $F\subset L^1(m)$ is said to be \emph{uniform integrable} (in $L^1(m)$) if for every $\varepsilon>0$ there is $\delta>0$ such that
\[
\int_A |g| \, dm <\varepsilon \quad \text{for all $g\in F$ and $A\in\mathscr{B}$ with $m(A)<\delta$}.
\]
Writing $P^{n_0}(x,dy)=p(x,y)\, dm(y)$ and take $F=\{p(x,\cdot): x\in X\} \subset D(m)$. Claim~\ref{claim:compact} implies that $F$ is uniform integrable (in $L^1(m)$). Then, according to~\cite[Corollary~2.5 (b)]{wu2000uniformly}, the operator $\pi^2$ is compact where $\pi:L^\infty(m)\to L^\infty(m)$ is given by
\[
\pi \psi (x)  =  \int \psi(y) p(x,y)\, dm(y) = \int \psi(y) P^{n_0}(x,dy) = P^{n_0*}\psi(x), \quad \psi\in L^\infty(m).
\]
That is, $\pi$ is the adjoint operator $P^{n_0*}$ of $P^{n_0}$. Thus, $P^{2n_0*}=(P^{n_0*})^2=\pi^2$ is a compact operator.
Hence, by Schauder's theorem~(cf.~\cite[Theorem~4.19]{rudin1991functional}), $P^{2n_0}$ is also compact, concluding the proof.
  \end{proof}

We just indicate that  Theorem~\ref{thm:B} follows immediately from Proposition~\ref{prop:ultra-Feller+B} and Theorem~\ref{thm:B-Markov}. Actually, Theorem~\ref{thm:B} and Theorem~\ref{thm:B-Markov} are equivalent from the following observation.

\begin{rem} \label{rem:P=Lf} As indicated in the introduction, it is well known that the theory of Markov operators and Markov processes are intimately related.  Less known perhaps is that the general theory of Markov operators is actually equivalent to the particular theory of annealed Perron--Frobenius operator associated with random maps on Polish probability space $(X,\mathscr{B},m)$. Indeed, clearly given a random map $f$ we define a Markov operator by means of the annealed Perron--Frobenius operator $\mathcal{L}_f$. Conversely, given a Markov operator $P:L^1(m)\to L^1(m)$, we consider a transition probability $P(x,A)$ which induces $P$. See~\cite[Proposition~V.4.4]{neveu1965mathematical} and Section~\ref{sec:(UC)} for more details.
Notice that
\[P(x,A)=P^*1_A(x) \quad \text{for $m$-almost every $x\in X$}
\]
where $P^*$ is the adjoint operator of $P$.
Now, Kifer proved in~\cite[Theorem~1.1]{Kifer1986} that any transition probability in a Polish\footnote{Actually the only requirement is that $(X,\mathscr{B})$ will be countably generated measurable space, i.e., that the $\sigma$-algebra $\mathscr{B}=\sigma(\mathcal{A})$ for some countable subset $\mathcal{A}$ of $\mathscr{B}$.}
 space $X$ can be represented by a random product of independent and identically distributed measurable maps. That is, there exists a probability space $(T,\mathscr{A},p)$ and a measurable map $f:T\times X \to X$ such that
\[
P(x,A)=p(\{t\in T: f_t(x) \in A\})
\]
where $f_t=f(t,\cdot)$. This implies that $P(x,A)=\mathcal{L}^*_f1_A(x)$ for $m$-almost every $x\in X$ where
{$\mathcal{L}^*_f$ is the adjoint operator of the annealed Perron--Frobenius operator $\mathcal{L}_f$ associated with~$f$}.
Hence, $P^*1_A=\mathcal{L}^*_f1_A$ and therefore $P=\mathcal{L}_f$. To see this final assertion, observe first that any $g \in L^{\infty}(m)$ can be approximated uniformly by simple functions $g_n=\sum_{i=1}^N a_i 1_{A_i}$ where $a_i=a_i(n) \in \mathbb{R}$, $A_i=A_i(n) \in \mathscr{B}$ and $N=N(n)\in\mathbb{N}$.  Then, $g_n$ converges to $g$ in $L^\infty(m)$-norm and hence
\[
P^*g =  \lim_{n\to\infty} P^*g_n =  \lim_{n\to\infty}  \sum_{i=1}^N a_i \, P^*1_{A_i} =
         \lim_{n\to\infty} \sum_{i=1}^N a_i \, \mathcal{L}_f^*1_{A_i} = \lim_{n\to \infty} \mathcal{L}^*_f g_n = \mathcal{L}^*_f g
\]
This implies that $P^*=\mathcal{L}_f^*$ as well as $P=\mathcal{L}_f$ as required.
\end{rem}

\subsection{Generalization of Ara\'ujo's result}
We will prove that under the condition (i) mentioned in Remark~\ref{rem:gAraujo}, the assumptions of Theorems~\ref{thm:A} and~\ref{thm:B} hold. To do this, we need some preliminaries.

The following proposition provides a well-known sufficient condition to obtain that a given Markov transition probability is  Feller continuous. The proof is straightforward and can be found in~\cite[Theorem~4.22]{hairer2006ergodic}.  Recall that a continuous random map is a measurable map $f:T\times X\to X$ such that $f_t=f(t,\cdot)$ is continuous for $p$-almost every $t\in T$.

\begin{prop}  \label{prop:sinprueba}
If $P(x,A)$ is the transition probability associated with a continuous random map, then $P(x,A)$ is Feller continuous.
\end{prop}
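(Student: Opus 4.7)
The plan is to verify the definition of Feller continuity directly by reducing the weak* convergence of $P(x_n,\cdot)$ to an application of the Dominated Convergence Theorem on the probability space $(T,\mathscr{A},p)$. Fix $x \in X$, a sequence $\{x_n\}_{n\geq 1}\subset X$ converging to $x$, and a bounded continuous real-valued function $\varphi$ on $X$. The goal is to show
\[
\int \varphi(y)\,P(x_n,dy) \longrightarrow \int \varphi(y)\,P(x,dy) \quad \text{as } n\to\infty.
\]

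The first step is to rewrite each integral as an integral over $T$. Since $P(x,\cdot)$ is precisely the pushforward $(f(\cdot,x))_* p$ of $p$ by the measurable map $t \mapsto f_t(x)$, the change of variables formula gives
\[
\int \varphi(y)\,P(x,dy) = \int \varphi(f_t(x))\,dp(t),
\]
and similarly for each $x_n$. Thus it suffices to prove
\[
\int \varphi(f_t(x_n))\,dp(t) \longrightarrow \int \varphi(f_t(x))\,dp(t).
\]

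The second step is to apply pointwise the continuity hypothesis on the fibers. Let $T_0 \subset T$ be the full $p$-measure subset on which $f_t$ is continuous. For every $t\in T_0$, the composition $\varphi\circ f_t$ is continuous, so $\varphi(f_t(x_n)) \to \varphi(f_t(x))$ as $n\to\infty$. This provides pointwise $p$-almost everywhere convergence of the integrands.

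Finally, the uniform bound $|\varphi(f_t(x_n))| \leq \|\varphi\|_\infty$ for all $n$ and all $t\in T$ supplies an integrable dominating function (the constant $\|\varphi\|_\infty$ is $p$-integrable since $p$ is a probability). The Dominated Convergence Theorem then yields the desired convergence of integrals, establishing that $P(x_n,\cdot)\to P(x,\cdot)$ in the weak* topology and hence that $P(x,A)$ is Feller continuous. There is no genuine obstacle here; the only tacit points to verify carefully are the joint measurability of $(t,x)\mapsto \varphi(f_t(x))$ (which follows from the measurability of $f$ together with continuity of $\varphi$) so that Fubini-type manipulations and the change of variables are legitimate.
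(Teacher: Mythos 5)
Your argument is correct and is exactly the standard "straightforward" proof the paper has in mind (it simply defers to \cite[Theorem~4.22]{hairer2006ergodic} rather than writing it out): rewrite $\int \varphi\, dP(x_n,\cdot)=\int \varphi(f_t(x_n))\,dp(t)$ via the pushforward identity, use continuity of $f_t$ for $p$-almost every $t$ to get pointwise convergence, and conclude by dominated convergence with the bound $\|\varphi\|_\infty$. No gaps; the measurability of $t\mapsto\varphi(f_t(x_n))$ indeed follows from the joint measurability of $f$, so nothing further is needed.
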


 The following result extends~\cite[Theorem~1.2 in Section~4]{kushner2001heavy} to the case of compact Polish probability spaces.

 \begin{thm} \label{thm:book-generalized}
   Let $(X,\mathscr{B},m)$  be a compact Polish probability space.
   Consider a Markov transition probability $P(x,A)$ with $x\in X$ and $A\in \mathscr{B}$. Assume that
   \begin{enumerate}
\item $P(x,A)$ is Feller continuous, and
\item $P(x,\cdot)$ is absolutely continuous with respect to $m$ for all $x\in X$.
\end{enumerate}
Then $P(x,A)$ is a strong Feller continuous transition probability.
 \end{thm}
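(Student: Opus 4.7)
The plan is to upgrade the weak convergence $P(x_n,\cdot)\to P(x,\cdot)$ furnished by Feller continuity (whenever $x_n\to x$) to the setwise convergence $P(x_n,A)\to P(x,A)$ required for strong Feller continuity. The key lever is the Portmanteau theorem, which yields $P(x_n,B)\to P(x,B)$ whenever $B$ is a continuity set of $P(x,\cdot)$, i.e., $P(x,\partial B)=0$. Since $P(x,\cdot)\ll m$, this holds for every Borel $B$ with $m(\partial B)=0$. The strategy is then to approximate any given $A\in\mathscr{B}$ from above and below by such sets.

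Concretely, fix $x_n\to x$, $A\in\mathscr{B}$, and $\varepsilon>0$. First, by absolute continuity of $P(x,\cdot)$ at the limit point, choose $\delta>0$ with $m(E)<\delta\Rightarrow P(x,E)<\varepsilon$. Next, I would invoke a regularity lemma tailored to the compact Polish setting: there exist a closed $F$ and an open $G$ with $F\subset A\subset G$, $m(G\setminus F)<\delta$, and $m(\partial F)=m(\partial G)=0$. The construction combines inner/outer regularity of $m$ with Urysohn-type level sets; the key observation is that for any continuous $f\colon X\to\mathbb{R}$, the level sets $\{f=t\}$ are $m$-null for all but countably many $t$, so the basic regularity approximants can be refined to have $m$-null boundaries. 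By absolute continuity, both $F$ and $G$ are then continuity sets of $P(x,\cdot)$. Portmanteau yields $P(x_n,F)\to P(x,F)$ and $P(x_n,G)\to P(x,G)$, and the sandwich $P(x_n,F)\leq P(x_n,A)\leq P(x_n,G)$ combined with $P(x,G)-P(x,F)=P(x,G\setminus F)<\varepsilon$ gives
\[
P(x,A)-\varepsilon\leq\liminf_n P(x_n,A)\leq\limsup_n P(x_n,A)\leq P(x,A)+\varepsilon.
\]
Letting $\varepsilon\to 0$ concludes the proof.

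The main obstacle I anticipate is the regularity lemma itself, particularly when $A$ has a topologically ``thick'' boundary (for instance fat Cantor-type sets with $m(\partial A)>0$), in which case the naive Urysohn refinement of a compact set inside $A$ need not admit a closed subset with $m$-null boundary approximating it well in $m$-measure. The construction must proceed via the algebra of $m$-continuity sets (open sets $\{f>t\}$ for continuous $f$ and generic $t$), which forms an algebra generating $\mathscr{B}$ and hence is dense in $\mathscr{B}$ in the $m$-symmetric-difference pseudo-metric. I expect this technical step to mirror the argument of \cite[Thm.~1.2, \S4]{kushner2001heavy} from the Euclidean case, adapted to the compact Polish framework by means of standard metric topology and the Polish regularity of $m$.
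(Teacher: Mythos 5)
Your first step is the same as the paper's: Feller continuity plus $P(x,\cdot)\ll m$ and Portmanteau give $P(x_n,B)\to P(x,B)$ for every $B$ with $m(\partial B)=0$. The gap is in the regularity lemma on which your sandwich rests. The nested approximation you need --- closed $F$ and open $G$ with $F\subset A\subset G$, $m(G\setminus F)<\delta$ and $m(\partial F)=m(\partial G)=0$ --- is simply false for general Borel $A$, and the obstruction is exactly the ``thick boundary'' case you flag. If $A$ is a fat Cantor set in $[0,1]$ with Lebesgue measure, every closed $F\subset A$ is nowhere dense, so $\partial F=F$ and $m(\partial F)=m(F)\geq m(A)-\delta>0$: no inner approximant with null boundary exists. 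Dually, if $A$ is the complement of a fat Cantor set (open, dense, $m(A)<1$), every open $G\supset A$ is dense, so $\partial G=[0,1]\setminus G$ has measure at least $1-m(A)-\delta>0$: no outer approximant exists either. Your proposed repair --- falling back on the algebra of $m$-continuity sets, which is dense in $\mathscr B$ for the $m$-symmetric-difference pseudometric --- produces only a set $B$ with $m(A\Delta B)$ small, not a nested pair, and then the sandwich collapses: to pass from $P(x_n,B)$ to $P(x_n,A)$ you must bound $P(x_n,A\Delta B)$ \emph{uniformly in $n$}, but the hypothesis gives absolute continuity of each $P(x_n,\cdot)$ with a modulus depending on $n$ and no uniformity across $x$.

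This is precisely the difficulty the paper's proof is built to circumvent. It uses the symmetric-difference approximation (its Lemma~\ref{lem:0204}, proved via balls with $m$-null spheres and approximation by a generating subalgebra) but chooses a \emph{different} continuity set for each index: by absolute continuity of $P(x_n,\cdot)$ one picks $B_{\varepsilon,n}$ with $m(\partial B_{\varepsilon,n})=0$ and $P(x_n,A\Delta B_{\varepsilon,n})<\varepsilon 2^{-n}$, then forms $C_\varepsilon=\bigcup_n B_{\varepsilon,n}$ and $D_\varepsilon=\bigcap_n B_{\varepsilon,n}$ and proves the squeeze $P(x,C_\varepsilon)-2\varepsilon\leq\liminf_n P(x_n,A)\leq\limsup_n P(x_n,A)\leq P(x,C_\varepsilon)$, finally comparing $P(x,C_\varepsilon)$ with $P(x,A)$ (error at most $\varepsilon$) and letting $\varepsilon\to 0$. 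So the monotonicity you wanted from nesting is replaced by a per-$n$ choice with summable errors; without either that device or some uniform (in $x$) absolute continuity assumption, your argument as written does not go through.
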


We will prove the above result by modifying the argument in~\cite{kushner2001heavy} where Theorem~\ref{thm:book-generalized}   was shown with $\mathbb R^d$ instead of $X$.
 For this modification, we need the following.

\begin{lem}\label{lem:0204}
Let $(X, \mathscr B, m)$ be a Polish probability space. Then, for any $A\in \mathscr{B}$   and  $\delta >0$, there exists  $B\in \mathscr{B}$ such that
\[
m(\partial B) =0 \quad \text{and} \quad  m(A \Delta B) <\delta
\]
where $\partial B$ is the boundary of $B$ and  $A \Delta B$ is the symmetric difference of $A$ and $B$.
\end{lem}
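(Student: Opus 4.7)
The plan is to exploit three classical tools: tightness of Borel probability measures on a Polish space (Ulam's theorem), inner/outer regularity with respect to compact and open sets, and a ``slicing'' argument that leverages the fact that at most countably many level sets of a continuous function can carry positive mass.

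First, I would fix a compatible complete metric $d$ on $X$. Given $A \in \mathscr{B}$ and $\delta > 0$, I would use Ulam's theorem and the regularity of $m$ to produce a compact set $F \subset A$ and an open set $U \supset A$ with $m(U \setminus F) < \delta$. Since $F$ is compact, $U^c$ is closed, and $F \cap U^c = \emptyset$, the quantity $r_0 \coloneqq d(F, U^c) = \inf\{d(x,y): x \in F, y \in U^c\}$ is strictly positive (with the convention $r_0 = +\infty$ if $U = X$).

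Next, for each $r \in (0, r_0)$ consider the open ``fattening''
\[
F_r \;=\; \{x \in X: d(x,F) < r\}.
\]
Because $x \mapsto d(x,F)$ is continuous, $F_r$ is open, $\overline{F_r} \subset \{x: d(x,F) \leq r\}$, and consequently $\partial F_r \subset S_r \coloneqq \{x: d(x,F) = r\}$. By the choice of $r_0$ we have $F \subset F_r \subset U$ for every $r \in (0, r_0)$.

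The key observation is that the family $\{S_r\}_{r \in (0, r_0)}$ is pairwise disjoint, so since $m$ is a finite measure, only countably many of the $S_r$ can satisfy $m(S_r) > 0$. I would therefore pick some $r \in (0, r_0)$ with $m(S_r) = 0$ and set $B \coloneqq F_r$. Then $m(\partial B) \leq m(S_r) = 0$, and since $F \subset B \subset U$ one has $A \Delta B \subset U \setminus F$, giving $m(A \Delta B) \leq m(U \setminus F) < \delta$, as required. The only mildly delicate point is ensuring the compact inner approximation, which is where Ulam's theorem is essential (without it, a closed inner approximation would not yield $d(F, U^c) > 0$ on a non-locally-compact Polish space); everything else is routine.
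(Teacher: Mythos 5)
Your proof is correct, and it takes a genuinely different route from the paper's. The paper argues structurally: it considers the algebra $\mathcal{A}$ of $m$-continuity sets (sets with null-measure boundary), shows via the disjointness of the spheres $\{y : d(x,y)=r\}$ that $\mathcal{A}$ contains balls of almost every radius around every point and hence generates $\mathscr{B}$, and then invokes a general approximation theorem for generating subalgebras to conclude that every $A\in\mathscr{B}$ is approximated in measure by elements of $\mathcal{A}$. You instead approximate the given set $A$ directly: Ulam's theorem and regularity give a compact $F\subset A$ and open $U\supset A$ with $m(U\setminus F)<\delta$, compactness of $F$ gives $d(F,U^c)>0$, and the countability trick is applied to the level sets $S_r=\{d(\cdot,F)=r\}$ of the distance function to pick a fattening $F_r$ with null boundary sandwiched between $F$ and $U$. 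Each step checks out ($F_r$ open, $\partial F_r\subset S_r$, $A\Delta F_r\subset U\setminus F$), including the degenerate cases $U=X$ and $F=\emptyset$, which cause no trouble. What your approach buys is a self-contained, constructive argument that avoids citing the subalgebra-approximation theorem; what it costs is reliance on tightness (inner regularity by \emph{compact} sets, which you rightly flag as essential to get $d(F,U^c)>0$), whereas the paper's argument uses only the metric, the countability trick on spheres, and the abstract approximation lemma, with no compactness anywhere.
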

 \begin{proof}
 Consider the subalgebra $\mathcal{A}$ in $\mathscr{B}$ of  $m$-continuity sets, i.e., $B\in \mathcal{A}$ if and only if $B\in \mathscr{B}$ and $m(\partial B)=0$.
 Let us prove that the $\sigma$-algebra generated by the $\mathcal{A}$  is $\mathscr{B}$. To see this, fix any metric $d$ compatible with the topology of $X$.
 Note that for $x\in X$, $\partial{B}_r(x)\subset \{ y \in X: d(x,y)=r\}$ where $B_r(x)$ denotes the ball centered at $x$ and of radius $r>0$. In particular, $\partial{B}_r(x)\cap \partial B_s(s)=\emptyset$  for $r\not= s$. But in a space of finite measure, there can only be countably many pairwise disjoint sets of positive measure. Hence, $B_r(x) \in \mathcal{A}$  for all but at most countably many $r$. In particular, $\mathcal{A}$ contains a neighborhood base of $x$. This shows that $\sigma(\mathcal{A})=\mathscr{B}$ as desired. Now, since the subalgebra $\mathcal{A} \subset \mathscr{B}$ generates $\mathscr{B}$, according to the well-known result on approximation generating subalgebras (cf.~\cite[Theorem~1.1]{liu2006smooth}), for every $A\in \mathscr{B}$ and $\delta>0$, there is $B\in \mathcal{A}$ such that $m(A\Delta B) < \delta$. This concludes the proof of the lemma.
 \end{proof}

Now, we are in a position to prove Theorem~\ref{thm:book-generalized}.

\begin{proof}[Proof of Theorem~\ref{thm:book-generalized}]
Let us fix $A\in \mathscr{B}$ and $x\in X$, and consider a sequence $\{ x_n\} _{n\geq 1}$ converging to~$x$. We will prove that $P(x_n,A) \to P(x,A)$ as $n\to\infty$ concluding the strong Feller continuity of $P(x,A)$.
It follows from Portemanteau's theorem~(cf.~\cite[Theorem~13.16]{Klenke2008})
 that $P(x_n, \cdot )$ converges to $P(x, \cdot )$ in the weak$^*$ topology  if and only if $P(x_n, B) $ converges to $P(x, B)$ for any continuity set $B$ of $P(x, \cdot )$, i.e.~any $B\in \mathscr{B}$ satisfying $P(x, \partial B) =0$.
The former condition holds  because we assumed that $P(x,A)$ is Feller continuous, and
thus, recalling that $P(x, \cdot )$ is absolutely continuous with respect to $m$ by  assumption,
\begin{equation}\label{eq:0204c}
P(x_n, B) \to P(x, B) \quad \text{for any $B\in \mathscr{B}$ satisfying $m( \partial B) =0$.}
\end{equation}

Now, according to~\cite[Lemma~1]{HK1964}, the absolute continuity of a
probability measure $\nu$ with respect to $m$  is equivalent to the uniform absolute continuity of $\nu$ with respect to $m$. That is,
$\nu (B) \to 0$ as $m(B) \to 0$. Thus, for each $\varepsilon >0$ and $n\geq 1$, by Lemma~\ref{lem:0204} and the absolute continuity of $P(x_n ,\cdot )$ with respect to $m$,
 one can find  $B_{\varepsilon ,n}\in \mathscr{B}$ such that
\[
m(\partial B_{\varepsilon ,n}) =0 \quad \text{and}
\quad P(x_n, A \Delta B_{\varepsilon ,n}) < \frac{\varepsilon }{2^n}.
\]
Set
\[
C_\varepsilon=\bigcup _{n\geq 1}B_{\varepsilon ,n} \quad  \text{and} \quad  D_\varepsilon=\bigcap _{n\geq 1}B_{\varepsilon ,n}.
\]
\begin{claim} \label{claim-Yushi}
It holds that
\[
P(x, C_\varepsilon) -2\varepsilon  \leq \liminf _{n\to\infty}P(x_n, A) \leq \limsup _{n\to\infty} P(x_n, A) \leq P(x, C_\varepsilon).
\]
\end{claim}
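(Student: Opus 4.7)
The plan is to prove the upper bound $\limsup_n P(x_n,A) \leq P(x,C_\varepsilon)$ and the lower bound $\liminf_n P(x_n,A) \geq P(x,C_\varepsilon) - 2\varepsilon$ separately, the latter being derived from the former by applying it to the complement $A^c$.

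For the upper bound, note that $A \subseteq B_{\varepsilon,n} \cup (A \Delta B_{\varepsilon,n})$ and $B_{\varepsilon,n} \subseteq C_\varepsilon$, so
\[
P(x_n, A) \leq P(x_n, C_\varepsilon) + \varepsilon/2^n,
\]
reducing the task to proving $\limsup_n P(x_n, C_\varepsilon) \leq P(x, C_\varepsilon)$. Setting $C_{\varepsilon,N} = \bigcup_{k=1}^N B_{\varepsilon,k}$, the boundary satisfies $\partial C_{\varepsilon,N} \subseteq \bigcup_{k=1}^N \partial B_{\varepsilon,k}$, which is $m$-null; so \eqref{eq:0204c} gives $P(x_n, C_{\varepsilon,N}) \to P(x, C_{\varepsilon,N})$, and the latter increases to $P(x, C_\varepsilon)$ as $N \to \infty$ by monotone convergence. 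The remaining piece is to control the tail $P(x_n, C_\varepsilon \setminus C_{\varepsilon,N})$ uniformly in $n$.

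For the lower bound, I would apply the upper-bound argument to $A^c$ using the analogous sequence $B_{\varepsilon,n}^c$: these sets have the same $m$-null boundary and satisfy $P(x_n, A^c \Delta B_{\varepsilon,n}^c) = P(x_n, A \Delta B_{\varepsilon,n}) < \varepsilon/2^n$, while $\bigcup_n B_{\varepsilon,n}^c = D_\varepsilon^c$. This yields $\limsup_n P(x_n, A^c) \leq P(x, D_\varepsilon^c)$, and hence $\liminf_n P(x_n, A) \geq P(x, D_\varepsilon)$. To compare with $P(x, C_\varepsilon)$, I would strengthen the choice of $B_{\varepsilon,n}$ to additionally satisfy $P(x, A \Delta B_{\varepsilon,n}) < \varepsilon/2^n$, which is possible by the uniform absolute continuity of $P(x,\cdot)$ with respect to $m$ already invoked in the proof. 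A case analysis shows $C_\varepsilon \setminus D_\varepsilon \subseteq \bigcup_n (A \Delta B_{\varepsilon,n})$ (splitting according as $x \in A$ or $x \notin A$), so countable subadditivity gives $P(x, C_\varepsilon) - P(x, D_\varepsilon) < \varepsilon \leq 2\varepsilon$, closing the gap.

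The main obstacle I anticipate is controlling $\limsup_n P(x_n, C_\varepsilon \setminus C_{\varepsilon,N})$ uniformly as $N \to \infty$. Since the family $\{P(x_n,\cdot)\}_{n\geq 1}$ is not a priori uniformly absolutely continuous with respect to $m$, this tail cannot naively be passed to zero. A likely path is to exploit the structural inclusion $C_\varepsilon \setminus C_{\varepsilon,N} \subseteq \bigcup_{k>N} B_{\varepsilon,k}$ together with the geometric smallness of $\sum_{k>N} m(A \Delta B_{\varepsilon,k})$ under the strengthened construction, and then convert these $m$-bounds into bounds for $P(x_n,\cdot)$ through an outer-regularity/Urysohn argument: one squeezes $\overline{C_\varepsilon}$ between continuous bump functions approximating it from above and uses weak$^\ast$ convergence of $P(x_n,\cdot)$, finally passing from $\overline{C_\varepsilon}$ back to $C_\varepsilon$ via the absolute continuity $P(x,\cdot) \ll m$.
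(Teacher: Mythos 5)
Your plan tracks the paper's argument closely in its completed parts, but it stops short exactly where the paper's proof does its decisive step, and the patch you sketch for that step does not work. On the points you do settle: the reduction $P(x_n,A)\le P(x_n,C_\varepsilon)+\varepsilon/2^n$ is the same as the paper's; your remark that one must also arrange $P(x,A\Delta B_{\varepsilon,n})<\varepsilon/2^n$ is correct and matches what the paper implicitly uses (its construction only states control of $P(x_n,\cdot)$, yet the bounds $P(x,(A\Delta C_\varepsilon)\cup(A\Delta D_\varepsilon))\le 2\varepsilon$ and $P(x,A\Delta C_\varepsilon)\le\varepsilon$ require the bound at $x$); and your complementation trick for the lower bound, using $D_\varepsilon^c=\bigcup_n B_{\varepsilon,n}^c$ and then $P(x,C_\varepsilon)-P(x,D_\varepsilon)\le\varepsilon$, is a legitimate variant of the paper's route, which instead bounds $\limsup_n P(x_n,C_\varepsilon\setminus B_{\varepsilon,n})\le P(x,C_\varepsilon\setminus D_\varepsilon)\le 2\varepsilon$ and feeds it into $P(x_n,C_\varepsilon)\le P(x_n,C_\varepsilon\setminus B_{\varepsilon,n})+P(x_n,A\Delta B_{\varepsilon,n})+P(x_n,A)$.

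The genuine gap is the one you flag yourself: you never establish $\limsup_n P(x_n,C_\varepsilon)\le P(x,C_\varepsilon)$ (and your lower bound needs the same upper-semicontinuity for the union $\bigcup_n B_{\varepsilon,n}^c$), so both halves of your argument funnel into an unproved statement. The truncation idea only gives the easy half: indeed $\liminf_n P(x_n,C_\varepsilon)\ge P(x,C_\varepsilon)$ is free, because the open set $U=\bigcup_n \mathrm{int}\,B_{\varepsilon,n}\subset C_\varepsilon$ satisfies $m(C_\varepsilon\setminus U)=0$, hence $P(x,C_\varepsilon\setminus U)=0$, and the open-set Portemanteau inequality applies; but the tail $\limsup_n P(x_n,C_\varepsilon\setminus C_{\varepsilon,N})$ cannot be controlled from the available data, since for $k\ne n$ nothing is known about $P(x_n,A\Delta B_{\varepsilon,k})$ and the family $\{P(x_n,\cdot)\}_n$ has no uniform absolute continuity with respect to $m$. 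The Urysohn/outer-regularity route you propose fares no better: weak* convergence (equivalently the closed-set Portemanteau inequality) only yields $\limsup_n P(x_n,C_\varepsilon)\le P(x,\overline{C_\varepsilon})$, and returning from $\overline{C_\varepsilon}$ to $C_\varepsilon$ via $P(x,\cdot)\ll m$ requires $m(\overline{C_\varepsilon}\setminus C_\varepsilon)=0$, which is precisely the delicate point for a countable union: the inclusion $\partial(\bigcup_n B_n)\subset\bigcup_n\partial B_n$ is a finite-union fact, and closures of countable unions can acquire positive $m$-measure. The paper closes this step differently: it asserts $m(\partial C_\varepsilon)\le\sum_n m(\partial B_{\varepsilon,n})=0$ (and similarly for $C_\varepsilon\setminus D_\varepsilon$) and then applies \eqref{eq:0204c} directly to the full sets $C_\varepsilon$ and $C_\varepsilon\setminus D_\varepsilon$, with no truncation. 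Whether or not one is comfortable with that countable subadditivity of boundaries, that is the ingredient your write-up is missing; without it, or a construction of the $B_{\varepsilon,n}$ guaranteeing that $\partial C_\varepsilon$ (and $\partial D_\varepsilon$) is $m$-null, the claimed inequalities are not proved.
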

\begin{proof}
Since
\[
m(\partial (C_\varepsilon \setminus D_\varepsilon)) \leq m(\partial C_\varepsilon \cup \partial D_\varepsilon ) \leq
2 \sum _{n=1}^\infty m(\partial B_{\varepsilon ,n}) =0
\]
it follows from~\eqref{eq:0204c} that
\begin{align*}
\limsup _{n\to\infty} P(x_n, C_\varepsilon \setminus B_{\varepsilon ,n}) &\leq \limsup _{n\to\infty} P(x_n, C_\varepsilon\setminus D_\varepsilon) =P(x, C_\varepsilon\setminus D_\varepsilon) \\
&\leq P(x, (A\Delta C_\varepsilon) \cup (A \Delta D_\varepsilon )) \leq 2\sum _{n=1}^\infty \frac{\varepsilon}{2^n} = 2\varepsilon .
\end{align*}
Therefore, since
\begin{align*}
P(x_n, C_\varepsilon) &\leq  P(x_n, C_\varepsilon\setminus B_{\varepsilon ,n}) + P(x_n, A\cup B_{\varepsilon ,n}) \\
 &\leq  P(x_n, C_\varepsilon\setminus B_{\varepsilon ,n}) + P(x_n, A\Delta B_{\varepsilon ,n}) + P(x_n, A)
\end{align*}
by applying~\eqref{eq:0204c} again
(note that $m(\partial C_\varepsilon) \leq \sum _{n=1}^\infty m(\partial B_{\varepsilon ,n}) =0$)
we have
\[
\liminf _{n\to\infty}P(x_n, A) \geq \liminf _{n\to\infty}P(x_n, C_\varepsilon) -2\varepsilon -\lim_{n\to\infty} \frac{\varepsilon}{2^n} = P(x, C_\varepsilon) -2\varepsilon .
\]
On the other hand, since
\[
P(x_n, A) \leq P(x_n,  B_{\varepsilon ,n})  + P(x_n, A\Delta B_{\varepsilon ,n})  \leq P(x_n, C_\varepsilon) +  \frac{\varepsilon}{2^n}
\]
we have
\begin{align*}
\limsup _{n\to\infty} P(x_n, A) \leq P(x, C_\varepsilon)
\end{align*}
concluding the proof of the claim.
\end{proof}

Finally, observe that
\[
\left\lvert P(x,C_\varepsilon ) - P(x,A)\right\rvert \leq P(x,A\Delta C_\varepsilon) \leq \sum _{n=1}^\infty  \frac{\varepsilon }{2^n}  = \varepsilon .
\]
Thus, $P(x,C_\varepsilon) \to P(x,A)$ as $\varepsilon\to 0$. Hence, by Claim~\ref{claim-Yushi},
 we get that
 \[
 \lim _{n\to\infty} P(x_n, A) = \lim _{\varepsilon\to\infty} P(x, C_\varepsilon ) = P(x, A).
 \]
This concludes the proof of the theorem.
\end{proof}

\begin{rem} A converse of Theorem~\ref{thm:book-generalized} follows from~\cite[Proposition~12.1.7]{douc2018markov}. Also, from~\cite[Remark~2]{ito1964invariant}, by arguing as in the implication of  (b) from (a) in Proposition~\ref{prop:ultra-Feller+B},  we immediately get the following more specific converse: if $P(x,A)$ is a strong Feller continuous Markov transition probability on a Polish probability space $(X,\mathscr{B},m)$, then there exists a probability measure $m^*$ on $(X,\mathscr{B})$ such that $m$ is absolutely continuous with respect to $m^*$ and $P(x,A)$ satisfies (1) and (2) with $m^*$ instead~$m$.
\end{rem}

\begin{cor} \label{cor:rem}
Let $(X,\mathscr{B},m)$  be a compact Polish probability space.
Consider a continuous random map \mbox{$f:T\times X\to X$}  where $(T,\mathscr{A},p)$ is a probability space and
 let $P^n(x,A)$ be the associated $n$-th transition probability. Assume that there is $n_0\geq 1$ such that
 \[
 P^{n_0}(x,\cdot) \ \ \text{is absolutely continuous with respect to $m$ for all $x\in X$}.
 \]
Then, for each $x\in X$,  there exists $p(x,\cdot)\in D(m)$ such that
\[
P^{2n_0}(x,A)=p(x,y) \, dm(y) \quad \text{and} \quad x \in X\mapsto p(x,\cdot)\in L^1(m) \ \ \text{is continuous}.\]
Moreover, the annealed Perron--Frobenius operator $\mathcal{L}_f$ associated with $f$ is eventually compact.
\end{cor}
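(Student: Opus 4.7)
My plan is to verify the hypothesis of Theorem~\ref{thm:B-Markov} for the transition probability $P^n(x,A)$ associated with $\mathcal{L}_f$ (see Remark~\ref{rem:P=Lf}), namely that $P^{n_0}(x,A)$ is strong Feller continuous. Once this is established, the density representation at step $2n_0$ follows from the equivalence in Proposition~\ref{prop:ultra-Feller+B}, and the eventual compactness of $\mathcal{L}_f$ follows directly from Theorem~\ref{thm:B-Markov}. Hence both conclusions of the corollary will be reduced to establishing strong Feller continuity at step~$n_0$.

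First, I would show that $P^{n_0}(x,A)$ is Feller continuous. The natural way is to realize the $n_0$-th iteration as a random map driven by the product probability space $(T^{n_0},\mathscr{A}^{n_0},p^{n_0})$, with the associated map
\[
\widetilde{f}:T^{n_0}\times X\to X,\qquad \widetilde{f}(t_1,\dots,t_{n_0},x)=f_{t_{n_0}}\circ\cdots\circ f_{t_1}(x),
\]
whose associated one-step transition probability is precisely $P^{n_0}(x,A)$. Since $f$ is a continuous random map, for $p^{n_0}$-almost every $(t_1,\dots,t_{n_0})$ each $f_{t_i}$ is continuous in $x$ (countable intersections of full-measure sets remain of full measure), so $\widetilde{f}_{(t_1,\dots,t_{n_0})}$ is continuous in $x$ $p^{n_0}$-almost surely. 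Thus $\widetilde{f}$ is itself a continuous random map, and Proposition~\ref{prop:sinprueba} yields that $P^{n_0}(x,A)$ is Feller continuous.

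Next, I would invoke Theorem~\ref{thm:book-generalized} applied to $P^{n_0}$: since $(X,\mathscr{B},m)$ is a compact Polish probability space, $P^{n_0}(x,A)$ is Feller continuous (just established) and $P^{n_0}(x,\cdot)$ is absolutely continuous with respect to $m$ for every $x\in X$ (by hypothesis), the conclusion of that theorem gives that $P^{n_0}(x,A)$ is strong Feller continuous. This places us exactly in condition (a) of Proposition~\ref{prop:ultra-Feller+B} at step $n_0$ (the $m$-nonsingularity is immediate from absolute continuity). The proof of the implication (a)$\Rightarrow$(b)$\Rightarrow$(c) in that proposition, which goes through Remark~\ref{rem1} to double the index via the convolution argument, then produces a density $p(x,y)$ such that $P^{2n_0}(x,dy)=p(x,y)\,dm(y)$ and $x\mapsto p(x,\cdot)\in L^1(m)$ is continuous. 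Finally, the strong Feller continuity of $P^{n_0}$ together with Theorem~\ref{thm:B-Markov} (identifying $\mathcal{L}_f$ with the Markov operator induced by the one-step transition probability of $f$ via Remark~\ref{rem:P=Lf}) gives that $\mathcal{L}_f$ is eventually compact.

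The main obstacle is a bookkeeping one rather than a deep difficulty: carefully passing from the continuity of $f$ in $x$ (holding for $p$-a.e.\ $t$) to the joint statement that the composition $\widetilde{f}$ is a continuous random map on the product space, and correctly tracking how the index $n_0$ propagates — from strong Feller at $n_0$ to the continuous density at $2n_0$, and then to eventual compactness of $\mathcal{L}_f$ (which in the proof of Theorem~\ref{thm:B-Markov} is obtained at $\mathcal{L}_f^{2n_0}$ via Schauder's theorem applied to the adjoint operator). None of these steps require new ideas, only careful invocation of the machinery already developed in the excerpt.
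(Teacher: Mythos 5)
Your proposal is correct and follows essentially the same route as the paper's proof: Proposition~\ref{prop:sinprueba} plus Theorem~\ref{thm:book-generalized} to get strong Feller continuity of $P^{n_0}(x,A)$, then Remark~\ref{rem1}, Proposition~\ref{prop:ultra-Feller+B} and Theorem~\ref{thm:B-Markov} for the continuous density at step $2n_0$ and the eventual compactness of $\mathcal{L}_f$. The only difference is that you spell out explicitly (via the random map $\widetilde{f}$ on $T^{n_0}$) why $P^{n_0}$ is Feller continuous, a step the paper leaves implicit when citing Proposition~\ref{prop:sinprueba}.
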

\begin{proof}
According to Proposition~\ref{prop:sinprueba}, $P^{n_0}(x,A)$ is a Feller continuous Markov transition probability. Hence, by Theorem~\ref{thm:book-generalized},  we have that $P^{n_0}(x,A)$ is actually strong Feller continuous. Therefore, Remark~\ref{rem1}, Proposition~\ref{prop:ultra-Feller+B} and Theorem~\ref{thm:B-Markov} immediately imply the conclusion of the corollary.
\end{proof}

\section{Existence of invariant measures: proof of Theorem~\ref{GST}} \label{sec:GST}

In what follows, $(X,\mathscr{B},m)$ denotes any abstract probability space. {We will prove in this section a generalization of Theorem~\ref{GST} which will be used to prove Theorem~\ref{thm:D} in the next section. But first, to prove these results}  we will need  some
preliminaries that can be interesting by themselves.

\subsection{Fundamental lemma}
If $\phi$ belongs to $L^1(m)$, then the support of $\phi$, denoted by $\supp \phi$, is not defined in a   unique way, since $\phi$ can be represented by two functions whose values are different in a zero $m$-measuring set.
However, since it is common to simplify terminology, we consider  $\supp \phi$  as a set defined by the relation $\phi\not =0$.
Moreover, if we want to emphasize that a   relationship between sets holds except a set of zero $m$-measure, we say that it holds \emph{up to an $m$-null set}.
For instance, $A\subseteq B$ up to an $m$-null set if $m(A\setminus B)=0$.

\begin{lem}\label{supp}
Let $E$ be either $L^1(m)$ or $L^\infty(m)$. Let $P:E\to E$ be a positive linear bounded operator and consider $\phi, \psi \in E$  with $\psi\geq 0$.  It holds that
\begin{enumerate}
\item $ \supp P\phi \subset \supp P1_{\supp \phi }$ and $\supp P\psi=\supp P1_{\supp \psi }$,
\item if $\supp \phi \subset \supp \psi$, then $\supp P\phi\subset \supp P\psi$.
\end{enumerate}
\end{lem}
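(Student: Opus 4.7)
The plan is to derive (2) from (1), reduce the first inclusion in (1) to the equality case for nonnegative $\psi$, and handle that equality by bounding $\psi$ above and below by multiples of indicator functions, then passing the resulting inequalities through $P$ using positivity and taking limits.

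First, I would dispense with (2) assuming (1). Since $\supp\phi \subset \supp\psi$ up to an $m$-null set, one has $0\leq 1_{\supp\phi}\leq 1_{\supp\psi}$, so positivity of $P$ gives $P1_{\supp\phi}\leq P1_{\supp\psi}$, whence $\supp P1_{\supp\phi}\subset \supp P1_{\supp\psi}$. Combining with the first inclusion of (1) applied to $\phi$ and the second (equality) part of (1) applied to $\psi$, one obtains $\supp P\phi\subset \supp P1_{\supp\phi}\subset \supp P1_{\supp\psi}=\supp P\psi$. Similarly, the first inclusion in (1) follows from the equality applied to $|\phi|\geq 0$: writing $\phi = \phi_+-\phi_-$ and using $|P\phi|\leq P\phi_+ + P\phi_- = P|\phi|$, we get $\supp P\phi\subset \supp P|\phi| = \supp P1_{\supp|\phi|}=\supp P1_{\supp\phi}$.

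So the heart of the argument is the equality $\supp P\psi = \supp P1_{\supp\psi}$ for $\psi\geq 0$. For $\subset$, when $E=L^\infty(m)$ the elementary pointwise domination $\psi \leq \|\psi\|_\infty\, 1_{\supp\psi}$ and positivity of $P$ give $P\psi\leq \|\psi\|_\infty P1_{\supp\psi}$, which yields the inclusion. When $E=L^1(m)$, I would truncate $\psi_n\coloneqq \min(\psi,n)\leq n\,1_{\supp\psi}$, so that $P\psi_n\leq nP1_{\supp\psi}$, and pass to the limit using $\psi_n\to\psi$ in $L^1(m)$, which implies $P\psi_n\to P\psi$ in $L^1(m)$ by boundedness of $P$; extracting an $m$-a.e.\ convergent subsequence gives $\supp P\psi\subset \supp P1_{\supp\psi}$ up to an $m$-null set. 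For the reverse inclusion, I would introduce the level sets $A_n\coloneqq\{\psi\geq 1/n\}$, which satisfy $1_{A_n}\leq n\psi$ and $A_n\uparrow \supp\psi$ $m$-a.e. Positivity gives $P1_{A_n}\leq nP\psi$, so $\supp P1_{A_n}\subset \supp P\psi$ for every $n$; in $L^1(m)$, dominated convergence gives $1_{A_n}\to 1_{\supp\psi}$ in $L^1$-norm, hence $P1_{A_n}\to P1_{\supp\psi}$ in $L^1$ and $m$-a.e.\ along a subsequence, so $\supp P1_{\supp\psi}\subset \bigcup_n \supp P1_{A_n}\subset \supp P\psi$ up to an $m$-null set.

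The main obstacle is the reverse inclusion in the $L^\infty$ case, because $1_{A_n}\to 1_{\supp\psi}$ only pointwise $m$-a.e.\ and not in the $L^\infty$-norm, so one cannot directly transfer the convergence through $P$ by continuity. The clean way around this is to exploit order continuity: the sequence $g_n\coloneqq P1_{A_n}$ is monotone increasing and bounded above by $P1_{\supp\psi}$, so $g_n\uparrow g_\infty\leq P1_{\supp\psi}$ $m$-a.e., and one must check $g_\infty = P1_{\supp\psi}$ $m$-a.e., equivalently $P1_{\supp\psi\setminus A_n}\to 0$. In the paper's intended applications the operator $P$ on $L^\infty(m)$ will always be the adjoint of a Markov operator on $L^1(m)$, for which this order continuity is automatic (by monotone convergence applied through the duality $\int P^*\chi\cdot\varphi\,dm=\int \chi\cdot P\varphi\,dm$ with $\varphi\in D(m)$); I would therefore either invoke this adjoint structure, or, if the statement is to be proved in full generality, include order continuity as an implicit property of the operators in play.
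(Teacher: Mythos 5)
Your route is genuinely different from the paper's. The paper reduces everything to item (2), approximates $\phi$ and $\psi$ by simple functions $\phi_n=\sum_k a_k1_{F_k}$, $\psi_n=\sum_k b_k1_{G_k}$ chosen with $\supp\phi_n=\supp\phi$ and $\supp\psi_n=\supp\psi$, verifies the statement for these by linearity and positivity, and then passes to the limit using norm convergence $P\phi_n\to P\phi$, $P\psi_n\to P\psi$; you instead work with order bounds ($\psi\le\|\psi\|_\infty 1_{\supp\psi}$, the truncations $\min(\psi,n)$, the level sets $\{\psi\ge 1/n\}$) and only take limits in $L^1(m)$. For $E=L^1(m)$ your proof is complete and correct, and your limiting steps are in fact safer than the paper's: you only ever use that an $m$-a.e.\ limit of functions vanishing off a \emph{fixed} set vanishes off that set, whereas the paper's intermediate claim that $m\bigl((\supp P\psi_n)\Delta(\supp P\psi)\bigr)\to0$ follows from norm convergence is not valid in general (consider $P\psi_n=\tfrac1n 1_X\to 0$).

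The obstacle you flag for $E=L^\infty(m)$ is genuine and cannot be removed: positivity and boundedness alone do not give the reverse inclusion there, so your instinct to invoke $\sigma$-order continuity (equivalently, the adjoint structure $P^*$ of a Markov operator on $L^1(m)$, for which $\sup_nP^*1_{A_n}=P^*1_{\cup_nA_n}$ follows from duality and monotone convergence) is the right one. Concretely, on $X=[0,1]$ with Lebesgue measure let $q(f)=\lim_{\varepsilon\to0^+}\esssup_{(0,\varepsilon)}|f|$ and use Hahn--Banach to produce a positive linear functional $\Lambda\le q$ with $\Lambda(1_X)=1$; then $Pf=\Lambda(f)1_X$ is a positive bounded linear operator on $L^\infty(m)$, yet for $\psi(x)=x$ one gets $P\psi=\Lambda(\psi)1_X=0$ while $P1_{\supp\psi}=P1_X=1_X$, so the equality in (1) and item (2) both fail. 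Thus the $L^\infty$ half of the lemma really does need $\sigma$-order continuity of $P$ as an extra hypothesis; since every application of the lemma with $E=L^\infty(m)$ in the paper is to an adjoint $P^*$ (or $\mathcal L_f^{\,*}$), which is $\sigma$-order continuous, your proposed fix covers all uses --- just state the additional hypothesis (or the adjoint structure) explicitly rather than leaving it implicit.
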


\begin{proof} It is easy to see that (1) implies (2). Indeed, if $\supp \phi \subset \supp \psi$, we have that $1_{\supp \phi } \leq 1_{\supp \psi}$ and since $P$ is positive linear operator, $P1_{\supp \phi } \leq P 1_{\supp \psi}$. This implies that $\supp P1_{\supp \phi}\subset \supp P1_{\supp \psi}$. Now, (1) immediately implies the conclusion of~(2). But, actually, if (2) holds for any pair $\phi$ and $\psi$ in the assumptions of the lemma, we also get (1) as follows. Since $\supp \phi = \supp 1_{\supp \phi}$, by (2) we get that $\supp P\phi\subset \supp P1_{\supp \phi}$. Same argument and inclusion hold for $\psi$. But, in this case, taking into account that $\psi \geq 0$, we can also apply (2) to $\supp 1_{\supp \psi} = \supp \psi$ getting the other inclusion and proving (1).

In view of the above observation, the proof of the lemma is reduced to show~(2). To do this,
let $\phi$ and $\psi$ be as in the statement.
From the approximation by simple functions,
$\phi$ and $\psi$ can be written as the pointwise limit of a functions
\[
\phi_n=\sum_{k=1}^{N}a_k 1_{F_k} \quad \text{and} \quad \psi_n=\sum_{k=1}^{M}b_k 1_{G_k}
\]
respectively, where $N=N(n), M=M(n)\in \mathbb{N}$, $a_k=a_k(n) \not=0$, $b_k=b_k(n)>0$ and $F_k=F_k(n),G_k=G_k(n) \in \mathscr{B}$   such that $F_\ell \cap F_j =\emptyset$  and $G_\ell\cap G_j = \emptyset$ if $\ell\not =j$. Moreover, $\phi_n$ and $\psi_n$ can be chosen\footnote{For a non-negative measurable function $g$, we consider the mesh with width $2^{-n}$ from the level $2^{-n}$ up to $2^n$, i.e.,
\[
g_n = \sum_{k=0}^{2^{2n}-1} (k+1)2^{-n}1_{g^{-1}(k2^{-n},\,(k+1)2^{-n}]}+2^n1_{g^{-1}(2^n,\,\infty)}.
\]
Then we can see that $g_n$  pointwise converges to $g$ since $0\leq g_n(x)-g(x)<2^{-n}$ on the set where $g\leq 2^n$. Moreover,  $\supp g=\supp g_n$.
For a general $g$, we consider $g=g^+ - g^-$ where $g^+$ and $g^-$ are the positive and negative parts of $g$ which we approximate by simple functions $g^+_n$ and $g_n^-$ as before. Finally, it is simple to verify that $g_n=g^+_n-g^-_n$ converges in the norm $L^1(m)$ or $L^{\infty}(m)$ where appropriate, to $g$ and $\supp g_n = \supp g$.}
so that
\begin{enumerate}[label=(\roman*)]
  \item $\supp \phi_n = \supp \phi \subset \supp \psi = \supp \psi_n$, and
  \item $\phi_n$ and $\psi_n$ converges in norm of $E$ to $\phi$ and $\psi$, respectively.
\end{enumerate}
Since $P$ is a bounded operator, (ii) implies that $P\phi_n \to P\phi$ and $P\psi_n \to P\psi$. Then
\begin{enumerate}[resume, label=(\roman*)]
\item $m((\supp P\phi_n) \Delta (\supp P\phi))\to 0$ and $m((\supp P\psi_n)  \Delta (\supp P\psi))\to 0$ as $n\to\infty$.
\end{enumerate}
Note that by (i), $\phi_n$ and $\psi_n$ satisfy the assumption in (2). It is easy to see that if~(2) holds for $\phi_n$ and $\psi_n$, then (iii) implies that $\supp P\phi \subset \supp P\psi$ up to an $m$-null set. Hence, we reduce the proof to check the conclusion of (2) for $\phi_n$ and $\psi_n$.

Taking into account $a_k \not = 0$, $P1_{F_k} \geq 0$ and the linearity of $P$, we see that
\begin{equation}\label{eq:lem1}
\begin{aligned}
\supp P\phi_n=\supp
\sum_{k=1}^N a_k P1_{F_k}   &\subset \bigcup_{k=1}^N \supp P1_{F_k}  \\ &=  \supp  \sum_{k=1}^N P1_{F_k}=\supp P1_{\cup_{k=1}^N F_k} = \supp P1_{\supp \phi_n}.
\end{aligned}
\end{equation}
Now, using that $b_k> 0$,  $P1_{G_k} \geq 0$ and the linearity of $P$,
\begin{equation}\label{eq:lem3}
      \supp P1_{\supp \psi_n}=\supp P1_{\cup_{k=1}^M G_k}=\supp \sum_{k=1}^MP1_{G_k} = \supp \sum_{k=1}^M b_k P1_{G_k} = \supp P\psi_n.
\end{equation}
Observe that~\eqref{eq:lem1} and~\eqref{eq:lem3} implies (1) for $\phi_n$ and $\psi_n$. As we see at the beginning of the proof, (1) implies (2) and we conclude the lemma.
\end{proof}

\subsection{Restrictions of Markov operators}\label{sec:Markov-restriction}

Fix $S\in \mathscr{B}$ with $m(S)>0$. Let us consider the probability space $(S,\mathscr{B}_S,m_S)$ where
\[
 \mathscr{B}_S=\{A\in \mathscr{B}: A\subset S\} \quad \text{and} \quad m_S(A)=\frac{m(A)}{m(S)}, \ \ A\in \mathscr{B}_S \ \ \ \left(\text{i.e.,} \ dm_S=\frac{1_S}{m(S)}\, dm\right).
\]
Denote  $L^1(S,\mathscr{B}_S,m_S)$ by $L^1(m_S)$. Observe that $L^1(m_S) \hookrightarrow L^1(m)$ by means of the inclusion
\begin{equation}\label{eq:def-mS}
  1_S: \phi\in L^1(m_S) \mapsto 1_S\phi \in L^1(m) \quad \text{given by} \quad 1_S\phi(x)= \begin{cases} \phi(x) & x\in S, \\ 0 & x\in X\setminus S.\end{cases}
\end{equation}
Although we are using the same notation $1_S$ to designate the characteristic function of  $S$ and the inclusion by  the characteristic function, it should cause no confusion.
Now, given a Markov operator \mbox{$P:L^1(m)\to L^1(m)$,} we define an operator
\begin{equation}\label{def:P_S}
   P_S: L^1(m_S) \to L^1(m_S),  \qquad P_S\phi = 1_S\cdot P(1_S\phi)  \quad  \text{for} \ \phi\in L^1(m_S).
\end{equation}
It is not difficult to see that $P_S$ is a positive contraction of $L^1(m_S)$, that is, $P_S\phi\geq 0$ if $\phi\geq 0$ and $\|P_S\|\leq 1$. Taking advantage of the abuse of notation, we can extend $P_S$ to $L^1(m)$ as follows:
\[
 P_S\phi = 1_S \cdot P(1_S \cdot \phi) \quad  \text{for} \  \phi\in L^1(m).
\]
When no confusion can arise, we will omit the dot in the above expression and in~\eqref{def:P_S}.  We also identify $L^1(m_S)$ with
\[
1_S(L^1(m_S))=\{\phi \in L^1(m): \supp \phi \subset S\} \subset L^1(m).
\]

\begin{lem} \label{lem:P_S-Markov} Let $P:L^1(m)\to L^1(m)$ be a Markov operator and consider $S\in\mathscr{B}$ with $m(S)>0$.
If $\supp P1_S \subset S$ up to an $m$-null set, then  $P_S:L^1(m_S) \to L^1(m_S)$ defined in~\eqref{def:P_S} is a Markov operator and
\[
P_S\phi=P(1_S\phi) \quad \text{for all $\phi\in L^1(m)$}.
\]
\end{lem}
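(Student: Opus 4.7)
The plan is to first establish the identity $P_S\phi = P(1_S\phi)$ as an immediate consequence of the support hypothesis via Lemma~\ref{supp}, and then to verify the three defining properties of a Markov operator on the induced space $L^1(m_S)$.

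First I would prove $P_S\phi = P(1_S\phi)$ for every $\phi \in L^1(m)$. Since $\supp(1_S\phi) \subset S = \supp 1_S$ up to an $m$-null set and $1_S \geq 0$, Lemma~\ref{supp}(2) applied to the positive bounded operator $P$ yields $\supp P(1_S\phi) \subset \supp P1_S$ up to an $m$-null set. The standing hypothesis $\supp P1_S \subset S$ then gives $\supp P(1_S\phi) \subset S$ up to an $m$-null set, so $P(1_S\phi)$ vanishes $m$-a.e.\ on $X \setminus S$ and hence $1_S \cdot P(1_S\phi) = P(1_S\phi)$ as elements of $L^1(m)$. This proves the identity and simultaneously ensures that $P_S$ maps $L^1(m_S)$ into itself under the identification $L^1(m_S) \simeq \{\phi\in L^1(m):\supp\phi\subset S\}$ recorded after~\eqref{def:P_S}.

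Next I would verify the three Markov axioms for $P_S$ on $L^1(m_S)$. Linearity is immediate from the linearity of $P$ and of multiplication by $1_S$. For positivity, if $\phi \geq 0$ $m_S$-a.e., then $1_S\phi \geq 0$ $m$-a.e., so $P(1_S\phi) \geq 0$ $m$-a.e.\ by positivity of $P$, whence $P_S\phi \geq 0$ $m_S$-a.e. For integral preservation, using the identity proved in the previous step,
\begin{align*}
\int P_S\phi \, dm_S
&= \frac{1}{m(S)}\int 1_S \cdot P_S\phi \, dm
= \frac{1}{m(S)}\int P(1_S\phi)\, dm \\
&= \frac{1}{m(S)}\int 1_S\phi\, dm
= \int \phi\, dm_S,
\end{align*}
where the third equality uses that $P$ is Markov on $L^1(m)$ and therefore preserves the $m$-integral.

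The only real subtlety is in tracking the ``up to an $m$-null set'' qualifiers carefully, so that the cancellation $1_S \cdot P(1_S\phi) = P(1_S\phi)$ and the support inclusions hold as genuine identities in $L^1(m)$ rather than only for particular pointwise representatives. Once Lemma~\ref{supp} is invoked cleanly, the remainder is routine bookkeeping and I do not expect any significant obstacle.
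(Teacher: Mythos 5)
Your proof is correct and follows essentially the same route as the paper: invoke Lemma~\ref{supp} with $\supp 1_S\phi \subset S = \supp 1_S$ to get $\supp P(1_S\phi)\subset \supp P1_S\subset S$, hence $1_S P(1_S\phi)=P(1_S\phi)$, and then check integral preservation by the same chain of equalities. The explicit verification of linearity and positivity is extra bookkeeping the paper omits (it is noted earlier that $P_S$ is always a positive contraction), but it does not change the argument.
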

\begin{proof}
Let $\phi \in L^1(m)$. Note that $\supp 1_S\phi \subset S = \supp 1_S$. Thus, by Lemma~\ref{supp}, $\supp P(1_S\phi) \subset \supp P1_S \subset S$ up to an $m$-null set. In particular,  $1_S  P(1_S\phi) = P(1_S\phi)$.  Then, for any $\phi \in L^1(m)$,
\[
\int P_S\phi \, dm_S = \frac{1}{m(S)} \int_S P(1_S\phi) \, dm = \frac{1}{m(S)}\int P(1_S\phi) \, dm  =\frac{1}{m(S)} \int_S \phi \, dm =\int \phi \, dm_S.
\]
Therefore, $P_S$ is a Markov operator.
\end{proof}

The implication in the previous lemma is actually an equivalence. To see this observation and other interesting equivalent conditions see Proposition~\ref{prop:P_h-Markov} in the Appendix~\ref{sec:Markov-restriction2} where the theory of the restriction of a Markov operator is extended and clarified.
For the following result, recall the notion of weak almost periodicity {\tt(WAP)} in Definition~\ref{wap}.  As mentioned, according to~\cite[Theorem~3.1]{Toyokawa2020},  {\tt(WAP)} is equivalent to the existence of an invariant density with the maximal support.

\begin{prop}  \label{prop:wap} Let $P:L^1(m)\to L^1(m)$ be a Markov operator and consider $h\in D(m)$ such that $Ph=h$ and set $S=\supp h$. Then, $P_S:L^1(m_S) \to L^1(m_S)$ is a weakly almost periodic Markov operator. Moreover, $P_Sh=h$ and $P_S\phi = P(1_S\phi)$ for all $\phi \in L^1(m)$.
\end{prop}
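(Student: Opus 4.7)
The plan is to establish the three assertions in the order: (i) the identity $P_S\phi = P(1_S\phi)$ (which forces $P_S$ to be Markov), (ii) the invariance $P_Sh=h$, and (iii) the weak almost periodicity, which will follow almost for free from (ii) combined with the characterization of weakly almost periodic operators via existence of an invariant density with maximal support.

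The whole proof hinges on the preliminary claim
\[
\supp P1_S \subset S \quad\text{up to an $m$-null set,}
\]
because once this is in hand, Lemma~\ref{lem:P_S-Markov} immediately delivers that $P_S$ is a Markov operator on $L^1(m_S)$ and that $P_S\phi = P(1_S\phi)$ for every $\phi\in L^1(m)$. To prove this claim I would argue by duality rather than by a direct application of Lemma~\ref{supp} (whose support-comparison goes in the wrong direction here). Using $Ph=h$ and $\supp h\subset S$,
\[
0 = \int h\cdot 1_{X\setminus S}\,dm = \int (Ph)\cdot 1_{X\setminus S}\,dm = \int h\cdot P^*1_{X\setminus S}\,dm.
\]
Since $P^*1_{X\setminus S}\geq 0$ and $h>0$ on $S$, this forces $P^*1_{X\setminus S}=0$ $m$-a.e.\ on $S$. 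Then
\[
\int_{X\setminus S}P1_S\,dm = \int P^*1_{X\setminus S}\cdot 1_S\,dm = 0,
\]
and since $P1_S\geq 0$, we conclude $P1_S = 0$ $m$-a.e.\ on $X\setminus S$, i.e.\ $\supp P1_S\subset S$ up to an $m$-null set.

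With the claim established, (i) and (ii) are now immediate: Lemma~\ref{lem:P_S-Markov} gives that $P_S$ is a Markov operator on $L^1(m_S)$ with the asserted formula, and for (ii), since $\supp h\subset S$ we have $1_Sh=h$, hence
\[
P_Sh = 1_S\cdot P(1_S h) = 1_S\cdot Ph = 1_S\cdot h = h.
\]

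For weak almost periodicity, I use the criterion of~\cite[Theorem~3.1]{Toyokawa2020}: a Markov operator $Q$ on a $L^1$-space is weakly almost periodic if and only if it admits an invariant density with maximal support. The candidate density is $\tilde h \coloneqq m(S)\, h$, viewed as an element of $L^1(m_S)$. It satisfies $\int \tilde h\, dm_S = \int_S h\, dm = 1$, so $\tilde h\in D(m_S)$; and $P_S\tilde h = m(S) P_Sh = m(S)h = \tilde h$. Its support in $(S,\mathscr{B}_S,m_S)$ is all of $S$ up to an $m_S$-null set (since $\supp h = S$), so the maximal-support condition \eqref{max.supp} reads
\[
\lim_{n\to\infty} P_S^{n*}1_S(x) = 1 \quad \text{for $m_S$-a.e.\ $x\in S$,}
\]
which is trivial because $1_S$ is the constant function $1$ on $S$ and $P_S^*1_S = 1_S$ by the Markov property of $P_S$. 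Invoking~\cite[Theorem~3.1]{Toyokawa2020} concludes that $P_S$ is weakly almost periodic.

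The only step that is not essentially bookkeeping is the support claim $\supp P1_S\subset S$, and this is where a naive appeal to Lemma~\ref{supp} fails (it would only yield $S\subset \supp P1_S$); the duality argument above circumvents this and is really the heart of the proposition.
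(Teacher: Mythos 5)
Your proof is correct, and its overall architecture (establish $\supp P1_S\subset S$, invoke Lemma~\ref{lem:P_S-Markov}, verify $P_Sh=h$, then conclude weak almost periodicity via the maximal-support criterion of~\cite[Theorem~3.1]{Toyokawa2020}) is the same as the paper's. The one place you genuinely diverge is the support claim, and there your closing remark misreads Lemma~\ref{supp}: you are right that item~(2) (or the inclusion half of item~(1)) only gives $\supp Ph\subset \supp P1_S$, i.e.\ $S\subset\supp P1_S$, but item~(1) also contains the \emph{equality} $\supp P\psi=\supp P1_{\supp\psi}$ for nonnegative $\psi$, and this is exactly what the paper applies to $\psi=h$: together with $Ph=h$ it yields $\supp P1_S=\supp Ph=S$ up to an $m$-null set, which is even stronger than the inclusion you need. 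So the appeal to Lemma~\ref{supp} does not fail; your duality computation ($\int h\,P^*1_{X\setminus S}\,dm=0$ forces $P^*1_{X\setminus S}=0$ a.e.\ on $S$, hence $\int_{X\setminus S}P1_S\,dm=0$) is a perfectly good, self-contained alternative that bypasses the simple-function approximation underlying Lemma~\ref{supp}, but it buys only the inclusion rather than the equality. A minor point in your favour: you are more careful than the paper about normalization, noting that the invariant density of $P_S$ in $D(m_S)$ is $m(S)\,h$ rather than $h$ itself; this is a harmless scaling that does not affect the support or the conclusion, and the rest of your verification (that $P_S^*1_S=1_S$ makes the maximal-support condition trivial) matches the paper's.
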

\begin{proof}
Note that $\supp h = S = \supp 1_S$ and by Lemma~\ref{supp} and the $P$-invariance of $h$, it follows that $S=\supp h = \supp Ph  = \supp P1_S$ up to an $m$-null set as well as $m(S)>0$. Hence, Lemma~\ref{lem:P_S-Markov} implies that $P_S$ is a Markov operator and $P_S \phi = P(1_S\phi)$ for all $\phi \in L^1(m)$. To prove that $P_S$ is also weakly almost periodic, it suffices to see that $P_Sh=h$ since $P_S^*1_S =1_S$  by the Markov property and thus $h$ has trivially the maximal support (as a fixed point of $P_S$). But this is proved as follows:
$
    P_S h = P(1_S h) = Ph =h.
$
\end{proof}

\subsection{Proof of  Theorem~\ref{GST}}
In this subsection, we prove a more general version of Theorem~\ref{GST}. In particular, we remark that the new item~\eqref{item5} below was shown in~\cite{socala1988existence} to be a sufficient condition for the existence of a $P$-invariant density, and below we show it is indeed also a necessary condition. 
In this context, we also refer to the work of Ito~\cite{ito1964invariant}, who investigates similar necessary and sufficient conditions for the existence of a finite invariant measure under a Markov process, which is equivalent to the reference measure.

\begin{thm} \label{cor:E} Let $(X,\mathscr{B},m)$ be an abstract probability space and consider a Markov operator $P: L^1(m) \to L^1(m)$. Then, the following conditions are  equivalent:
\begin{enumerate}
\item \label{item1}
There exists an invariant density for $P$;
\item \label{item2}
There exist $\alpha\in(0,1)$ and $\delta>0$ such that
\[
\sup_{n\ge0}\int_AP^n1_X \, dm<\alpha \quad \text{for any $A\in\mathscr{B}$ with $m(A)<\delta$};
\]
\item \label{item3}
There exist $\alpha\in(0,1)$ and $\delta>0$ such that
\[
\sup_{n\ge0}\int_AA_n1_X \, dm<\alpha \quad \text{for any $A\in\mathscr{B}$ with $m(A)<\delta$};
\]
\item \label{item4}
There exist $\alpha\in(0,1)$ and $\delta\geq 0$ such that
\[
\inf_{n\ge1}\int_A A_n1_X \, dm>1-\alpha \quad \text{for any $A\in\mathscr{B}$ with $m(A)>\delta$}.
\]
\item \label{item5} There exist $\alpha\in(0,1)$ and $\delta>0$ such that
\[
\limsup_{n\to \infty} \int_AP^n1_X \, dm<\alpha \quad \text{for any $A\in\mathscr{B}$ with $m(A)<\delta$};
\]
\item \label{item7} There exist $\alpha\in(0,1)$ and $\delta>0$ such that
\[
\limsup_{n\to \infty} \int_A A_n1_X \, dm<\alpha \quad \text{for any $A\in\mathscr{B}$ with $m(A)<\delta$};
\]
\end{enumerate}
The function $1_X$ in (5) and (6) could be substituted by any arbitrary density $h\in D(m)$.
Moreover, the equivalence also holds by taking $\alpha=1$ in all the above items.
\end{thm}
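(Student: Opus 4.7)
I would close the cycle $(2)\Rightarrow(3)\Rightarrow(6)\Rightarrow(1)\Rightarrow(2)$, together with the side implications $(2)\Rightarrow(5)\Rightarrow(6)$ and the equivalence $(3)\Leftrightarrow(4)$. The equivalence with the $\alpha=1$ and the $h$-substituted versions is handled at the end.

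\textbf{Easy implications.} Since $A_n 1_X = \frac{1}{n}\sum_{k=0}^{n-1}P^k 1_X$ is a Cesàro average of a nonnegative sequence, $\int_A A_n 1_X\,dm \leq \sup_{0\leq k<n}\int_A P^k 1_X\,dm$, which gives $(2)\Rightarrow(3)$. The trivial bounds $\limsup\leq\sup$ and $\limsup\tfrac{1}{n}\sum_{k<n} a_k\leq\limsup a_k$ yield $(2)\Rightarrow(5)$, $(3)\Rightarrow(6)$ and $(5)\Rightarrow(6)$. For $(3)\Leftrightarrow(4)$, the Markov property gives $\int A_n 1_X\,dm=1$, so the condition for small $A$ is equivalent (by complementation $A\mapsto X\setminus A$, with parameter shift $\delta\leftrightarrow 1-\delta$, $\alpha\leftrightarrow 1-\alpha$) to the condition for large $A$.

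\textbf{Main step $(6)\Rightarrow(1)$.} Put $f_n=A_n 1_X \in D(m)$, a sequence bounded in $L^1(m)$ with $\|f_n\|=1$. The plan is to extract a weak subsequential limit $h\in L^1(m)$ and verify that $h\in D(m)$ is $P$-invariant. For weak precompactness I would invoke the Dunford--Pettis theorem; its uniform integrability hypothesis must be bootstrapped from~(6), which only asserts a limsup bound strictly below $1$ rather than a vanishing bound. The bootstrap argument (in the spirit of Straube's classical proof) proceeds by contradiction: were $\{f_n\}$ not uniformly integrable, there would exist $\beta>0$, a subsequence $f_{n_k}$, and sets $A_k$ with $m(A_k)\to 0$ satisfying $\int_{A_k}f_{n_k}\,dm\geq \beta$; one then stacks layer sets $\{f_{n_k}>M\}$ with $M\to\infty$ to concentrate mass arbitrarily close to $1$ on arbitrarily small sets, contradicting~(6). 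Granting uniform integrability, extract a subsequence $f_{n_k}\rightharpoonup h$; weak continuity of positive bounded linear functionals preserves $h\geq 0$ and $\int h\,dm=1$, so $h\in D(m)$. Invariance follows from the telescoping estimate
\[
\|Pf_n - f_n\| \;=\; \tfrac{1}{n}\|P^n 1_X - 1_X\| \;\leq\; \tfrac{2}{n} \longrightarrow 0,
\]
combined with the weak continuity of the bounded operator $P$: $Pf_{n_k}\rightharpoonup Ph$ and $Pf_{n_k}\rightharpoonup h$, whence $Ph=h$.

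\textbf{Classical step $(1)\Rightarrow(2)$ and moreover parts.} Given a $P$-invariant density $h$, replaced if necessary by an invariant density of maximal support on the conservative part (Chacon--Ornstein/Hopf decomposition, in the spirit of Proposition~\ref{prop:wap}), decompose $1_X=(1_X\wedge Mh)+(1_X-Mh)_+$ for large $M$. The first piece is dominated by the $P$-invariant function $Mh$, hence $P^n(1_X\wedge Mh)\leq Mh$ and $\int_A P^n(1_X\wedge Mh)\,dm\leq M\int_A h\,dm$, which is uniformly absolutely continuous in $A$. The residue $(1_X-Mh)_+$ has $L^1$-norm tending to $0$ as $M\to\infty$ thanks to the maximal-support property, so its $P^n$-images contribute uniformly at most $\|(1_X-Mh)_+\|$ to any integral. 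Choosing $M$ large and $\delta$ small gives $\sup_n\int_A P^n 1_X\,dm<\alpha$ for $m(A)<\delta$, which is in fact stronger than~(2). For the moreover parts: the $\alpha=1$ version of (6) still provides the limsup-strictly-below-$1$ hypothesis used in the main step, so the whole cycle works, and then one recovers the $\alpha<1$ version from~(1). The substitution $1_X\rightsquigarrow h\in D(m)$ in (5) and (6) is justified because the main step used only $f_n\in D(m)$ (true for $A_n h$ as well), while $(1)\Rightarrow$ the $h$-version is trivial when $h$ is the invariant density itself ($P^n h=h$).

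\textbf{Main obstacle.} The delicate technical point is the bootstrap from condition (6) ("limsup of $\int_A f_n$ strictly less than $1$ for $m(A)<\delta$") to the full uniform integrability ("$\int_A f_n\to 0$ uniformly in $n$ as $m(A)\to 0$") required by Dunford--Pettis. Executing the layered-truncation argument in the abstract measure setting (without topological tools such as Prokhorov) is the heart of the proof.
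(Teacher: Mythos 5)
There is a genuine gap, and it sits exactly where you locate "the heart of the proof": the bootstrap from condition (6) to uniform integrability of $\{A_n1_X\}_{n\geq 1}$ is not merely delicate, it is false. Condition (6) only forbids the Ces\`aro averages from asymptotically carrying mass at least $\alpha$ on small sets; it does not prevent them from permanently carrying a fixed fraction $\beta<\alpha$ of their mass on sets of arbitrarily small measure, and in that situation no subsequence of $A_n1_X$ converges weakly in $L^1(m)$ (by Dunford--Pettis, weak precompactness \emph{is} uniform integrability). The paper's own example (d) in Section~\ref{sss:ifs} (Proposition~\ref{prop:figb}) exhibits this: $X=X_-\sqcup X_+$ with $\tau^\pm_i$ preserving each piece, an absolutely continuous invariant density supported in $X_+$, and the mass of $P^n1_{X_-}$ concentrating near a point of $X_-$. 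There $\limsup_n\int_A A_n1_X\,dm\leq m(X_-)+\varepsilon'<\alpha$ for all small $A$ (so (1)--(6) hold, with $\alpha$ chosen between $m(X_-)$ and $1$), yet $\int_{A}A_n1_X\,dm\to m(X_-)>0$ along shrinking neighbourhoods of the attracting point, so $\{A_n1_X\}$ is not uniformly integrable and has no weak limit point in $L^1(m)$. Hence your layered-truncation contradiction cannot "amplify" the concentrated mass up to $\alpha$: the remaining mass is equidistributed by the invariant density on $X_+$ and never concentrates. The implication (6)$\Rightarrow$(1) needs a different mechanism. The paper closes the cycle in two ways: (4)$\Rightarrow$(1) via Krengel's theorem (if no invariant density exists there is a fully supported $\psi\geq 0$ in $L^\infty(m)$ with $\|A_n^*\psi\|_\infty\to 0$, contradicting (4)), and (5)/(6)$\Rightarrow$(1) by adapting Soca{\l}a's argument \cite{socala1988existence}: one defines $\lambda(\varphi)=\mathrm{Lim}\int A_n1_X\cdot\varphi\,dm$ with a Banach limit, checks $\lambda(P^*\varphi)=\lambda(\varphi)$, and uses the limsup bound $<\alpha$ to guarantee that the invariant finitely additive measure so obtained has a nontrivial countably additive, $m$-absolutely continuous part yielding the invariant density. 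Your weak-compactness extraction cannot be repaired into either of these.

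Two smaller remarks. In your step (1)$\Rightarrow$(2), the residue $\|(1_X-Mh)_+\|$ does \emph{not} tend to $0$ as $M\to\infty$ unless $h$ is fully supported (which one cannot arrange in general, again by the direct-sum example); it decreases to $m(X\setminus\supp h)$. Since $m(\supp h)>0$ this still gives (2) with some $\alpha<1$ -- essentially the paper's own argument, which splits $P^n1_X=P^n1_S+P^n1_{X\setminus S}$ and uses weak almost periodicity of the restricted operator $P_S$ -- but your claim that you obtain something "stronger than (2)" (i.e.\ arbitrarily small $\alpha$) is false for the same reason as above. Finally, your easy implications, (3)$\Leftrightarrow$(4) by complementation (modulo the implicit requirement $\delta<1$ in (4), which the paper's proof also uses), and the reduction of the $h$-substituted and $\alpha=1$ variants are fine in spirit, but they all hang on the broken main step, and the $\alpha=1$ case is in fact the precise hypothesis of Soca{\l}a's theorem, which is how the paper disposes of it.
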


\begin{proof}
We first prove~(1) implies~(2).
Let $g\in D(m)$ be an invariant density of $P$, that is, $Pg=g$.
Hence, Proposition~\ref{prop:wap} implies that $P_S:L^1(m_S) \to L^1(m_S)$ is a weak almost periodic Markov operator where $S=\supp g$.
According to~\cite[Theorem~3.1]{Toyokawa2020}, this is  equivalent to weak compactness of $\{P_S^n1_S\}_{n\geq 0}$ in $L^1(m_S)$.
Hence, by Dunford--Pettis characterization, for each $\varepsilon>0$, there exists $\delta>0$ such that
\begin{equation} \label{eq:DP} \sup_{n\geq 0} \int_{A}P_S^n1_S \, dm_S <\varepsilon \quad \text{for any $A\in \mathscr{B}_S$ with $m_S(A)<\delta$}. \end{equation}
Note that clearly $m(S)>0$ since $g$ is a density function. Thus, $m(X\setminus S) <1$ and hence,  we can
take $\varepsilon>0$ small enough such that $\alpha\coloneqq\varepsilon \cdot m(S) +m(X\setminus S) <1$. Then, by Dunford--Pettis characterization, there is $\delta>0$ such that~\eqref{eq:DP} holds. Let $\delta'=\delta \cdot m(S)>0$. Observe that, according to Proposition~\ref{prop:wap},  $P_S 1_S = P1_S$  and then, $P^n_S 1_S = P^n 1_S$ and $\supp P^n1_S \subset S$ up to an $m$-null set for all $n\geq 0$. Finally, for any $A\in \mathscr{B}$ with $m(A)<\delta'$ it holds that $m_S(A\cap S) <\delta$ and then,
\begin{align*}
\sup_{n\ge 0}\int_A P^n1_X \, dm  &= \sup_{n\ge0} \left(\int_A P^n 1_S \, dm + \int_A P^n1_{X\setminus S} \, dm\right) \\
&=\sup_{n\ge0} \left( m(S) \cdot \int_{A\cap S} P^n_S 1_S \, dm_S + \int_A P^n1_{X\setminus S} \, dm\right)\\
&< m(S) \cdot \varepsilon + \sup_{n\ge 0}\int_{X\setminus S}  P^{n*}1_X \, dm\\
&<\varepsilon\cdot m(S) +m(X\setminus S)=\alpha.
\end{align*}

The implication from the condition~(2) to~(3) follows immediately taking into account that
\[
\int_A A_n1_X \, dm = \frac{1}{n} \sum_{i=0}^{n-1} \int_A P^i1_X \, dm < \frac{1}{n} \sum_{i=0}^{n-1}  \alpha =\alpha
\]
for all $n\geq 1$ and $A\in\mathscr{B}$ with $m(A)<\delta$.

{We show the implication from~(3) to~(4).
Observe that since $P$ is a Markov operator, then $P^*1_X=1_X$ and thus,
\[
\int A_n 1_X \, dm = \int A^*_n 1_X \, dm = \frac{1}{n}\sum_{k=0}^{n-1} \int P^{k*}1_X \,dm = 1.
\]
From this and assumption~(3), we have
\begin{align*}
\inf_{n\geq 1}\int_{X\setminus E}A_n1_X \, dm
 = \inf_{n\geq 1} \int A_n 1_X \, dm - \sup_{n\geq 1} \int_E A_n1_X \, dm
> 1 -\alpha
\end{align*}
for all  $E\in\mathscr{B}$ with $m(E)<\delta$. Setting $\delta$ in~(4) as $1-\delta$ in~(3), we get the desired implication.
}

Next we show the implication from~(4) to~(1).
Suppose contrarily that there is no invariant density for $P$.
Then, according to~\cite[Theorem~4.6 and Lemma~4.5]{Krengel}, there exists $\psi\in L^\infty(m)$ with $\psi\geq 0$, fully supported on $X$ (i.e., $\supp \psi =X$) and $\|A^*_n\psi\|_{\infty} \to 0$ as $n\to 0$. Now, since $h$ is fully supported on $X$, for $\delta\geq 0$ as in the condition~(4), we can find $\eta>0$ such that the set $E=\{h\geq \eta\}$ satisfies $m(E)>\delta$.  Since $\psi\ge\eta$ on $E$, $ \eta^{-1} \psi \geq 1_E$. Hence,
\begin{align*}
\int_{E} A_n1_X  \, dm
&=\int 1_{E} A_n 1_{X} \, dm \leq \int \frac{\psi}{\eta} \, A_n 1_{X} \, dm
=\frac{1}{\eta}\int  A_n^*\psi \, dm
\le\frac{1}{\eta}\lV A_n^*\psi \rV_{\infty}
\to 0
\end{align*}
as $n\to\infty$.
This contradicts~(4) and the proof is done.
Finally, observe that the previous arguments work assuming $\alpha=1$ in all of the items.

To complete the proof we will see now the equivalence with~(5), (6).
Clearly (2)  implies (5).
Also, (5) immediately implies (6). On the other hand, from~\cite[Theorem~1]{socala1988existence}, we have that~(6) implies~(1).
Moreover, a slight modification of the argument of~\cite[Theorem~1]{socala1988existence} also shows that~(6) implies~(1).
Indeed, assume (6) instead of (5) (which is called (C) in~\cite{socala1988existence} and where $1_X$ could be substituted by any density in $D(m)$). Define
\[
\lambda(\varphi )= \mathrm{Lim} \, \int A_n1_X \, \varphi\, dm \quad \text{for $\varphi\in L^\infty(m)$}
\]
where $\mathrm{Lim}$ is a Banach limit (i.e.~replace $P^n$ in the definition of $\lambda (h)$ of~\cite{socala1988existence} with $A_n$).
The rest of the proof in~\cite[Theorem~1]{socala1988existence} literally  works to conclude~(1) in our case, except proving $\lambda(P^*\varphi)=\lambda(\varphi)$ for each $\varphi \in L^\infty (m)$,
that is the unique calculation that one needs to do.
To see it,
\begin{align*}
\lambda(P^*\varphi) &= \mathrm{Lim}  \, \int  A_n1_X\, P^*\varphi\,dm
=  \mathrm{Lim} \, \int \frac{1}{n}\sum_{i=0}^{n-1} P^{i+1}1_X \, \varphi \, dm\\
& =\mathrm{Lim} \, \int \left( A_{n} 1_X + \frac{1}{n}P^n1_X - \frac{1}{n} 1_X\right) \, \varphi \, dm = \lambda (\varphi) + \mathrm{Lim} \,  \frac{1}{n} \int ( P^n1_X - 1_X)\, \varphi \, dm.
\end{align*}
In the last equality we used the linearity of Banach limits.
On the other hand,
\[
\left\vert \int ( P^n1_X - 1_X)\, \varphi \, dm \right\vert \leq \Vert \varphi\Vert _{\infty} \ \int (P^n1_X + 1_X )\, dm =   2\Vert \varphi\Vert _{\infty}
\]
since $P$ is a Markov operator.
Hence we get $\frac{1}{n} \int ( P^n1_X - 1_X)\, \varphi \, dm \to 0$ as $n\to \infty$, and obtain the desired equality $\lambda(P^*\varphi)=\lambda(\varphi)$.
The version for $\alpha=1$ follows since, actually, the assumption in~\cite[Theorem~1]{socala1988existence} is (5) for $\alpha=1$ and where $1_X$ could be substituted by any density in $D(m)$.  This completes the proof.
\end{proof}

The implications between~(1)--(4)  in Theorem~\ref{cor:E} strongly require that $P$ is a Markov operator.
However, the equivalence between (1) and (4)  for $\alpha=1$ holds even under the weaker assumption that the linear positive operator $P$ is just a contraction (i.e., $\|P\|_{\rm op}\leq 1$).  As a final result of this section, we will prove it as follows.  Compare with~\cite[Theorem~4.2 and Corollary~4.7 in Chapter~3]{Krengel} {and~\cite[Theorem~2]{neveu1967existence}.}

\begin{thm}
If $P:L^1(m)\to L^1(m)$ is a linear positive contraction, then the following are equivalent:
\begin{enumerate}[label=(\roman*)]
  \item \label{item11} there exists a $P$-invariant density;
  \item \label{item22} there exists $\delta\geq 0$ such that
  \[
  \inf_{n\geq 0} \int_E P^{n}1_X \, dm >0 \quad \text{for all $E\in\mathscr{B}$ with $m(E)>\delta$;}
  \]
  \item \label{item33} there exists $\delta\geq 0$ such that
  \[
  \inf_{n\geq 0} \int_E A^{n}1_X \, dm >0 \quad \text{for all $E\in\mathscr{B}$ with $m(E)>\delta$.}
  \]
\end{enumerate}
\end{thm}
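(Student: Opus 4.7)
I would prove the cycle (i) $\Rightarrow$ (ii) $\Rightarrow$ (iii) $\Rightarrow$ (i), following the scheme of Theorem~\ref{cor:E} but adapted to the non-Markov setting. The central new difficulty is that now $P^{\ast}1_X\leq 1_X$ may be a strict inequality, so the duality identity $\int P^{n}1_X\, dm=1$ that powered the Markov version is lost and the total mass of $P^{n}1_X$ can decay. This forces us to work with the hypothetical invariant density $h$ rather than with $1_X$ itself, bridging the two by a truncation argument.

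\textbf{(i) $\Rightarrow$ (ii).} Let $h\in D(m)$ satisfy $Ph=h$. Pick $\eta>0$ small enough that $m(\{h\geq \eta\})>0$ (possible since $\int h\,dm=1$), and set $\delta\coloneqq 1-m(\{h\geq\eta\})\in [0,1)$. For any $E\in\mathscr{B}$ with $m(E)>\delta$ one has $m(E\cap\{h\geq \eta\})\geq m(E)+m(\{h\geq\eta\})-1=m(E)-\delta>0$, hence $\int_E h\,dm\geq \eta(m(E)-\delta)>0$. To transfer this to $1_X$, I would split $h=h_N+(h-N)_+$ with $h_N=\min(h,N)$. Since $h_N\leq N\cdot 1_X$, positivity of $P$ gives $P^n 1_X\geq N^{-1}P^n h_N$, while $Ph=h$ combined with linearity yields $P^n h_N = h - P^n(h-N)_+$. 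The contraction bound $\|P^n(h-N)_+\|_1\leq \|(h-N)_+\|_1$ tends to $0$ as $N\to\infty$, uniformly in $n$. Fixing $E$ with $m(E)>\delta$ and choosing $N$ large enough that $\|(h-N)_+\|_1<\eta(m(E)-\delta)/2$, one obtains
\[
\int_E P^n 1_X\, dm \geq N^{-1}\Bigl(\int_E h\,dm - \|(h-N)_+\|_1\Bigr)\geq \frac{\eta(m(E)-\delta)}{2N}>0
\]
for every $n\geq 0$, which is (ii).

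\textbf{(ii) $\Rightarrow$ (iii).} This is immediate: since $\int_E A_n 1_X\, dm = \frac{1}{n}\sum_{k=0}^{n-1}\int_E P^k 1_X\, dm \geq \inf_{k\geq 0}\int_E P^k 1_X\, dm$, the infimum over $n$ of the Cesàro means inherits the positivity in (ii).

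\textbf{(iii) $\Rightarrow$ (i).} Argue by contradiction, exactly as in the proof of (4) $\Rightarrow$ (1) in Theorem~\ref{cor:E}. If $P$ admits no invariant density, the Yosida--Hopf theory for positive contractions on $L^1(m)$ (cf.\ \cite[Chapter 3, Theorem 4.6 and Lemma 4.5]{Krengel}) furnishes a $\psi\in L^\infty(m)$ with $\psi\geq 0$, $\psi>0$ $m$-a.e., and $\|A_n^{\ast}\psi\|_\infty\to 0$. As $\eta\downarrow 0$ the set $E_\eta=\{\psi\geq \eta\}$ satisfies $m(E_\eta)\uparrow 1$, so one can pick $\eta>0$ with $m(E_\eta)>\delta$. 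Then $1_{E_\eta}\leq \eta^{-1}\psi$, and the duality $\int \varphi\cdot P^{\ast}\psi\,dm = \int P\varphi\cdot\psi\,dm$ (valid for any bounded linear $P$) yields
\[
\int_{E_\eta}A_n 1_X\, dm \leq \eta^{-1}\int A_n^{\ast}\psi\, dm \leq \eta^{-1}\|A_n^{\ast}\psi\|_\infty\to 0,
\]
contradicting (iii). The main obstacle is the last implication: one must verify that the Krengel/Hopf-type dichotomy producing $\psi$ extends verbatim from Markov operators to general positive contractions. This is a classical but nontrivial ingredient of the Hopf--Yosida theory and will be invoked as a black box; the arguments for (i) $\Rightarrow$ (ii) and (ii) $\Rightarrow$ (iii) are comparatively elementary once the truncation device is set up.
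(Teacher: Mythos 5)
Your proof is correct, and it follows the same overall cycle (i) $\Rightarrow$ (ii) $\Rightarrow$ (iii) $\Rightarrow$ (i) as the paper, with the same trivial step (ii) $\Rightarrow$ (iii) and essentially the same contradiction argument for (iii) $\Rightarrow$ (i): the paper simply observes that its proof of (4) $\Rightarrow$ (1) in Theorem~\ref{cor:E} never uses $\int P\varphi\,dm=\int\varphi\,dm$, and the Krengel input you treat as a black box (\cite[Ch.~3, Theorem~4.6 and Lemma~4.5]{Krengel}) is in fact stated there for positive contractions, so your worry about extending it beyond Markov operators is moot — you are invoking exactly what the paper invokes. The genuine difference is in (i) $\Rightarrow$ (ii): the paper cites \cite[Ch.~3, Theorem~4.2]{Krengel}, which directly gives $\inf_{n}\int_{\tilde E}P^n1_X\,dm>0$ for every positive-measure subset $\tilde E$ of $\supp h$, and then takes $\delta=1-m(\supp h)$; you instead prove the needed lower bound by hand via the truncation $h_N=\min(h,N)$, using $P^n h_N=h-P^n(h-N)_+$, the contraction bound $\|P^n(h-N)_+\|\le\|(h-N)_+\|\to0$, and $\delta=1-m(\{h\ge\eta\})$. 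Your version is slightly weaker in what it establishes (a lower bound whose constant depends on $E$, and a possibly larger $\delta$), but that is all the statement requires, and it has the advantage of being self-contained, replacing one of the two Krengel citations by an elementary argument; the paper's route is shorter but leans on the ergodic-theory literature twice.
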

\begin{proof}
Observe that the proof of the implication of (1) from (4) in Theorem~\ref{cor:E} does not require that $\int P\varphi \, dm =\int \varphi \, dm$ for all $\varphi\in L^1(m)$. This shows that~(iii) implies~(i).
The equivalence between~(i) and~(ii) follows from~\cite[Theorem~4.2]{Krengel} as follows.
This result says that if $h$ is a $P$-invariant density, then
\[
\inf_{n\geq 0} \int_{\tilde{E}} P^n 1_X \, dm >0 \quad \text{for all $\tilde{E}\subset  \supp h$ with $m(\tilde{E})>0$}.
\]
 Taking $\delta=1-m(\supp h)\geq 0$, we have that if $E\subset X$ with $m(E)>\delta$, then $m(\tilde{E})>0$ where $\tilde{E}=E\cap \supp h$. Hence, from the above inequality it holds that
\[
\inf_{n\geq 0} \int_{E} P^n1_X \, dm \geq \inf_{n\geq 0} \int_{\tilde{E}} P^n1_X \, dm  >0.
\]
Finally, since~(ii) implies (iii) immediately, we conclude the cycle of implications and thus the proof of the theorem.
\end{proof}

\section{Mean constrictivity
: proof of Theorem~\ref{thm:D}}\label{s:thmD}
In the sequel, we will prove the following result which is a more general version of Theorem~\ref{thm:D}.

\begin{thm}\label{thmeq}
 Let $(X,\mathscr{B},m)$ be an abstract probability space and consider a Markov operator $P: L^1(m) \to L^1(m)$. Then the following conditions are equivalent:
\begin{enumerate}[itemsep=0.2cm]
\item[\tt (MC)]\label{MC}
$P$ is \emph{mean constrictive};
\item[ \tt (MC2)]\label{MC2}
There is a compact set $F\subset L^1(m)$ and  $\kappa<1$ such that
\begin{align*}
\limsup_{n\to\infty}  d(A_n \varphi,F) \leq \kappa \quad \text{for any $\varphi\in D(m)$};
\end{align*}
\item[\tt (WMC)]\label{WMC}
$P$ is \emph{weakly mean constrictive}, i.e., there is a weakly compact set $F\subset L^1(m)$~such~that
\begin{align*}
\lim_{n\to\infty} d(A_n \varphi,F)=0 \quad \text{for any $\varphi\in D(m)$};
\end{align*}
\item[\tt (WMC2)]\label{WMC2}
There is a weakly compact set $F\subset L^1(m)$ and  $\kappa<1$ such that
\begin{align*}
\limsup_{n\to\infty} d(A_n \varphi,F) \leq \kappa \quad \text{for any $\varphi\in D(m)$};
\end{align*}
\item[\tt (MCDP)]\label{MC3}
$P$ is \emph{mean constrictive \`a la Dunford--Pettis}, i.e., for every $\varepsilon>0$, there exists $\delta>0$ such that for any $\varphi\in D(m)$, there is $n_0=n_0(\varepsilon,\varphi)\geq 1$ satisfying that
\begin{align*}
\int_E A_n \varphi\,dm <\varepsilon \quad \text{for any $n\geq n_0$ and $E\in\mathscr{B}$ with $m(E)<\delta$};
\end{align*}
\item[\tt (MCDP2)]\label{MC4}
There exist $\kappa<1$ and $\delta>0$ such that for any $\varphi\in D(m)$, there is $n_0=n_0(\varphi)\geq 1$ satisfying that
\begin{align*}
\int_E A_n \varphi\,dm \leq\kappa \quad \text{for any $n\geq n_0$ and $E\in\mathscr{B}$ with $m(E)<\delta$};
\end{align*}
\item[\tt (AMC)]\label{AMC} $P$ is \emph{asymptotically  mean constrictive}, i.e., for every $\varepsilon>0$, there is $\delta>0$~such~that
\begin{align*}
\limsup_{n\to\infty} \int_E A_n \varphi\,dm <\varepsilon \quad \text{for any $\varphi \in D(m)$ and $E \in \mathscr{B}$ with~$m(E)<\delta$};
\end{align*}
\item[\tt (AMC2)]\label{AMC2}
There exist $\kappa<1$ and  $\delta>0$ such that
\begin{align*}
\limsup_{n\to\infty} \int_E A_n \varphi\,dm <\kappa \quad \text{for any $\varphi \in D(m)$ and $E \in \mathscr{B}$ with~$m(E)<\delta$};
\end{align*}
\item[\tt (FED)]\label{FPM}
$P$ admits finitely many ergodic invariant densities $h_1,\dots,h_r$  with mutually disjoint supports (up to an $m$-null set) and the invariant density function $h=\frac{1}{r}(h_1+\dots+h_r)$ has the maximal support;
\item[\tt (APM)]\label{AD}
$P$ is \emph{asymptotically periodic in mean}, i.e., there exist finitely many ergodic invariant densities $h_1,\dots,h_r$ with mutually disjoint supports (up to an $m$-null set) and positive bounded linear functionals  $\lambda_1, \dots, \lambda_r$ on $L^1(m)$ such that
\begin{align*}
\lim_{n\to\infty} \bigg\lVert A_n\varphi-\sum_{i=1}^r\lambda_i(\varphi)h_i\bigg\rVert =0 \quad \text{for any $\varphi\in L^1(m)$}.
\end{align*}
\end{enumerate}
\end{thm}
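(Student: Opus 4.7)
The proof strategy is to organize the ten conditions into a cycle of implications with {\tt (APM)} as the ``canonical'' strongest form, since the convex hull of $\{h_1,\dots,h_r\}$ in {\tt (APM)} is a norm-compact subset of $L^1(m)$ (being finite-dimensional) that directly serves as a constrictor, yielding {\tt (APM)}$\Rightarrow${\tt (MC)}. All other ``downward'' implications are immediate: each $\varepsilon$-version implies its $\kappa$-version (take $\kappa=0$); norm-compact implies weakly compact, so {\tt (MC)}$\Rightarrow${\tt (WMC)} and {\tt (MC2)}$\Rightarrow${\tt (WMC2)}; and monotonicity of $\limsup$ gives {\tt (MCDP)}$\Rightarrow${\tt (AMC)}$\Rightarrow${\tt (AMC2)} and {\tt (MCDP)}$\Rightarrow${\tt (MCDP2)}.

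The next block of equivalences uses the Dunford--Pettis theorem (footnote~\ref{f:DP}): since $\|A_n\varphi\|=\|\varphi\|=1$ for $\varphi\in D(m)$, norm-boundedness is automatic, and weak compactness of the constrictor becomes equivalent to uniform integrability of the orbits. This pairing gives {\tt (WMC)}$\Leftrightarrow${\tt (MCDP)}$\Leftrightarrow${\tt (AMC)} and {\tt (WMC2)}$\Leftrightarrow${\tt (MCDP2)}$\Leftrightarrow${\tt (AMC2)}. The subtle point is uniformity of $\delta$ over all $\varphi\in D(m)$: since the weakly compact constrictor $F$ does not depend on $\varphi$, its uniform integrability supplies a common $\delta$ good for every $\varphi$; conversely, the closed convex hull of the tail orbit $\bigcup_{\varphi\in D(m)}\{A_n\varphi:n\geq n_0(\varphi)\}$ is weakly compact by the Krein--\v{S}mulian theorem and can serve as the constrictor.

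The heart of the argument is showing that the weakest condition, {\tt (AMC2)}, already implies {\tt (FED)}. Specializing $\varphi=1_X$ in {\tt (AMC2)} gives condition~(7) of Theorem~\ref{cor:E}, producing at least one $P$-invariant density. A Zorn-style exhaustion then yields a maximal family $h_1,h_2,\dots$ of ergodic $P$-invariant densities with pairwise disjoint supports, using the standard fact (Appendix~\ref{appendix:B}) that distinct ergodic densities have disjoint supports. The crucial finiteness of this family follows from an $\ell^1$--Schur argument: disjoint-support densities satisfy $\|\sum_i a_ih_i\|=\sum_i|a_i|$, spanning an isometric copy of $\ell^1\subset L^1(m)$ in which weak and norm convergence coincide by Schur's theorem; since $A_nh_i=h_i$ for every $n$ and $\|h_i-h_j\|=2$ for $i\neq j$, the sequence $(h_i)$ cannot have any norm-Cauchy (hence weakly convergent) subsequence, contradicting the weak compactness that {\tt (WMC2)} supplies after passing to the restrictions $P_{\supp h_i}$ via Proposition~\ref{prop:wap} (which are weakly almost periodic, placing each $h_i$ essentially inside a weakly compact set). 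The maximal support condition~\eqref{max.supp} is then obtained by another application of Theorem~\ref{cor:E} to the restriction of $P$ to the complement of $S=\bigcup_i\supp h_i$: if this complement carried positive asymptotic mass, a further ergodic density could be extracted there, contradicting maximality.

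The remaining implication, {\tt (FED)}$\Rightarrow${\tt (APM)}, proceeds by ergodic decomposition: each restriction $P_{\supp h_i}$ (Lemma~\ref{lem:P_S-Markov}) is an ergodic weakly almost periodic Markov operator on $L^1(m_{\supp h_i})$, so the mean ergodic theorem yields strong $L^1$-convergence $A_n^{(S_i)}\phi\to\lambda_i(\phi)h_i$ for a positive bounded functional $\lambda_i$. The maximal-support condition~\eqref{max.supp} is essential for tracking how mass supported outside $\bigcup_i\supp h_i$ asymptotically flows into the ergodic components and contributes to the functionals $\lambda_i$. The main technical obstacle lies precisely in this last bookkeeping---the generalization of Inoue--Ishitani's Perron--Frobenius argument to arbitrary Markov operators---together with the $\ell^1$--Schur finiteness step and the uniform-in-$\varphi$ control when passing through the Dunford--Pettis equivalences; all three are handled by the restriction machinery developed in Section~\ref{sec:Markov-restriction} and Appendix~\ref{sec:Markov-restriction2}.
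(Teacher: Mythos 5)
Your overall architecture (trivial downward implications, a Dunford--Pettis lemma to pass from a weakly compact constrictor to the integral conditions, then {\tt(AMC2)}$\Rightarrow${\tt(FED)}$\Rightarrow${\tt(APM)}$\Rightarrow${\tt(MC)}) matches the paper's, but the heart of your argument for {\tt(AMC2)}$\Rightarrow${\tt(FED)} has a genuine gap, and it is circular. Your finiteness step invokes ``the weak compactness that {\tt(WMC2)} supplies'': at that point of the cycle {\tt(WMC2)} is not available --- it is one of the conditions being proven equivalent, and your only claimed route from {\tt(AMC2)} up to {\tt(WMC2)} is the block ``{\tt(AMC2)}$\Leftrightarrow${\tt(MCDP2)}$\Leftrightarrow${\tt(WMC2)}'', whose nontrivial directions you never prove. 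Passing from a limsup condition that holds for each fixed set $E$ to a bound uniform in $E$ for $n\geq n_0(\varphi)$ is exactly the strengthening that fails for the non-averaged analogues ({\tt(AC)} does not imply {\tt(C)}, Proposition~\ref{prop:contracting}), and your Krein--\v{S}mulian step is unjustified because the union of tails $\bigcup_{\varphi}\{A_n\varphi:n\geq n_0(\varphi)\}$ need not be uniformly integrable when $n_0$ depends on $\varepsilon$. Even granting some weakly compact $F$ with $\limsup_n d(A_nh_i,F)\leq\kappa<1$, your $\ell^1$--Schur contradiction does not close: since $A_nh_i=h_i$, you only get points $\psi_i\in F$ with $\|h_i-\psi_i\|\lesssim\kappa$, and a weakly convergent subsequence of the $\psi_i$ is not norm-Cauchy in general, so it does not contradict $\|h_i-h_j\|=2$; the $h_i$ themselves need not lie in any weakly compact set. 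The paper's finiteness argument is much more elementary and uses only {\tt(AMC2)}: if $P^*1_E\geq 1_E$ and $m(E)>0$, testing {\tt(AMC2)} with the density $1_E/m(E)$ gives $\limsup_n\int_E A_n(1_E/m(E))\,dm\leq\kappa<1$, while monotonicity forces this integral to be $\geq 1$; hence $m(E)\geq\delta$, and since supports of distinct ergodic densities are disjoint and satisfy $P^*1_S\geq 1_S$, there are at most $\delta^{-1}$ of them (Theorem~\ref{propC2}, Claim~\ref{claim:Edelta}).

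The maximal-support step is also not handled by your sketch. The restriction of $P$ to the complement of $S=\bigcup_i\supp h_i$ is \emph{not} a Markov operator: one has $P^*1_S\geq 1_S$, hence $P^*1_{X\setminus S}\leq 1_{X\setminus S}$, which is the wrong inequality for Proposition~\ref{prop:P_h-Markov}, so Theorem~\ref{cor:E} cannot be applied there directly; and even knowing that no invariant density lives outside $S$ does not yield $\lim_n P^{n*}1_S=1$ $m$-a.e., because mass could linger asymptotically in a region from which it slowly leaks into $S$. This is precisely what Proposition~\ref{propWAP} is for: one sets $\varphi=\lim_n P^{n*}1_S$, splits $X$ into $\{\varphi=0\}$, $\{0<\varphi<1\}$ and $\{\varphi=1\}$; only on $E_0=\{\varphi=0\}$ does one have $P^*1_{E_0}\geq 1_{E_0}$, so only there can an invariant density be extracted and maximality (Zorn on supports of invariant densities) invoked --- this is the part of your idea that survives --- while killing the intermediate region requires the second limit $\psi=\lim_n P^{n*}1_{\{0<\varphi<1\}}$, the level sets $B_\varepsilon$, and a further, more delicate use of {\tt(AMC2)} (Lemma~\ref{lem:MCDP2}). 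Finally, for {\tt(FED)}$\Rightarrow${\tt(APM)} your component-wise mean ergodic theorem leaves exactly the ``bookkeeping of mass starting outside $\bigcup_i\supp h_i$'' unresolved; the paper avoids it by noting that the maximal support condition is equivalent to {\tt(WAP)} for the global operator $P$ (Toyokawa), which by Yosida--Kakutani gives norm convergence of $A_n\varphi$ for every $\varphi\in L^1(m)$, after which the limit decomposes over the finitely many ergodic densities and defines the functionals $\lambda_i$.
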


\begin{figure}
\[
\xymatrix{ \tt
(MCDP) \ar@{=>}[rd]_{trivial} \ar@{=>}[dd]_{trivial}  &     &   &      &      &      &  &  \ar@{=>}[lllllll]_{Lemma~\tiny\ref{lemC1}} \tt (WMC) \\
         & \tt (ACM)  \ar@{=>}[d]_{trivial}  &  &
                                                                                              &      &      &  &     \\
\tt (MCDP2)  \ar@{=>}[r]_{trivial} &  \tt (AMC2) \ar@{=>}[rr]^{Theorem~\tiny\ref{propC2}} & & \tt (FED) \ar@{=>}[rr]_{Lemma~\tiny\ref{lemC3}} & & \tt (APM) \ar@{=>}[rr]^{Lemma~\tiny\ref{lemC4}} & & \ar@{=>}[uu]_{trivial} \tt (MC) \ar@{=>}[d]^{trivial} \\
\tt (WMC2) \ar@{=>}[u]^{Lemma~\tiny\ref{lemC1}} & &    &        &  &          &   &  \tt (MC2) \ar@{=>}[lllllll]^{trivial}
}
\]
\caption{Diagram of the relations between the conditions of Theorem~\ref{thmeq}.}
\label{fig2}
\end{figure}
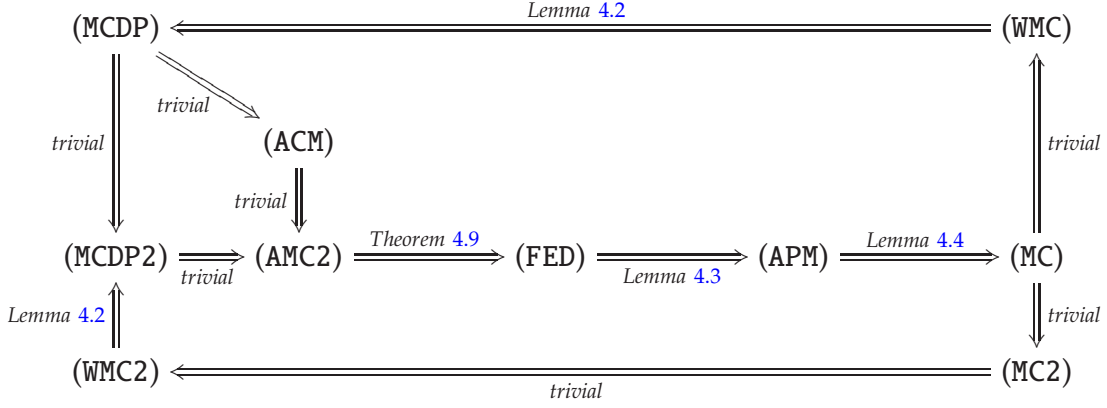

We will prove Theorem~\ref{thmeq} following the implications described in Figure~\ref{fig2}. To organize the proof, we divide the proof into two subsections.

\subsection{Proof of the   lemmas}
\begin{lem}\label{lemC1}
Condition {\tt(WMC)} (resp.~{\tt(WMC2)}) implies condition {\tt (MCDP)} (resp.~{\tt(MCDP2)}).
\end{lem}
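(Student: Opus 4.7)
The plan is to derive both implications from the Dunford--Pettis characterization of weak precompactness in $L^1(m)$ (footnote~\ref{f:DP} of the paper). The key observation is that every weakly compact subset of $L^1(m)$ is in particular weakly precompact, hence bounded and uniformly integrable. Once uniform integrability of the constrictor $F$ is available, a straightforward $\varepsilon/2$ argument transfers the uniform integrability to the almost-orbit $(A_n\varphi)_{n\ge n_0}$.

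For {\tt(WMC)} $\Rightarrow$ {\tt(MCDP)}, I would proceed as follows. Let $F\subset L^1(m)$ be a weakly compact set witnessing {\tt(WMC)}. Fix $\varepsilon>0$. By Dunford--Pettis, there exists $\delta>0$ such that
\begin{equation*}
\int_E |\psi|\,dm < \frac{\varepsilon}{2}\quad\text{for every $\psi\in F$ and every $E\in\mathscr{B}$ with $m(E)<\delta$.}
\end{equation*}
Given $\varphi\in D(m)$, the hypothesis $d(A_n\varphi,F)\to 0$ provides an integer $n_0=n_0(\varepsilon,\varphi)$ and, for each $n\ge n_0$, an element $\psi_n\in F$ with $\|A_n\varphi-\psi_n\|<\varepsilon/2$. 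Then for any $E\in\mathscr{B}$ with $m(E)<\delta$,
\begin{equation*}
\int_E A_n\varphi\,dm \le \int_E |A_n\varphi-\psi_n|\,dm + \int_E |\psi_n|\,dm \le \|A_n\varphi-\psi_n\| + \frac{\varepsilon}{2} < \varepsilon,
\end{equation*}
which is exactly {\tt(MCDP)}.

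For {\tt(WMC2)} $\Rightarrow$ {\tt(MCDP2)}, I would run the same argument with quantitative control. Let $F$ and $\kappa<1$ be as in {\tt(WMC2)}. Fix an auxiliary $\varepsilon>0$ small enough that $\tilde\kappa:=\kappa+2\varepsilon<1$, and use Dunford--Pettis again to pick $\delta>0$ with $\int_E|\psi|\,dm<\varepsilon$ whenever $\psi\in F$ and $m(E)<\delta$. For every $\varphi\in D(m)$, the hypothesis $\limsup_{n}d(A_n\varphi,F)\le\kappa$ yields $n_0=n_0(\varphi)$ and $\psi_n\in F$ with $\|A_n\varphi-\psi_n\|<\kappa+\varepsilon$ for $n\ge n_0$. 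The same splitting then gives
\begin{equation*}
\int_E A_n\varphi\,dm \le \|A_n\varphi-\psi_n\| + \int_E|\psi_n|\,dm < (\kappa+\varepsilon)+\varepsilon = \tilde\kappa,
\end{equation*}
for every $n\ge n_0$ and every $E\in\mathscr{B}$ with $m(E)<\delta$, which is {\tt(MCDP2)} with constants $\tilde\kappa$ and $\delta$.

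There is no serious obstacle here; the only care-point is to remember that a weakly compact set need not consist of non-negative functions, so the bound on $\int_E\psi_n\,dm$ must be obtained through $\int_E|\psi_n|\,dm$ (this is precisely what Dunford--Pettis provides). The implication from {\tt(WMC2)} does require choosing $\varepsilon$ before $\delta$, so that the resulting constant $\tilde\kappa=\kappa+2\varepsilon$ can be kept strictly below $1$; any $\varepsilon\in(0,(1-\kappa)/2)$ works.
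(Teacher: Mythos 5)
Your proposal is correct and follows essentially the same route as the paper: weak compactness of the constrictor gives uniform integrability via the Dunford--Pettis theorem, and an $\varepsilon/2$ splitting of $\int_E A_n\varphi\,dm$ through a near-optimal $\psi_n\in F$ transfers it to the averages, with the quantitative version handled by the same bookkeeping (the paper takes the new constant $\kappa+\tfrac{1-\kappa}{2}$, you take $\kappa+2\varepsilon$ with $\varepsilon<\tfrac{1-\kappa}{2}$, which is equivalent).
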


\begin{proof}
Let $F$ be as in the condition {\tt(WMC)}.  Fix $\varepsilon>0$. The condition {\tt(WMC)} guarantees that for any $\varphi\in D(m)$ there exists $n_0=n_0(\varepsilon,\varphi)\geq 1$ such that
\begin{align*}
\inf_{\phi\in F}\lV A_n \varphi-\phi\rV <\frac{\varepsilon}{2} \quad \text{for any $n\geq n_0$}.
\end{align*}
Since $F$ is weakly compact, there exists $\delta=\delta(\varepsilon)>0$ such that for any $\phi\in F$ and $E\in \mathscr{B}$ with $m(E)<\delta$, we have $\int_E | \phi| \,  dm<\frac{\varepsilon}{2}$. Thus, for any set $E$ with $m(E)<\delta$
\begin{align*}
\int_EA_n \varphi\,dm&\le\inf_{\phi\in F}\lV A_n \varphi-\phi\rV+\int_E | \phi| \, dm <\varepsilon \quad \text{for any  $n\ge n_0$.}
\end{align*}
This proves {\tt (MCDP)}.

The implication {\tt(WMC2)} implies {\tt(MCDP2)} follows analogously. Namely, we put $\kappa$ in {\tt(MCDP2)} as the sum of $\kappa$ in {\tt(WMC2)} and $\varepsilon=\frac{1-\kappa}{2}$ from the weak compactness~of~$F$.
\end{proof}

\begin{lem}\label{lemC3}
The condition {\tt(FED)} implies the condition {\tt(APM)}.
\end{lem}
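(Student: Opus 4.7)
The plan is to deduce {\tt(APM)} from {\tt(FED)} by applying the mean ergodic theorem for weakly almost periodic operators and identifying the $P$-invariant subspace of $L^1(m)$ with $\mathrm{span}(h_1,\dots,h_r)$. Set $S_i=\supp h_i$ and $S=\supp h=\bigsqcup_{i=1}^r S_i$ (up to an $m$-null set). By \cite[Theorem~3.1]{Toyokawa2020}, the maximal support condition on $h$ is equivalent to $P$ being weakly almost periodic; by the Yosida--Kakutani mean ergodic theorem one then obtains a bounded linear projection $E:L^1(m)\to L^1(m)$ onto $\ker(I-P)$ such that $A_n\varphi\to E\varphi$ strongly in $L^1(m)$ for every $\varphi\in L^1(m)$.

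The main work is to prove $\ker(I-P)=\mathrm{span}(h_1,\dots,h_r)$; the inclusion $\supset$ is clear. First I would show that every ergodic $P$-invariant density coincides with some $h_i$: by Proposition~\ref{prop:ergodic}, the only alternative is an ergodic density $g$ with $\supp g\cap S=\emptyset$, which is ruled out by the identity
\[
  \int g\cdot P^{n*}1_{X\setminus S}\,dm = \int P^n g\cdot 1_{X\setminus S}\,dm = \int g\cdot 1_{X\setminus S}\,dm = 1
\]
(valid for all $n$ by duality, $P$-invariance, and $\supp g\cap S=\emptyset$), contrasted with $P^{n*}1_{X\setminus S}=1-P^{n*}1_S\to 0$ $m$-a.e., which forces the left-hand side to $0$ via dominated convergence. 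Second, for any $g\in\ker(I-P)$ the inequality $|g|\le P|g|$ combined with $\int|g|\,dm=\int P|g|\,dm$ yields $P|g|=|g|$, so $g^\pm$ are non-negative invariant $L^1$-functions; normalizing each to a density and invoking the ergodic (Choquet) decomposition, which collapses to a finite convex combination because the set of ergodic invariant densities is $\{h_1,\dots,h_r\}$, places $g$ in $\mathrm{span}(h_1,\dots,h_r)$.

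Linear independence of the $h_i$ (from their disjoint supports) then allows us to write $E\varphi=\sum_{i=1}^r\lambda_i(\varphi)h_i$ uniquely, and, the coefficient map on a finite-dimensional subspace being continuous, the $\lambda_i:L^1(m)\to\R$ are bounded linear functionals. Positivity is immediate: if $\varphi\ge0$ then $A_n\varphi\ge0$, hence $E\varphi\ge0$, and strict positivity of each $h_i$ on its support forces $\lambda_i(\varphi)\ge 0$. The main obstacle is the second sub-step of identifying $\ker(I-P)$: one must use a Choquet-type ergodic decomposition together with the fact that the maximal support condition rules out any ergodic invariant density beyond the $h_i$; the remaining pieces are routine consequences of the mean ergodic theorem and the support calculus of Lemma~\ref{supp}.
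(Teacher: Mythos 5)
Your proposal is correct and follows essentially the same route as the paper's proof: {\tt(FED)} gives an invariant density with maximal support, hence {\tt(WAP)} by \cite[Theorem~3.1]{Toyokawa2020}, hence mean ergodicity by Yosida--Kakutani, and the Ces\`aro limits are then identified with combinations of $h_1,\dots,h_r$ using the extremality/finitude of ergodic invariant densities (Proposition~\ref{prop:ergodic}). Your explicit duality-plus-dominated-convergence argument that no ergodic density outside $\{h_1,\dots,h_r\}$ can exist, and your use of the ergodic projection to get boundedness and positivity of the $\lambda_i$, only make explicit steps the paper leaves implicit, so there is no gap.
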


\begin{proof}
Suppose the condition {\tt(FED)}. In particular, the existence of an invariant density with the maximal support implies that $P$ is {\tt(WAP)}, cf.~\cite[Theorem~3.1]{Toyokawa2020}. Yosida and Kakutani proved in~\cite{yosida1941operator} that {\tt(WAP)} implies mean ergodicity.\footnote{The converse of this implication was recently proved in~\cite[Proposition~3.9]{Toyokawa2020}.} Recall that a Markov operator $P:L^1(m)\to L^1(m)$ is called mean ergodic if $A_n\varphi$ converges in $L^1(m)$-norm for any $\varphi\in L^1(m)$. For each density $h\in D(m)$, denote by $h^*$  the limit in $L^1(m)$-norm of $A_nh$. Observe that $h^*$ is an invariant density of $P$. Hence, from the finitude of ergodic invariant densities $h_1,\dots,h_r$, or equivalently extremal points in the space of invariant densities~(see Proposition~\ref{prop:ergodic} in Appendix~\ref{appendix:B}), we can find  $\lambda_1(h), \dots \lambda_r(h)$  with $\lambda_i(h)\ge 0$, $\lambda_1(h)+\dots+\lambda_r(h)=1$ such $h^*=\lambda_1(h) h_1 + \dots + \lambda_r(h) h_r$.
We have the equation in {\tt (APM)} for any $h\in D(m)$ and also for $\varphi\in L^1(m)$ with $\varphi\geq 0$.
Then, considering the positive part and negative part of any function in $L^1(m)$, we can extend the above argument to $L^1(m)$.
{Obviously} $\lambda_i:L^1(m)\to\mathbb{R}$ is bounded and linear so that the proof is done.
\end{proof}

\begin{lem}\label{lemC4}
The condition {\tt(APM)} implies the condition {\tt(MC)}.
\end{lem}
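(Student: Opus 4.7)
The plan is to construct a compact constrictor $F \subset L^1(m)$ explicitly from the data provided by {\tt(APM)}. Given the ergodic invariant densities $h_1, \dots, h_r$ and the bounded positive linear functionals $\lambda_1, \dots, \lambda_r$ supplied by {\tt(APM)}, define
\[
F = \Bigl\{ \sum_{i=1}^{r} a_i h_i \; : \; (a_1,\dots,a_r) \in \Delta_r \Bigr\}, \qquad \Delta_r = \bigl\{ (a_1,\dots,a_r) \in [0,1]^r : \textstyle\sum_i a_i = 1 \bigr\}.
\]
Since $\Delta_r$ is compact in $\mathbb R^r$ and the map $(a_1,\dots,a_r) \mapsto \sum_i a_i h_i$ is continuous from $\mathbb R^r$ to $L^1(m)$, the set $F$ is compact in $L^1(m)$.

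The main step is to verify that for every $h \in D(m)$ the vector $(\lambda_1(h), \dots, \lambda_r(h))$ lies in the simplex $\Delta_r$. Positivity of each $\lambda_i$ gives $\lambda_i(h) \geq 0$. For the normalization, I would first observe that $P$ is a Markov operator and so $\int A_n \varphi \, dm = \int \varphi \, dm$ for every $\varphi \in L^1(m)$ and every $n$. Passing to the limit in the {\tt(APM)} identity and using that each $h_i$ is a density,
\[
\int \varphi \, dm = \lim_{n\to\infty} \int A_n \varphi \, dm = \sum_{i=1}^{r} \lambda_i(\varphi) \int h_i \, dm = \sum_{i=1}^{r} \lambda_i(\varphi).
\]
Specializing to $\varphi = h \in D(m)$ gives $\sum_i \lambda_i(h) = 1$, as required.

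Finally, for any $h \in D(m)$ the point $\sum_i \lambda_i(h) h_i$ belongs to $F$, so
\[
d(A_n h, F) \leq \Bigl\| A_n h - \sum_{i=1}^{r} \lambda_i(h) h_i \Bigr\| \xrightarrow[n\to\infty]{} 0
\]
by {\tt(APM)}. This is precisely {\tt(MC)}. I do not anticipate any genuine obstacle here: once one recognizes the identity $\sum_i \lambda_i = \int \cdot \, dm$ forced by the Markov property of $P$, the construction of $F$ as the image of the simplex is immediate, and the compactness of $F$ in $L^1(m)$ follows from finite-dimensionality.
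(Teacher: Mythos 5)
Your proof is correct and follows essentially the same route as the paper, which takes the constrictor to be exactly the set of convex combinations of $h_1,\dots,h_r$ and simply declares the verification straightforward. Your explicit check that $\lambda_i(h)\geq 0$ and $\sum_{i=1}^r \lambda_i(h)=1$ for $h\in D(m)$ (via positivity of the $\lambda_i$ and the Markov property $\int A_n h\,dm=1$ passed to the $L^1$-limit) is precisely the detail the paper leaves implicit, and it is needed to place $\sum_i\lambda_i(h)h_i$ inside that constrictor.
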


\begin{proof}
The proof is straightforward, taking the compact constrictor for $A_n$ by
\begin{align*}
F=\left\{ \varphi\in L^1(m): \ \varphi\text{ is a convex combination of }h_1,\dots,h_r \right\}.
\end{align*}
Therefore, the proof is done.
\end{proof}

\subsection{Proof of the theorem}

In this subsection, we will show that {\tt (AMC2)} implies~{\tt(FED)}.

\begin{rem} \label{prop:invariant} Observe that from Proposition~\ref{prop:wap}, the restriction $P_S$ of a Markov operator $P$ to the support $S$ of a $P$-invariant density is a Markov operator. Hence, Lemma~\ref{prop:P_h-Markov} in the Appendix~\ref{sec:Markov-restriction2} implies that   $P^*1_S\geq 1_S$.  
In particular, the sequence $(P^{n*}1_S)_{n\geq 1}$ is increasing.
\end{rem}

\begin{prop}\label{propC22}
{\tt (AMC2)}  implies that the set of ergodic invariant densities of $P$ is nonempty and finite. Moreover, these ergodic invariant densities have mutually disjoint supports.
\end{prop}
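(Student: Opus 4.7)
The plan is to extract from {\tt(AMC2)} a uniform lower bound $m(\supp h)\geq \delta$ on the support of every $P$-invariant density $h$, and then combine this bound with the disjointness of the supports of distinct ergodic invariant densities (Proposition~\ref{prop:ergodic} in Appendix~\ref{appendix:B}) to obtain nonemptiness, finiteness, and mutual disjointness in one stroke.

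First I would produce at least one invariant density via Theorem~\ref{cor:E}: applied with $\varphi = 1_X$, condition {\tt(AMC2)} is exactly condition~(6) of Theorem~\ref{cor:E} with $\alpha = \kappa$, so a $P$-invariant density exists. For the lower bound on supports, I would observe that $Ph = h$ forces $A_n h = h$ for every $n$, so $\int_{\supp h} A_n h \, dm = 1$; applying {\tt(AMC2)} with $\varphi = h$ and $E = \supp h$ would then yield $1 \leq \limsup_n \int_{\supp h} A_n h\, dm < \kappa < 1$ whenever $m(\supp h) < \delta$, which is absurd. Hence $m(\supp h) \geq \delta$ for every $P$-invariant density $h$.

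Next, to produce an ergodic invariant density, I would decompose iteratively. If an invariant density $h_0$ is not ergodic, the characterization of ergodicity in Appendix~\ref{appendix:B} yields $A \in \mathscr B$ with $P^* 1_A \geq 1_A$ and $0 < m_{h_0}(A) < 1$; the standard argument (using that $P^*1_{A^c}\leq 1_{A^c}$ together with the invariance relation $\int_{A^c} Ph_0\,dm = \int_{A^c} h_0\,dm$ to deduce $P(1_A h_0)=1_A h_0$ and $P(1_{A^c}h_0)=1_{A^c}h_0$ up to $m$-null sets) then shows that the normalizations of $1_A h_0$ and $1_{A^c}h_0$ are $P$-invariant densities with disjoint supports inside $\supp h_0$, of which $h_0$ is a nontrivial convex combination. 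By the lower bound, each of these supports has $m$-measure at least $\delta$, so $m(\supp h_0) \geq 2\delta$. Iterating on the non-ergodic child, at step $k$ one would obtain $k+1$ pairwise disjoint sets of $m$-measure at least $\delta$ packed inside $X$, so $(k+1)\delta \leq 1$ and the process must terminate after at most $\lfloor 1/\delta\rfloor$ steps at an ergodic invariant density.

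Finally, by Proposition~\ref{prop:ergodic}, any two distinct ergodic $P$-invariant densities have disjoint supports up to $m$-null sets. Combined with the lower bound and $m(X)=1$, this bounds the number of ergodic invariant densities by $\lfloor 1/\delta\rfloor$, yielding finiteness and disjointness simultaneously. The main obstacle is a clean formulation of the iterative decomposition step; a slicker alternative would be to pass to the restricted Markov operator $P_S$ of Proposition~\ref{prop:wap} on the support $S$ of a fixed invariant density, using that $P_S$ is weakly almost periodic and hence mean ergodic by Yosida--Kakutani, and to extract an ergodic invariant density directly from the resulting ergodic decomposition of $P_S$, thereby turning the iterative splitting into a single inductive argument on $m(S)/\delta$.
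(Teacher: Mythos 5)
Your proposal is correct and, in its core mechanism, is the same as the paper's: existence of an invariant density from Theorem~\ref{cor:E}\,(6) applied with $\varphi=1_X$, a uniform lower bound of $\delta$ coming from {\tt(AMC2)}, and then disjointness of supports of distinct ergodic densities (Proposition~\ref{prop:ergodic}) to cap their number by $\lfloor 1/\delta\rfloor$. The two small differences are worth noting. First, your $\delta$-bound is obtained by applying {\tt(AMC2)} directly to the invariant density $h$ with $E=\supp h$ (using $A_nh=h$), whereas the paper's Claim~\ref{claim:Edelta} applies it to the normalized indicator $1_E/m(E)$ of any set with $P^*1_E\geq 1_E$ and then specializes to supports via Remark~\ref{prop:invariant}; your version is a touch more direct, the paper's is slightly more general, and both are sound. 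Second, for nonemptiness of the set of \emph{ergodic} invariant densities the paper simply cites Proposition~\ref{prop:ergodic} (extremal-point characterization), while you give a self-contained iterative splitting of a non-ergodic density into invariant pieces with disjoint supports, each of measure at least $\delta$, so that the splitting must terminate at an ergodic density after at most $\lfloor 1/\delta\rfloor$ steps; this is a valid argument (the splitting step is exactly the converse direction in the proof of Proposition~\ref{prop:ergodic}) and arguably fills in a step the paper treats tersely. The ``slicker alternative'' via $P_S$ and an ergodic decomposition is vaguer—ergodic decomposition for abstract Markov operators is not off-the-shelf here—but since your main route is complete, nothing hinges on it.
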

\begin{proof}
Observe first that by virtue of Theorem~\ref{cor:E} (6), 
it immediately follows from {\tt (AMC2)}  that there exists a $P$-invariant density.
Thus,
by Proposition~\ref{prop:ergodic},
\[
\mathcal{E}=\{g \in L^1(m): \  \text{$g$ is an ergodic $P$-invariant density} \} \not = \emptyset.
\]
The following claim is the key to obtaining the finitude of ergodic $P$-invariant densities.

\begin{claim}  \label{claim:Edelta}
  Consider $E\in \mathscr{B}$ such that $m(E)> 0$ and $P^*1_E\geq 1_E$. Let $\delta >0$ be the constant given   in {\tt (AMC2)}. Then, $m(E)\geq \delta$.
\end{claim}
\begin{proof}
Assume that $m(E)<\delta$.
Then, applying {\tt (AMC2)} to $h=\frac{1_E}{m(E)} \in D(m)$, we have
\[
\limsup_{n\to\infty}  \int_E A_n\frac{1_E}{m(E)} \, dm \leq \kappa <1.
\]
On the other hand, for any $n\geq 1$,
\begin{align*}
  \int_E A_n\frac{1_E}{m(E)} \, dm &= \frac{1}{m(E)} \int _EA^*_n 1_E \, dm = \frac{1}{m(E)} \int _E\frac{1}{n} \sum_{i=0}^{n-1} P^{i*}1_E \, dm
             \geq  \frac{1}{m(E)} \int _E 1_E \, dm =1.
\end{align*}
This is a contradiction, and thus, we have $m(E)\geq \delta$.
\end{proof}

Now,  by Remark~\ref{prop:invariant}, the support $S$ of any invariant density satisfies $P^*1_S\geq 1_S$ and by Proposition~\ref{prop:ergodic} and Remark~\ref{rem:disjoin-support} two distinct ergodic $P$-invariant densities have disjoint support.
Hence, it follows from Claim~\ref{claim:Edelta} that the cardinality of $\mathcal{E}$ is less than $\delta^{-1}$. Set $r$ to be this cardinality (notice that $r>0$ since $\mathcal{E}\not=\emptyset$). Thus, we get that $P$ admits finitely many ergodic invariant densities $h_1,\dots,h_r$ in $D(m)$ with mutually disjoint supports.
\end{proof}

\begin{prop} \label{propWAP}
  {\tt(WAP)} holds if {\tt(AMC2)} holds.
\end{prop}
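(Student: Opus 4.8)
The plan is to prove that the invariant density $h:=\tfrac1r(h_1+\dots+h_r)$ produced by Proposition~\ref{propC22} has the maximal support, which by \cite[Theorem~3.1]{Toyokawa2020} is exactly the property {\tt(WAP)}. Write $S:=\supp h=\bigcup_i\supp h_i$. Since $Ph=h$, Lemma~\ref{supp} gives $\supp P1_S=S$, i.e.\ $P^*1_S\ge 1_S$, so $q:=\lim_nP^{n*}1_S$ exists, with $1_S\le q\le 1_X$ and $P^*q=q$; moreover $P^{n*}1_{X\setminus S}=1_X-P^{n*}1_S\downarrow 1_X-q$. Because every $P$-invariant density is a convex combination of $h_1,\dots,h_r$ and hence supported in $S$, \cite[Theorem~3.1]{Toyokawa2020} reduces the proposition to showing $q=1_X$ $m$-almost everywhere. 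Suppose, toward a contradiction, that $m(\{q<1\})>0$.

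First I would rule out $m(\{q=0\})>0$. Since $q\ge0$ and $P^*q=q$, Lemma~\ref{supp} applied to $q$ gives $\supp P1_{\{q=0\}}\subseteq\{q=0\}$, so $\{q=0\}$ is forward invariant (and disjoint from $S$). By Lemma~\ref{lem:P_S-Markov} the restriction $P_{\{q=0\}}$ is then a Markov operator, and forward invariance gives $P_{\{q=0\}}^n\varphi=P^n(1_{\{q=0\}}\varphi)$, from which one checks that $P_{\{q=0\}}$ again satisfies {\tt(AMC2)}; by Theorem~\ref{cor:E} it would admit an invariant density, which extended by zero is a $P$-invariant density disjoint from $S$ --- impossible. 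Hence $m(\{q=0\})=0$, and consequently $m(\{q<\varepsilon\})\to0$ as $\varepsilon\downarrow0$.

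The core step is to show $\beta:=\esssup(1_X-q)<1$. Set $\psi:=1_X-q$, so that $P^*\psi=\psi$, $\supp\psi\subseteq X\setminus S$ and $0\le\psi\le 1_{X\setminus S}$, and put $Q:=P_{X\setminus S}$, a positive contraction of $L^1(m)$ whose adjoint is $Q^*\varphi=1_{X\setminus S}P^*(1_{X\setminus S}\varphi)$. Then $Q^*\psi=1_{X\setminus S}P^*\psi=\psi$; and from $Q^*\varphi\le P^*\varphi$ for $\varphi\ge0$ one obtains by induction $Q^{*n}\varphi\le P^{n*}\varphi$, so that $Q^{*n}1_{X\setminus S}$, being decreasing and squeezed between $\psi$ and $P^{n*}1_{X\setminus S}\downarrow\psi$, satisfies $Q^{*n}1_{X\setminus S}\downarrow\psi$. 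Consequently, for any $\varphi\in D(m)$ with $\supp\varphi\subseteq X\setminus S$ (so $Q^n\varphi\ge0$ is supported in $X\setminus S$), monotone convergence gives $\|Q^n\varphi\|=\int\varphi\, Q^{*n}1_{X\setminus S}\,dm\to\int\varphi\psi\,dm=:\theta_\varphi$, while $\int(Q^n\varphi)\psi\,dm=\int\varphi\, Q^{*n}\psi\,dm=\theta_\varphi$ for every $n$; hence $\int(Q^n\varphi)q\,dm=\|Q^n\varphi\|-\theta_\varphi\to0$. Cesàro averaging yields $\|A_n^Q\varphi\|\to\theta_\varphi$ and $\int(A_n^Q\varphi)q\,dm\to0$, whence $\int_{\{q\ge\varepsilon\}}A_n^Q\varphi\,dm\le\varepsilon^{-1}\int(A_n^Q\varphi)q\,dm\to0$ and therefore $\int_{\{q<\varepsilon\}}A_n^Q\varphi\,dm\to\theta_\varphi$. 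Since $A_n^Q\varphi\le A_n\varphi$ and $m(\{q<\varepsilon\})<\delta$ for $\varepsilon$ small, {\tt(AMC2)} forces $\theta_\varphi<\kappa$. Applying this with $\varphi=1_B/m(B)$ for $B:=\{\psi\ge\beta-\varepsilon'\}$ (nonnull for every $\varepsilon'>0$, and contained in $X\setminus S$) gives $\beta-\varepsilon'\le\theta_\varphi<\kappa$, so $\beta\le\kappa<1$.

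Finally, $\psi\le\beta\,1_{X\setminus S}$ and $Q^{*n}\psi=\psi$ give $\psi=Q^{*n}\psi\le\beta\, Q^{*n}1_{X\setminus S}\to\beta\psi$, so $(1-\beta)\psi\le0$, i.e.\ $\psi=0$ and $q=1_X$ $m$-a.e., contradicting the assumption. Thus $h$ has the maximal support and $P$ satisfies {\tt(WAP)}. I expect the main obstacle to lie in the third paragraph: establishing $Q^{*n}1_{X\setminus S}\downarrow\psi$ and $\|Q^n\varphi\|\to\int\varphi\psi\,dm$ cleanly, and packaging the concentration of $A_n^Q\varphi$ on the small sets $\{q<\varepsilon\}$ into an actual contradiction with {\tt(AMC2)} (together with the auxiliary fact, used already in the second paragraph, that {\tt(AMC2)} descends to forward-invariant restrictions); the rest is bookkeeping with Lemmas~\ref{supp} and~\ref{lem:P_S-Markov}, Theorem~\ref{cor:E}, and Proposition~\ref{propC22}.
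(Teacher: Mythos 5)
Your core analytic argument (the third and fourth paragraphs) is correct and is in essence a streamlined version of what the paper does in Lemmas~\ref{lem1}--\ref{lem:MCDP2}: your restricted operator $Q=P_{X\setminus S}$ together with $Q^{*n}1_{X\setminus S}\downarrow\psi=1_X-q$ plays the role of the paper's auxiliary limits and of the sets $E$, $C_k$, $B_\varepsilon$ in Lemmas~\ref{lem:anterior}--\ref{lem:epsilon}, and your two-step conclusion ($\esssup\psi\le\kappa<1$ from testing {\tt(AMC2)} on $1_B/m(B)$ with $B=\{\psi\ge\beta-\varepsilon'\}$, then the bootstrap $\psi=Q^{*n}\psi\le\beta\,Q^{*n}1_{X\setminus S}\to\beta\psi$) is the same mechanism the paper runs in Lemma~\ref{lem:MCDP2} with $1-\varepsilon$ in place of $\beta$. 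The verification that {\tt(AMC2)} descends to the forward-invariant restriction is also exactly parallel to the paper's treatment of $E_0$.

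The genuine gap is the word ``impossible'' in your second paragraph. You rule out an invariant density supported in $\{q=0\}$ on the grounds that every $P$-invariant density is a convex combination of $h_1,\dots,h_r$ and hence carried by $S=\supp h_1\cup\dots\cup\supp h_r$. That is not automatic from Proposition~\ref{propC22}: knowing that $h_1,\dots,h_r$ exhaust the \emph{ergodic} invariant densities does not by itself forbid invariant mass outside $S$, because a general invariant density need not dominate any ergodic one (for the identity operator on a nonatomic space every density is invariant and there are no ergodic densities at all); under {\tt(AMC2)} the statement is true, but it requires an argument you do not give. The paper avoids precisely this by taking $S$ to be a Zorn-maximal support of an invariant density, so that an invariant density living on $E_0=\{q=0\}$ contradicts maximality outright. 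You can repair your proof either by adopting that maximal choice of $S$ (the rest of your argument goes through verbatim), or, staying with your $S$, by applying Proposition~\ref{propC22} (rather than only Theorem~\ref{cor:E}) to the restriction $P_{\{q=0\}}$ --- whose {\tt(AMC2)} you already checked --- to produce an \emph{ergodic} $P_{\{q=0\}}$-invariant density, and then checking, using $P^*1_{X\setminus\{q=0\}}\le 1_{X\setminus\{q=0\}}$ (Proposition~\ref{prop:P_h-Markov}) and the characterization in Proposition~\ref{thm:apendix-ergodic}, that its extension by zero is ergodic for $P$; this contradicts the fact that all ergodic densities of $P$ are among $h_1,\dots,h_r$ and supported in $S$. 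With that step supplied, your proof is complete.
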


We postpone the proof of the above proposition and show how to use it to conclude the following main result of this subsection:

\begin{thm}\label{propC2}
  {\tt (AMC2)} implies  {\tt (FED)}. In particular,  {\tt (AC)} implies {\tt (MC)}.
\end{thm}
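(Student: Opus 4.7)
The plan is to combine the two propositions just established, namely Proposition~\ref{propC22} (which yields finitely many ergodic $P$-invariant densities with mutually disjoint supports) and Proposition~\ref{propWAP} (which yields {\tt(WAP)} and hence the existence of \emph{some} invariant density with maximal support), and then upgrade the latter to show that the specific density $h=\frac{1}{r}(h_1+\cdots+h_r)$ itself has the maximal support.

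First, I would apply Proposition~\ref{propC22} to obtain ergodic $P$-invariant densities $h_1,\dots,h_r$ with pairwise disjoint supports. Next, Proposition~\ref{propWAP} together with the characterization in \cite[Theorem~3.1]{Toyokawa2020} produces an invariant density $\tilde h$ whose support $\tilde S$ satisfies $P^{n*}1_{\tilde S}\to 1$ $m$-a.e. Using that {\tt(WAP)} implies mean ergodicity and the extremality characterization of ergodic densities in Proposition~\ref{prop:ergodic}, one can decompose $\tilde h=\sum_{i=1}^{r} c_i h_i$ with $c_i\ge 0$ and $\sum_i c_i=1$.

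The crux, and expected main obstacle, is to show $c_i>0$ for every $i$. Suppose for contradiction that $c_{i_0}=0$. Then $\tilde S\subseteq X\setminus\supp h_{i_0}$ up to an $m$-null set, so by positivity of $P^*$,
\begin{equation*}
P^{n*}1_{\tilde S}\;\le\; P^{n*}1_{X\setminus\supp h_{i_0}}\;\le\; 1,
\end{equation*}
which forces $P^{n*}1_{X\setminus\supp h_{i_0}}\to 1$ $m$-a.e. On the other hand, by duality and the $P$-invariance of $h_{i_0}$,
\begin{equation*}
\int P^{n*}1_{X\setminus\supp h_{i_0}}\cdot h_{i_0}\,dm \;=\; \int 1_{X\setminus\supp h_{i_0}}\cdot P^n h_{i_0}\,dm \;=\; \int 1_{X\setminus\supp h_{i_0}}\cdot h_{i_0}\,dm=0,
\end{equation*}
so $P^{n*}1_{X\setminus\supp h_{i_0}}=0$ $m$-a.e.\ on $\supp h_{i_0}$. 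Since $m(\supp h_{i_0})>0$, this contradicts the pointwise limit being $1$. Hence $c_i>0$ for all $i$, and therefore $\supp\tilde h=\bigcup_{i=1}^r\supp h_i=\supp h$ up to an $m$-null set. Consequently $P^{n*}1_{\supp h}\to 1$ $m$-a.e., so $h$ has the maximal support and {\tt(FED)} holds.

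For the final assertion, observe that {\tt(AC)} implies {\tt(AMC)} because Ces\`aro averaging preserves $\limsup$ (i.e.\ $\limsup_n \frac{1}{n}\sum_{k=0}^{n-1} a_k\le \limsup_k a_k$ applied to $a_k=\int_E P^k\varphi\,dm$), and in particular {\tt(AMC2)} holds. The first part of the theorem then yields {\tt(FED)}, and chaining Lemmas~\ref{lemC3} and~\ref{lemC4} gives~{\tt(MC)}. The only delicate point in the whole argument is the duality step singling out $h$ among all candidate maximal-support densities, which crucially exploits both the finitude and the disjoint-support properties of the $h_i$'s supplied by Proposition~\ref{propC22}.
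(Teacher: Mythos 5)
Your proposal is correct and follows essentially the same route as the paper: combine Proposition~\ref{propC22} with Proposition~\ref{propWAP}, decompose the maximal-support density supplied by {\tt(WAP)} over the finitely many ergodic densities, deduce that $h=\frac{1}{r}(h_1+\cdots+h_r)$ has the maximal support, and then pass from {\tt(AC)} to {\tt(AMC2)} by the Ces\`aro limsup inequality and to {\tt(MC)} via Lemmas~\ref{lemC3} and~\ref{lemC4}. The only difference is that your ``crux'' step proving $c_i>0$ for every $i$ (which is correct, via the duality argument) is superfluous: since $1_{\tilde S}\le 1_{\supp h}$, positivity of $P^{n*}$ gives $P^{n*}1_{\tilde S}\le P^{n*}1_{\supp h}\le 1$, so maximal support of $h$ already follows from the inclusion $\supp \tilde h\subset \supp h$, valid for any convex combination even with vanishing coefficients, which is exactly how the paper concludes.
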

\begin{proof}
Let $h_1,\dots,h_r$ be the ergodic invariant densities of $P$ obtained in Proposition~\ref{propC22}.
To conclude {\tt (FED)} we need to prove that the $P$-invariant density $h= \frac{1}{r} (h_1+\dots+h_r)$ has the maximal support.
According to Proposition~\ref{propWAP}, $P$ is {\tt(WAP)}. Then there is a $P$-invariant density $g$ with the maximal support, i.e., such that $\lim_{n\to\infty} P^{n*}1_{\supp g}=1_X$.
Since the set of $P$-invariant densities is convex and the ergodic densities are its extremal points, it follows that $g=\lambda_1 h_1 + \dots+ \lambda_r h_r$ with $\lambda_i \geq 0$ and $\lambda_1+\dots+\lambda_r=1$.
Then, $\supp g \subset \supp h$ and thus we also have $\lim_{n\to\infty} P^{n*}1_{\supp h}=1_X$. This shows that $h$ has the maximal support concluding the proof.

The final implication follows immediately from the first part of the theorem since, trivially, {\tt(AC)} $\Rightarrow$ {\tt (AMC2)} and {\tt (FED)} $\Rightarrow$ {\tt(MC)} by Lemmas~\ref{lemC3} and~\ref{lemC4}.
\end{proof}

\subsubsection{Proof of Proposition~\ref{propWAP}}

Let $h$ be a $P$-invariant density. Set $S=\supp h$. Define
\[
\varphi\eqdef \lim_{n\to\infty} P^{n*}1_{S}.
\]
This limit exists since the sequence $(P^{n*}1_{S})_{n\geq 1}$ is increasing. See Remark~\ref{prop:invariant}.
The first observation is that $0\leq \varphi \leq 1$. Also, by the monotone continuity property of $P^*$ (see~\cite[Proposition V.4.1]{neveu1965mathematical}),
   $P^*\varphi =\varphi$.
Moreover, since
$P^*1_{S} \geq 1_{S}$, one has that $\varphi=1$ on $S$.
Let us split
\[
X=E_0 \cup E \cup E_1, \quad E=\{x\in X: 0<\varphi<1\} \ \ \text{and} \ \ E_i=\{x\in X: \varphi=i\} \ \text{for $i=0,1$}.
\]

\begin{lem} \label{lem1}
  We have $P^*1_E\leq 1_E$ and $P^*1_{E_0} \geq 1_{E_0}$.
\end{lem}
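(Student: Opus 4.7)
The plan is to decompose everything with respect to the partition $X = E_0 \cup E \cup E_1$ and exploit the two fixed-point identities $P^{*}\varphi = \varphi$ and $P^{*}(1_X-\varphi)=1_X-\varphi$ (the latter since $P^*1_X = 1_X$ by the Markov property). Writing
\[
\varphi = 1_{E_1} + \varphi\,1_E \qquad \text{and} \qquad 1_X-\varphi = 1_{E_0} + (1-\varphi)\,1_E,
\]
both decompositions are sums of non-negative terms, so positivity of $P^*$ lets us read off information on each piece of the partition.

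First I will work on $E_0$. Since $\varphi\equiv 0$ on $E_0$, the equation $P^{*}\varphi = \varphi$ gives $P^{*}1_{E_1}+P^{*}(\varphi\,1_E)=0$ on $E_0$, and because both summands are non-negative, each vanishes on $E_0$. To upgrade $P^{*}(\varphi\, 1_E)=0$ to $P^{*}1_E=0$ on $E_0$, I will use the standard approximation trick: set $A_n \eqdef \{x\in E:\varphi(x)\ge 1/n\}$, so that $1_{A_n} \le n\,\varphi\,1_E$ and $1_{A_n}\uparrow 1_E$. Positivity gives $P^{*}1_{A_n}\le n\,P^{*}(\varphi\,1_E)=0$ on $E_0$, and the order continuity of $P^*$ (see~\cite[Proposition V.4.1]{neveu1965mathematical}) then yields $P^{*}1_E = \lim_n P^{*}1_{A_n}=0$ on $E_0$.

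The argument on $E_1$ is symmetric, using $P^{*}(1_X-\varphi)=1_X-\varphi$ and the decomposition $1_X-\varphi = 1_{E_0}+(1-\varphi)1_E$: since $1-\varphi\equiv 0$ on $E_1$, I get $P^{*}1_{E_0}=0$ and $P^{*}((1-\varphi)1_E)=0$ on $E_1$, and then approximating $1_E$ by $1_{\{1-\varphi\ge 1/n\}\cap E} \le n(1-\varphi)1_E$ and invoking order continuity gives $P^{*}1_E = 0$ on $E_1$.

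Finally I will assemble the two conclusions. Since $P^{*}1_E$ vanishes on $E_0\cup E_1$ and is trivially $\le 1 = 1_E$ on $E$, this proves inequality~(1). For~(2), on $E_0$ the identity $P^{*}1_X = P^{*}1_{E_0}+P^{*}1_E+P^{*}1_{E_1}=1$ combined with $P^{*}1_E = 0$ and $P^{*}1_{E_1}=0$ on $E_0$ forces $P^{*}1_{E_0}=1=1_{E_0}$ there, while on $X\setminus E_0$ the inequality is trivial. I do not anticipate a real obstacle: the only delicate point is the monotone approximation step, which is routine once one has the order continuity of $P^*$ available.
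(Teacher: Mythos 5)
Your proof is correct. It differs from the paper's argument mainly in how the first inequality is obtained: the paper deduces $\supp P^*1_E\subset E$ by applying Lemma~\ref{supp} to the fixed points $P^*\varphi=\varphi$ and $P^*(1_X-\varphi)=1_X-\varphi$, and then converts that support containment into $P^*1_E\le 1_E$ via the equivalences of Proposition~\ref{prop:P_h-Markov}, whereas you argue directly from the decompositions $\varphi=1_{E_1}+\varphi 1_E$ and $1_X-\varphi=1_{E_0}+(1-\varphi)1_E$, positivity, $P^*1_X=1_X$, and the truncations $1_{\{\varphi\ge 1/n\}\cap E}\le n\varphi 1_E$ and $1_{\{1-\varphi\ge 1/n\}\cap E}\le n(1-\varphi)1_E$ together with monotone continuity of $P^*$. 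For the second inequality the two arguments are essentially the same device: the paper truncates with $\min\{1_X,n\varphi\}\uparrow 1_{\supp \varphi}$ to get $P^*1_{\supp \varphi}\le 1_{\supp \varphi}$, which is exactly your conclusion read on $E_0=X\setminus\supp\varphi$. The paper's route is shorter because the appendix machinery is already in place; your route is self-contained, bypasses Lemma~\ref{supp} and Proposition~\ref{prop:P_h-Markov}, and yields slightly finer information along the way, namely $P^*1_E=0$ and $P^*1_{E_1}=0$ on $E_0$, $P^*1_E=0$ and $P^*1_{E_0}=0$ on $E_1$, and in fact $P^*1_{E_0}=1$ on $E_0$ (all up to $m$-null sets). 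The only point worth making explicit is that each vanishing statement holds $m$-almost everywhere and monotone continuity is invoked along a countable sequence, so the exceptional null sets can be collected without harm.
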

\begin{proof}
  Note that $\supp 1_E \subset \supp \varphi$  and  $\supp 1_E \subset \supp (1_X-\varphi)$. Then, by Lemma~\ref{supp}, it follows that
  \[
  \supp P^*1_E \subset \supp P^*\varphi \cap \supp P^*(1_X-\varphi)=\supp \varphi \cap \supp (1_X-\varphi)=E.
  \]
  Then, Proposition~\ref{prop:P_h-Markov} in Appendix~\ref{sec:Markov-restriction2} implies that $P^*1_{E} \leq 1_{E}$.
On the other hand, since $\lim_{n\to\infty} \min\{1_X,n\varphi\}=1_{\supp \varphi}$, it holds that
\[
P^*1_{\supp \varphi} =\lim_{n\to\infty} P^*\min\{1_X,n\varphi\}   \leq \lim_{n\to\infty} \min\{1_X,n\varphi\}=1_{\supp \varphi}.
\]
Equivalently,  $P^*1_{E_0} \geq 1_{E_0}$ since $E_0=X\setminus \supp \varphi$.
\end{proof}

\begin{lem} \label{lem:anterior}
For every $A\subset \supp \varphi \setminus S$, $\lim_{n\to\infty} P^{n*}(\varphi 1_A)=0$.
\end{lem}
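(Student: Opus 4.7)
My plan is to prove the lemma by a clean sandwich argument that exploits the $P^{*}$-invariance of $\varphi$ together with the fact that $\varphi=1$ on $S$. The key observation is that since $A\subset\supp\varphi\setminus S$, in particular $A\cap S=\emptyset$, so $S\subset X\setminus A$. Combined with $\varphi\equiv 1$ on $S$, this yields $\varphi\,1_{X\setminus A}\geq 1_S$; together with the trivial upper bound $\varphi\,1_{X\setminus A}\leq \varphi$, I would record the two-sided estimate
\[
1_S \;\leq\; \varphi\,1_{X\setminus A} \;\leq\; \varphi.
\]

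Next, I would apply the positive linear operator $P^{n*}$ to the above inequality and use $P^{n*}\varphi=\varphi$ (which holds since $P^{*}\varphi=\varphi$ by construction of $\varphi$) to obtain, for every $n\geq 1$,
\[
P^{n*}1_S \;\leq\; P^{n*}\!\left(\varphi\,1_{X\setminus A}\right)\;\leq\;\varphi.
\]
By the very definition of $\varphi$, the lower bound $P^{n*}1_S$ increases monotonically to $\varphi$ pointwise $m$-almost everywhere, so the sandwich forces
\[
\lim_{n\to\infty}P^{n*}\!\left(\varphi\,1_{X\setminus A}\right)\;=\;\varphi \qquad m\text{-a.e.}
\]
Invoking the linearity of $P^{n*}$ and $P^{n*}\varphi=\varphi$ one final time,
\[
P^{n*}(\varphi\,1_A)\;=\;P^{n*}\varphi - P^{n*}\!\left(\varphi\,1_{X\setminus A}\right)\;=\;\varphi - P^{n*}\!\left(\varphi\,1_{X\setminus A}\right)\;\xrightarrow[n\to\infty]{}\;0
\]
pointwise $m$-a.e., and since $0\leq P^{n*}(\varphi\,1_A)\leq\varphi\leq 1$, the dominated convergence theorem promotes this to convergence in $L^{1}(m)$ as well.

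Honestly, there is no real obstacle: the argument relies only on the positivity of $P^{n*}$, the $P^{*}$-invariance of $\varphi$, and the containment $S\subset X\setminus A$. In particular, this step does not invoke hypothesis \texttt{(AMC2)} nor the preceding Lemma~\ref{lem1}; those will enter only in the subsequent step of Proposition~\ref{propWAP}, where Lemma~\ref{lem:anterior} is combined with the structural information $P^{*}1_E\leq 1_E$, $P^{*}1_{E_0}\geq 1_{E_0}$ and the conclusion of Proposition~\ref{propC22} to rule out the possibility that $m(\supp\varphi\setminus S)>0$, thereby identifying $h$ as having maximal support.
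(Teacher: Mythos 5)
Your argument is correct and is essentially the paper's own proof in a slightly rearranged form: both rest on the positivity and linearity of $P^{n*}$, the invariance $P^{n*}\varphi=\varphi$, the fact that $\varphi=1$ on $S$, and the monotone limit $P^{n*}1_S\to\varphi$, the paper bounding $0\le P^{n*}(\varphi 1_A)\le P^{n*}(\varphi 1_{\supp\varphi\setminus S})=\varphi-P^{n*}1_S\to 0$ while you reach the same bound via the complement $X\setminus A$ and a sandwich. The final appeal to dominated convergence is harmless but unnecessary, since the lemma is used in the pointwise $m$-a.e.\ sense.
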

\begin{proof} By assumption, $1_A \leq  1_{\supp \varphi \setminus S}$. Hence,  $0\leq \varphi 1_A \leq \varphi 1_{\supp \varphi \setminus S}$.
Then, since
\[
\lim_{n\to\infty} P^{n*}(\varphi 1_{\supp \varphi \setminus S}) =
\lim_{n\to\infty} P^{n*}(\varphi 1_{\supp \varphi} - \varphi 1_S)=
\lim_{n\to\infty} P^{n*}\varphi - \lim_{n\to\infty} P^{n*}1_{S}= \varphi-\varphi=0,
\]
it also follows that $\lim_{n\to\infty} P^{n*}(\varphi 1_A)=0$.
\end{proof}

Let us define
\[
\psi\eqdef\lim_{n\to\infty} P^{n*}1_{E}.
\]
As above, notice that this limit exists since, from Lemma~\ref{lem1}, the sequence $(P^{n*}1_{E})_{n\geq 1}$ is decreasing.
Moreover, again, by the monotone continuity property of $P^*$,  we have that~$P^*\psi=\psi$ and $\supp \psi \subset E$.

\begin{lem}  \label{lem:1-phi}
  If $m(E_0)=0$, then $\varphi = 1_X -\psi$.
\end{lem}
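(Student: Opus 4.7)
The plan is to introduce the monotone limit $\chi\eqdef\lim_{n\to\infty}P^{n*}1_{E_1}$, use the hypothesis $m(E_0)=0$ to write $1_X=\psi+\chi$, and then identify $\chi$ with $\varphi$ via Lemma~\ref{lem:anterior}, yielding $\varphi=1_X-\psi$.

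First I would check that $\chi$ is a well-defined element of $L^{\infty}(m)$ satisfying $P^*\chi=\chi$. From $1_{E_1}=1_X-1_E-1_{E_0}$ in $L^{\infty}(m)$, the Markov identity $P^*1_X=1_X$ combined with Lemma~\ref{lem1} (which gives $P^*1_E\leq 1_E$ and $P^*1_{E_0}\geq 1_{E_0}$) yields $P^*1_{E_1}\geq 1_{E_1}$. Hence $(P^{n*}1_{E_1})_{n\geq 1}$ is increasing and dominated by $1_X$; it converges pointwise $m$-a.e., and the monotone continuity of $P^*$ (see \cite[Proposition V.4.1]{neveu1965mathematical}) gives $P^*\chi=\chi$. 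Now the hypothesis $m(E_0)=0$ lets us identify $1_X=1_E+1_{E_1}$ in $L^{\infty}(m)$; applying $P^{n*}$ and using $P^{n*}1_X=1_X$ gives $1_X=P^{n*}1_E+P^{n*}1_{E_1}$ for every $n$, and letting $n\to\infty$ produces $1_X=\psi+\chi$.

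To finish I would show $\chi=\varphi$. Decomposing $1_{E_1}=1_S+1_{E_1\setminus S}$, we get $P^{n*}1_{E_1}=P^{n*}1_S+P^{n*}1_{E_1\setminus S}$. By definition $\varphi=1$ on $E_1$, so $1_{E_1\setminus S}=\varphi\cdot 1_{E_1\setminus S}$ and $E_1\setminus S\subset(\supp\varphi)\setminus S$; Lemma~\ref{lem:anterior} applied with $A=E_1\setminus S$ then yields $\lim_{n\to\infty}P^{n*}1_{E_1\setminus S}=0$. Therefore $\chi=\lim_{n\to\infty}P^{n*}1_S=\varphi$, and combined with $1_X=\psi+\chi$ this gives $\varphi=1_X-\psi$, as required. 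The only point to verify is the inclusion $E_1\setminus S\subset(\supp\varphi)\setminus S$, which is immediate from $E_1=\{\varphi=1\}\subset\supp\varphi$, so no serious obstacle arises.
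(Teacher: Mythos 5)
Your argument is correct and is essentially the paper's proof: modulo the $m$-null set $E_0$ you decompose $1_X=1_S+1_{E_1\setminus S}+1_E$, apply $P^{n*}$, and eliminate the middle term with Lemma~\ref{lem:anterior}, exactly as the paper does; your auxiliary limit $\chi$ is just $\varphi$ repackaged. One small slip in the justification: $P^*1_{E_1}\geq 1_{E_1}$ does not follow from $P^*1_X=1_X$, $P^*1_E\leq 1_E$ and $P^*1_{E_0}\geq 1_{E_0}$ alone, since the last inequality points the wrong way after the subtraction; the step is rescued only because the hypothesis $m(E_0)=0$ forces $1_{E_0}=0$ and $P^{n*}1_{E_0}=0$ $m$-almost everywhere, and once this is used you do not even need monotonicity of $(P^{n*}1_{E_1})_{n\geq 1}$, as $P^{n*}1_{E_1}=1_X-P^{n*}1_E$ converges automatically because $(P^{n*}1_E)_{n\geq 1}$ decreases to $\psi$.
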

\begin{proof}
Since $m(E_0)=0$,
\begin{equation*}\label{eq:lim}
  1_X=\lim_{n\to\infty} P^{n*}1_X= \lim_{n\to\infty} P^{n*}(1_{S}+1_{E_1\setminus S}+1_E) = \varphi +\lim_{n\to\infty} P^{n*}1_{E_1\setminus S} + \psi.
\end{equation*}
Having it in mind that $1_{E_1\setminus S}=\varphi 1_{E_1\setminus S}$ and $ E_1\setminus S \subset \supp \varphi \setminus S$, Lemma~\ref{lem:anterior} implies that $\lim_{n\to\infty} P^{n*}1_{E_1\setminus S}=0$.
Substituting above, we get $1_X=\varphi + \psi$ concluding the proof.
\end{proof}

For each $\varepsilon>0$, let us denote
$
   B_{\varepsilon}=\{x\in X:  1-\varepsilon \leq \psi(x)\}.
$

\begin{lem}  \label{lem:epsilon}
  If $\varphi=1_X - \psi$, then  $\lim_{n\to \infty} P^{n*}(\psi 1_{B_\varepsilon})=\psi$ for every $\varepsilon>0$.
  Moreover, if $m(B_{\varepsilon})=0$ for some $\varepsilon>0$, then $\psi=0$ (and hence, $\varphi=1$) $m$-almost everywhere.
\end{lem}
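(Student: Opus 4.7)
The plan is to exploit the $P^*$-invariance of $\psi$ together with Lemma~\ref{lem:anterior}. Under the hypothesis $\varphi=1_X-\psi$, the function $\psi$ equals $1-\varphi$ pointwise; since $0<\varphi<1$ on $E$, this forces $\psi>0$ everywhere on $E$, so in fact $\supp\psi = E$ (up to an $m$-null set) and $B_\varepsilon\subset E$. Because $P^*\psi=\psi$ gives $P^{n*}\psi=\psi$ for all $n$, I can decompose
\[
\psi \;=\; P^{n*}(\psi 1_{B_\varepsilon}) + P^{n*}(\psi 1_{E\setminus B_\varepsilon})
\]
for every $n$, reducing the first claim to showing that the second term tends to $0$.

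The key observation is the pointwise bound on $E\setminus B_\varepsilon$: there $\psi<1-\varepsilon$, so $\varphi = 1-\psi > \varepsilon$, and hence
\[
0 \;\le\; \psi\,1_{E\setminus B_\varepsilon} \;\le\; \tfrac{1-\varepsilon}{\varepsilon}\,\varphi\,1_{E\setminus B_\varepsilon}.
\]
Now $E\setminus B_\varepsilon\subset E\subset \supp\varphi\setminus S$ (since $\varphi=1$ on $S$ while $\varphi<1$ on $E$), so Lemma~\ref{lem:anterior} gives $P^{n*}(\varphi 1_{E\setminus B_\varepsilon})\to 0$. Applying the positive operator $P^{n*}$ to the above inequality and taking the limit, $P^{n*}(\psi 1_{E\setminus B_\varepsilon})\to 0$, which combined with the decomposition yields $P^{n*}(\psi 1_{B_\varepsilon})\to\psi$.

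For the ``moreover'' part, if $m(B_\varepsilon)=0$ for some $\varepsilon>0$, then $\psi 1_{B_\varepsilon}=0$ in $L^\infty(m)$, so $P^{n*}(\psi 1_{B_\varepsilon})=0$ identically; the first part then forces $\psi=0$ $m$-a.e., and hence $\varphi=1_X-\psi=1_X$ $m$-a.e. The only subtlety in the argument is tracking the mode of convergence — the limits in the definitions of $\varphi$ and $\psi$ and in Lemma~\ref{lem:anterior} are pointwise (monotone) limits, and the passage through the inequality preserves this via positivity of $P^*$, so no additional technology is needed. I don't anticipate any significant obstacle; the whole proof is really just the $P^*$-invariance identity combined with a two-line pointwise domination.
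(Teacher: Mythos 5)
Your proof is correct and rests on the same two pillars as the paper's: the decomposition $\psi=P^{n*}\psi=P^{n*}(\psi 1_{B_\varepsilon})+P^{n*}(\psi 1_{E\setminus B_\varepsilon})$ and an application of Lemma~\ref{lem:anterior} after dominating $\psi$ by a multiple of $\varphi$ off $B_\varepsilon$; where you differ is in the execution, and your version is leaner on both counts. The paper partitions $E\setminus B_\varepsilon$ into the level sets $C_k=\{\tfrac{1}{k+1}\le\varphi<\tfrac{1}{k}\}$, proves $\psi\le k\varphi$ on each $C_k$, and then covers $E\setminus B_\varepsilon$ by finitely many of them; your single uniform bound $\psi\,1_{E\setminus B_\varepsilon}\le\tfrac{1-\varepsilon}{\varepsilon}\,\varphi\,1_{E\setminus B_\varepsilon}$ (valid since $\varphi=1-\psi>\varepsilon$ there) removes that bookkeeping entirely. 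For the ``moreover'' part the paper gives a separate argument ($\psi=P^{n*}(\psi 1_E)\le(1-\varepsilon)P^{n*}1_E\to(1-\varepsilon)\psi$, forcing $\psi=0$), whereas you deduce it directly from the first part, observing that $m(B_\varepsilon)=0$ makes $\psi 1_{B_\varepsilon}=0$ in $L^\infty(m)$; this is legitimate because the ``moreover'' clause is only invoked in the paper (via Lemma~\ref{lem:MCDP2} and Proposition~\ref{propWAP}) under the standing hypothesis $\varphi=1_X-\psi$. Two cosmetic remarks: the inclusion $B_\varepsilon\subset E$ you assert fails for $\varepsilon\ge 1$ (where the claim is trivial anyway) and is not actually needed, since $\supp\psi\subset E$ already yields $\psi=\psi 1_{B_\varepsilon}+\psi 1_{E\setminus B_\varepsilon}$; and your comment on the mode of convergence is accurate — all limits are pointwise $m$-a.e.\ and are preserved through the squeeze by positivity of $P^{*}$, which is exactly the form in which the lemma is used later.
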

\begin{proof}
For each $k\geq 1$, define
$C_k=\{x\in X: \, \frac{1}{k+1}\leq \varphi(x) < \frac{1}{k}\}$.
Since $\varphi=1_X - \psi$, we have
\[
  C_k = \left\{ x\in X: \, 1-\frac{1}{k}< \psi(x) \leq 1-\frac{1}{k+1}\right\}.
\]
Hence,
\[
1_{C_k}\psi
\leq 1_{C_k}\left(1-\frac{1}{k+1}\right)
=1_{C_k}\cdot\frac{k}{k+1}
\leq k\, 1_{C_k}\varphi.
\]
Moreover, from Lemma~\ref{lem:anterior}, since $C_k \subset E \subset \supp \varphi \setminus S$, we have   that
\[
 0\leq \lim_{n\to\infty}P^{n*}\left(1_{C_k}\psi\right)
\le k\lim_{n\to\infty}P^{n*}\left(1_{C_k}\varphi\right)=0
\]
and thus, for each $k\geq 1$,
\begin{equation}\label{eq:lim=0}
  \lim_{n\to\infty}P^{n*}\left(1_{C_k}\psi\right)=0.
\end{equation}
On the other hand, for each $\varepsilon>0$, there exists $k_0\geq 1$ such that
\[
E\setminus B_\varepsilon =\{x\in X: 0<\psi(x)<1-\varepsilon\} \subset \bigcup_{k=1}^{k_0} C_k.
\]
Then, from~\eqref{eq:lim=0} we get
\begin{equation}\label{eq24}
  \lim_{n\to\infty}P^{n*}\left(1_{E\setminus B_\varepsilon}\psi\right)=0 \quad \text{for all $\varepsilon>0$.}
\end{equation}
Now, observe that  $\psi=\psi1_E=\psi 1_{B_\varepsilon}+ \psi 1_{E\setminus B_\varepsilon}$. Hence, by~\eqref{eq24}, we conclude that
\begin{align*}
\lim_{n\to\infty}P^{n*}\left(\psi1_{B_{\varepsilon}}\right)=\lim_{n\to\infty} P^{n*}\psi = \psi.
\end{align*}

Finally, note that if $m(B_{\varepsilon})=0$ for some $\varepsilon>0$  then $\psi<1-\varepsilon$ $m$-almost everywhere.
From this, it follows that
\begin{align*}
\psi=P^{n*}\psi=P^{n*}(\psi 1_E)< (1-\varepsilon)P^{n*}1_E\to(1-\varepsilon)\psi\quad \text{as $n\to\infty$.}
\end{align*}
Then $\psi=0$ $m$-almost everywhere.
\end{proof}

Now we set
\[
\mathcal{F}=\{S \subset X :  S \ \ \text{is the support of a} \  P\text{-invariant density} \}.
\]
Then, according to Proposition~\ref{propC22} there is at least one $P$-invariant density and thus $\mathcal{F}\not=\emptyset$.
The inclusion up to an $m$-null set induces a partial order in $\mathcal{F}$. Given a chain, by the Zorn lemma, there exists a maximal element  $S\in \mathcal{F}$ of the chain in the sense that if $S \subset S'$ and $S'\in \mathcal{F}$ then $S'=S$ up to an $m$-null set. Let $h\in D(m)$ be a $P$-invariant density such that $S=\supp h$ and consider for this density the sets $E_0$, $E$ and $E_1$ defined above.

\begin{lem} \label{lem:MCDP2}
 {\tt(AMC2)} implies that  $m(E_0)=0$ and  $m(B_{\varepsilon})=0$ for some $\varepsilon>0$.
\end{lem}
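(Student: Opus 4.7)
The plan is to argue each assertion by contradiction. For $m(E_0)=0$ I will exploit the maximality of $S=\supp h$ in $\mathcal{F}$; for $m(B_\varepsilon)=0$ I will test {\tt(AMC2)} directly against an explicit density supported on $B_\varepsilon$.

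First, I would suppose $m(E_0)>0$ and derive a new $P$-invariant density whose support properly extends $S$. By Lemma~\ref{lem1} one has $P^*1_{E_0}\geq 1_{E_0}$, and combining this with $P^*1_X=1_X$ yields $P^*1_{X\setminus E_0}\leq 1_{X\setminus E_0}$; a short duality argument then gives $\supp P1_{E_0}\subset E_0$, so Lemma~\ref{lem:P_S-Markov} applies and $P_{E_0}$ is a Markov operator on $L^1(m_{E_0})$. An easy induction would show $P_{E_0}^n\phi=P^n(1_{E_0}\phi)$, and a simple change of variables would transfer {\tt(AMC2)} from $P$ to $P_{E_0}$ (with $\delta$ replaced by $\delta/m(E_0)$, and $\kappa$ unchanged). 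Theorem~\ref{cor:E} would then produce a $P_{E_0}$-invariant density $g\in D(m_{E_0})$, whose extension by zero (and renormalisation) gives $h'\in D(m)$ with $Ph'=h'$ and $\supp h'\subset E_0$. Since $P^{n*}1_S$ is increasing in $n$ and bounded above by $1_X$, $\varphi\equiv 1$ on $S$, so $S\cap E_0=\emptyset$ up to an $m$-null set; therefore $\tfrac{1}{2}(h+h')$ is a $P$-invariant density whose support strictly contains $S$, contradicting the maximality of $S$ in $\mathcal{F}$.

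For the second part, with $m(E_0)=0$ established, Lemma~\ref{lem:1-phi} gives $\varphi=1_X-\psi$, so $\{\psi=1\}\subset\{\varphi=0\}=E_0$ is $m$-null and downward continuity of $m$ forces $m(B_\varepsilon)\to 0$ as $\varepsilon\to 0$. I would assume for contradiction that $m(B_\varepsilon)>0$ for every $\varepsilon>0$ and choose $\varepsilon$ small enough that $0<m(B_\varepsilon)<\delta$ and $1-\varepsilon>\kappa$. Since $\psi 1_{B_\varepsilon}\leq 1_{B_\varepsilon}$, positivity of $P^*$ gives $P^{i*}(\psi 1_{B_\varepsilon})\leq P^{i*}1_{B_\varepsilon}$; Lemma~\ref{lem:epsilon} then implies $\liminf_{i\to\infty}P^{i*}1_{B_\varepsilon}\geq\psi$ pointwise $m$-almost everywhere, a bound preserved under Ces\`aro averaging. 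Testing against $\varphi_0=1_{B_\varepsilon}/m(B_\varepsilon)\in D(m)$ and applying Fatou's lemma would yield
$$
\liminf_{n\to\infty}\int_{B_\varepsilon}A_n\varphi_0\,dm=\liminf_{n\to\infty}\int\varphi_0\cdot A_n^*1_{B_\varepsilon}\,dm\geq\int\varphi_0\psi\,dm\geq 1-\varepsilon>\kappa,
$$
contradicting {\tt(AMC2)} applied to $\varphi_0$ and $A=B_\varepsilon$.

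The main obstacle is the first part, where one has to verify that $P_{E_0}$ is a genuine Markov operator on the smaller probability space and that {\tt(AMC2)} passes through the embedding $L^1(m_{E_0})\hookrightarrow L^1(m)$, so that Theorem~\ref{cor:E} supplies the invariant density violating the maximality of $S$. The second part is cleaner once one observes that $B_\varepsilon$ has arbitrarily small measure and that $\psi$ provides the built-in lower bound $1-\varepsilon$ needed to push $\liminf_n\int_{B_\varepsilon}A_n\varphi_0\,dm$ above $\kappa$.
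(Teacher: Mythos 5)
Your proof is correct and follows essentially the same route as the paper: the maximality of $S$ combined with the restricted Markov operator $P_{E_0}$ and Theorem~\ref{cor:E} for the first claim, and the test density $1_{B_\varepsilon}/m(B_\varepsilon)$ together with Lemma~\ref{lem:epsilon} for the second. The only cosmetic differences are that you spell out the transfer of {\tt(AMC2)} to $P_{E_0}$ (which the paper treats as immediate) and replace the paper's dominated-convergence step by a Fatou/liminf bound, both of which are fine.
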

\begin{proof}
By Lemma~\ref{lem1}, $P^*1_{E_0}\geq 1_{E_0}$. If $m(E_0)>0$, then Proposition~\ref{prop:P_h-Markov} implies that $P_{E_0}:L^1(m_{E_0})\to L^1(m_{E_0})$ is a Markov operator.
Therefore, by virtue of Theorem~\ref{cor:E}~(6), it immediately follows from {\tt (AMC2)} that there exists a $P_{E_0}$-invariant density $g\in D(m_{E_0})$.
In particular, it satisfies that $P(1_{E_0}g)=1_{E_0}g$ and $\int 1_{E_0} g \, dm = m(E_0)$.
Hence, $\phi= \frac{1}{2}(h+\frac{1_{E_0}g}{m(E_0)})$ is a $P$-invariant density and thus $S'=\supp \phi \in \mathcal{F}$ and $S=\supp h \subsetneq S'$. This contradicts the maximality of $S$, concluding   that $m(E_0)=0$.

Next we will see that there exists $\varepsilon>0$ such that $m(B_\varepsilon)=0$.
From the assumption {\tt (AMC2)}, there are  $0<\kappa<1$ and $\delta>0$ so that
\begin{align}\label{eq26}
\limsup _{n\to\infty} \int_B A_n\phi \, dm \le\kappa \quad \text{for each $\phi \in D(m)$ and $B\in\mathscr{B}$ with $m(B)<\delta$.}
\end{align}
On the other hand, one can find $\varepsilon_0>0$ such that for any $0<\varepsilon\le\varepsilon_0$ it holds $m(B_{\varepsilon})<\delta$ since $m(B_\varepsilon)\to0$ as $\varepsilon\to0$.
We fix $0<\varepsilon<\min\{\varepsilon_0,1-\kappa\}$. We will see that $m(B_{\varepsilon})=0$.
Arguing by contradiction, we assume that $m(B_\varepsilon)>0$.
Then for a function $\phi =\frac{1_{B_\varepsilon}}{m(B_\varepsilon)}$, it follows from~\eqref{eq26} that
\begin{align*}
\kappa&\ge\limsup _{n\to\infty}\int_{B_\varepsilon}A_n\phi\,dm\\
&=\limsup _{n\to\infty}\int_X \phi\cdot A_{n}^*1_{B_\varepsilon}\,dm
\ge\limsup _{n\to\infty}\int_X \phi\cdot A_{n}^*(\psi1_{B_\varepsilon})\,dm
=\int_X \phi \psi \,dm
\end{align*}
by the Lebesgue dominated convergence theorem and Lemma~\ref{lem:epsilon} (see $\varphi=1_X-\psi$ due to $m(E_0)=0$ and Lemma~\ref{lem:1-phi}). But we also have
\begin{align*}
\int_X \phi \psi \, dm=
\frac{1}{m\left(B_\varepsilon\right)}\int_{B_\varepsilon}\psi \,dm
\ge\frac{1}{m\left(B_\varepsilon\right)}\int_{B_\varepsilon}(1-\varepsilon)\,dm
>\kappa.
\end{align*}
Therefore, we conclude $m(B_{\varepsilon})=0$.
\end{proof}

\begin{proof}[Proof of Proposition~\ref{propWAP}]
The result
follows from Lemmas~\ref{lem:1-phi},~\ref{lem:epsilon}~and~\ref{lem:MCDP2}.
\end{proof}

\subsection{On the classes {\tt (AC)},  {\tt(WC)} and {\tt (APW)} }

In Theorem~\ref{propC2} we have proved that {\tt(AC)} $\Rightarrow$ {\tt (MC)} and, in particular from Theorem~\ref{thm:D}, {\tt(AC)} implies {\tt (APM)}.
The following proposition concludes that conditions {\tt (APW)} and {\tt (WC)} introduced in Section~\ref{sec:Questions-(AC)-(UC)} are sufficient for {\tt(AC)}. Thus, as a consequence,  {\tt(APW)} $\Rightarrow$ {\tt(APM)}  and \mbox{{\tt (WC)} $\Rightarrow$ {\tt(MC)}.}

\begin{prop}\label{prop:efinal} It holds that
{\tt (WC)} implies {\tt (AC)}. Furthermore, {\tt (APW)} implies {\tt (WC)}.
\end{prop}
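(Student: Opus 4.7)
The plan is to handle the two implications separately; both are short once the right objects are identified.

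For {\tt (CW)} $\Rightarrow$ {\tt (AC)}, let $F\subset D(m)$ be the weakly compact set guaranteed by {\tt (CW)}. The idea is that the uniform integrability of the \emph{fixed} set $F$ (Dunford--Pettis, footnote \ref{f:DP}) provides a uniform-in-$\varphi$ control, while the error $P^n\varphi-\psi_n$ is handled by testing weak convergence against $1_A\in L^\infty(m)$. Concretely, given $\varepsilon>0$, I choose $\delta>0$ such that $\int_A\psi\,dm<\varepsilon/2$ for every $\psi\in F$ and every $A\in\mathscr{B}$ with $m(A)<\delta$. Then for any $\varphi\in D(m)$, picking $(\psi_n)\subset F$ with $P^n\varphi-\psi_n\to 0$ weakly, I get
$$\limsup_{n\to\infty}\int_A P^n\varphi\,dm \;=\; \limsup_{n\to\infty}\int_A\psi_n\,dm \;\leq\; \varepsilon/2 \;<\;\varepsilon,$$
which is precisely {\tt (AC)}.

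For {\tt (APW)} $\Rightarrow$ {\tt (CW)}, let $g_1,\dots,g_r$, the positive functionals $\lambda_1,\dots,\lambda_r$, and the permutation $\rho$ be as in {\tt (APW)}. I will take
$$F=\Big\{\sum_{i=1}^r c_i g_i : c_i\geq 0,\ \sum_{i=1}^r c_i=1\Big\},$$
which is a bounded closed subset of the finite-dimensional affine subspace spanned by $g_1,\dots,g_r$, hence norm compact and in particular weakly compact; since each $g_i\in D(m)$, $F\subset D(m)$. For $\varphi\in D(m)$, I set
$$\psi_n \;=\; P^n\Big(\sum_{i=1}^r \lambda_i(\varphi) g_i\Big) \;=\; \sum_{i=1}^r \lambda_i(\varphi)\, g_{\rho^n(i)},$$
using $Pg_i=g_{\rho(i)}$. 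Positivity of $\lambda_i$ on $D(m)$ gives $\lambda_i(\varphi)\geq 0$. To see $\sum_i\lambda_i(\varphi)=1$, I test the weak limit against $1_X\in L^\infty(m)$: since $P^n$ preserves integrals and $P^n(\varphi-\sum_i\lambda_i(\varphi)g_i)\to 0$ weakly, one obtains $1-\sum_i\lambda_i(\varphi)=0$. Hence $\psi_n\in F$, and by definition $P^n\varphi-\psi_n\to 0$ weakly, establishing {\tt (CW)}.

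There is no real obstacle: the only delicate point in Part~1 is recognising that uniformity in $\varphi$ must come from $F$ (not from the error sequence, which depends on $\varphi$), and the only delicate point in Part~2 is the identity $\sum_i\lambda_i(\varphi)=1$ for $\varphi\in D(m)$, which one extracts from {\tt (APW)} by evaluating the weak limit on the constant $1_X$.
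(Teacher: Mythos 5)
Your proof is correct and follows essentially the same route as the paper: Dunford--Pettis uniform integrability of the fixed weakly compact set $F$ plus testing the weak convergence against $1_A$ for the first implication, and for the second the same choice $\psi_n=\sum_i\lambda_i(\varphi)\,g_{\rho^n(i)}$ with $\sum_i\lambda_i(\varphi)=1$ extracted by integrating the weak limit against $1_X$. Your definition of $F$ as the simplex of convex combinations (with $c_i\geq 0$) is in fact slightly more careful than the paper's affine version, but this is a cosmetic difference rather than a different argument.
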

\begin{proof}
Assume that {\tt (WC)} holds. Hence,  given $h\in D(m)$, there is $(\psi _n)_{n\geq 1} \subset F$ such that $P^nh -\psi _n\to 0$ weakly in $L^1(m)$ as $n\to \infty$.
Therefore,
\begin{align*}
\limsup _{n\to\infty}\int _AP^n h\, dm &= \limsup _{n\to\infty}\left(\int _A\psi _n \, dm +\int _A(P^n h - \psi _n) \, dm\right) \\
&= \limsup _{n\to\infty} \int _A\psi _n \, dm \leq \sup _{\psi \in F} \int _A \psi \, dm.
\end{align*}
Since $F$ is a weakly compact set, it follows from the Dunford--Pettis theorem that for any $\varepsilon >0$, there is $\delta >0$ such  that $ \sup _{\psi \in F} \int _A \psi \, dm<\varepsilon$ for any $A\in\mathscr{B}$ with $m(A)<\delta$.
This concludes that $P$ satisfies {\tt (AC)}.

Next we assume that {\tt (APW)} holds. Then, for $h \in D(m)$, since $P$ is a Markov operator, due to the weak convergence in {\tt(APW)},
\begin{align} \label{eq:convex-combination}
0 &=\lim _{n\to\infty} \int P^n\big(h -\sum _{i=1}^r\lambda _i(h )g_i\big) \, dm
=\int \big(\varphi -\sum _{i=1}^r\lambda _i(h )g_i \big) \, dm= 1- \sum _{i=1}^r\lambda _i(h ).
\end{align}
Set
\[
F\coloneqq \left\{\sum_{i=1}^r a_i \, g_i : \ a_i\in \mathbb R,\; \sum _{i=1}^r a_i =1\right\} \subset D(m).
\]
Notice that  $F$ is a weakly compact set and also $P$-invariant since $Pg_i=g_{\rho(i)}$ where $\rho$ is the permutation of $\{1,\dots,r\}$ in the definition of {\tt(APW)}. Moreover, in view of~\eqref{eq:convex-combination}, the weak convergence in {\tt (APW)} means that for any $h\in D(m)$, there exists
\[
\psi _n\coloneqq \sum _{i=1}^r \lambda _i(h)P^ng_i = \sum _{i=1}^r \lambda _i(h)g_{\rho^n(i)} \in F
\]
such that
$P^nh -\psi _n \to 0$  weakly as $n\to \infty$. This concludes that $P$ satisfies {\tt (WC)}.
\end{proof}

\section{Characterization of finitude of physical measures: proof of Theorem~\ref{thm:C}}\label{s:thmc}
Let $(X ,  \mathscr B,  m)$ and  $(\Omega ,  \mathscr F,  \mathbb{P})=(T^\mathbb{N},  \mathscr A^\mathbb{N},  p^\mathbb{N})$ be a Polish probability space and the infinite product space of a probability space $(T,  \mathscr A,  p)$, respectively. Consider a measurable map $f: T\times  X \to X$  and
\[
f^0_\omega=\mathrm{id} \quad \text{and}  \quad f^n_\omega=
f_{\omega_{n-1}}\circ \dots \circ f_{\omega_0} \
\ \text{for}
\  n>0 \ \text{and   $\omega =(\omega _0, \omega _1, \ldots )\in \Omega$},
\]
where we denoted $f_t=f(t,\cdot)$ for $t\in T$.
Recall the notion of stationary measure of $f$ in~\eqref{def:stationary}.
Also, recall that the convergence of measures that we are considering is in the weak* topology. That is, $\mu_n \to \mu$ if and only if
\begin{equation} \label{eq:weak*convergence}
   \int \varphi \,d\mu_n \to \int \varphi \, d\mu \quad \text{for all $\varphi\in C_b(X)$}
\end{equation}
where $C_b(X)$ denotes the set of bounded real-valued continuous functions of $X$.
Since $X$ is Polish space, it is actually a complete separable metric space and thus the following result is applied. This statement can be found in~\cite[Proposition 2.2]{worm2011ergodic} (cf.~\cite[Proof of Proposition 4.4 in Chapter 3]{ethier1986markov}) but we include its proof for convenience of the reader.
\begin{prop} \label{prop:countable-determined-weak}
Let $(X, d)$ be a separable metric space. Then, the convergence in the weak* topology in the space $\mathcal{P}(X)$ of Borel probability measures of $X$ is countably determined, i.e., there exists a countable set $S$ in $C_b(X)$ consisting of bounded Lipschitz functions such that if $\mu$ and $(\mu_n)_{n\geq 1}$ belong to  $\mathcal{P}(X)$  and satisfy that
\begin{equation} \label{eq:4.4}
\int \phi \, d\mu_n \to  \int \phi \, d\mu \quad  \quad \text{for all} \ \phi \in S,
\end{equation}
then $\mu_n \to \mu$ in the weak* topology.  Consequently, if $\nu$, $\mu \in \mathcal{P}(X)$ satisfy
$\int \phi \,d\nu =  \int \phi \, d\mu$ for every $\phi \in S$,
then $\nu = \mu$.
\end{prop}
\begin{proof}
Let $\{x_i\}_{i\geq 1}$ be a dense set of $X$ and define $\phi_{ij}\in C_b(X)$ for $i,j \geq 1$ as
\begin{equation*}
  \phi_{ij}(x)=\max\{2(1-j\,d(x,x_i),0\}.
\end{equation*}
Clearly, $\phi_{ij}$ are bounded Lipschitz functions. Let $S$ be the countable set of such functions. Now,
given any open  set $U$ of $X$, consider the functions $g_m(x)= \inf\{ \sum \phi_{ij}(x), 1\}$ for $m\geq 1$, where the sum extends over those $i,j\leq m$ such that $B(x_i,\frac{1}{j})\subset U$. Since $g_m$ are finite sums of $\phi_{ij}$,
if~\eqref{eq:4.4} holds, then
\[
\liminf_{n\to\infty} \mu_n(U)\geq \lim_{n\to\infty} \int g_m \, d\mu_n = \int g_m \, d\mu \quad \text{for $m\geq 1$.}
\]
Hence, letting $m\to \infty$ we conclude that $\liminf_{n\to\infty} \mu_n(U) \geq \mu(U)$.  By Portemanteau's theorem~(cf.~\cite[Theorem 3.1 in Chapter 3]{ethier1986markov}), this inequality is equivalent to the convergence of $\mu_n$ to $\mu$ in the weak* topology concluding the proof.
\end{proof}

As an application of the above proposition we get the following:

\begin{lem} \label{lem-mu1}
 Let $\mu$ be an ergodic stationary measure of $f$. Then,
 \begin{enumerate}[label=(\roman*)]
  \item $\bar{\mu}(B(\mu))=1$ \quad where $\bar{\mu}=\mathbb{P}\times\mu$;
 \item $\mu(B_\omega(\mu))=1$ \quad for $\mathbb{P}$-almost every $\omega\in \Omega$;
 \item $\mathbb{P}(B_x(\mu))=1$ \quad for $\mu$-almost every $x\in X$.
 \end{enumerate}
Here,
 \begin{equation} \label{eq:movida}
 B_\omega(\mu)=\big\{x\in X: \, (\omega,x)\in B(\mu)\big\} \ \quad \text{and} \quad \  B_x(\mu)=\big\{\omega\in \Omega: \, (\omega,x)\in B(\mu)\big\}
\end{equation}
and
\[
B(\mu)=\left\{ (\omega,x)\in \Omega\times X: \, \lim _{n\to \infty} \frac{1}{n} \sum _{j=0}^{n-1} \delta_{ f_{ \omega}^{j}(x)}=\mu\right\}.
\]
\end{lem}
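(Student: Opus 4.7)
The plan is to apply Birkhoff's ergodic theorem to the skew-product $F:\Omega\times X\to\Omega\times X$ defined in~\eqref{eq:skew}, noting that $\bar\mu=\mathbb{P}\times\mu$ is an ergodic $F$-invariant probability measure by assumption (cf.~\cite{O83}). For any $\varphi\in C_c(X)$, applying Birkhoff's theorem to $\tilde\varphi(\omega,x)\eqdef\varphi(x)$ yields
\begin{equation*}
\lim_{n\to\infty}\frac{1}{n}\sum_{j=0}^{n-1}\varphi(f^j_\omega(x)) = \int\varphi\,d\mu
\end{equation*}
for $\bar\mu$-almost every $(\omega,x)\in\Omega\times X$.

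First I would fix a countable family $\{\varphi_k\}_{k\geq 1}\subset C_c(X)$ that is dense in $C_c(X)$ under the supremum norm; such a family exists because $X$ is a locally compact Polish space, hence $\sigma$-compact, so $C_0(X)$ is separable and $C_c(X)$ is dense therein. In order to control the possible escape of mass at infinity, I would also include in this family an exhausting sequence $\{\chi_\ell\}_{\ell\geq 1}\subset C_c(X)$ with $0\leq\chi_\ell\leq 1$ and $\chi_\ell(x)\uparrow 1$ pointwise on $X$. Applying Birkhoff's theorem to each such test function and intersecting countably many $\bar\mu$-full measure sets, one obtains a single measurable set $B\subseteq\Omega\times X$ with $\bar\mu(B)=1$ on which all of these time averages converge to the corresponding spatial integrals.

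Next I would show $B\subseteq B(\mu)$. Fix $(\omega,x)\in B$ and set $\nu_n\eqdef\frac{1}{n}\sum_{j=0}^{n-1}\delta_{f^j_\omega(x)}$. From $\int\chi_\ell\,d\nu_n\to\int\chi_\ell\,d\mu$ together with the monotone convergence $\int\chi_\ell\,d\mu\uparrow 1$, one deduces that $\{\nu_n\}_{n\geq 1}$ is tight, hence every weak$^*$-subsequential limit is a probability measure on $X$. Any such limit $\nu$ satisfies $\int\varphi_k\,d\nu=\int\varphi_k\,d\mu$ for every $k$, and by density of $\{\varphi_k\}$ in $C_c(X)$, together with the fact that probabilities on $X$ are determined by their integrals against $C_c(X)$, it follows that $\nu=\mu$. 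Therefore the full sequence $\nu_n$ converges weak$^*$ to $\mu$; that is, $(\omega,x)\in B(\mu)$, which proves (i).

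Finally, $B(\mu)$ is jointly measurable (being describable as the intersection over $k$ of measurable sets of the form $\{(\omega,x):\lim_n\int\varphi_k\,d\nu_n=\int\varphi_k\,d\mu\}$, once one works on the determining family), so Fubini's theorem applied to $\bar\mu(B(\mu))=1$ gives
\begin{equation*}
1=\int\mathbb P(B_x(\mu))\,d\mu(x)=\int\mu(B_\omega(\mu))\,d\mathbb P(\omega),
\end{equation*}
and combined with the trivial bounds $\mathbb P(B_x(\mu)),\mu(B_\omega(\mu))\leq 1$ this yields (iii) and (ii) respectively. The step I expect to be the main subtle point is the tightness argument, which is why one must include an exhausting family $\{\chi_\ell\}$ alongside an arbitrary countable dense subset of $C_c(X)$: on a non-compact Polish $X$, convergence of $\int\varphi_k\,d\nu_n$ for each $k$ does not by itself imply weak$^*$ convergence of $\nu_n$, since mass could a priori escape to infinity along the orbit.
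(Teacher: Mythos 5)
Your proof is correct and follows essentially the same route as the paper: Birkhoff's ergodic theorem applied to the ergodic skew-product measure $\mathbb{P}\times\mu$, a countable dense family in $C_c(X)$ (using that $X$ is locally compact Polish, so $C_c(X)$ is separable), and Fubini's theorem to deduce (ii) and (iii) from (i). The only difference is presentational: you re-derive the passage from convergence against $C_c(X)$ to weak* convergence of probability measures via an explicit exhausting sequence and tightness, whereas the paper obtains exactly this equivalence by citing Portemanteau's theorem in the discussion immediately preceding the lemma.
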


\begin{proof}
Since $\mu$ is an ergodic stationary measure of $f$, the measure $\bar{\mu}=\mathbb{P}\times \mu$ is an ergodic invariant probability measure of the corresponding skew-product transformation $F$.
Now, for a fixed continuous function $\psi:X\to \mathbb{R}$, we define the set $B(\mu,\psi)$ of points $(\omega,x)\in \Omega\times X$  for which
\[
\lim _{n\to \infty} \frac{1}{n} \sum _{j=0}^{n-1} \psi \circ f_{ \omega}^{j}(x) = \int _X \psi\, d\mu.
\]
Taking into account that
\[
\psi \circ f_{ \omega}^{n}(x) =  \bar{\psi} \circ F^{n}(\omega,x) \quad \text{and} \quad \int \psi \, d\mu = \int \bar{\psi} \, d\bar{\mu}
\]
where
\[
\bar{\psi}(\omega,x)=\psi(x) \quad \text{for } (\omega,x)\in \Omega\times X,
\]
it follows from the Birkhoff ergodic theorem that $B(\mu,\psi)$ has full $\bar{\mu}$-measure.  Now, since $X$ is a Polish space, by Proposition~\ref{prop:countable-determined-weak}
it is not difficult to see that
\[
B({\mu})=\bigcap_{\varphi\in S} B(\mu,\psi)
\]
and therefore $B(\mu)$ has also full $\bar{\mu}$-measure.
Finally, we observe that $B_\omega(\mu)$ and $B_x(\mu)$ are the $\omega$-section and $x$-section of $B(\mu)$ respectively, i.e., it holds~\eqref{eq:movida}.
Hence,
by Fubini's theorem, we have that $\mu(B_\omega(\mu))=1$ and $\mathbb{P}(B_x(\mu))=1$ for $\mathbb{P}$-almost every $\omega\in \Omega$ and $\mu$-almost every $x\in X$ respectively.
\end{proof}

We highlight the following lemma for future reference. This lemma follows immediately by the Fubini theorem and the definition of $B_\omega(\mu)$ and $B_x(\mu)$ in~\eqref{eq:movida}.

\begin{lem} \label{lem:equi-basin}
 The following are equivalent:
\begin{enumerate}
\item $\bar{m}(B(\mu_1)\cup \dots \cup B(\mu_r))=1$  \quad where $\bar{m}=\mathbb{P}\times m$;
\item $m(B_\omega(\mu_1)\cup \dots \cup B_\omega(\mu_r))=1$ \quad  for $\mathbb{P}$-almost every $\omega\in \Omega$;
\item $\mathbb{P}(B_x(\mu_1)\cup \dots \cup B_x(\mu_r))=1$ \quad for $m$-almost every $x\in X$.
\end{enumerate}
\end{lem}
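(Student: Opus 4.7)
\smallskip
\noindent\textbf{Proof plan.} The plan is to realize $B \coloneqq B(\mu_1)\cup\dots\cup B(\mu_r)$ as a measurable subset of $\Omega\times X$ whose $\omega$-section and $x$-section coincide with the unions appearing in items (2) and (3). The key observation is that each $B(\mu_i)$ is a Borel subset of $\Omega\times X$: indeed, if $S$ is a countable dense subset of $C_c(X)$ (which exists because $X$ is a locally compact Polish space), then one can describe $B(\mu_i)$ as the intersection over $\varphi\in S$ of sets of pairs $(\omega,x)$ for which $\frac{1}{n}\sum_{j=0}^{n-1}\varphi\circ f_\omega^j(x)$ converges to $\int\varphi\,d\mu_i$, and each such set is Borel by the joint measurability of $(\omega,x)\mapsto f_\omega^j(x)$. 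Hence $B$ is Borel, and by the very definitions of $B_\omega(\mu_i)$ and $B_x(\mu_i)$ in~\eqref{eq:movida},
\begin{equation*}
B_\omega \coloneqq \bigcup_{i=1}^r B_\omega(\mu_i)=\{x\in X:(\omega,x)\in B\}, \qquad B_x \coloneqq \bigcup_{i=1}^r B_x(\mu_i)=\{\omega\in\Omega:(\omega,x)\in B\}.
\end{equation*}

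With this setup, I would invoke Fubini's theorem for the product measure $\bar m=\mathbb{P}\times m$, which gives
\begin{equation*}
\bar m(B)=\int_\Omega m(B_\omega)\,d\mathbb{P}(\omega)=\int_X \mathbb{P}(B_x)\,dm(x).
\end{equation*}
Since $0\le m(B_\omega)\le 1$ and $0\le \mathbb{P}(B_x)\le 1$, an elementary measure-theoretic argument (an integrable function bounded by $1$ whose integral equals $1$ must equal $1$ almost everywhere) yields that $\bar m(B)=1$ is equivalent to $m(B_\omega)=1$ for $\mathbb{P}$-almost every $\omega$, and also equivalent to $\mathbb{P}(B_x)=1$ for $m$-almost every $x$. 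This chain of equivalences is precisely the content of the lemma.

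There is no real obstacle here; the only mild technical point is the Borel measurability of $B$, which requires reducing the convergence of measures in the weak$^*$ topology to convergence against a countable dense family of test functions, exactly as was done in the proof of Lemma~\ref{lem-mu1}. Everything else is an immediate consequence of Fubini's theorem.
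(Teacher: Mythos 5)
Your proposal is correct and follows essentially the same route as the paper, which dismisses the lemma as an immediate consequence of Fubini's theorem together with the definitions of the sections $B_\omega(\mu_i)$ and $B_x(\mu_i)$; your only addition is to spell out the Borel measurability of $B$ via a countable dense family of test functions, which is exactly the device already used in the proof of Lemma~\ref{lem-mu1}.
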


\begin{rem}\label{rem:physical}
By the Birkhoff ergodic theorem and the Fubini theorem, we immediately find that Lemmas~\ref{lem-mu1} and~\ref{lem:equi-basin}  hold for
\begin{equation} \label{eq:Bpsi}
\begin{gathered}
B(\mu,\psi)=\left\{ (\omega,x)\in \Omega\times X : \, \lim _{n\to \infty} \frac{1}{n} \sum _{j=0}^{n-1} \psi (f^j_\omega (x)) = \int \psi \, d\mu  \right\},  \\
B_\omega(\mu, \psi)=\left\{x\in X:  (\omega,x)\in B(\mu,\psi)\right\} \ \ \text{and} \ \  B_x(\mu,\psi)= \left\{\omega\in \Omega:  (\omega,x)\in B(\mu,\psi)\right\}
\end{gathered}
\end{equation}
with bounded real-valued functions $\psi$,
instead of $B(\mu)$, $B_\omega(\mu)$ and $B_x(\mu)$ respectively.
\end{rem} 
\begin{rem}\label{rem:physical2}
The equivalence in Lemma~\ref{lem:equi-basin} remains valid when we replace "$=1$" with "$>0$". This leads us to the following definition: an ergodic stationary probability measure $\mu$ for $f$ is called a \emph{physical measure} if one of the following equivalent conditions holds:
  \begin{enumerate}
    \item $\bar{m}(B(\mu))>0$, where $\bar{m}=\mathbb{P}\times m$;
    \item $m(B_\omega(\mu))>0$ for $\mathbb{P}$-almost every $\omega \in \Omega$;
    \item $\mathbb{P}(B_x(\mu))>0$ for $m$-almost every $x \in X$.
  \end{enumerate}
This definition aligns with the classical notion of physical measure in the deterministic case (that is, when $\Omega$ is a singleton). However, it differs from previous definitions of physical measure for random maps introduced in the literature (see for instance~\cite[Equation~(2)]{alves2007stochastic} or~\cite[page~313]{Araujo2000}).
This concept was previously introduced by requiring $m(\mathfrak{B}(\mu))>0$, where
\[
 \mathfrak{B}(\mu)=\left\{x\in  X: \, \lim _{n\to \infty} \frac{1}{n} \sum _{j=0}^{n-1} \delta_{f_{ \omega}^{j}(x)}=\mu \quad \text{for $\mathbb{P}$-almost every $\omega\in \Omega$} \right\}.
\]
Since $\bar{m}(B(\mu))\geq m(\mathfrak{B}(\mu))$, we conclude that $m(\mathfrak{B}(\mu))>0$ implies $\bar{m}(B(\mu))>0$. However, the converse is not expected to hold.
Moreover, defining $\mathfrak{B}(\mu)$ as the statistical basin of attraction does not seem appropriate, as the following example illustrates.
  \begin{example} \label{example:figura4}
  Let $X=T=[-1,1]$ be equipped with the normalized Lebesgue measure and let $X_-=[-1,0)$,  $X_+=(0,1]$.
 Consider a continuous map $f_0: X\to X$ which has exactly two sinks $p_- = -\frac{1}{2}$, $p_+ = \frac{1}{2}$ whose basin of attraction is $X_-$, $X_+$, respectively, and $0$ is the fixed point of $f_0$.
 With the notation $B(x,r)$ for the ball of radius $r$ centered at $x$,
 we also assume that $f_0(X )\subset B(0,\frac{3}{4})$ and $f_0( B(p_\pm , \frac{1}{4}))\subset  B(p_\pm , \frac{1}{8})$.
 Then, for the random map $f: T\times X\to X$ with additive noise given by $f_t(x) = f_0(x) + \frac{1}{8} t$,  it holds that $f_t(B(p_\pm , \frac{1}{4}))\subset  B(p_\pm , \frac{1}{4})$  for any $t\in T$. See Figure~\ref{fig_eg_Y}.

\begin{figure}[h]
\centering
\begin{subfigure}{0.45\textwidth}
    \includegraphics[width=\textwidth]{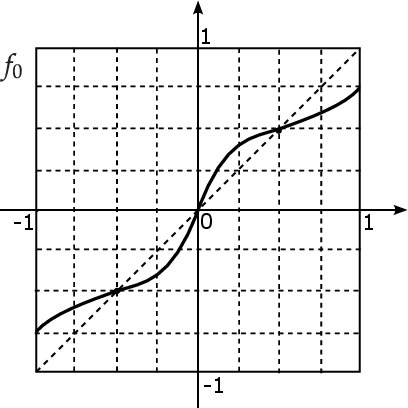}
    \caption{The map $f_0$ on $[-1,1]$}
    \label{fig:first}
\end{subfigure}
\hfill
\begin{subfigure}{0.45\textwidth}
    \includegraphics[width=\textwidth]{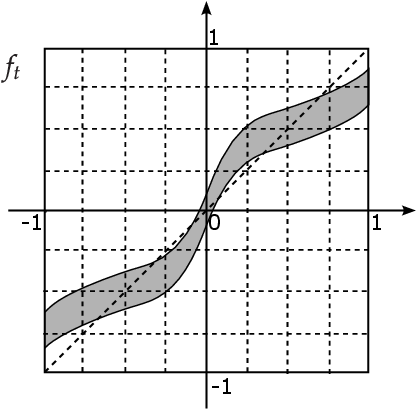}
    \caption{The map $f_t$ on $[-1,1]$}
    \label{fig:second}
\end{subfigure}
\caption{Illustrations of the random map $f$ in Example~\ref{example:figura4}.}
\label{fig_eg_Y}
\end{figure}

  By the argument in Remark~\ref{rm:0302}, the annealed Perron--Frobenius operators  associated with the restrictions of $f$ on both $B(p_- , \frac{1}{4})$  and $B(p_+ , \frac{1}{4})$
    satisfy {\tt (FPM)}.
Therefore, $f$ admits at least two absolutely continuous ergodic stationary measures $\mu _-$, $\mu _+$ whose supports are included in $B(p_- , \frac{1}{4})$, $B(p_+ , \frac{1}{4})$, respectively.
Since the annealed Perron--Frobenius operator associated with $f$ itself also satisfies {\tt (FPM)}, $f$ admits finitely many absolutely continuous ergodic stationary measures $\mu _1,\ldots ,\mu _r$, two of which are $\mu _-$, $\mu _+$,  such that
\[
m( B_\omega (\mu _1) \cup \cdots \cup B_\omega (\mu _r)) =1
\quad \text{for $\mathbb P$-almost every $\omega$.}
\]
On the other hand, by the continuity of $f_0$, there is a neighborhood  $U$ of $0$ such that $f_0(U) \subset B(0,\frac{1}{16})$, so that for each $x\in U$, both $\{ f_t(x) : t\in T\} \cap X_+$ and $\{ f_t(x) : t\in T\} \cap X_-$ have positive Lebesgue measure.
Consequently,  one can find positive $\mathbb P$-measure sets $\Gamma_-$, $\Gamma_+$ and $n_0\geq 1$ such that  if $\omega \in \Gamma_\pm$  then  $f_\omega ^n(x) \in B(p_\pm , \frac{1}{4})$  for any  $n\geq n_0$ and $x\in U$.
This concludes
 that $U\not \subset \bigcup _{j=1}^r \mathfrak{B} (\mu _j) $.
Therefore,
\[
0<m(\mathfrak{B}(\mu_1)\cup \dots \cup \mathfrak{B}(\mu_r))<1.
\]
In conclusion,  if one expects finitely many physical measures where the union of their basins of attraction covers the whole space almost everywhere, then the good notion of the basin should be the fiberwise statistical basin.
  \end{example}
\end{rem}

Let us show that (ii) implies (i) in Theorem~\ref{thm:C}. To do this, according to the equivalences shown in Theorem~\ref{thm:D}, it suffices to show the following proposition.

\begin{prop} \label{prop:FPM-FED}
If $f$ satisfies {\tt(FPM)}, then  ${\mathcal{L}}_f$ satisfies the condition {\tt(FED)}.
\end{prop}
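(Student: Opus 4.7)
The first step is routine. For each $i \in \{1,\dots,r\}$ set $h_i = d\mu_i/dm \in D(m)$; stationarity of $\mu_i$ is equivalent to $\mathcal{L}_f h_i = h_i$ (see Appendix~\ref{sec:apendix}), so each $h_i$ is a $\mathcal{L}_f$-invariant density; ergodicity of $\mu_i$ translates to ergodicity of $h_i$ in the Markov-operator sense (Proposition~\ref{prop:ergodic}); item 2) of Definition~\ref{dfn:11} gives pairwise disjointness of $S_i \coloneqq \supp h_i$ up to $m$-null sets. Setting $h = \tfrac{1}{r}(h_1+\cdots+h_r)$ and $S = \supp h = \bigcup_i S_i$, the task reduces to proving the maximal-support property
\[
\lim_{n\to\infty}\mathcal{L}_f^{n*}1_S(x)=1 \quad \text{for $m$-a.e.~$x\in X$.}
\]
Next I will establish forward invariance of $S$: from $\mathcal{L}_f h_i = h_i$ and Lemma~\ref{supp} one gets $\supp \mathcal{L}_f 1_{S_i} \subset S_i$ up to $m$-null, which dualizes to $\mathcal{L}_f^* 1_{X\setminus S_i}$ vanishing $m$-a.e.~on $S_i$; summing and using $\mathcal{L}_f^* 1_X = 1_X$ yields $\mathcal{L}_f^* 1_S \geq 1_S$ $m$-a.e. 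The sequence $(\mathcal{L}_f^{n*}1_S)_{n\geq 0}$ is thus pointwise non-decreasing and bounded by $1_X$, hence converges to some $g\in L^\infty(m)$ with $\mathcal{L}_f^* g = g$ (monotone convergence) and $1_S \leq g \leq 1_X$.

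It remains to show $g = 1_X$ $m$-a.e. Put $\phi \coloneqq 1_X - g \geq 0$; this is bounded, $\mathcal{L}_f^*$-invariant, vanishes $m$-a.e.~on $S$, and (using $\mu_i\ll m$ concentrated on $S_i\subset S$) satisfies $\int \phi \, d\mu_i = 0$ for each $i$. On the product space $(\Omega\times X,\mathbb{P}\times m)$ I view $X_n(\omega,x) = f_\omega^n(x)$ as a Markov chain with transition $\mathcal{L}_f$; the invariance of $\phi$ makes $(\phi(X_n))_{n\geq 0}$ a bounded martingale relative to the natural filtration, so by Doob's theorem $\phi(X_n) \to M_\infty$ $\bar{m}$-a.s.~with $\mathbb{E}_{\mathbb{P}}[M_\infty] = \phi(x)$ for $m$-a.e.~$x$ (dominated convergence), and by Cesaro $\tfrac{1}{n}\sum_{j=0}^{n-1}\phi(f_\omega^j(x)) \to M_\infty$ $\bar{m}$-a.s.

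To identify $M_\infty = 0$ I will invoke item 3) of {\tt(FPM)} through Lemma~\ref{lem:equi-basin}: for $\bar{m}$-a.e.~$(\omega, x)$ there is $i$ with $(\omega,x) \in B(\mu_i)$, so by Lemma~\ref{lem:attraction-to-stationary} the ergodic averages against continuous test functions converge to $\int\cdot\, d\mu_i$. For each $\varepsilon > 0$ I construct, using the Radon regularity of $\mu_i$ on the Polish space $X$, a continuous function $\varphi_\varepsilon : X \to [0,\|\phi\|_\infty]$ with $\varphi_\varepsilon \geq \phi$ pointwise and $\int \varphi_\varepsilon \, d\mu_i < \varepsilon$, obtained by enclosing $\{\phi > 0\}$ in an open neighborhood of small $\mu_i$-measure whose closure also has small $\mu_i$-measure (chosen from a base of $\mu_i$-continuity open sets) and then gluing a Urysohn-type plateau. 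Feeding $\varphi_\varepsilon$ into the preceding convergence and using $\phi \leq \varphi_\varepsilon$ gives
\[
0 \leq \limsup_{n\to\infty}\tfrac{1}{n}\sum_{j=0}^{n-1}\phi(f_\omega^j(x)) \leq \int\varphi_\varepsilon\, d\mu_i < \varepsilon;
\]
letting $\varepsilon \to 0$ forces $M_\infty = 0$ $\bar{m}$-a.s., whence $\phi(x) = \mathbb{E}_{\mathbb{P}}[M_\infty] = 0$ for $m$-a.e.~$x$ and $g = 1_X$ $m$-a.e., which completes the verification of the maximal-support property.

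The main obstacle is the transition from weak-$*$ convergence of empirical measures (which applies only to continuous test functions) to a quantitative estimate on the measurable and possibly nowhere-continuous function $\phi = 1 - g$. The key technical step is constructing the continuous upper approximation $\varphi_\varepsilon \geq \phi$ with $\mu_i$-integral $< \varepsilon$: one must control not just the $\mu_i$-measure of an open enclosure of $\{\phi > 0\}$ but also of its closure, which requires care in choosing the enclosing open set from a suitable base of $\mu_i$-continuity sets afforded by the Polish structure.
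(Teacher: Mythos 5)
Your reduction to the maximal-support property, the forward-invariance step $\mathcal{L}_f^*1_S\geq 1_S$, the monotone limit $g$, and the idea of killing $\phi=1_X-g$ by combining Lemmas~\ref{lem:equi-basin} and~\ref{lem:attraction-to-stationary} all run parallel to the paper's argument (the paper reaches the same end by applying Lemma~\ref{lem:attraction-to-stationary} to $1_{\supp h}$ along each orbit, integrating in $\omega$ with dominated convergence and Lemma~\ref{lem:A2}, and upgrading Ces\`aro convergence to convergence via monotonicity and Stolz--Ces\`aro; your martingale packaging of the integration-in-$\omega$ step is a legitimate variant). The problem is the step you yourself single out as the key one: the construction of a \emph{continuous} $\varphi_\varepsilon\geq\phi$ with $\int\varphi_\varepsilon\,d\mu_i<\varepsilon$. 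This cannot be done in general. All you know about $\phi$ at that stage is that it is a bounded measurable $\mathcal{L}_f^*$-invariant function vanishing $m$-a.e.\ on $S$; its $m$-essential support lies in $X\setminus S$ and may perfectly well be dense in $X$ with positive $m$-measure --- indeed, ruling out a ``fat'' complement of $S$ is exactly what the maximal-support property asserts, so it cannot be assumed. If $\{\phi\geq c\}$ meets every ball in a set of positive $m$-measure, then any continuous (or even upper semicontinuous) majorant of $\phi$ (even only $m$-a.e.) is $\geq c$ everywhere, so its $\mu_i$-integral is $\geq c$, and no choice of $\varepsilon$ is reachable. Your proposed fix does not repair this: outer regularity does give an open $U\supset\{\phi>0\}$ with $\mu_i(U)<\varepsilon$, but when $\{\phi>0\}$ is dense the closure of any such $U$ is all of $X$, and a dense open $\mu_i$-continuity set automatically has full $\mu_i$-measure, so no ``Urysohn plateau'' with small integral exists. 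The structural reason is one-sided: to dominate a $\limsup$ of empirical averages via weak* convergence you need an \emph{upper} semicontinuous majorant, whereas Vitali--Carath\'eodory-type regularity only supplies lower semicontinuous majorants (and upper semicontinuous \emph{minorants}). So the crux of your argument fails, and with it the identification $M_\infty=0$.

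Two smaller points. First, throughout you evaluate the $L^\infty(m)$-class $\phi$ (and the martingale $\phi(X_n)$) pointwise along random orbits; this needs the remark that, by nonsingularity of the maps $f^j_\omega$, for $\bar m$-a.e.\ $(\omega,x)$ the orbit avoids any fixed $m$-null set, so a fixed representative may be used --- the paper faces the same issue for $1_{\supp h}$ and handles the $\omega$-integration through Lemma~\ref{lem:A2} and Remark~\ref{prop:invariant}. Second, note that the paper never needs a continuous majorant of an abstract invariant function: it applies the basin convergence directly to the indicator of $\supp h$ (whose $\mu_i$-measure is $1$) for points in $B_\omega(\mu_i)$, which is a different and much more rigid situation than bounding your $\phi$ from above. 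If you want to keep your martingale framework, you must find another mechanism to show $M_\infty=0$ (for instance exploiting $\mathcal{L}_f^*$-invariance of $\phi$ together with the structure of the basins), not a topological approximation of $\phi$.
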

\begin{proof}
Since $f$ satisfies {\tt(FPM)} we have only finitely many absolutely continuous (with respect to $m$) ergodic stationary probability measures
$\mu_1,\dots, \mu_r$ which have pairwise disjoint supports and $m(B_\omega(\mu_1,\psi)\cup\dots \cup B_\omega(\mu_r, \psi))=1$ for $\mathbb{P}$-almost every $\omega\in \Omega$ and any bounded real-valued function $\psi$. Let $h_i$ be the Radon--Nikod\'{y}m derivative of the measure $\mu_i$ with respect to $m$ for $i=1,\dots,r$.
Observe that $h_i$ is an invariant density of ${\mathcal{L}}_f$ because $\mu_i$ is a stationary probability measure of $f$.
Moreover, since the supports of such measures are pairwise disjoints, we also get that the densities  $h_1,\dots,h_r$ have mutually disjoint supports.
To conclude {\tt(FED)} for ${\mathcal{L}}_f$ we need to prove that the density $h=\frac{1}{r} (h_1+\dots+h_r)$ has the maximal support. This means that
\begin{equation} \label{eq:maximal}
  \lim_{n\to\infty} {\mathcal{L}}_f^{n*}1_{\supp h}(x) = 1 \quad \text{for $m$-almost every $x\in X$}
\end{equation}
where ${\mathcal{L}}_f^{\,*}$ is the adjoint operator of ${\mathcal{L}}_f$.

From 3) in Definition~\ref{dfn:11} of {\tt (FPM)}, we also have that for $\psi=1_{\supp h}$ and for $(\mathbb{P}\times m)$-almost every $(\omega, x)\in \Omega\times X$, there is $i=i(\omega,x)\in\{1,\cdots, r\}$ such that
\[
 \lim_{n\to \infty} \frac{1}{n}\sum_{j=0}^{n-1}  1_{\supp h} \circ f^j_\omega(x) =  \mu_i({\supp h})=1.
\]
That is, for $m$-almost every $x\in X$, there is $\Omega_1=\Omega_1(x)\subset\Omega$ with $\mathbb{P}(\Omega_1)=1$ such that
\begin{equation}\label{eq:lim1}
 \lim_{n\to \infty} \frac{1}{n}\sum_{j=0}^{n-1}  1_{\supp h} \circ f^j_\omega(x) =1 \quad \text{for all $\omega \in \Omega_1$.}
\end{equation}
Using the notation introduced in Appendix~\ref{sec:apendix}, and by applying Lemma~\ref{lem:A2}~(4), the Lebesgue dominated convergence theorem, Lemma~\ref{lem:A2}~(2), and Equation~\eqref{eq:lim1}, we deduce that for $m$-almost every $x\in X$,
\begin{align*}
  \lim_{n\to \infty} \frac{1}{n}\sum_{j=0}^{n-1}  {\mathcal{L}}_f^{j*}1_{\supp h}(x)  &=
  \lim_{n\to \infty} \frac{1}{n}\sum_{j=0}^{n-1}  \int \mathcal{L}_\omega^{j*} 1_{\supp h}(x) \, d\mathbb{P}(\omega) \\
  &=\int_{\Omega_1} \,   \lim_{n\to \infty} \frac{1}{n}\sum_{j=0}^{n-1}  1_{\supp h} \circ f^j_\omega(x)\, d\mathbb{P}(\omega)=1.
\end{align*}
Furthermore, according to Remark~\ref{prop:invariant}, we observe that $({\mathcal{L}}_f^{j*}1_{\supp h})_{j\geq 1}$ is monotonically increasing. Therefore, the convergence of the mean implies the convergence of the sequence (this follows as a consequence of the Stolz--Ces\`aro theorem). In other words, we obtain~\eqref{eq:maximal}, thereby concluding the proof.
\end{proof}

Now we will prove that (i) implies (ii). According to Theorem~\ref{thm:D}, if ${\mathcal{L}}_f$ is mean constrictive, then ${\mathcal{L}}_f$ satisfies {\tt (APM)}. That is, ${\mathcal{L}}_f$ admits finitely many ergodic invariant densities $h_1,\dots,h_r$ in $D(m)$ with mutually disjoint supports and $\lambda_1, \dots, \lambda_r$ bounded linear functionals such that for any $\varphi\in L^1(m)$,
\begin{align}\label{eq:AD}
\lim_{n\to\infty} \lV A_n\varphi-\sum_{i=1}^r\lambda_i(\varphi)h_i\rV =0.
\end{align}
Since $h_i$ is an ergodic invariant density for ${\mathcal{L}}_f$, the measure $\mu_i$ is an absolutely continuous (with respect to $m$) ergodic stationary measures of $f$ where $d\mu_i=h_i \,dm$.
Moreover,  since the support $\supp \nu$ of a measure $\nu$ absolutely continuous with respect to $m$ is defined as the support of the  Radon--Nikod\'{y}m derivative $\frac{d\nu}{dm}$  of $\nu$ with respect to $m$, we also obtain that $\supp \mu_1,\dots,\supp \mu_r$ are pairwise disjoints. To prove that $f$ satisfies {\tt(FPM)} it remains to show that the union of basins of attraction of these measures with respect to any bounded observable $\psi$ is $X$ modulus a set of zero $m$-measure. This will be obtained in the following proposition.

\begin{prop} \label{prop:APM-FPM}
  Assume that ${\mathcal{L}}_f$ satisfies {\tt(APM)} as above. Then, for any bounded real-valued function $\psi$,
\[
  m(B_\omega(\mu_1, \psi)\cup \dots \cup B_\omega(\mu_r, \psi))=1 \quad \text{ for $\mathbb{P}$-almost every $\omega\in \Omega$.}
 \]

\end{prop}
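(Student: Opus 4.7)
The plan is to combine the maximal support property coming from {\tt(FED)} (equivalent to {\tt(APM)} by Theorem~\ref{thm:D}) with the Bernoulli independence of $\mathbb{P}$ and the $F$-invariance of each basin $B(\mu_i)$. Write $\bar m = \mathbb{P} \times m$ and $S_i = \operatorname{supp} h_i = \operatorname{supp} \mu_i$, so that $C := \operatorname{supp} h = S_1 \sqcup \dots \sqcup S_r$ (up to $m$-null sets). For $n \ge 1$ define
\[
C_n^i = \{(\omega,x) \in \Omega \times X : f_\omega^n(x) \in S_i\}, \qquad C_n = C_n^1 \sqcup \dots \sqcup C_n^r.
\]
By Lemma~\ref{lem:equi-basin} it suffices to prove $\bar m\bigl(\bigcup_{i=1}^r B(\mu_i)\bigr) = 1$.

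I would first check that $\bar m(C_n) \to 1$ as $n \to \infty$. By Lemma~\ref{lem:A2}, $\mathcal{L}_f^{n*} 1_C(x) = \int 1_C(f_\omega^n(x))\,d\mathbb{P}(\omega)$, so Fubini gives $\bar m(C_n) = \int \mathcal{L}_f^{n*} 1_C\,dm$. Since $h$ has maximal support, $\mathcal{L}_f^{n*} 1_C \to 1$ pointwise $m$-a.e., and the bounded convergence theorem (with dominant $1$) yields the claim.

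Next I would show $\bar m\bigl(C_n^i \setminus B(\mu_i)\bigr) = 0$ for every $i$ and $n$. Two ingredients enter: \emph{(a)} the basin $B(\mu_i)$ is $F$-invariant, because the Ces\`aro limit of empirical measures is unchanged by a finite shift of the starting time; hence $(\omega,x) \in B(\mu_i)$ iff $F^n(\omega,x) = (\sigma^n\omega, f_\omega^n(x)) \in B(\mu_i)$. \emph{(b)} Because $\mathbb{P} = p^{\mathbb{N}}$ is Bernoulli, $\sigma^n\omega = (\omega_n,\omega_{n+1},\dots)$ has law $\mathbb{P}$ and is $\mathbb{P}$-independent of $(\omega_0,\dots,\omega_{n-1})$, and hence of $f_\omega^n(x)$. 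Moreover the duality of Lemma~\ref{lem:A2} tells us that the law of $f_\omega^n(x)$ under $\bar m$ has density $\mathcal{L}_f^n 1_X$ with respect to $m$. Integrating out $\sigma^n\omega$ first via the independence and then the remaining variables gives
\[
\bar m\bigl(C_n^i \setminus B(\mu_i)\bigr) = \int_{S_i} \mathbb{P}\bigl(\Omega \setminus B_y(\mu_i)\bigr)\,\mathcal{L}_f^n 1_X(y)\,dm(y).
\]
By Lemma~\ref{lem-mu1}(iii), $\mathbb{P}(B_y(\mu_i)) = 1$ for $\mu_i$-a.e.\ $y$, and since $\mu_i \ll m$ with support $S_i$, the integrand vanishes $m$-a.e.\ on $S_i$, so the integral is zero.

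Summing over $i$, $\bar m\bigl(C_n \cap \bigcup_i B(\mu_i)\bigr) = \bar m(C_n) \to 1$, whence $\bar m\bigl(\bigcup_i B(\mu_i)\bigr) = 1$, and Lemma~\ref{lem:equi-basin} converts this to the desired fiberwise statement. The main obstacle is the independence step (b): it uses the Bernoulli structure of $\mathbb{P}$ in an essential way, which is precisely the feature the authors flag in Section~\ref{sec:1.4} as the obstruction to extending Theorem~\ref{thm:C} to non-i.i.d.\ noise.
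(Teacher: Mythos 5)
Your argument is correct, but it takes a genuinely different route from the paper's. The paper never passes through the maximal-support property: it sets $A_\omega=X\setminus(B_\omega(\mu_1)\cup\dots\cup B_\omega(\mu_r))$, uses Lemma~\ref{lem-mu1}(ii) to get $\int_{A_{\sigma^n\omega}}h_i\,dm=0$ along a full-measure set of $\omega$, and then proves by contradiction (Claim~\ref{claim3}) that $\bar m$-a.e.\ orbit must leave the sets $A_{\sigma^k\omega}$ at some time $k$, the contradiction coming directly from the $L^1$-convergence $\|A_n 1_X-\sum_j\lambda_j(1_X)h_j\|\to 0$ in {\tt(APM)}; the conclusion is then the same time-shift argument you use. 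You instead convert {\tt(APM)} into {\tt(FED)} via Theorem~\ref{thm:D}, use the maximal support of $h$ to show $\bar m(C_n)\to 1$, and then exploit the Bernoulli decoupling of $\sigma^n\omega$ from $f^n_\omega(x)$ together with Lemma~\ref{lem-mu1}(iii) to see that landing in $S_i$ at time $n$ puts $(\omega,x)$ in $B(\mu_i)$ up to a $\bar m$-null set; your identity $\bar m(C^i_n\setminus B(\mu_i))=\int_{S_i}\mathbb{P}(\Omega\setminus B_y(\mu_i))\,\mathcal{L}^n_f 1_X(y)\,dm(y)$ is a correct consequence of Lemma~\ref{lem:A2} and Fubini, and its integrand does vanish $m$-a.e.\ on $S_i$ because $\mu_i$-null subsets of $S_i$ are $m$-null. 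Two small points you should make explicit: (i) the maximal-support density provided by {\tt(FED)} must be identified with $h=\tfrac1r(h_1+\dots+h_r)$ for the \emph{same} $h_i$ as in {\tt(APM)}; this follows because any ergodic invariant density $g$ satisfies $A_ng=g\to\sum_i\lambda_i(g)h_i$, so extremality (Proposition~\ref{prop:ergodic}) forces $g\in\{h_1,\dots,h_r\}$, hence the {\tt(APM)} family exhausts the ergodic densities and the {\tt(FED)} statement applies to $h$; (ii) your use of Fubini and of $y\mapsto\mathbb{P}(B_y(\mu_i))$ needs the joint measurability of $B(\mu_i)\subset\Omega\times X$, which holds as in Lemma~\ref{lem-mu1} via a countable dense subset of $C_c(X)$ (the paper uses this implicitly too). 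Comparing the two: your proof is more probabilistic, isolates clearly where the Bernoulli/i.i.d.\ hypothesis enters (the independence of $\sigma^n\omega$ from $f^n_\omega(x)$), and avoids a contradiction argument; the paper's proof works directly with the norm convergence in {\tt(APM)}, needs only Lemma~\ref{lem-mu1}(ii) rather than the maximal-support reformulation, and is self-contained at the level of the operators $\mathcal{Q}_n$, at the cost of being less transparent about the mechanism. Note that both proofs rely on the Bernoulli structure (yours explicitly, the paper's through Lemma~\ref{lem:A2}(4)--(5)), consistent with the obstruction discussed in Section~\ref{sec:1.4}.
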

\begin{proof}
Consider  $\Omega _0 =\{ \omega \in \Omega :  \mu_i(B_\omega(\mu _i, \psi)) =1, \  \text{for $i=1,\dots,r$}\}$ for a fixed  bounded real-valued function $\psi$.
From the ergodicity of $\mu_i$, by item (ii) in Lemma~\ref{lem-mu1} applied to $B_\omega(\mu_i,\psi)$ according to Remark~\ref{rem:physical},  we get $\mathbb{P}(\Omega_0)=1$.
Set
\[
A_\omega =X\setminus \left(B_\omega (\mu _1, \psi)\cup \dots \cup B_\omega(\mu_r, \psi) \right).
\]
Then, for any $\omega \in \Omega_0$ it holds that $\mu_i(A_\omega) = 0$ for all $i=1,\dots,r$ because $A_\omega \subset X \setminus B_\omega (\mu_i, \psi)$.
Consequently, since $d\mu_i= h_i \, dm$
  \begin{equation}\label{eq:3b}
 \int _{A_\omega} h_i \, dm
  =0 \quad \text{for all
   $i=1,\dots,r$ and $\omega\in \Omega_0$}.
  \end{equation}
Set
\[
\Omega _1 = \bigcap _{n=0}^\infty \sigma ^{-n} (\Omega _0),
\]
which is also a full $\mathbb P$-measure set.
Notice that $\omega \in \Omega _1$ implies $\sigma ^n\omega \in \Omega _0$ for any $n\geq 0$. Thus, from~\eqref{eq:3b}
\begin{equation}\label{eq:4b}
 \int _{A_{\sigma^n\omega}} h_i  \, dm =0 \quad \text{ for each $i=1,\ldots ,r$ and $n\geq 1$ and $\omega\in \Omega_1$.}
\end{equation}

For simplicity of notation, we write $\bar{m}=\mathbb P\times m$.

\begin{claim}\label{claim3}
For $\bar{m}$-almost every~$(\omega , x)\in \Omega \times X$, there is $k\geq 1$ such that $f_\omega^k(x)\in X\setminus A_{\sigma ^k\omega }$.
\end{claim}
\begin{proof}
By contradiction, assume that the claim does not hold. Therefore,
\[
\bar{m}(B_0)>0, \quad B_0= \{ (\omega , x)\in \Omega \times X : \, f_\omega^n(x)\in A_{\sigma ^n\omega } \ \ \text{for all $n\geq 1$}\}.
\]
Set $B= B_0 \cap (\Omega _1\times X)$, which is still a positive $\bar{m}$-measure set because $\Omega _1 \times X$ is a full measure set.
Denoting by $B_\omega = \{ x \in X:  (\omega , x)\in B\}$ the $\omega$-section of $B$, Fubini's theorem implies that for every $n\geq 1$,
\[
\int_\Omega \int_{B_\omega }  1_{A_{\sigma ^n\omega }}\circ f^{n}_\omega  \ dm \, d\mathbb P =
\int_\Omega \int_{B_\omega }  \, d{m}\, d\mathbb P =
\bar{m}(B)
>0.
\]
Hence, with the notation from Appendix~\ref{sec:apendix}, see Lemma~\ref{lem:A2}~(2),
\begin{equation}\label{eq:contradition}
  \frac{1}{n}\sum_{i=0}^{n-1} \int_\Omega \int_{B_\omega }  \mathcal{L}_{\omega}^{i*}1_{A_{\sigma^i\omega}}  \, dm \, d\mathbb P =\bar{m}(B)>0.
\end{equation}
Given $\varphi\in L^1(m)$ and $\omega\in \Omega$, let us write
\[
\mathcal{L}^n_\omega\varphi = \sum_{j=1}^r \lambda_j(\varphi)h_j + \mathcal{Q}^n_\omega \varphi  \quad \text{and} \quad
A_n\varphi = \sum_{j=1}^r \lambda_j(\varphi)h_j + \mathcal{Q}_n \varphi
\]
where we recall that (see Lemma~\ref{lem:A2} (4)),
\[
 A_n\varphi = \frac{1}{n}\sum_{i=0}^{n-1} {\mathcal{L}}^i_f\varphi = \frac{1}{n}\sum_{i=0}^{n-1} \int   \mathcal{L}_{\omega}^{i}\varphi \, d\mathbb P.
\]
Then,
\begin{align*}
  \frac{1}{n}\sum_{i=0}^{n-1} \int_\Omega \int _{X}  &\mathcal Q_\omega ^i 1_X\,  \, {dm}\,d\mathbb P  =
  \int  \big( A_n  1_X\ - \sum_{j=1}^r \lambda_j(1_X) h_j \big) \, {dm} =
   \int  \mathcal Q_n1_X \, {dm} \leq \| \mathcal Q_n1_X\|.
\end{align*}
On the other hand,  by Lemma~\ref{lem:A2}~(3), Equation~\eqref{eq:4b} and the above inequality,
\begin{align*}
\frac{1}{n}\sum_{i=0}^{n-1} &\int_\Omega \int_{B_\omega }  \mathcal{L}_{\omega}^{i*}1_{A_{\sigma^i\omega}}   \, dm \, d\mathbb P \\
&\leq \frac{1}{n}\sum_{i=0}^{n-1} \int_\Omega\int_X \mathcal{L}_{\omega}^{i*}1_{A_{\sigma^i\omega}}    \, {dm}\,d\mathbb P
= \frac{1}{n}\sum_{i=0}^{n-1} \int_\Omega \int _{A_{\sigma ^i\omega }}  \mathcal L_\omega ^i 1_X \,  \,{dm}\,d\mathbb P  \\
&= \frac{1}{n}\sum_{i=0}^{n-1} \int_\Omega \int _{A_{\sigma ^i\omega }}  \mathcal Q_\omega ^i 1_X\,  \, {dm}\,d\mathbb P   \leq
\frac{1}{n}\sum_{i=0}^{n-1} \int_\Omega \int _{X}  \mathcal Q_\omega ^i 1_X\,  \, {dm}\,d\mathbb P
\leq \Vert \mathcal Q_n1_X \Vert  \to 0.
\end{align*}
The last limit (as $n \to \infty$) follows from the assumption~\eqref{eq:AD}. But this limit provides a contradiction with~\eqref{eq:contradition}.
\end{proof}
Fix $(\omega , x)\in \Omega \times M$ (up to zero measure sets) and let $k\geq 1$ be the integer of Claim~\ref{claim3}.
Set $y= f^k_\omega (x) \in X\setminus A_{\sigma ^k\omega }$.
By definition of $A_{\sigma^k\omega}$,
 there is $i$ such that $y\in B_{\sigma ^k\omega }(\mu _i, \psi)$.
Then, in view of~\eqref{eq:Bpsi}, $(\sigma^k\omega,y)\in B(\mu_i, \psi)$ and
 hence
\[
 \lim_{n\to\infty}\frac{1}{n}\sum_{j=0}^{n-1}
\psi( f^j_{\sigma^k\omega} (y)) = \int \psi \, d\mu _i .
\]
On the other hand,
\[
   \frac{1}{n}\sum_{j=0}^{n-1}\psi(f^j_\omega(x))=
   \frac{n-k}{n} \left( \frac{1}{n-k}\sum_{j=0}^{n-k-1}
   \psi(f_{\sigma^k\omega}^j(y)) \right) +
   \frac{1}{n}\sum_{j=0}^{k-1}\psi(f^j_{\omega}(x)) \to \int \psi\, d\mu_i
\]
as $n\to \infty$. This show that $x \in B_\omega(\mu_i, \psi)$
concluding the proof.
\end{proof}

Finally, we summarize and complete the last details of the proof of Theorem~\ref{thm:C}:

\begin{proof}[Proof of Theorem~\ref{thm:C}] If $f$ satisfies {\tt (FPM)}, then $\mathcal{L}_f$ is {\tt (FED)} by Proposition~\ref{prop:FPM-FED} and hence Theorem~\ref{thm:D} implies that  $\mathcal{L}_f$ is {\tt (MC)}. This proves (ii) implies (i) in Theorem~\ref{thm:C}. On the other hand, if $\mathcal{L}_f$ is {\tt(MC)}, then it is also {\tt (APM)} by Theorem~\ref{thm:D} and hence,  one concludes that $f$ satisfies {\tt(FPM)} from  Proposition~\ref{prop:APM-FPM}. This shows (i) implies (ii). Finally, (1), (2) and (3) follow from Lemma~\ref{lem:equi-basin} and Remark~\ref{rem:fiberwise}.
\end{proof}

\section{Sub-hierarchy in {\tt (UC)}} \label{s:UC}

In this section, we complete the proof of the implications in the hierarchies of Figure~\ref{fig:subhierarchy}, as well as provide practical sufficient conditions for {\tt (FPM)} other than conditions in Section~\ref{ss:1.1}.
{In the sequel, $(X,\mathscr{B},m)$ denotes a Polish probability space,  $P:L^1(m) \to L^1(m)$  is a Markov operator and $P(x,A)$ is a transition probability that induces $P$.}
We first give  equivalent conditions for {\tt (UC)}.

\begin{prop} \label{prop:UC}
The following assertions are equivalent:
\begin{enumerate}
\item
$P$ satisfies {\tt (UC)};
\item
$P:L^1(m)\to L^1(m)$ is a quasi-compact operator;
\item
for any $\varepsilon>0$,  there are  $\delta >0$ and  $n_0 \geq 1$ such that
\[
\sup_{\varphi\in D(m)} \int_A P^{n_0}\varphi \, dm < \varepsilon \quad \text{for all $A\in \mathscr{B}$ with $m(A)< \delta$;}
\]
\item
there are $n_0 \geq 1$, $0<\varepsilon<1$ and $\delta >0$ such that
\[
\sup_{\varphi\in D(m)} \int_A P^{n_0}\varphi \, dm < \varepsilon \quad \text{for all $A\in \mathscr{B}$ with $m(A)< \delta$;}
\]
\item
for any $\varepsilon>0$,  there are  $\delta >0$ and  $n_0 \geq 1$ such that  $P^{n_0}(x,A)<\varepsilon$  for all $A\in \mathscr{B}$ with $m(A)< \delta$ and   $m$-almost every  $x\in X$ (depending on $A$);
\item
there are  $n_0 \geq 1$, $0<\varepsilon<1$ and $\delta >0$ such that $P^{n_0}(x,A)<\varepsilon$ for all $A\in\mathscr{B}$ with $m(A)<\delta$ and $m$-almost every $x\in X$ (depending on A);
\item
for any $\varepsilon>0$,  there are  $\delta >0$, $n_0 \geq 1$  and a probability $\mu$ absolutely continuous with respect to $m$ such that $P^{n_0}(x,A)<\varepsilon$ for all $A\in\mathscr{B}$ with $\mu(A)<\delta$ and $m$-almost every $x\in X$ (depending on $A$);
 \item
 there are $n_0\geq 1$, $0<\varepsilon<1$, $\delta>0$  and a probability $\mu$ absolutely continuous with respect to $m$ such that $P^{n_0}(x,A)<\varepsilon$ for all $A\in\mathscr{B}$ with $\mu(A)<\delta$ and $m$-almost every $x\in X$ (depending on $A$).
\end{enumerate}
\end{prop}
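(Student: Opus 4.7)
The plan is to establish the eight equivalences through the main cycle $(1) \Rightarrow (3) \Rightarrow (4) \Rightarrow (2) \Rightarrow (1)$, together with the horizontal identifications $(3) \Leftrightarrow (5) \Leftrightarrow (7)$ and $(4) \Leftrightarrow (6) \Leftrightarrow (8)$. The horizontal equivalences follow from the duality identity
\[
\sup_{\varphi\in D(m)} \int_A P^{n_0}\varphi\, dm
\;=\; \Vert P^{n_0*}1_A \Vert_\infty
\;=\; \esssup_{x\in X} P^{n_0}(x,A),
\]
which yields $(3) \Leftrightarrow (5)$ and $(4) \Leftrightarrow (6)$; the further identifications $(5) \Leftrightarrow (7)$ and $(6) \Leftrightarrow (8)$ are obtained by taking $\mu=m$ in one direction, and invoking the uniform absolute continuity of any probability $\mu\ll m$ (\cite[Lemma~1]{HK1964}) in the other, so that a threshold in $\mu(A)$ can be swapped for a threshold in $m(A)$ and vice versa.

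Along the main cycle, $(1) \Leftrightarrow (2)$ is precisely \cite[Theorem~2]{Bart95} cited just before the proposition, and $(3) \Rightarrow (4)$ is trivial (take any $\varepsilon<1$). For $(1) \Rightarrow (3)$ I exploit the compact constrictor $F$: since $F$ is compact, hence weakly compact in $L^1(m)$, Dunford--Pettis (footnote~\ref{f:DP}) makes $F$ uniformly integrable. Given $\varepsilon>0$, I pick $\delta>0$ with $\sup_{g\in F}\int_A g\,dm<\varepsilon/2$ for $m(A)<\delta$, and $n_0$ with $\sup_{h\in D(m)} d(P^{n_0}h,F)<\varepsilon/2$. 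Choosing, for each $h\in D(m)$, some $g\in F$ with $\Vert P^{n_0}h-g\Vert<\varepsilon/2$ yields $\int_A P^{n_0}h\,dm \leq \int_A g\,dm + \Vert P^{n_0}h-g\Vert < \varepsilon$, which is (3).

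The crux of the proof is $(4) \Rightarrow (2)$. Using the equivalent form (6), the hypothesis reads: there exist $n_0\geq 1$, $\varepsilon\in(0,1)$, $\delta>0$ such that $P^{n_0}(x,A)<\varepsilon$ for $m$-a.e.\ $x$ whenever $m(A)<\delta$. Passing to complements $B=X\setminus A$ and using $P^{n_0}(x,\cdot)$ is a probability, this rewrites as $P^{n_0}(x,B)>1-\varepsilon$ for $m$-a.e.\ $x$ whenever $m(B)>1-\delta$, a Doeblin-type minorization with reference measure $m$, threshold $1-\delta$, and minorization coefficient $1-\varepsilon>0$. By the classical Doeblin theorem in its $L^1(m)$-operator form (see, e.g., \cite[Chapter~V]{neveu1965mathematical}, \cite[Chapter~3]{foguel2007ergodic}, or \cite[Chapter~16]{MT2012}), such a minorization forces $P$ to be quasi-compact on $L^1(m)$, establishing (2); the ``$m$-a.e.\ (depending on $A$)'' qualifier is harmless since $P^{n_0*}1_A$ is intrinsically an $L^\infty(m)$ equivalence class, so the essential supremum captures the statement precisely at the operator level.

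The main obstacle I anticipate is precisely this step $(4) \Rightarrow (2)$. A naive iteration of (4) fails to sharpen the constant $\varepsilon$: since $P^{(k-1)n_0}\varphi$ remains in $D(m)$, the bound $\int_A P^{kn_0}\varphi\,dm\leq\varepsilon$ merely reproduces itself for all $k$, so one cannot bootstrap (4) into the stronger (3) by iteration alone. The Doeblin machinery circumvents this by extracting structural information from the minorization (finitely many ergodic invariant densities with uniform geometric convergence on each), which delivers quasi-compactness without needing any decay of $\varepsilon$ itself; the careful check is that the ``$m$-a.e.'' formulation of (6) fits the hypotheses of the classical theorem, which indeed requires only a null-set formulation when the reference measure $\nu=m$ is used.
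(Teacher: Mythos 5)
Your overall architecture is workable, and several pieces are fine: the duality identity $\sup_{\varphi\in D(m)}\int_A P^{n_0}\varphi\,dm=\lVert P^{n_0*}1_A\rVert_\infty=\operatorname*{ess\,sup}_x P^{n_0}(x,A)$ correctly gives $(3)\Leftrightarrow(5)$ and $(4)\Leftrightarrow(6)$ (the paper proves the same equivalences by a short contradiction argument using the duality $\int_A P^n\varphi\,dm=\int\varphi(x)P^n(x,A)\,dm$); the passage between $m$ and $\mu$ via the uniform absolute continuity of \cite{HK1964} is exactly how the paper handles $(7)$ and $(8)$; and your direct Dunford--Pettis argument for $(1)\Rightarrow(3)$ is correct (the paper instead takes all of $(1)$--$(4)$ from \cite[Theorem~2]{Bart95} in one stroke).

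The genuine gap is your crux step $(4)\Rightarrow(2)$. The condition you extract from $(6)$ --- $P^{n_0}(x,B)>1-\varepsilon$ for $m$-almost every $x$, with the exceptional null set depending on $B$, whenever $m(B)>1-\delta$ --- is not the hypothesis of the classical Doeblin/uniform-ergodicity theorems in \cite{MT2012,neveu1965mathematical,foguel2007ergodic}: those require the bound for \emph{every} $x$ and deliver quasi-compactness of the transition operator in the sup-norm/total-variation setting, i.e.\ they belong to the territory of the conditions {\tt (D)} and {\tt (D*)} of this paper, not of {\tt (UC)}. The distinction is not cosmetic: Proposition~\ref{prop:ucd} exhibits an operator satisfying {\tt (UC)} (hence $(6)$) for which $P^n(0,\{0\})=1$ for all $n$, so {\tt (D)} and uniform ergodicity fail and the classical theorem you invoke is simply not applicable; your closing remark that the ``$m$-a.e.'' formulation ``fits the hypotheses of the classical theorem'' is exactly the point that is false for those references. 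The almost-everywhere, $L^1(m)$ version of ``Doeblin-type bound implies quasi-compactness'' is precisely the nontrivial content of \cite[Theorem~2]{Bart95}, which is how the paper closes the loop. To repair the proof, either cite that theorem for $(4)\Rightarrow(2)$ (at which point your cycle reduces to the paper's argument), or supply an actual argument --- for instance, show that $(6)$ forces $P^{n_0*}$ to be quasi-compact on $L^\infty(m)$ and then dualize via Schauder's theorem --- neither of which is routine enough to be left as a citation to sources that do not cover these hypotheses.
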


\begin{proof}
The equivalence between (1)--(4) follows from~\cite[Theorem~2]{Bart95}. On the other hand, observe that
\begin{equation} \label{eq:duality}
\int_A P^n\varphi \, dm = \int \varphi(x) \, P^{n*}1_A(x) \, dm = \int \varphi(x) P^n(x,A) \, dm
\end{equation}
for all $\varphi \in L^1(m)$, $A\in \mathscr{B}$ and $n\geq 1$. Taking into account that if $\varphi\in D(m)$ then $\int \varphi\, dm =1$, from~\eqref{eq:duality} one immediately gets that~(5) implies~(3) and (6) implies  (4).

We are going to see now that (3) implies~(5). By contradiction, assume that there exists $\varepsilon>0$ satisfying that for any $\delta>0$ and $n\geq 1$ there are $A\in\mathscr{B}$ with $m(A)<\delta$ and set $E\subset X$ with $m(E)>0$ such that $P^n(x,A)\geq \varepsilon$ for all $x\in E$. Take $\varphi=\frac{1}{m(E)} \, 1_E \in D(m)$. Hence, (3) implies that $\int_A P^n\varphi \, dm <\varepsilon$. However, by~\eqref{eq:duality}
\[
\int_A P^n\varphi \, dm = \int \varphi(x) P^n(x,A) \,dm = \frac{1}{m(E)} \int_E P^n(x,A) \, dm  \geq \varepsilon
\]
we get a contradiction.

The implication of (6) from~(4)  follows analogously as the implication of (5) from~(3).
This concludes the equivalence from (1) to (6). To complete the proof, we will show that (5) is equivalent to (7) and (6) is equivalent to (8).

Observe that clearly, (5) implies (7) and (6) implies (8) by taking $\mu=m$. As before, the converse implications are analogous and thus we only prove (7) implies (5). To do this, by (7) we have that $\mu$ is absolutely continuous with respect to $m$. In particular, according to~\cite[Lemma~1]{HK1964}, $\mu$ is uniform absolutely continuous with respect to $m$. That is, for any $\eta>0$ there is $\alpha>0$ such that $\mu(A)<\eta$ for all $A\in\mathscr{B}$ with $m(A)<\alpha$. Then taking $\eta=\delta$ in (7), we get that for any $\varepsilon>0$,  there are  $\alpha >0$ and  $n \geq 1$ such that  $P^n(x,A)<\varepsilon$  for all $A\in \mathscr{B}$ with $m(A)< \alpha$ and   $m$-almost every  $x\in X$ (depending on $A$). This concludes (5).
\end{proof}

As already mentioned in Section~\ref{sec:(UC)}, it follows from Proposition~\ref{prop:UC} (specifically, part (8)) that {\tt (D)} implies {\tt (UC)}.
The following proposition emphasizes a sufficient condition to achieve {\tt (D)}, as demonstrated in the proof of Theorem~\ref{thm:B}.
\begin{prop}
  \label{prop:Felle+UC=D}
Let $P(x,A)$ be a transition probability. If there is $n_0 \in \mathbb{N}$ such that $P^{n_0}(x,A)$ is  strongly Feller continuous and $m$-nonsingular, then $P(x,A)$ satisfies~{\tt(D)}.
\end{prop}
\begin{proof} This follows from Claim~\ref{claim:compact}.
\end{proof}

\begin{rem} \label{rem:AraujoD}
If a random map $f$ satisfies the Ara\'ujo or Brin--Kifer conditions, the transition probability associated with $f$ satisfies {\tt(D)}. This follows from the fact that, under these conditions, the assumption in Theorem~\ref{thm:A} are equivalent to those in Proposition~\ref{prop:Felle+UC=D}, as established by Proposition~\ref{prop:ultra-Feller+B}. Moreover, as noted in Remarks~\ref{rem:gAraujo} and~\ref{rem:gBrin--Kifer}, the assumption in Theorem~\ref{thm:A} are satisfied by Ara\'ujo's or Brin--Kifer's conditions.
\end{rem}

Another important consequence of Proposition~\ref{prop:UC} is the following practical sufficient conditions for {\tt (FPM)}.
\begin{thm}  \label{AAraujo2000-generalization}
Consider a Markov operator $P:L^1(m) \to L^1(m)$ and let $P(x,A)$ be an associated transition probability.
Assume that $P^{n_0}(x,dy)=p(x,y)\,dm(y)$ and one of the following   holds:
  \begin{enumerate}[leftmargin=1cm]
    \item there are $\gamma>0$ and $\alpha>0$ such that
    \begin{enumerate}
    \item $m(\supp p(x,\cdot))>\gamma$ for $m$-almost every $x\in X$, and
      \item $\alpha \leq p(x,y)$ for $m$-almost every $x\in X$ and $m$-almost every $y\in \supp p(x,\cdot)$;
    \end{enumerate}
        \item there is $\beta>0$ such that  $p(x,y) \leq \beta$ for $m$-almost every $x\in X$ and $m$-almost every $y\in \supp p(x,\cdot)$.
  \end{enumerate}
Then, $P$ satisfies {\tt(UC)}. In particular, if $P=\mathcal{L}_f$, then  $f$ satisfies {\tt(FPM)}.
\end{thm}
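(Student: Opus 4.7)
The plan is to verify one of the equivalent formulations of \texttt{(UC)} given in Proposition~\ref{prop:UC}, specifically condition~(6): namely, that there exist $n_0\ge 1$, $\varepsilon\in(0,1)$ and $\delta>0$ such that $P^{n_0}(x,A)<\varepsilon$ for every $A\in\mathscr B$ with $m(A)<\delta$ and $m$-almost every $x\in X$. Here $P(x,A)$ is the transition probability associated with the Markov operator $\mathcal L_f$. Once this is shown, the final \texttt{(FPM)} conclusion (under local compactness of $X$) follows immediately by combining \texttt{(UC)}$\Rightarrow$\texttt{(MC)} from Figure~\ref{fig:hierarchy} with the equivalence between \texttt{(MC)} of $\mathcal L_f$ and \texttt{(FPM)} of $f$ established in Theorem~\ref{thm:C}.

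For case~(2) the argument is immediate: using $P^{n_0}(x,dy)=p(x,y)\,dm(y)$ with $p(x,y)\le\beta$ on $\supp p(x,\cdot)$, one has
\begin{equation*}
P^{n_0}(x,A)=\int_{A\cap\supp p(x,\cdot)} p(x,y)\,dm(y)\le \beta\, m(A)
\end{equation*}
for $m$-almost every $x\in X$. Choosing $\varepsilon=\tfrac12$ and $\delta=\tfrac{1}{2\beta}$ verifies condition~(6) of Proposition~\ref{prop:UC}.

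For case~(1) the plan is to use a complementation trick. Since $\int p(x,y)\,dm(y)=1$, the lower bound $p(x,y)\ge\alpha$ on $\supp p(x,\cdot)$ forces $\alpha\,m(\supp p(x,\cdot))\le 1$, hence $\alpha\gamma<1$. For any $A\in\mathscr B$, writing $S_x\coloneqq\supp p(x,\cdot)$,
\begin{equation*}
P^{n_0}(x,A)=1-P^{n_0}(x,X\setminus A)\le 1-\int_{S_x\setminus A} p(x,y)\,dm(y)\le 1-\alpha\bigl(m(S_x)-m(A)\bigr).
\end{equation*}
Choosing $\delta=\gamma/2$, whenever $m(A)<\delta$ one has $m(S_x)-m(A)>\gamma/2$ for $m$-a.e.~$x$, and therefore $P^{n_0}(x,A)\le 1-\alpha\gamma/2\eqdef\varepsilon<1$, again verifying~(6) in Proposition~\ref{prop:UC}.

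I do not expect any serious obstacle here: both cases reduce to a direct pointwise estimate on $P^{n_0}(x,A)=\int_A p(x,y)\,dm(y)$, and the passage from \texttt{(UC)} to \texttt{(FPM)} is handled by invoking the already-proved Theorem~\ref{thm:C}. The only mildly subtle point is case~(1), where a naive upper bound on $p$ is unavailable, so one must instead bound the mass of the \emph{complement} $X\setminus A$ using the uniform lower bound $\alpha$ together with the size $\gamma$ of $\supp p(x,\cdot)$; this is a one-line observation once written down.
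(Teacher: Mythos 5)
Your proposal is correct and follows essentially the same route as the paper: both cases are settled by verifying condition (6) of Proposition~\ref{prop:UC} (the bound $P^{n_0}(x,A)\le\beta m(A)$ in case (2), and in case (1) the complementation estimate $m(S_x\setminus A)>\gamma/2$ when $m(A)<\gamma/2$, which is the same inclusion--exclusion bound the paper uses for sets of measure $>1-\gamma/2$ before "considering complements"), and then {\tt(FPM)} is deduced from the hierarchy {\tt(UC)}$\Rightarrow${\tt(MC)} together with Theorem~\ref{thm:C}. No gaps.
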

\begin{proof}
By Proposition~\ref{prop:UC} (6), the condition (2)  implies that $P$ satisfies {\tt (UC)}. Therefore, it follows from the implications in Figure~\ref{fig:hierarchy} and Theorem~\ref{thm:C} that if $P=\mathcal{L}_f$, then $f$ satisfies {\tt (FPM)}.

Assume the condition~(1). Then,
for any $A\in \mathscr B$ with $m(A)>1-\frac{\gamma}{2}$, it holds
\begin{align*}
P^{n_0}(x,A)&=\int_A p(x,y)\, dm(y) = \int_{A\cap \mathrm{supp} \, p(x,\cdot)} p(x,y) \, dm(y)\\
& \geq  \alpha m(A\cap \mathrm{supp}\, p(x,\cdot))>\alpha \, \frac{\gamma}{2}.
\end{align*}
In the last inequality, we used that
\begin{align*} m(A\cap \mathrm{supp}\, p(x,\cdot)) &=m(A) + m( \mathrm{supp}\, p(x,\cdot)) - m(A\cup \mathrm{supp}\, p(x,\cdot)) \\ &> (1-\frac{\gamma}{2}) + \gamma -1 = \frac{\gamma}{2}.
\end{align*}
Hence, by considering   complements and using Proposition~\ref{prop:UC} (6)  again,
we get {\tt (UC)} for $P$   and
 $f$ satisfies {\tt (FPM)} provided $P=\mathcal L_f$.
\end{proof}

\begin{rem}\label{rmk:6.7}
As previously mentioned, Theorem~\ref{AAraujo2000-generalization} (2)  is another weakening of Brin--Kifer's condition (see (2) in page~\pageref{Brin--Kifer}).
Moreover, we find Ara\'ujo--Ayta\c{c}~\cite{AA2017} for  Theorem~\ref{AAraujo2000-generalization} (1).
Indeed, they considered the case when $X$ is a compact manifold equipped with the normalized Lebesgue measure $m$, and  assumed that there exist $\alpha , \beta , \gamma >0$ and $t_*\in T$ such that for every $x\in X$,
\begin{itemize}
\item[(i)] $\supp p(x,\cdot )$ includes the ball of radius $\gamma$ centered at $f_{t_*}(x)$, and
\item[(ii)] $\alpha \leq p (x,y)\leq \beta$ for $m$-almost every $y\in \supp p(x,\cdot ) $.
\end{itemize}
Obviously, the conditions in Theorem~\ref{AAraujo2000-generalization} relaxed Ara\'ujo--Ayta\c{c}'s condition to get that {\tt (UC)}.\footnote{They also assumed an aperiodicity condition to ensure uniform ergodicity, meaning that $X$ cannot be decomposed into $\ell$ subsets $X=X_1\cup \cdots \cup X_\ell$ ($\ell \geq 2$) such that $p(x,  X_{i +1\pmod{\ell}})$ for all $x\in X_i$ and $1\leq i\leq \ell$.
We do not assume it, as indicated in the second example in Section~\ref{subsec:additive}, which satisfies the conditions in Theorem~\ref{AAraujo2000-generalization} but  violates the aperiodicity condition.} Moreover, in view of the following section, under the assumptions in~\cite[Thm.~A]{AA2017}, one gets {\tt (D*)}.
\end{rem}

\subsection{{\tt (D*)} and  uniform ergodicity}\label{ss:6.1}

Let us also give equivalent conditions for   {\tt (D*)}.
\begin{prop} \label{prop:equi-uni-erg}
The following assertions are equivalent:
\begin{enumerate}
\item $P(x,A)$ satisfies {\tt (D*)};
\item there is a probability measure $\pi$ absolutely continuous  with respect to $m$ such that
\[
\lim_{n\to \infty} \sup_{x\in X} \left\lVert P^n(x,\cdot) - \pi\right\rVert_{TV}=0;
\]
\item there are a probability measure $\pi$ absolutely continuous  with respect to $m$ and constants $C>0$, $0<\lambda<1$ such that
\[
\left\lVert P^n(x,\cdot)-\pi\right\rVert_{TV}\leq C \lambda^n \quad  \text{for all $x\in X$}.
\]
\end{enumerate}
\end{prop}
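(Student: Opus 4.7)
The plan is to prove the cycle (3) $\Rightarrow$ (2) $\Rightarrow$ (1) $\Rightarrow$ (3). The first implication is trivial. For (2) $\Rightarrow$ (1), I take $\varepsilon_0 \in (0, 1/2)$ and use (2) to find $n_0$ with $\sup_{x} \|P^{n_0}(x, \cdot) - \pi\|_{TV} < 2\varepsilon_0$, which by the paper's definition of TV distance is equivalent to $|P^{n_0}(x,A) - \pi(A)| < \varepsilon_0$ for all $x$ and $A$. Fixing any $\delta \in (1/2, 1 - \varepsilon_0)$ and setting $\varepsilon = \delta + \varepsilon_0 < 1$, one immediately gets condition {\tt (D*)} with $\mu = \pi$, which is absolutely continuous with respect to $m$ by hypothesis.

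The main content is in (1) $\Rightarrow$ (3), which I handle via a Dobrushin contraction argument. The crucial step is to use the strict inequality $\delta > 1/2$ to upgrade {\tt (D*)} into an estimate on the Dobrushin ergodic coefficient
\[
\beta(P^{n_0}) := \sup_{x,y,A} |P^{n_0}(x, A) - P^{n_0}(y, A)|.
\]
I fix any $A\in\mathscr{B}$ and split into two cases: if $\mu(A) < \delta$, then by {\tt (D*)} both $P^{n_0}(x,A)$ and $P^{n_0}(y,A)$ lie below $\varepsilon$; otherwise $\mu(A) \geq \delta$, so $\mu(A^c) \leq 1 - \delta < \delta$ (this is where $\delta > 1/2$ is essential), and applying {\tt (D*)} to $A^c$ forces both $P^{n_0}(x, A)$ and $P^{n_0}(y, A)$ to exceed $1 - \varepsilon$. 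In either case $|P^{n_0}(x,A) - P^{n_0}(y,A)| < \varepsilon$, so $\beta(P^{n_0}) \leq \varepsilon < 1$. From the classical inequalities $\|\alpha Q - \gamma Q\|_{TV} \leq \beta(Q) \|\alpha - \gamma\|_{TV}$ and $\beta(P^{k n_0}) \leq \beta(P^{n_0})^k$, the map $\alpha \mapsto \alpha P^{n_0}$ is a strict contraction on the space of probability measures equipped with the TV distance. The Banach fixed point theorem then yields a unique $P^{n_0}$-invariant probability $\pi$, which is automatically $P$-invariant (since $\pi P$ is a fixed point of $P^{n_0}$ by commutativity). Applying the contraction to $\alpha = \delta_x$ gives $\|P^{k n_0}(x, \cdot) - \pi\|_{TV} \leq 2\varepsilon^k$ for all $x$, and interpolating across residues mod $n_0$ yields the exponential bound $\|P^n(x, \cdot) - \pi\|_{TV} \leq C\lambda^n$ with $\lambda = \varepsilon^{1/n_0}$.

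It remains to verify that $\pi$ is absolutely continuous with respect to $m$. This will follow from the $m$-nonsingularity of the transition kernels established at the end of Proposition~\ref{prop:ultra-Feller+B}: for each $A\in\mathscr{B}$ with $m(A) = 0$ and each $n \geq 1$, there is an $m$-null set $N_n$ with $P^n(x, A) = 0$ for all $x \notin N_n$; taking any $x_0$ outside the (still $m$-null) union $\bigcup_{n \geq 1} N_n$ and passing to the limit gives $\pi(A) = \lim_n P^n(x_0, A) = 0$. The step I expect to be the main obstacle is the careful exploitation of the hypothesis $\delta > 1/2$: it is precisely this strict inequality that converts a Doeblin-type bound into a Dobrushin contraction and thereby promotes uniform convergence into exponential convergence. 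Without it the dichotomy collapses and the implication (1) $\Rightarrow$ (3) genuinely fails.
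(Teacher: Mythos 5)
Your proposal is correct, but it takes a genuinely different route from the paper. The paper disposes of the equivalences largely by citation: (2) $\Leftrightarrow$ (3) is delegated to Meyn--Tweedie \cite[Theorem~16.0.2]{MT2012}, and (1) $\Rightarrow$ (2) to Dorea--Pereira \cite[Theorem~2]{Dorea2006}; the only original content in the paper's proof is the absolute continuity of $\pi$, which it obtains by noting that {\tt(D*)} implies {\tt(D)}, hence {\tt(S)} via Theorem~\ref{GST}, so there is an invariant density $g$, and then the uniqueness of the invariant probability of the kernel forces $m_g=\pi$. You instead reprove the quantitative heart of the Dorea--Pereira result yourself: the dichotomy on $\mu(A)\lessgtr\delta$, which uses $\delta>\tfrac12$ exactly as you say, bounds the Dobrushin coefficient $\beta(P^{n_0})\leq\varepsilon<1$, and the contraction argument then delivers (3) directly (uniqueness of $\pi$, exponential rate, and uniformity in $x$ in one stroke), so you only need the trivial direction of (2) $\Leftrightarrow$ (3). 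Your absolute continuity argument is also different and arguably simpler: you use the $m$-nonsingularity of the associated kernels (the last assertion of Proposition~\ref{prop:ultra-Feller+B}) and pass to the limit along a point $x_0$ outside the countable union of null sets, rather than routing through Theorem~\ref{GST}. The trade-off: your proof is self-contained modulo the standard Dobrushin inequalities ($\|\alpha Q-\gamma Q\|_{TV}\leq\beta(Q)\|\alpha-\gamma\|_{TV}$, submultiplicativity) and completeness of the probability measures under total variation, while the paper's is shorter but rests on external theorems; both implicitly use that the chosen kernels satisfy the Chapman--Kolmogorov semigroup identity (which the paper's random-map representation in Remark~\ref{rem:P=Lf} guarantees), so that is not a gap on your side.
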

\begin{proof}
The equivalence between (2) and (3) follows from~\cite[Theorem~16.0.2]{MT2012}. It is also clear that (2) implies (1) by taking the measure $\mu$ in (1) as the measure $\pi$ in~(2).
Finally, the implication of  (2) from (1) follows basically from~\cite[Theorem~2]{Dorea2006}.
Namely, Dorea and Pereira proved that (1) implies the convergence  in (2) but they do not conclude that $\pi$ is absolutely continuous with respect to $m$.
To prove this, observe that the convergence in (2) implies that $\pi$ is the unique measure  such that
\[
\pi(A)=\int P(x,A) \, d\pi(x) \quad \text{for all $A\in\mathscr{B}$}.
\]
See {next} Remark~\ref{rem:uni-erg} for more details.

On the other hand, (1) implies condition {\tt(D)} and by Proposition~\ref{prop:UC} the induced  Markov operators in $L^1(m)$ is {\tt(UC)}. Although it is not difficult to get directly {\tt(S)} from condition {\tt(D)} using Theorem~\ref{GST} and the relation $\int {P^n}(x,A)\, dm = \int P^{n*}1_A(x) \, dm=\int_A P^n1_X \, dm$, we can immediately obtain such implication from Figure~\ref{fig:hierarchy}. Thus, there is $g\in D(m)$ such that $Pg=g$.
Hence,
\begin{align} \label{eq:uniqueness}
m_g(A) &\eqdef \int_A g \, dm = \int_A Pg \, dm = \int g \, P^*1_A \, dm = \int P(x,A) \, dm_g   \quad \text{for all $A\in\mathscr{B}$}.
\end{align}
Therefore, by the uniqueness, $m_g=\pi$, proving that $\pi$ is absolutely continuous.
\end{proof}

\begin{rem} \label{rem:uni-erg}
In Markov processes theory, condition (2)  in Proposition~\ref{prop:equi-uni-erg} without the absolute continuity of $\pi$ with respect to $m$ is called
uniform ergodicity (cf.~\cite{MT2012}).
Apart from the background,
it would still make sense to call any of the equivalent conditions in Proposition~\ref{prop:equi-uni-erg} uniform ergodicity (for Markov operators on $L^1(m)$) because the conditions imply that
 there is a unique invariant density of the Markov operator $P:L^1(m)\to L^1(m)$ induced by $P(x,A)$.
 To see this, observe first that (2) in Proposition~\ref{prop:equi-uni-erg} implies that $\pi$ is the unique invariant probability  in the sense that
\[
\pi(A)=\int P(x,A) \, d\pi(x) \quad \text{for all $A\in\mathscr{B}$}.
\]
Indeed, if $\nu$ is a probability satisfying also the above relation, then
\[
\nu(A)=\int P(x,A)\,d\nu(x) = \int P(x,A) P(y,dx)\,d\nu(y) = \dots = \int P^n(x,A)\, d\nu.
\]
Since  $P^n(x,A)$ converges uniformly to $\pi (A)$ on $x$ as $n\to \infty$,  the right-hand side of  the above expression converges to $\pi(A)$,  and therefore, $\nu=\pi$.

Now, since $\pi$ is absolutely continuous with respect to $m$, let us write $d\pi= h \, dm$ with $h\in D(m)$. Then,
\begin{align*}
\int_A h(x) \,dm &= \pi (A) =\int P(x,A)\, d\pi = \int h(x) P(x,A) \, dm   \\
&= \int h(x) P^*1_A(x) \, dm  = \int_A Ph(x)  \, dm.
\end{align*}
This implies that $Ph=h$. Moreover, by the uniqueness shown above,  arguing as in~\eqref{eq:uniqueness},
we conclude that $h$ is the unique invariant density of $P$.
\end{rem}

\section{Examples and counterexamples: proof of propositions} \label{s:examples}

In this section, we prove all propositions given in Section~\ref{ss:e}.

\subsection{Proof of Propositions~\ref{prop:ucd} and~\ref{prop:dd}: Additive type noise}
We first prove Proposition~\ref{prop:ucd}.
Recall that our random map is given by~\eqref{eq:dd}.

To see {\tt(UC)},
 notice that if $x\neq 0$ then
 \[
 \{ t \in T : f_t(x) \in A\} = \{ t \in T : f_0(x) +t \in A \pmod{1}\}
 \]
 is the translated set of $A$ by $f_0(x)$.
Hence, by the invariance of the Lebesgue measure for translations,
\begin{equation}\label{eq:0617}
P(x, A)= p\left(\{t \in T: f_t(x) \in A\}\right) =p(A) =m(A).
\end{equation}
Now, {\tt(UC)}
 immediately follows from the interpretation of {\tt(UC)}
in Section~\ref{sec:(UC)} (e.g.~for $n_0=1$, $\delta =\varepsilon =\frac{1}{2}$ and $\mu =m$).

On the other hand, since
$\{ t\in T: f_t(0) =0\} =T$,
we have $P(0,\{0\}) = 1$, namely $P(0, \cdot )= \delta _0$. Thus it follows from~\eqref{eq:recurence} that $P^n(0,\{0\}) = 1$ for any $n\geq 1$.
This means that {\tt(D)} is violated with $x=0$ and $A=\{ 0\}$.

\begin{figure}
\centering
\begin{subfigure}{0.47\textwidth}
 \includegraphics[width=\textwidth]{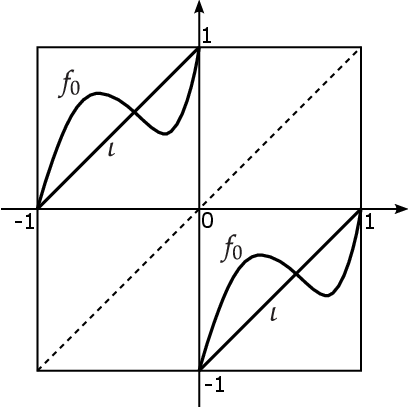}
 \caption{The maps $\iota$ and $f_0$ on $[-1,1]$}
 \label{fig:first}
\end{subfigure}
\hfill
\begin{subfigure}{0.47\textwidth}
 \includegraphics[width=\textwidth]{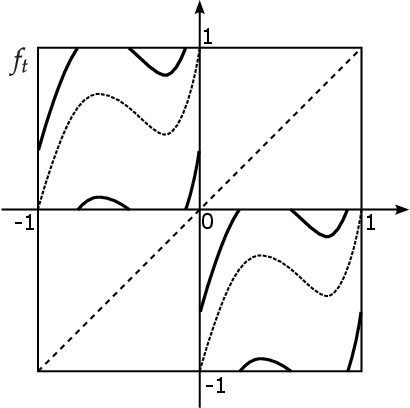}
 \caption{The map $f_t$ on $[-1,1]$}
 \label{fig:second}
\end{subfigure}
\caption{Illustrations of the random map $f$ in Proposition~\ref{prop:ucd}.}
\label{fig_eg_X}
\end{figure}

We next prove Proposition~\ref{prop:dd}. Recall the random map displayed in Figure~\ref{fig_eg_X}.
 Notice that
 $p(A) =2m(A) $
 for any Borel set $A\subset X_+$.
Furthermore, by the argument obtaining~\eqref{eq:0617},
\[
p\left(\left\{t\in T : \widetilde f_t(y) \in B\right\}\right) = 2m(B) \quad \text{for any $y\in X_+$ and Borel set $B\subset X_+$}
\]
and thus
\[
P(x,A) =
\begin{cases}
p\left(\left\{t \in T: \widetilde f_t(x) \in \iota (A)\right\}\right) = 2m(\iota (A)) =2m(A) & \text{for $x\in X_+$}\\
p\left(\left\{t \in T: \widetilde f_t(\iota (x)) \in A\right\}\right) = 2m(A)& \text{for $x\in X_-$}
\end{cases}
\]
for each $A\in \mathscr B$.
By~\eqref{eq:recurence}, $P^n(x, A) =2m(A)$ for each $n\geq 1$, $x\in X$ and $A\in \mathscr B$.
Therefore, obviously {\tt (D)} holds (e.g.~for $n_0=1$, $\delta =\varepsilon =\frac{1}{2}$ and $\mu =m$).
On the other hand, given $n_0\geq 1$, $0< \varepsilon <1$, $\delta > \frac{1}{2}$ and a probability measure $\mu$,
 it holds that $\mu (X_+) \leq \frac{1}{2}$ or $\mu (X_-) \leq \frac{1}{2}$.
For simplicity assume that $\mu (X_+) \leq \frac{1}{2}$, and set $A=X_+$.
Then, $\mu (A)<\delta $ but $P^{n_0}(x,A) =2m(A) =1> \varepsilon$.
This means that {\tt (D*)} is violated.

\subsection{Proof of Propositions~\ref{prop:multi:ex1} { and~\ref{prop:0707}}:
Multiplicative noise}
Recall condition~(b) in Remark~\ref{rem:gAraujo}. {Also recall that condition almost-(b) is the natural relaxation of the condition~(b) from ``for all $x\in X$'' to ``for $m$-almost every $x\in X$''.} Let $f$ be the random map
 given in~\eqref{eq:0621a}, i.e.~random perturbation of a measurable map $f_0: [0,1]\to [0,1]$ by multiplicative noise.
To prove Proposition~\ref{prop:multi:ex1},
we need the following lemma.

\begin{lem}\label{lem:multiple}
Denote the set of fixed points and zeros of $f_{0}$ by $F$ and $Z$, respectively, that is, $F=\left\{x : f_{0}(x)=x\right\}$ and $Z=\left\{x : f_{0}(x)=0\right\}$.
Then, the following holds:
\begin{enumerate}
\item If $0 \in F \cap Z$, then $f$ does not satisfy (b).
\item If $Z=\emptyset$, then $f$ satisfies (b).
\item If $m(Z)=0$, then $f$ satisfies almost-(b).
\end{enumerate}
\end{lem}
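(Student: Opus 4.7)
My plan is to reduce each assertion to a direct analysis of the pushforward of $p$ under the map $\varphi_x : t \in [0,1] \mapsto (1-\varepsilon t) f_0(x) \in [0,1]$, since by definition
\[
P(x,A) = p(\varphi_x^{-1}(A)) \quad \text{for } A\in \mathscr{B}.
\]
The key observation is that $\varphi_x$ is affine in $t$ with derivative $-\varepsilon f_0(x)$, so its behaviour splits cleanly according to whether $f_0(x)=0$ or not.

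For item (1), the assumption $0 \in F\cap Z$ simply means $f_0(0)=0$ (each of $0\in F$ and $0\in Z$ is equivalent to this). Consequently $\varphi_0$ is identically zero and $P(0,\cdot)=\delta_0$, which is not absolutely continuous with respect to Lebesgue measure $m$; hence (b) fails at $x=0$.

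For item (2), with $Z=\emptyset$ one has $f_0(x)>0$ for every $x\in X$, so $\varphi_x$ is a bi-Lipschitz homeomorphism of $[0,1]$ onto $[(1-\varepsilon)f_0(x),f_0(x)]$ with constant Jacobian $\varepsilon f_0(x)$. The change-of-variables formula then yields
\[
P(x,A) = \int_A \frac{1}{\varepsilon f_0(x)}\, 1_{[(1-\varepsilon)f_0(x),\, f_0(x)]}(y)\, dm(y) \quad \text{for all } x\in X,\ A\in\mathscr{B},
\]
which is exactly the absolute continuity of $P(x,\cdot)$ for \emph{all} $x\in X$, i.e.\ (b).

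For item (3), the assumption $m(Z)=0$ means $f_0(x)\neq 0$ for $m$-almost every $x$; the calculation used in item (2) then applies to every such $x$, giving almost-(b). There is no real obstacle here: the whole lemma is essentially an explicit-density computation, and the only subtle point is recognising that the three statements are just the three cases (degeneracy at a single point, uniform non-degeneracy, generic non-degeneracy) of the Jacobian $\varepsilon f_0(x)$ of $\varphi_x$.
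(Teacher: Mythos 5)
Your proposal is correct and follows essentially the same route as the paper: compute the law of $t\mapsto f_t(x)$ under $p$, which is $\delta_0$ when $f_0(x)=0$ and the normalized Lebesgue measure on the nondegenerate interval $[(1-\varepsilon)f_0(x),f_0(x)]$ otherwise (your explicit density $\tfrac{1}{\varepsilon f_0(x)}1_{[(1-\varepsilon)f_0(x),f_0(x)]}$ is just this measure written out). The only point to add is in item (1): condition (b) allows an arbitrary iterate $n_0$, so refuting it requires $P^{n_0}(0,\cdot)\not\ll m$ for \emph{every} $n_0$, not only $n_0=1$; this is immediate from your own observation that $\varphi_0\equiv 0$, since then $f_t(0)=0$ for all $t$, hence $f^n_\omega(0)=0$ for all $n$ and $\omega$ and $P^n(0,\cdot)=\delta_0$ for all $n\geq 1$ — which is exactly the remark the paper makes.
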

\begin{proof}
Assume that $0 \in F \cap Z$.
Then, $\left\{f_{\omega}^{n}(0) : \omega \in \Omega\right\}=\{0\}$ for all $n \geq 1$, implying that $P_{n}(0,\cdot )=\delta _0$ for all $n \geq 1$.
Since $\delta_{0}$ is not absolutely continuous with respect to the Lebesgue measure $m$ of $[0,1]$, (b) does not hold.

Take a point $x$ such that $x\not\in Z$. Then $f_{0}(x)>0$, so $\left\{f_{\omega}(x) : \omega \in \Omega\right\}$ is a closed interval which is not a point set, and $P(x, \cdot)$ is the normalized Lebesgue measure of $[0,1]$, implying that $P(x, \cdot)$ is absolutely continuous for all $x \not\in Z$. Hence, $Z=\emptyset$ implies (b), and $m(Z)=0$ implies almost-(b).
\end{proof}

\begin{proof}[Proof of Proposition~\ref{prop:multi:ex1}]
Let $f_0$ be the measurable map in (1) or (2) of Proposition~\ref{prop:multi:ex1}, that is,
$f_0 (x) = \frac{x}{2}$ for $x\in (0,1]$ and $ f_0(0) =0$ or $\frac{1}{2}$.
To see $\mathcal L_f$ does not satisfy {\tt (S)} for this $f$,
recall that for a given Markov operator $P:L^1(m)\to L^1(m)$, according to Theorem~\ref{GST}, the existence of a $P$-invariant density is equivalent to the following condition:
there is $\delta>0$ such that
\[
\sup_{n\geq 1} \int_A P^n1_X \, dm < 1 \quad \text{for any} \ \ A\in \mathscr{B} \ \ \text{with} \ \ m(A)<\delta.
\]
By duality, this implies that
\[
\int P^{n*}1_A \, dm <1 \quad \text{for all $n\geq 1$ and $A\in \mathscr{B}$ with $m(A)<\delta$.}
\]
Since $P^{n*}1_A(x) =P^n(x, A)\leq 1$ for any $x\in X$, we conclude the following necessary condition for the existence of a $P$-invariant density:
\begin{enumerate}[rightmargin=1.5cm, label=($\star$)]
\item \label{*} there is $\delta>0$ such that for all $n\geq 1$ and $A\in\mathscr{B}$ with $m(A) <\delta $, there exists $E\in \mathscr{B}$ with $m(E)>0$ satisfying that $P^{n}(x,A) < 1$ for all $x\in E$.
\end{enumerate}
We will prove that
 $\mathcal L_f$
 does not satisfy ~($\star$).
 Notice that, for that purpose, it suffices to show that for any $\delta>0$ there are $n_0\geq 1$ and $A_0\in\mathscr{B}$ such that $m(A_0) <\delta $ but $P^{n_0}(x, A_0) =1$ for $m$-almost every $x$.
Observe also that since $0<
 f_t(x) \leq f_0(x)$ for all $t\in[0,1]$ and $x\neq 0$, we have that, if $x\neq 0$ then
\[
0<f^n_\omega(x)\leq f_0^n(x) =\frac{1}{2^n} \quad \text{ for any $n\geq 1$ and
$\omega\in \Omega$.}
\]
 Hence, the support of $P^n(x,\cdot) =\mathbb P(\{ \omega : f^n_\omega(x) \in \, \cdot \, \})$ is contained in the interval $(0,\frac{1}{2^n}]$ of length $\frac{1}{2^n}$ if $x\neq 0$.
 Therefore, for any $\delta>0$, by taking {$n_0\geq -\frac{\log \delta }{\log 2}$} and $A_0=(0,\frac{1}{2^{n_0}}]$,
 we have that $m(A_0)<\delta $ and $P^{n_0}(x, A_0) =1$ whenever $x\neq 0$.
From these observations, ($\star$) does not hold, and consequently, neither does {\tt (S)} for $\mathcal L_f$.
In particular, due to the implications in Figure~\ref{fig:hierarchy}, {\tt (FPM)} does not hold for $\mathcal L_f$.

 Let us complete the proof of items (1) and (2) of Proposition~\ref{prop:multi:ex1}.
 When $f_0$ is the continuous map in (1), obviously (a) in Remark~\ref{rem:gAraujo} holds and $Z=\{0\}$.
 So, by Lemma~\ref{lem:multiple}, $f$ does not satisfy (b) but satisfies almost-(b).
 When $f_0$ is the map in (2) having discontinuity at $0$, (a) does not hold and $Z=\emptyset$.
Therefore, by using Lemma~\ref{lem:multiple} again, we conclude that $f$ satisfies~(b).

We next prove item (3) of Proposition~\ref{prop:multi:ex1}.
Let $f_0(x) =2x \pmod{1}$ for $x\in [0,1]$.
Then, since $0\in F\cap Z$, it follows from Lemma~\ref{lem:multiple} that $f$ does not satisfy (b).
On the other hand, by~\cite[Theorem 3.1]{Iwata2013}, we find that $\mathcal{L}_f$ satisfies {\tt (C)}.
Therefore, $f$ satisfies {\tt (FPM)} by Theorem~\ref{thm:C} together with the implications in Figure~\ref{fig:hierarchy}.
\end{proof}

{

Now, we will prove Proposition~\ref{prop:0707}. Hence, let $f$ be the random map
given in~\eqref{eq:0707c}, i.e.~random perturbation of a measurable map $f_0: [0,1]\to [0,1]$ by multiplicative type noise.
First, we need the following lemma.

\begin{lem}\label{lem:multiple2}
Denote the set of fixed points of $f_{0}$ by $F$.
The following hold:
\begin{enumerate}
\item $F= \emptyset$ if and only if then $f$ satisfies (b);
\item If $m(F)=0$, then $f$ satisfies almost-(b).
\end{enumerate}
\end{lem}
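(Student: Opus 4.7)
The key observation is that for each fixed $x \in X$, the map $t \mapsto f_t(x) = f_0(x) + t(x - f_0(x))$ is affine in $t$, with slope $x - f_0(x)$. So the analysis splits cleanly according to whether $x$ belongs to $F = \{x \in X : f_0(x) = x\}$ or not.

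First I would handle the case $x \notin F$. Here the slope $x - f_0(x) \neq 0$, so the map $\varphi_x : t \in [0,1] \mapsto f_t(x) \in X$ is a diffeomorphism from $[0,1]$ onto the closed interval with endpoints $x$ and $f_0(x)$. Since $p$ is the Lebesgue measure on $T = [0,1]$ and
\[
P(x, A) = p(\{t \in T : f_t(x) \in A\}) = p(\varphi_x^{-1}(A)),
\]
the measure $P(x, \cdot)$ is the pushforward of $p$ by $\varphi_x$, which is the normalized Lebesgue measure on the interval with endpoints $x$ and $f_0(x)$. In particular, $P(x, \cdot) \ll m$.

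Next I would handle the case $x \in F$. Then $f_t(x) = x$ for every $t \in T$, so $P(x, \cdot) = \delta_x$, which is not absolutely continuous with respect to $m$.

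Combining the two cases gives both statements: for~(1), if $F = \emptyset$ then every $x$ falls in the first case and (b) holds, while if some $x_0 \in F$ exists then $P(x_0, \cdot) = \delta_{x_0}$ violates (b); conversely if (b) holds then no Dirac masses appear, forcing $F = \emptyset$. For~(2), if $m(F) = 0$ then $m$-almost every $x$ falls in the first case, giving almost-(b). There is no real obstacle here beyond carefully recording that $\varphi_x$ is an affine diffeomorphism, so its pushforward of Lebesgue is Lebesgue on the image interval.
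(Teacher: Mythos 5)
Your argument is correct and essentially the paper's proof: you split on whether $x$ is a fixed point of $f_0$, observe that for $x\notin F$ the map $t\mapsto f_t(x)$ is affine with nonzero slope $x-f_0(x)$, so $P(x,\cdot)$ is the normalized Lebesgue measure on the interval with endpoints $x$ and $f_0(x)$ and hence absolutely continuous, while for $x\in F$ one gets a Dirac mass. The one point to record explicitly is that condition (b) permits an arbitrary $n_0$, so in the ``only if'' direction of (1) you should note that a point $x_0\in F$ is a common fixed point of every $f_t$, whence $f_\omega^n(x_0)=x_0$ and $P^{n}(x_0,\cdot)=\delta_{x_0}$ for all $n\geq 1$ (as the paper states), not merely for $n=1$; with that one-line addition your proof matches the paper's.
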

\begin{proof}
Take a point $x\not\in F$. Then $f_{0}(x) -x\neq 0$ and thus $I_x=\left\{f_{t}(x) :\, t \in T\right\}$ is a closed interval which is not a point set.
Hence, $P(x, \cdot)$ is the normalized Lebesgue measure on the closed interval $I_x$. This implies that $P(x, \cdot)$ is absolutely continuous for all $x \not\in F$.
On the other hand, if $x\in F$,
then $\left\{f_{\omega}^{n}(x) :\, \omega \in \Omega\right\}=\{x\}$ for all $n \geq 1$. Consequently $P^{n}(x,\cdot )=\delta _x$ for all $n \geq 1$.
Since $\delta_{x}$ is not absolutely continuous with respect to $m$, (b) does not hold.
The claim immediately follows from these observations.
\end{proof}

\begin{proof}[Proof of Proposition~\ref{prop:0707}]
Now let $f_0$ be the $C^1$ map given by the gradient flow with potential~\eqref{eq:0707b}.
As mentioned, $f_0$ has infinitely many sinks $(s_k)_{k\geq 1}$ and sources $(r_k)_{k\geq 1}$ such that the union of the basins of $s_k$ for $f_0$ coincides with $X\setminus \{r_1, r_2,\ldots \}$.
In particular, $F=\{r_k,s_k: \, k\geq 1\}$ is the set of fixed point of $f_0$.
Hence, it follows from Lemma~\ref{lem:multiple2} that $f$ satisfies almost-(b), but not (b).

Furthermore,
take $x$ in the basin of $s_k$ for $f_0$.
In view of~\eqref{eq:0707c}, $f_t(x)$ is a convex combination of $x$ and $f_0(x)$. Thus, $f_t(x)$ is in the closed interval between these points. In particular (from the orientation preserving) in the closed interval whose endpoints are $x$ and $s_k$. Arguing recursively, $f_\omega^n(x)$ is in the closed interval whose endpoints are $s_k$ and $f_0^{n-1}(x)$ for any $\omega \in \Omega$ and $n\geq 1$.
Since $f_0^n(x) \to s_k$, this implies that $f_\omega ^n(x)\to s_k$ as $n\to\infty$ for any $\omega \in \Omega$.
This completes the proof of Proposition~\ref{prop:0707}.
\end{proof}}

\subsection{Proof of Proposition~\ref{prop:figa}: Random expanding maps}

The constrictivity of $\mathcal L_f$ under the condition~\eqref{expanding_condition} is just the consequence of the work by Boyarsky and Levesque~\cite[Theorem 2]{BR1988}.
Furthermore, it is not difficult to see that $\mathcal L_f$ is not uniformly constrictive as follows. Notice that, for any $n_0\geq 1$ and $x\in X$,
\[
P^{n_0}(x, \cdot ) = \sum _{\omega \in \{ 1,\ldots ,k\}^{n_0}} p_{\omega} \delta _{f_\omega ^{n_0}(x)}
\]
where $p_{\omega} =p^{n_0}(\{\omega \})$ and $f_\omega ^{n_0} =f_{\omega _{{n_0}-1}}\circ \cdots \circ f_{\omega _0}$ for $\omega =(\omega _0,\ldots ,\omega _{{n_0}-1})$.
Thus, if we consider the finite set $A=\{f_\omega ^{n_0}(x) : \omega \in \{ 1,\ldots ,k\}^{n_0}\}$, then $P^{n_0}(x, A )=1$ but $\mu (A) =0$ for any $m$-absolutely continuous probability measure $\mu$.
By virtue of the interpretation of {\tt(UC)} in Section~\ref{sec:(UC)}, this confirms that {\tt (UC)} does not hold.

\subsection{Proof of Proposition~\ref{prop:contracting}: Random contracting maps}

For the proof of Proposition~\ref{prop:contracting},
 the next proposition is important.
Recall the mixing property and exactness of an invariant density for a Markov operator given in Section~\ref{sec:1.4}.
\begin{prop} \label{prop:mixing-no-exact}
For $f$ in Proposition~\ref{prop:contracting}, $1_X$ is an $\mathcal L_f$-invariant density. Moreover, $1_X$ is mixing, but is not exact.
\end{prop}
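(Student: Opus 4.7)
The plan is to identify the annealed Perron--Frobenius operator $\mathcal{L}_f$ with the Koopman operator $U_T$ of the doubling map $T(y)=2y\bmod 1$ on $[0,1]$, and then exploit the well-known mixing property of $T$ together with the fact that $U_T$ is an isometry of $L^1(m)$.

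First, I will compute the individual Perron--Frobenius operators. Since $f_1$ (resp.~$f_2$) is a bijection from $[0,1]$ onto $[0,1/2]$ (resp.~$[1/2,1]$) with constant Jacobian $1/2$, the standard change-of-variables formula yields
$$\mathcal{L}_1\varphi(y)=2\varphi(2y)\,1_{[0,1/2]}(y), \qquad \mathcal{L}_2\varphi(y)=2\varphi(2y-1)\,1_{[1/2,1]}(y),$$
so $\mathcal{L}_f\varphi(y)=\tfrac{1}{2}(\mathcal{L}_1+\mathcal{L}_2)\varphi(y)=\varphi(T(y))$ for every $\varphi\in L^1(m)$. Thus $\mathcal{L}_f=U_T$. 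In particular, $\mathcal{L}_f 1_X=1_X$, which proves that $1_X$ is an invariant density.

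For mixing, I will invoke the classical fact that the doubling map is mixing on $(X,\mathscr{B},m)$: writing $e_k(y)=e^{2\pi iky}$, the relation $e_k\circ T^n=e_{2^nk}$ shows that the frequency $2^n k$ escapes any fixed integer, whence mixing holds on trigonometric polynomials and therefore on $L^2(m)$. A routine truncation then upgrades this to weak $L^1$ convergence: given $\varphi\in L^1(m)$ and $\psi\in L^\infty(m)$, approximate $\varphi$ in $L^1$ by a bounded $\varphi_\varepsilon$, apply the $L^2$ mixing to $\varphi_\varepsilon$, and control the error uniformly in $n$ via $\|(\varphi-\varphi_\varepsilon)\circ T^n\|_{L^1}=\|\varphi-\varphi_\varepsilon\|_{L^1}$, which gives
$$\int\mathcal{L}_f^n\varphi\cdot\psi\,dm=\int(\varphi\circ T^n)\,\psi\,dm\longrightarrow\int\varphi\,dm\int\psi\,dm.$$
Hence $\mathcal{L}_f^n\varphi$ converges weakly in $L^1(m)$ to $1_X\cdot\int\varphi\,dm$ for every $\varphi\in L^1(m)$, so $1_X$ is mixing.

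The non-exactness is where the identity $\mathcal{L}_f=U_T$ pays its biggest dividend: because $T_*m=m$, $U_T$ is an $L^1$-isometry. Suppose, for contradiction, that $U_T^n\varphi\to c\cdot 1_X$ strongly in $L^1(m)$ for some $\varphi\in L^1(m)$, with $c=\int\varphi\,dm$. Applying $U_T$ once more gives $U_T^{n+1}\varphi\to c\cdot 1_X$ as well, so
$$\|U_T\varphi-\varphi\|_{L^1}=\|U_T^n(U_T\varphi-\varphi)\|_{L^1}=\|U_T^{n+1}\varphi-U_T^n\varphi\|_{L^1}\longrightarrow 0,$$
which forces $U_T\varphi=\varphi$ $m$-a.e.; by ergodicity of $T$ the function $\varphi$ must then be $m$-a.e.\ constant. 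Choosing $\varphi=2\cdot 1_{[0,1/2]}$, a non-constant density, rules out strong convergence and shows that $1_X$ is not exact. The only step requiring real care is the truncation argument in the mixing part; everything else is immediate from the identity $\mathcal{L}_f=U_T$.
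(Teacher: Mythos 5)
Your proof is correct and follows essentially the same route as the paper: both rest on identifying the annealed operator with the dyadic map $g(x)=2x \bmod 1$ (the paper writes $\mathcal{L}_f^{*}$ as the Perron--Frobenius operator of $g$ on $L^\infty(m)$, you write $\mathcal{L}_f=U_T$ on $L^1(m)$, which is the same identification viewed through the duality) and then import the classical mixing of the doubling map. The only difference is in the non-exactness step, where the paper computes explicitly that $\lVert \mathcal{L}_f^n(1_{[0,1/2]}-1_{[1/2,1]})\rVert = 1$ for all $n$, while you use the $L^1$-isometry of the Koopman operator together with ergodicity to exclude strong convergence for any non-constant test function---both arguments hinge on the same invariance of $m$ under the doubling map, so this is a cosmetic rather than substantive divergence.
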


\begin{proof}
It is obvious that $1_X$ is an invariant density for $\mathcal L_f$.
Furthermore, we immediately find that
\begin{align*}
\mathcal{L}_f^*\varphi(x)=\frac{1}{2}\mathcal{L}_1^*\varphi(x)+\frac{1}{2}\mathcal{L}_2^*\varphi(x)=\frac{1}{2}\varphi\left(\frac{x}{2}\right)+\frac{1}{2}\varphi\left(\frac{x+1}{2}\right)
\end{align*}
which is equivalent to the Perron--Frobenius operator $\mathcal L_g$ of the (deterministic) dyadic map $g(x)=2x\pmod{1}$. Then, for any $A,B\in\mathscr{B}$, we have
\begin{align*}
\int_X \mathcal{L}_f^n 1_A\cdot 1_B \, dm =
\int_X 1_A\cdot \mathcal L_g^n1_B \, dm \to
m(A)m(B)
\end{align*}
as $n\to\infty$ since $m$ is mixing for $g$. Therefore, (using a simple function approximation) we conclude that $1_X$ is mixing.

 On other other hand, for $\varphi =1_{A}-1_{A^c}$ with $A=[0,\frac{1}{2}]$ and any $n\in\mathbb{N}$, we have
\[
\int_X|\mathcal{L}_f^n \varphi |dm=\sum_{k=0}^{2^n-1}\int_X\left|1_{\left[\frac{2k}{2^{n+1}},\frac{2k+1}{2^{n+1}}\right]}-1_{\left[\frac{2k+1}{2^{n+1}},\frac{2k+2}{2^{n+1}}\right]}\right|\,dm=\int_X1\, \,dm=1.
\]
If $1_X$ will be exact, then $\mathcal{L}_f^n\varphi \to \int \varphi\, dm =0$ in $L^1$-norm. Thus, the above computation implies that $1_X$ is not exact.
\end{proof}

Let us move to the proof of Proposition~\ref{prop:contracting}.
It is well-known that, in the class of constrictive Markov operators preserving $1_X$, the mixing property implies the exactness (cf.~\cite[Remark~5.5.1]{LM}). Thus, the above proposition already concludes that $P$ is not constrictive. As another proof, we can also check it directly as follows.

Recall the Dunford--Pettis interpretation of {\tt (C)} (given in Section~\ref{sss:acc}):
$\mathcal L_f$ is constrictive if and only if
for any $\varepsilon>0$ there exists $\delta>0$ such that for any $h \in D(m)$, there is $n_0\geq 1$ for which
\[
\int_A \mathcal{L}_f^nh \, dm<\varepsilon \quad \text{for any $n\geq n_0$ and $A\in\mathscr{B}$ with $m(A)\leq\delta$}.
\]
The key point is that the set $A$ can depend on $n$, compare it with the definition of~{\tt(AC)}.
Fix $\varepsilon>0$ and $\delta>0$.
Let $A=[0,\delta]$ and $h =\frac{1}{\delta}1_A \in D(m)$.
Then
\[
\mathcal{L}_f^n h =\frac{1}{\delta}\sum_{k=0}^{2^n-1} 1_{\left[\frac{k}{2^n},\frac{k}{2^n}+\frac{\delta}{2^n}\right]}.
\]
Thus, by letting $B_n= \supp \mathcal{L}_f^n h $, we have
\[
\int_{B_n}\mathcal{L}_f^n h \, dm = 1\quad \text{and } \quad m(B_n )=\delta \, \text{ for any $n\geq1$},
\]
which implies that $\mathcal{L}_f$ is not constrictive.

On the other hand, since $\mathcal{L}_f$ is mixing with respect to $m$, we can conclude $\mathcal{L}_f$ is asymptotically constrictive as follows.
For any $\varepsilon>0$, set $\delta=\varepsilon$. Then for any $h\in D(m)$ and $B\in\mathscr B$ with $m(B)<\delta$, by the mixing property, we have
\[
\lim_{n\to\infty}\int_B \mathcal{L}_f^nh \, dm=\int_Xh \, dm\cdot m(B) < \delta =\varepsilon.
\]
This completes the proof of Proposition~\ref{prop:contracting}.

\subsection{Proof of Proposition~\ref{prop:rotations}: Random rotations}

To simplify notations, in this subsection we identify any closed interval in $\mathbb{S}^1$ with its corresponding set in $[0,1)$
(a closed interval or a set $[0,a]\cup [b,1)$ with some $a < b$).

\subsubsection{Case (1): Irrational $\alpha-\beta$}
It is sufficient to show that $\mathcal L_f$ is asymptotically stable, that is, $\|\mathcal L_f^n\varphi-1_X\|\to 0$ as $n\to\infty$ for any $\varphi\in D(m)$, since an asymptotically stable Markov operator is constrictive (see~\cite{LM}).
For $\varphi\in D(m)$, we can calculate $\mathcal L_f^n\varphi$ as
\begin{align*}
\mathcal L_f^n\varphi(x)
&=\frac{1}{2^n}\sum_{k=0}^n\binom{n}{k}\varphi(x-(n-k)\alpha-k\beta)
=\frac{1}{2^n}\sum_{k=0}^n\binom{n}{k}\varphi(x-n\alpha+k(\alpha-\beta))\\
&=\frac{1}{2^n}\sum_{k=0}^n\binom{n}{k}\mathcal L_{\beta-\alpha}^{k}\varphi(x-n\alpha)
\end{align*}
where $\mathcal L_\gamma$ denotes the Perron--Frobenius operator for the irrational rotation with angle $\gamma$. Since $\mathcal L_f1_X=1_X$ and $\alpha-\beta$ is irrational, using the mean ergodic theorem weighted with binomial coefficients (see~\cite[Theorem 4.1 and Corollary 4.3]{dykema2009brown}),
\[
\mathcal L_f^n\mathcal L_{-\alpha}^n\varphi
=\mathcal L_f^n\varphi(x+n\alpha)
=\frac{1}{2^n}\sum_{k=0}^n\binom{n}{k}\mathcal L_{\beta-\alpha}^{k}\varphi(x)\to 1_X \text{ in $L^1(m)$ as $n\to\infty$.}
\]
It is clear that $\mathcal L_f\mathcal L_{-\alpha}=\mathcal L_{-\alpha}\mathcal L_f$, $\mathcal L_{-\alpha}1_X=1_X$ and $\|\mathcal L_{-\alpha}\psi\|=\|\psi\|$ for any $\psi\in L^1(m)$. Therefore, we have
\[
\|\mathcal L_f^n\varphi-1_X\|
=\|\mathcal L_{-\alpha}^n(\mathcal L_f^n\varphi-1_X)\|
=\|\mathcal L_{-\alpha}^n \mathcal L_f^n\varphi-\mathcal L_{-\alpha}^n 1_X\|
=\|\mathcal L_f^n\mathcal L_{-\alpha}^n\varphi-1_X\|
\to 0
\]
as $n\to\infty$, which completes the proof.

\subsubsection{Case (2): Irrational $\alpha$ and $\beta$ with rational $\alpha-\beta$}
Let $\alpha-\beta=\frac{\ell}{N}$ for some integer $\ell$ and $N$. {Fix $0<\delta<1$.}
Set $B_i=[\frac{i}{N},\frac{i}{N}+\frac{\delta}{N}]$ for $i=0,\cdots,N-1$ and {$B=B_0\cup \dots \cup B_{N-1}$. Notice that $m(B)=\delta$.}

We first show that $\mathcal L_f 1_{B}=1_{B+\alpha}$. {Indeed, observe that} for any $i=0,\dots,N-1$ there is a unique $j=0,\dots,N-1$ such that $B_i+\alpha=B_{j}+\beta$, where $A+t=[a+t,b+t]$ for $A=[a,b]$ and $t\in[0,1)$. {This follows
for $j=i-\ell \pmod{N}$, since}
\[
\left(\frac{i}{N}+\alpha\right)-\left(\frac{j}{N}+\beta\right)=
\frac{i}{N}-\frac{j}{N}+\alpha-\beta=\frac{i}{N}-\frac{j}{N}+\frac{\ell}{N}=0\pmod{1},
\]
and such $j$ is unique. Then we have
\[
\mathcal{L}_f 1_B = \frac{1}{2}\sum_{i=0}^{N-1}(1_{B_i+\alpha}+1_{B_i+\beta})=\frac{1}{2}\sum_{i=0}^{N-1}(1_{B_i+\alpha}+1_{B_{i}+\alpha})=\sum_{i=0}^{N-1}1_{B_i+\alpha}= 1_{B+\alpha}.
\]
Moreover, we have that $\mathcal{L}_f^n1_B=1_{B+\alpha n}$ for any $n\geq 1$.

Take $\varphi\coloneqq\frac{1}{\delta}1_{B} \in D(m)$ since $m(B)=\delta$. Since $\alpha\not\in \mathbb{Q}$, we can consider a sequence $\{n_j\}_{j\geq 1}$ such that $\alpha n_j\to 0 \pmod{1}$ as $j\to\infty$. Hence,
\[
\int_B \mathcal{L}_f^{n_j}\varphi \, dm\to 1 \quad\text{as $j\to\infty$}.
\]
Therefore, $\mathcal{L}_f$ does not satisfies {\tt (AC)}.

We next prove that $\mathcal{L}_f$ is mean constrictive. It is clear that the function $1_X$ is invariant for $\mathcal{L}_f$. Then, from Remark~\ref{rem:ergodic-implies-MC}, it is sufficient to show that {$1_X$ is an ergodic density}.
{Let $\mathcal{L}_f^*1_A=1_A$, where $\mathcal{L}_f^*$ is the adjoint operator for $\mathcal{L}_f$. We will prove $m(A)\in \{0,1\}$.
 Since obviously $1_A \in L^2(m)$,
 we have the Fourier series of $1_A$ as follows, $1_A(x)=\sum_{n=-\infty}^\infty a_n e^{2n\pi i x}$.
 Then we can calculate
\[
\mathcal{L}_f^*\varphi(x)
=\frac{1}{2}\sum_{n=-\infty}^\infty a_n e^{2n\pi i (x+\alpha)}+\frac{1}{2}\sum_{n=-\infty}^\infty a_n e^{2n\pi i (x+\beta)}.
\]
Then $\mathcal{L}_f^*\varphi=\varphi$ and
\[
\mathcal{L}_f^*\varphi(x)=\frac{1}{2}\mathcal{L}_1^*\varphi(x)+\frac{1}{2}\mathcal{L}_2^*\varphi(x)=\frac{1}{2}\varphi(x+\alpha)+\frac{1}{2}\varphi(x+\beta)
\]
imply
\[
\frac{1}{2}\sum_{n=-\infty}^\infty a_n e^{2n\pi i (x+\alpha)}+\frac{1}{2}\sum_{n=-\infty}^\infty a_n e^{2n\pi i (x+\beta)}=\sum_{n=-\infty}^\infty a_n e^{2n\pi i x}.
\]
Due to the uniqueness of Fourier series, it must be satisfied that, for any $n\in\mathbb{Z}$,
\[
\frac{1}{2} e^{2n\pi i \alpha}+\frac{1}{2} e^{2n\pi i \beta}= 1.
\]
It can be rewritten as
\[
e^{2n\pi i \gamma}+1=2 e^{-2n\pi i \beta}\quad \text{with $\gamma\coloneqq\alpha-\beta$}.
\]
That is,
\[
\cos(2n\pi\gamma)+1=2\cos(-2n\pi\beta)\quad{\rm and}\quad
\sin(2n\pi\gamma)=2\sin(-2n\pi\beta).
\]
Hence
\[
\left(2\cos(2n\pi\beta)-1\right)^2+\left(-2\sin(2n\pi\beta)\right)^2=1,
\]
which leads to $\cos(2n\pi\beta)=1$. This equality holds only for $n=0$ since $\beta$ is irrational. Thus, all coefficient $a_n$ must be 0 for any $n\neq0$, which shows $1_A$ is constant and hence $m(A)=0$ or $1$.

\subsubsection{Case (3): Rational $\alpha$ and $\beta$}
By the same arguments, we have $\mathcal L_f1_B=1_{B+\alpha}$. Unlike the case $\alpha,\beta\in[0,1]\backslash\mathbb{Q}$, we further have $\mathcal{L}_f 1_B = 1_{B}$
by taking $N$ as the least common multiple of the denominator for $\alpha$ and $\alpha-\beta$.
Then, for any $\delta>0$, taking $\varphi=\frac{1}{\delta}1_B$,
we get $m(B)=\delta$ but
\[
\int_B A_n \varphi \, dm=\frac{1}{n}\sum_{j=0}^{n-1}\int_B \mathcal{L}_f^j\varphi \, dm=1,
\]
since the support of $\mathcal{L}_f^n\varphi$ and $A_n\varphi$ is always on $B$.
This concludes that $\mathcal{L}_f$ does not satisfy~{\tt (MC)}.

Finally, since $1_X$ is clearly invariant for $\mathcal{L}_f$, by the fact that {\tt (WAP)} is equivalent to the existence of an invariant density with the maximal support~\cite[Theorem~3.1]{Toyokawa2020}, $\mathcal{L}_f$ satisfies {\tt (WAP)}. This completes the proof.

\subsection{Proof of Proposition~\ref{prop:figb}: Direct sums of random maps}
Notice that the random dynamics $f_+$ on $X_+$ generated by $\tau _1^+$, $\tau _2^+$ satisfies {\tt (S)} because $f$ satisfies {\tt (S)} and preserves $X_+$.
That is, $\mathcal L_{f_+}$ admits an invariant density, denoted by $h_+$.
Let $h$ be the density function on $X$ given by $h(x)=0$ on $X_-$ and $h(x)=h_+(x)$ on $X_+$.
Then, $h$ is obviously an invariant density of $\mathcal L_f$,
namely, {\tt (S)} holds for $f$.

On the other hand, {\tt (WAP)} does not hold. If $\mathcal L_f$ has an invariant density $h$ with the maximal support, then an integral function $h_-$ on $X_-$, given by $ h_-(x) = h(x)$, should be invariant for $\mathcal L_{f_-}$, where $f_-$ is the random dynamics generated by $\tau _1^-$, $\tau _2^-$, due to the invariance of $X_-$, $X_+$ for $f$.
This contradicts the assumption that $f_-$ is not in~{\tt(S)}.

\subsection{Proof of Proposition~\ref{prop:IFS-circle}: Random circle homeomorphisms}
As mentioned earlier, it follows from~\cite[Theorem B and proof of Proposition~4.9]{malicet2017random} that $\mu$ is the unique stationary measure of $f$ and for any probability $\nu$ on $\mathbb{S}^1$, $\nu(B_\omega(\mu))=1$ for $\mathbb{P}$-almost every $\omega\in \Omega$. Therefore, since the reference measure is $m=\frac{1}{2}(\mu+\eta)$,
we have that the random map $f$ satisfies items 1) and 2) in Definition~\ref{dfn:11} and Equation~\eqref{eq:3}. From Proposition~\ref{prop:FPM-FED}, to establish that $f$ does not satisfy {\tt(FPM)}, it suffices to prove that $\mathcal{L}_f$ does not satisfy {\tt (FED)}. To show this, let $h=2\cdot 1_{\supp \mu}$ be the density function of $\mu$ with respect to $m$. Notice that $h$ is the unique ergodic $\mathcal{L}_f$-invariant density. Thus $\mathcal{L}_f$ satisfies~{\tt (FED)} if and only if $h$ has the maximal support. We will prove that actually, $h$ does not have maximal support.

To do this, notice that for any $x$ in a gap of the Cantor set $K$, where the measure $\eta$ is supported, we have that $f^n_\omega(x) \in \mathbb{S}^1\setminus \supp \mu$ for all $n\in\mathbb{N}$ and $\omega\in \Omega$. In particular, we have a set $E$ of $m$-positive measure (the gap where $\eta$ is supported) such that for every $x\in E$,
\[
\mathcal{L}^{n*}_f1_{\supp \mu}(x)= \int 1_{\supp \mu}\circ f^n_\omega(x) \, d\mathbb{P}=0
\]
Hence, since $\supp h = \supp \mu$, we have that $\lim_{n\to\infty} \mathcal{L}^{n*}_f1_ {\supp h} < 1_{\mathbb{S}^1}$ over a set of positive $m$-measure (on $E$) and then $h$ does not have maximal support. Consequently, $\mathcal{L}_f$ does not satisfy {\tt (FED)} and therefore {\tt(FPM)} with respect to $m$ does not hold for $f$. This concludes the proof of Proposition~\ref{prop:IFS-circle}.

\subsection{Proof of Proposition~\ref{prop:dce}: Deterministic systems}

All statements except one for the baker's transformation can be proven by literally repeating the argument in the previous subsections.
On the other hand, for the statement for the baker's transformation, it immediately follows from the fact that the baker's transformation is mixing but not exact with respect to the Lebesgue measure $m$ (cf.~\cite{alexander1984fat}).

\appendix

\section{Annealed Perron--Frobenius operators}
\label{sec:apendix}
Let $(X,\mathscr{B},m)$ be a Polish probability space and consider $(\Omega,\mathscr{F})=(T^\mathbb{N},\mathscr{A}^{\mathbb{N}})$ an infinite product space of a measure space. Now, we introduce a probability measure $\mathbb{P}$ on $(\Omega,\mathscr{F})$ which is shift invariant but not necessarily a Bernoulli probability. Let $f:T\times X \to X$ be a measurable map. We consider iterations $f^n_\omega=f_{\omega_{n}}\circ \dots \circ f_{\omega_1}$ for $n\geq 1$ where the sequence $\omega=(\omega_i)_{i\geq 1}$ is chosen from $\Omega$ according to the probability $\mathbb{P}$.
To emphasize this random choice, we call $f$ as a $\mathbb{P}$-random map. We can introduce the annealed Perron--Frobenius operator ${\mathcal{L}}_f$ given by
\begin{equation}\label{A:eq00}
 {\mathcal{L}}_f \varphi = \int \mathcal{L}_\omega \varphi \, d\mathbb{P} \quad \text{for $\varphi \in L^1(m)$}
\end{equation}
where $\mathcal{L}_\omega$ is the Perron--Frobenius operator of $f_\omega$.
Recall that we introduced $\mathcal{L}_\omega\varphi$ as the Radon--Nikod\'ym derivative of the signed measure $(f_\omega)_* m_\varphi$ with respect to $m$ where
\[
m_\varphi (B) = \int_B \varphi \, dm \quad \text{for all $B\in\mathscr{B}$}.
\]
That is, for each $\varphi \in L^1(m)$,
\begin{equation} \label{A:eq0}
\mathcal{L}_\omega\varphi \in L^1(m) \ \ \text{such that} \ \ (f_{\omega})_*m_\varphi(B) = \int_B \mathcal{L}_\omega\varphi \, dm \ \
\text{for all $B\in\mathscr{B}$}.
\end{equation}
In other words, $(f_{\omega})_*m_\varphi = m_{\mathcal{L}_\omega\varphi}$, i.e.,
\begin{equation}\label{A:eq1}
 \int_{(f_\omega)^{-1}(B)} \varphi \, dm = \int_B \mathcal{L}_\omega\varphi \, dm \quad
\text{for all $B\in\mathscr{B}$}.
\end{equation}
The following lemma characterizes $\mathcal{L}_f$ in similar terms.

\begin{lem} \label{lem:A0}
 For each $\varphi\in L^1(m)$, ${\mathcal{L}}_f\varphi$ is the Radon--Nikod\'{y}m derivative of the measure $\bar{m}_\varphi$ with respect to $m$ where
\[
\bar{m}_\varphi (B) = \int (f_\omega)_* m_\varphi(B) \, d\mathbb{P}
\quad \text{for all $B\in\mathscr{B}$}.
\]
That is,
\[
\mathcal{L}_f\varphi \in L^1(m) \ \ \text{such that} \ \ \bar{m}_\varphi(B) = \int_B {\mathcal{L}}_f\varphi \, dm \ \
\text{for all $B\in\mathscr{B}$}.
\]
In particular,
\begin{equation}\label{eq:estationary}
 {\mathcal{L}}_f\varphi = \varphi \quad \iff \quad m_\varphi(B) = \int (f_\omega)_* m_\varphi (B) \, d\mathbb{P} \quad \text{for all $B\in\mathscr{B}$}.
\end{equation}
\end{lem}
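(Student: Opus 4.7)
Plan: The lemma has two assertions, and both reduce to straightforward Fubini-type manipulations plus uniqueness of the Radon--Nikod\'ym derivative. I will prove them in sequence.

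First, I will establish that $\mathcal L_f\varphi$ is the Radon--Nikod\'ym derivative of $\bar m_\varphi$ with respect to $m$. Fix $\varphi\in L^1(m)$ and $B\in \mathscr{B}$. By definition of $\bar m_\varphi$ and then applying \eqref{A:eq0} (which is just the defining property of each $\mathcal L_\omega$),
\[
\bar m_\varphi(B) \;=\; \int (f_\omega)_*m_\varphi(B)\,d\mathbb{P}(\omega) \;=\; \int \Big(\int_B \mathcal L_\omega \varphi \, dm\Big) d\mathbb{P}(\omega).
\]
Next I would apply Fubini--Tonelli to exchange the two integrals. To justify this, I first note that the integrability is automatic: since $\mathcal L_\omega$ is a Markov operator, $\|\mathcal L_\omega \varphi\|\le \|\varphi\|$ for every $\omega\in\Omega$, hence
\[
\int \int |\mathcal L_\omega \varphi|\,dm\,d\mathbb{P}(\omega) \le \|\varphi\| < \infty.
\]
(The joint measurability of $(\omega,x)\mapsto \mathcal L_\omega\varphi(x)$, which is needed for Fubini and is implicitly used in the very definition \eqref{A:eq00} of $\mathcal L_f\varphi$, is inherited from the measurability of $f$ via the standard construction of the Perron--Frobenius operator as an integral of characteristic functions against the transition kernel.) Swapping the order of integration yields
\[
\bar m_\varphi(B) \;=\; \int_B \Big(\int \mathcal L_\omega \varphi \, d\mathbb{P}(\omega)\Big)\, dm \;=\; \int_B \mathcal L_f \varphi \, dm,
\]
which is the desired identity. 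In particular, $\bar m_\varphi = m_{\mathcal L_f\varphi}$ as finite signed measures on $\mathscr B$.

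Second, I will deduce the equivalence \eqref{eq:estationary}. If $\mathcal L_f\varphi = \varphi$ in $L^1(m)$, then $m_{\mathcal L_f\varphi} = m_\varphi$, and combining this with the identity $\bar m_\varphi = m_{\mathcal L_f\varphi}$ just proved gives $m_\varphi(B) = \bar m_\varphi(B) = \int (f_\omega)_*m_\varphi(B)\,d\mathbb{P}$ for every $B\in\mathscr B$. Conversely, if this equation holds for all $B$, then $m_\varphi = \bar m_\varphi = m_{\mathcal L_f\varphi}$ as signed measures. Both $\varphi$ and $\mathcal L_f\varphi$ are then Radon--Nikod\'ym derivatives of the same measure with respect to $m$, so by uniqueness of such derivatives they coincide $m$-almost everywhere, i.e.\ $\mathcal L_f\varphi=\varphi$ in $L^1(m)$.

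I do not expect a genuine obstacle here; the only delicate point is the joint measurability needed for Fubini, and this is a background fact about annealed operators that is already used implicitly in \eqref{A:eq00}.
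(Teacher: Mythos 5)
Your proof is correct and follows essentially the same route as the paper: both arguments amount to unwinding the definitions of $\bar m_\varphi$ and $\mathcal L_\omega$, swapping the order of integration via Fubini, and then identifying Radon--Nikod\'ym derivatives to get \eqref{eq:estationary}. The only difference is cosmetic (you start from $\bar m_\varphi(B)$ rather than $\int_B \mathcal L_f\varphi\,dm$) and you supply the integrability and joint-measurability remarks that the paper leaves implicit, which is fine.
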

\begin{proof} According to the definition of the annealed operator $\mathcal{L}_f$ in~\eqref{A:eq00} and the Perron--Frobenius operator of $f_\omega$ in~\eqref{A:eq0}, it holds
\[
\int_B \mathcal{L}_f\varphi \, dm = \int_B \int \mathcal{L}_\omega\varphi \, d\mathbb{P} \,dm = \int \int_B \mathcal{L}_\omega\varphi \, dm \,d\mathbb{P} = \int (f_\omega)_*m_\varphi (B)\, d\mathbb{P}.
\]
This proves the first part of the lemma. The second part follows immediately by observing that $\mathcal{L}_f\varphi=\varphi$ if and only if
$\int_B \mathcal{L}_f \varphi \, dm = m_\varphi(B)$. Hence, the above equation reads as ${m}_\varphi =\bar{m}_\varphi$ and thus~\eqref{eq:estationary} follows.
\end{proof}

\begin{rem} Since the Perron--Frobenius operator also acts on the space of finite signed measures as
\[
 \mathcal{L}_f\mu (B)= \int 1_B \circ f_\omega (x) \, d\mathbb{P}(\omega) d\mu(x) \quad \text{for all $B\in\mathscr{B}$,}
\]
the equivalence~\eqref{eq:estationary} can be generalized as follows:
\begin{equation*}\label{eq:estationarymeasure}
 {\mathcal{L}}_f\mu = \mu \quad \iff \quad \mu(B) = \int (f_\omega)_* \mu (B) \, d\mathbb{P} \quad \text{for all $B\in\mathscr{B}$}.
\end{equation*}
\end{rem}
\begin{rem} \label{rem:anexo} Note that when $\mathbb{P}=p^\mathbb{N}$ is a Bernoulli measure on $\Omega$,
\[
\int (f_\omega)_* m_\varphi (B) \, d\mathbb{P} = \int (f_{\omega_1})_*m_\varphi (B) \, dp(\omega_1).
\]
Thus, from~\eqref{eq:estationary}, $\varphi$ is an $\mathcal{L}_f$-invariant density if and only if
$m_\varphi$ is an $f$-stationary measure in the sense introduced in~\eqref{def:stationary} that is absolutely continuous~with~respect~to~$m$.
\end{rem}

Let us write
\begin{equation*}
\mathcal{L}^n_\omega = \mathcal{L}_{\omega_n}\circ \dots \circ \mathcal{L}_{\omega_1}
\quad \text{for $\psi\in L^\infty(m)$ and $\omega\in \Omega$}
\end{equation*}
{where $\sigma$ is the left shift on $\Omega$ preserving $\mathbb{P}$.}
As usual, we introduce the adjoint operator of ${\mathcal{L}}_f:L^1(m)\to L^1(m)$ as the operator ${\mathcal{L}}^{\,*}_f:L^\infty(m)\to L^\infty(m)$ satisfying
\[
\langle {\mathcal{L}}^{\,*}_f\psi, \varphi \rangle = \langle \psi, {\mathcal{L}}_{f}\varphi \rangle \quad
\text{for $\psi\in L^\infty(m)$ and $\varphi \in L^1(m)$ \ \ where \ \ $\langle h,g\rangle = \int hg \, dm$.}
\]
Recall that since $(\mathcal{L}^n_f)^*=(\mathcal{L}^*_f)^n$
where $\mathcal{L}^n_f$ and $(\mathcal{L}^*_f)^n$ are the $n$-th iteration of $\mathcal{L}_f$ and $\mathcal{L}^*_f$ respectively, we simply write it as $\mathcal{L}^{n*}_f$. Similarly, we write $(\mathcal{L}^{n}_\omega)^*=\mathcal{L}^{n*}_\omega$.

\begin{lem} \label{lem:A2}For any $n\geq 1$, $\varphi\in L^1(m)$ and $\psi\in L^\infty(m)$, it holds
\begin{enumerate}[itemsep=0.25cm]
\item \label{lem:A2item1} $\mathcal{L}_\omega^n=\mathcal{L}_{f^n_\omega}$;
\item \label{eq:P*} $\mathcal{L}^{n*}_\omega \psi = \psi \circ f^n_\omega$;
\item \label{lem:A2item4}$\langle {\mathcal{L}}^{n*}_\omega\psi, \varphi \rangle = \langle \psi, {\mathcal{L}}^{\, n}_{\omega}\varphi \rangle$.

 \end{enumerate}
 Moreover, if $\mathbb{P}$ is a Bernoulli measure on $\Omega$ (i.e., an infinite product $p^\mathbb{N}$ of a probability measure ${p}$ on $T$), it holds
 \begin{enumerate}[resume]
 \item \label{lem:A2item2} ${\mathcal{L}}_f^n \varphi = \int \mathcal{L}^n_\omega \varphi \, d\mathbb{P}$;
 \item \label{lem:A2item3} ${\mathcal{L}}_f^{n*} \psi = \int \mathcal{L}_\omega^{n*} \psi \, d\mathbb{P}$.
\end{enumerate}
\end{lem}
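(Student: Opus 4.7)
The plan is to establish parts (1)--(3) directly from the standard functorial properties of the Perron--Frobenius operator and its adjoint, and then deduce parts (4)--(5) from an induction on $n$ combined with Fubini's theorem, using crucially that $\mathbb{P}=p^{\mathbb{N}}$ is a product measure. The whole lemma is essentially bookkeeping once the relevant change-of-variables is identified, but the one step that needs some care is justifying the interchange of the operator $\mathcal{L}_t$ with an $L^1(m)$-valued integral against $\mathbb{P}$.

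For part (1), I would note that the Perron--Frobenius operator is covariant under composition, i.e.\ $\mathcal{L}_{g\circ h}=\mathcal{L}_g\circ \mathcal{L}_h$ for nonsingular maps $g,h$. This follows immediately from the defining identity $(g\circ h)_*m_\varphi = g_*(h_*m_\varphi)$ together with~\eqref{A:eq0}. Applied to $f^n_\omega=f_{\omega_n}\circ\cdots\circ f_{\omega_1}$, this yields $\mathcal{L}_{f^n_\omega}=\mathcal{L}_{\omega_n}\circ\cdots\circ\mathcal{L}_{\omega_1}=\mathcal{L}_\omega^n$. For part (2), the classical duality calculation
\[
\int (\psi\circ g)\,\varphi\,dm=\int \psi\,d(g_*m_\varphi)=\int \psi\cdot\mathcal{L}_g\varphi\,dm
\]
shows that $\mathcal{L}_g^*\psi=\psi\circ g$ for any nonsingular $g$; combining this with~(1) gives $\mathcal{L}_\omega^{n*}\psi=\mathcal{L}_{f^n_\omega}^*\psi=\psi\circ f^n_\omega$. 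Part (3) is the definition of the adjoint applied to the iterate $\mathcal{L}_\omega^n$, using the elementary identity $(\mathcal{L}_\omega^n)^*=\mathcal{L}_\omega^{n*}$.

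For part (4), I would argue by induction on $n$. The base case $n=1$ is literally the definition of $\mathcal{L}_f$, noting that the integrand depends only on $\omega_1$ and $\mathbb{P}=p^{\mathbb{N}}$ has marginal $p$ on the first coordinate. For the inductive step, writing $\mathcal{L}_f^{n}\varphi=\mathcal{L}_f(\mathcal{L}_f^{n-1}\varphi)$ and using the induction hypothesis inside $\mathcal{L}_f$, one obtains
\[
\mathcal{L}_f^n\varphi=\int_T \mathcal{L}_t\!\left(\int_\Omega \mathcal{L}_{\omega'}^{n-1}\varphi\,d\mathbb{P}(\omega')\right)dp(t).
\]
Interchanging $\mathcal{L}_t$ with the inner integral (justified by the boundedness/linearity of $\mathcal{L}_t$ on $L^1(m)$ and Fubini for the Bochner integral) and then interchanging the two outer integrals, I get an integral over $T\times\Omega$ of $\mathcal{L}_t\mathcal{L}_{\omega'}^{n-1}\varphi$. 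Finally, I would use the change of variables $(t,\omega')\mapsto\omega$ sending $(t,(\omega'_1,\dots,\omega'_{n-1},\omega'_n,\dots))$ to $(\omega'_1,\dots,\omega'_{n-1},t,\omega'_n,\dots)$, which is a measure-preserving bijection from $(T\times\Omega,p\times\mathbb{P})$ to $(\Omega,\mathbb{P})$ precisely because $\mathbb{P}=p^{\mathbb{N}}$ is a Bernoulli product; under this identification $\mathcal{L}_t\mathcal{L}_{\omega'}^{n-1}\varphi=\mathcal{L}_\omega^n\varphi$, closing the induction. Part (5) then follows automatically by duality: for any $\varphi\in L^1(m)$, pairing (4) against $\psi\in L^\infty(m)$ and using (3) yields $\langle\mathcal{L}_f^{n*}\psi,\varphi\rangle=\int\langle\mathcal{L}_\omega^{n*}\psi,\varphi\rangle\,d\mathbb{P}$, so $\mathcal{L}_f^{n*}\psi=\int\mathcal{L}_\omega^{n*}\psi\,d\mathbb{P}$ as elements of $L^\infty(m)$.

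The only step that is not purely formal is the interchange of $\mathcal{L}_t$ with the $L^1(m)$-valued integral $\int \mathcal{L}_{\omega'}^{n-1}\varphi\,d\mathbb{P}(\omega')$ and the subsequent use of Fubini; the main obstacle is therefore a measurability/integrability check for the Bochner integral rather than a genuine analytic difficulty. One can sidestep any subtlety by passing to the equivalent pointwise statement, noting that by iterating the definition $\mathcal{L}_f^n\varphi(x)=\int_{T^n}[\mathcal{L}_{t_n}\cdots\mathcal{L}_{t_1}\varphi](x)\,dp^n(t_1,\dots,t_n)$ for $m$-almost every $x$, and then recognizing $p^n$ as the pushforward of $\mathbb{P}$ under the projection $\omega\mapsto(\omega_1,\dots,\omega_n)$.
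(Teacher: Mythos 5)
Your proof is correct and follows essentially the same route as the paper: items (1)--(3) via the standard composition and duality properties of Perron--Frobenius operators, item (4) by induction on $n$ using Fubini and the product structure of $\mathbb{P}$, and item (5) by duality from (3) and (4). The only difference is cosmetic: the paper runs the induction for (4) at the level of the signed measures $m_\varphi$, testing against Borel sets $B$ (its Claim about $m_{\mathcal{L}_f^n\varphi}(B)=\int (f^n_\omega)_*m_\varphi(B)\,d\mathbb{P}$), whereas you run it at the operator level with a Bochner-integral interchange and a coordinate-insertion change of variables; both reduce to the same scalar Fubini argument (requiring the same routine joint-measurability of $(t,x)\mapsto\mathcal{L}_t\varphi(x)$) and exploit the Bernoulli structure of $\mathbb{P}$ at exactly the same point.
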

\begin{proof}
Let us prove (1). To do this we proceed by induction. It is clear that (1) holds for $n=1$. Assume that $(f^{n-1}_\omega)_* m_\varphi = m_{\mathcal{L}^{n-1}_\omega\varphi}$.
Then
\[
(f^n_\omega)_* m_\varphi=(f_{\sigma^{n-1}\omega})_* ((f^{n-1}_\omega)_* m_\varphi) =(f_{\sigma^{n-1}\omega})_*m_{\mathcal{L}^{n-1}_\omega\varphi}.
\]
From this, having in mind~\eqref{A:eq1}, we get
\begin{align*}
 (f^n_\omega)_* m_\varphi (B) = \int_{(f_{\sigma^{n-1}\omega})^{-1}(B)} \mathcal{L}_{\omega}^{n-1}\varphi \, dm = \int_B \mathcal{L}_{\sigma^{n-1}\omega} \circ \mathcal{L}_{\omega}^{n-1}\varphi \, dm = \int_B \mathcal{L}_{\omega}^{n}\varphi \, dm.
\end{align*}
This implies that $\mathcal{L}_{f^n_\omega}=\mathcal{L}_\omega^n$.

Now, (2) follows immediately from (1) since $\mathcal{L}_\omega^{n*}= (\mathcal{L}_\omega^{n})^* =(\mathcal{L}_{f^n_\omega})^*$. Hence, as is well-known, $(\mathcal{L}_{f^n_\omega})^*\psi=\psi \circ f^n_\omega$. Finally, observe that~(3) is just the duality relation. To conclude the proof, from now on we assume that the probability $\mathbb{P}$ is Bernoulli.

\begin{claim} \label{claimA1} For any $n\geq 1$,
\[
m_{\mathcal{L}^n_f\varphi}(B)=\int(f^n_\omega)_*m_\varphi(B) \, d\mathbb{P} \quad \text{for all $B\in\mathscr{B}$ and $\varphi\in L^1(m)$}.
\]
\end{claim}
\begin{proof}
 We prove the claim by induction. For $n=1$, we need to prove that $m_{\mathcal{L}_f\varphi}(B)=\int(f^n_\omega)_*m_\varphi(B) \, d\mathbb{P}$. Observe that this is just Lemma~\ref{lem:A0}. Thus, we can assume that the claim holds for~$n-1$. Then, using again Lemma~\ref{lem:A0},
 \begin{align*}
 m_{\mathcal{L}^n_f\varphi} (B) &= \int_B \mathcal{L}^n_f\varphi \, dm = \int_B \mathcal{L}_f(\mathcal{L}^{n-1}_f\varphi)\, dm = \int_B (f_\omega)_*m_{\mathcal{L}^{n-1}_f\varphi}(B)\, d\mathbb{P}.
 \end{align*}
 By induction we have that
 \begin{align*}
 \int_B (f_\omega)_*m_{\mathcal{L}^{n-1}_f\varphi}(B)\, d\mathbb{P} &= \iint (f^{n-1}_{\bar{\omega}})_* m_\varphi(f^{-1}_\omega(B))\, d\mathbb{P}(\bar\omega)\,d\mathbb{P}(\omega) \\ &= \iint (f^{}_\omega\circ f^{n-1}_{\bar{\omega}})_* m_\varphi(B)\, d\mathbb{P}(\bar\omega)\,d\mathbb{P}(\omega).
 \end{align*}
Taking into account that $\mathbb{P}$ is a Bernoulli measure and that $f_\omega=f_{\omega_1}$ and $f^{n-1}_{\bar{\omega}}=f_{\bar\omega_{n-1}}\circ \dots \circ f_{\bar\omega_{1}}$ where $\omega=(\omega_i)_{i\geq 1}$ and $\bar{\omega}=(\bar\omega_i)_{i\geq 1}$, we have
\[
\iint (f^{}_\omega\circ f^{n-1}_{\bar{\omega}})_* m_\varphi(B)\, d\mathbb{P}(\bar\omega)\,d\mathbb{P}(\omega) = \int (f^{n}_\omega)_* m_\varphi(B)\, d\mathbb{P}.
\]
Putting together all the above equations, we get the claim.
\end{proof}

Now we conclude (4). To do this, we use the first item in this lemma, the fact that the Perron--Frobenius operator of $f^n_\omega$ satisfies that $m_{\mathcal{L}_{f^{n}_\omega}\varphi}=(f^n_\omega)_*m_\varphi$, and finally Claim~\ref{claimA1}. Then we get that
\begin{align*}
 \int_B \int \mathcal{L}_\omega^n\varphi \, d\mathbb{P}\,dm &= \int \int_B \mathcal{L}_{f^{n}_\omega}\varphi \, dm \, d\mathbb{P}
 = \int (f^n_\omega)_*m_{\varphi}(B)\, d\mathbb{P} = \int_B P^n\varphi \, dm.
\end{align*}
From this, since $B\in \mathscr{B}$ is arbitrary, we obtain that $\mathcal{L}_f^n\varphi=\int \mathcal{L}_\omega^n\varphi \, d\mathbb{P}\,dm$ as desired.

Finally, we will prove (5). By (3), (4) and the duality, we have
\begin{align*}
 \langle \mathcal{L}^{n*}_f\psi,\varphi \rangle &= \langle \psi,\mathcal{L}^{n}_f\varphi \rangle =\langle \psi,\int\mathcal{L}^{n}_\omega\varphi \, d\mathbb{P} \rangle = \int \langle \psi,\mathcal{L}^{n}_\omega\varphi \rangle \, d\mathbb{P} \\
 &= \int \langle \mathcal{L}^{n*}_\omega\psi,\varphi \rangle \, d\mathbb{P} = \langle \int\mathcal{L}^{n*}_\omega\psi \, d\mathbb{P},\varphi \rangle.
\end{align*}
This implies $\mathcal{L}^{n*}_f\psi=\int\mathcal{L}^{n*}_\omega\psi \, d\mathbb{P}$ as required.
\end{proof}

\section{Generalized restrictions of Markov operators} \label{sec:Markov-restriction2}
We will extend the theory of restrictions of Markov operators developed in Section~\ref{sec:Markov-restriction}.
Roughly speaking, we want to replace the reference measure $m$ with an absolutely continuous invariant measure and restrict the Markov operator to the support of such measure recovering the Markov property among other ergodic properties.

Let $P:L^1(m)\to L^1(m)$ be a Markov operator.
Take $h \in D(m)$ and define the probability measure $m_h$ as usual by $dm_h=h\,dm$.
Denote $S=\supp h$ and note that $S\in \mathscr{B}$ with $m(S)>0$.
Let us consider  $L^1(m_h)=L^1(S, \mathscr{B}_{S}, m_{h})$ where $\mathscr{B}_{S}$ denotes the trace $\sigma$-algebra of $S$ in $\mathscr{B}$.
As in Section~\ref{sec:Markov-restriction},  $L^1(m_h) \hookrightarrow L^1(m)$.
Abusing notation, we write this inclusion by $1_{S}$ and identify $L^1(m_S)$ with 
\[
1_S(L^1(m_S))=\{\phi \in L^1(m): \supp \phi \subset S\} \subset L^1(m).
\]
 We define the operator
\begin{align*}
P_{h}:L^1(m_h) \to L^1(m_h), \qquad  P_{h}\phi = \frac{1_{S} P(1_S \phi h)}{h} \quad \text{for $\phi\in L^1(m_h)$}.
\end{align*}
Actually, $P_h$ acts on $L^1(m)$ by the same formula.
Moreover, if $h=\frac{1_S }{ m(S)}$, then  $P_h\phi=1_SP(1_S\phi)$.
That is, $m_h$ and $P_h$ coincides with the measure $m_S$ and the operator $P_S$  introduced in~\eqref{eq:def-mS} and~\eqref{def:P_S} respectively.

It is clear that $P_h$ is a bounded linear positive operator on $L^1(m_h)$.
Moreover, $P_h$ is also a contraction on $L^1(m_h)$ since $\|P_h\|_{\rm op} \leq 1$.
On the other hand,  $L^{\infty}(m_h)\hookrightarrow L^{\infty}(m)$ by the same canonical inclusion $1_S$.
As before, we identify $L^1(m_h)$ with $1_{S}(L^{\infty}(m_h))$.
We also have that $P^*_{h}$, the adjoint operator of $P_{h}$, acting on $L^{\infty}(m_h)$ coincides with $1_{S}P^*1_{S}$.
Indeed,
for each $\varphi\in L^1(m_h)$ and $\psi\in L^{\infty}(m_h)$,
\begin{align*}
\int_S \varphi \cdot P_{h}^*\psi \, dm_h
&=\int_S P_{h}\varphi\cdot \psi\, dm_h
=\int_X 1_S \frac{P(\varphi h)}{h} \cdot \psi h \, dm \\
&=\int_X \varphi h\cdot P^*(1_{S}\psi) \, dm =\int_{S} \varphi \cdot P^*(1_{S}\psi) \, dm_h
\end{align*}
as desired.

\begin{prop} \label{prop:P_h-Markov} Let $P:L^1(m)\to L^1(m)$ be a Markov operator and consider $S\in \mathscr{B}$ with $m(S)>0$.
The following conditions are equivalent:

\begin{enumerate}
\item\label{PS:item1}
$P^*1_{X\setminus S} \leq 1_{X\setminus S}$.
Equivalently, $P^*1_{S} \geq 1_S$;
\item\label{PS:item2}
$\supp P1_S \subset S$.
Equivalently, $\supp P^*1_{X\setminus S} \subset X\setminus S$; 
\item\label{PS:item3}
$1_SP(1_S\phi)=P(1_S\phi)$ for all $\phi\in L^1(m)$.
Equivalently,
\[
P_S\phi = P(1_S\phi) \ \ \text{for all $\phi\in L^1(m)$} \quad \text{or} \quad P(1_S\phi) \in L^1(m_S) \ \ \text{for all $\phi\in L^1(m)$};
\]
\item\label{PS:item4}
for every $\phi \in L^1(m)$ it holds that
\[
    \int_S P(1_S\phi) \, dm = \int_S \phi \, dm;
    \]
  \item\label{PS:item5} $P_S:L^1(m_S)\to L^1(m_S)$ is a Markov operator;
  \item \label{PS:item6} $P_h: L^1(m_h)\to L^1(m_h)$ is a Markov operator for any $h\in D(m)$ with $S=\supp h$.
\end{enumerate}
\end{prop}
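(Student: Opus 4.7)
The plan is to establish the cycle $(1) \Rightarrow (2) \Rightarrow (3) \Rightarrow (4) \Rightarrow (1)$, then close $(5) \Leftrightarrow (2)$ and $(6) \Leftrightarrow (5)$. Before entering the cycle, I would dispose of the two ``inner equivalences'' stated in items~(1) and~(2). For~(1), the Markov property gives $P^{*}1_{X}=1_{X}$, so $P^{*}1_{S}+P^{*}1_{X\setminus S}=1_{X}$ and thus $P^{*}1_{X\setminus S}\leq 1_{X\setminus S}$ if and only if $P^{*}1_{S}\geq 1_{S}$. For~(2), the duality relation
\[
\int_{X\setminus S} P1_{S}\,dm=\int 1_{X\setminus S}\cdot P1_{S}\,dm=\int P^{*}1_{X\setminus S}\cdot 1_{S}\,dm=\int_{S}P^{*}1_{X\setminus S}\,dm,
\]
together with the nonnegativity of $P1_{S}$ and $P^{*}1_{X\setminus S}$, shows that $\supp P1_{S}\subset S$ if and only if $\supp P^{*}1_{X\setminus S}\subset X\setminus S$.

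For the main cycle: $(1)\Rightarrow(2)$ is immediate, since $0\leq P^{*}1_{X\setminus S}\leq 1_{X\setminus S}$ forces $P^{*}1_{X\setminus S}$ to vanish $m$-almost everywhere on $S$, i.e.\ $\supp P^{*}1_{X\setminus S}\subset X\setminus S$, which by the inner equivalence is $(2)$. The implication $(2)\Rightarrow(3)$ is where Lemma~\ref{supp} enters: for any $\phi\in L^{1}(m)$ one has $\supp(1_{S}\phi)\subset S=\supp 1_{S}$ with $1_{S}\geq 0$, so by Lemma~\ref{supp}(2) we get $\supp P(1_{S}\phi)\subset \supp P1_{S}\subset S$, which is exactly $1_{S}P(1_{S}\phi)=P(1_{S}\phi)$. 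Implication $(3)\Rightarrow(4)$ follows by integrating and using the Markov property:
\[
\int_{S}P(1_{S}\phi)\,dm=\int 1_{S}\,P(1_{S}\phi)\,dm=\int P(1_{S}\phi)\,dm=\int 1_{S}\phi\,dm=\int_{S}\phi\,dm.
\]
Finally, $(4)\Rightarrow(1)$: taking $\phi=1_{S}$ in~(4) gives $\int_{S}P1_{S}\,dm=m(S)=\int P1_{S}\,dm$, so $\int_{X\setminus S}P1_{S}\,dm=0$ and therefore $P1_{S}=0$ on $X\setminus S$; this is~$(2)$, hence~$(1)$.

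To fold in $(5)$: one direction is essentially Lemma~\ref{lem:P_S-Markov} (in the text), since $(2)$ implies that $P_{S}$ is well-defined as a Markov operator with $P_{S}\phi=P(1_{S}\phi)$ and $(4)$ gives preservation of $m_{S}$-integrals. Conversely, if $P_{S}$ is Markov, applying the Markov identity to $\phi=1_{S}\in L^{1}(m_{S})$ yields $\int 1_{S}P1_{S}\,dm=m(S)=\int P1_{S}\,dm$ and one recovers $\supp P1_{S}\subset S$ as above. Finally, $(6)\Rightarrow(5)$ is trivial by choosing $h=1_{S}/m(S)$. For $(5)\Rightarrow(6)$, fix any $h\in D(m)$ with $\supp h=S$ and any $\phi\in L^{1}(m_{h})$; then $\psi\coloneqq 1_{S}\phi h\in L^{1}(m)$ has $\supp\psi\subset S$, so by $(3)$ we obtain $P\psi=1_{S}P\psi$, which ensures $P_{h}\phi=1_{S}P(\psi)/h\in L^{1}(m_{h})$, and by $(4)$
\[
\int P_{h}\phi\,dm_{h}=\int_{S}P(1_{S}\phi h)\,dm=\int 1_{S}\phi h\,dm=\int\phi\,dm_{h},
\]
showing that $P_{h}$ is Markov.

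The only step that is not a straightforward manipulation of duality and the Markov property is the passage $(2)\Rightarrow(3)$, which hinges on the support inclusion lemma; the rest is bookkeeping. The main conceptual obstacle is keeping track of the various identifications $L^{1}(m_{S})\hookrightarrow L^{1}(m)$ and $L^{1}(m_{h})\hookrightarrow L^{1}(m)$ via multiplication by $1_{S}$, so that the ``restriction'' and ``extension by zero'' of functions are consistently handled when verifying both the well-definedness and the integral-preserving property of $P_{S}$ and $P_{h}$.
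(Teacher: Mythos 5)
Your cycle establishes $(1)\Rightarrow(2)\Rightarrow(3)\Rightarrow(4)$, together with the links to (5) and (6), but the step you label ``$(4)\Rightarrow(1)$'' in fact only proves $(4)\Rightarrow(2)$: taking $\phi=1_S$ yields $\supp P1_S\subset S$, and you then assert ``this is (2), hence (1)''. At that point nothing in your proposal gives $(2)\Rightarrow(1)$ — the only connection between item (1) and the rest is the forward implication $(1)\Rightarrow(2)$ — so as written the conditions $(2)$--$(6)$ are shown equivalent, $(1)$ implies them, but no condition is shown to imply $(1)$, and the equivalence is not closed. The two ``inner equivalences'' you record ($P^*1_{X\setminus S}\leq 1_{X\setminus S}$ iff $P^*1_S\geq 1_S$, and $\supp P1_S\subset S$ iff $\supp P^*1_{X\setminus S}\subset X\setminus S$) do not bridge this either: (1) is an inequality statement, (2) only a support statement.

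The gap is small and you should close it explicitly; one line suffices. By positivity of $P^*$ and the Markov property, $P^*1_{X\setminus S}\leq P^*1_X=1_X$ everywhere, while (2) in its dual form gives $P^*1_{X\setminus S}=0$ $m$-almost everywhere on $S$; hence $P^*1_{X\setminus S}\leq 1_{X\setminus S}$, which is (1). (The paper instead proves $(4)\Rightarrow(1)$ directly by contradiction: if $P^*1_S<1$ on some $B\subset S$ with $m(B)>0$, then $\int_S 1_B\,dm>\int_S 1_BP^*1_S\,dm=\int_S P(1_S1_B)\,dm$, contradicting (4) with $\phi=1_B$.) Apart from this missing link, your argument is correct and runs essentially parallel to the paper's: Lemma~\ref{supp} for $(2)\Rightarrow(3)$, integration plus the Markov property for $(3)\Rightarrow(4)$, duality for the inner equivalences, and the restriction operators $P_S$, $P_h$ (as in Lemma~\ref{lem:P_S-Markov}) for (5) and (6); your direct derivation of $(1)\Rightarrow(2)$ is a mild streamlining of the paper's argument by contradiction, and your $(5)\Rightarrow(6)$ via $(3)$ and $(4)$ matches the paper's $(4)\Rightarrow(6)$.
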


\begin{proof}
Let us prove the equivalence between the above conditions.

First, we will prove~\eqref{PS:item1} $\Rightarrow$~\eqref{PS:item2}.
Suppose that~\eqref{PS:item2} does not hold.
Then, there is $B \subset \supp P1_S$ with $m(B\setminus S)>0$.
From this and the condition~\eqref{PS:item1}, it follows
\[
  0<\int_{B\setminus S} P1_{S} \, dm = \int 1_S P^*1_{B\setminus S} \, dm \leq \int 1_S P^*1_{X\setminus S} \, dm \leq \int 1_S 1_{X\setminus S} \, dm =0
\]
which is a contradiction.
Note that the equivalence indicated in~\eqref{PS:item1} immediately follows by using $1_X=P^*1_{S} + P^*1_{X\setminus S}$.
Furthermore, the equivalence indicated in~\eqref{PS:item2} follows immediately by the duality
\[
\int_{X\setminus S} P1_{S}\,dm = \int_{S} P^*1_{X\setminus S}\, dm.
\]

Now, we will show that~\eqref{PS:item2} $\Rightarrow$~\eqref{PS:item3}.
Let $\phi \in L^1(m)$.
Note that $\supp 1_S\phi \subset S = \supp 1_S$.
Thus, by Lemma~\ref{supp} and condition~\eqref{PS:item2}, $\supp P(1_S\phi) \subset \supp P1_S \subset S$ up to $m$-null set.
In particular,  $1_S  P(1_S\phi) = P(1_S\phi)$.
That is, condition~\eqref{PS:item3} holds.

We will see that~\eqref{PS:item3} $\Rightarrow$~\eqref{PS:item4}.
Assuming~\eqref{PS:item3} and using that $P$ is a Markov operator, for any $\phi \in L^1(m)$,
\[
\int_S P(1_S\phi)\, dm = \int 1_S P(1_S\phi) \, dm = \int P(1_S\phi)\, dm =  \int 1_S\phi \, dm = \int_S \phi \, dm
\]
and thus~\eqref{PS:item4} holds.

Next, we will prove that~\eqref{PS:item4} and~\eqref{PS:item5} are equivalent.
Clearly, $P_S$ is a bounded linear positive operator.
Note that, by definition, for any $\phi\in L^1(m_S)$,
\[
   \int P_S\phi \, dm_S = \frac{1}{m(S)} \int_S P(1_S\phi) \, dm   \quad \text{and} \quad \int \phi \, dm_S = \frac{1}{m(S)}\int_S \phi \, dm.
\]
Thus, clearly we get that ~\eqref{PS:item4} implies that $\int P_s\phi \, dm_S=\int \phi \, dm_S$ and viceversa.

We will obtain the implication~\eqref{PS:item4} to~\eqref{PS:item6}.
Similar as above, for any $\varphi\in L^1(m)$ and $h\in D(m)$ with $S=\supp h$,
$P_h$ is clearly a bounded linear positive operator.
Moreover, taking into account that $\varphi h = 1_S \varphi h$,
\begin{equation}\label{eq:Ph-final}
\int P_h\varphi \, dm_h =\int_S P(1_S\varphi h) \, dm    \quad \text{and} \quad \int \varphi \, dm_h = \int_S \varphi h \, dm.
\end{equation}
Hence, applying~\eqref{PS:item4} to $\phi=\varphi h$, we immediately get
\[
\int P_h \varphi \, dm = \int \varphi \, dm_h.
\]
Thus, $P_h$ is Markov and we conclude~\eqref{PS:item6}.

Since~\eqref{PS:item6} $\Rightarrow$~\eqref{PS:item5} is clear, the rest to show is~\eqref{PS:item4} $\Rightarrow$~\eqref{PS:item1}.
Suppose contrarily that there is some $B\subset S$ such that $m(B)>0$ and $1_B P^* 1_S<1_B$.
This implies that
\[
\int_S 1_B\,dm>\int_S 1_BP^*1_S\,dm=\int_S P(1_B1_S)\,dm
\]
but clearly contradicts to the assumption~\eqref{PS:item4} with $\phi=1_B$.
 \end{proof}

\section{Ergodicity of invariant densities} \label{appendix:B}

Let $P:L^1(m) \to L^1(m)$ be a Markov operator as introduced in Section~\ref{s:MO}.  We associate $P$ with a new operator, which we will still denote by $P$, acting
on the set of probability measures $\mu$  on $(X,\mathscr{B})$ which are absolutely continuous with~respect~to~$m$~by
$$
    P\mu(A)=\int P^*1_A(x) \, d\mu  \quad \text{for any $A\in \mathscr{B}$}.
$$
Here $P^*:L^\infty(m)\to L^\infty(m)$ denotes the adjoint operator of $P:L^1(m)\to L^1(m)$. Recall that $D(m)=\{\phi \in L^1(m): \phi \geq 0, \ \|\phi\|=1 \}$ and for a given $\phi \in D(m)$, we denote $m_{\phi}$ the probability measure given by $dm_{\phi}=\phi\, dm$. We say that $\phi \in D(m)$ is a $P$-invariant density if $P\phi =\phi$. Similarly, a probability measure $\mu$ on $(X,\mathscr{B})$ is said to be $P$-invariant if $P\mu=\mu$. Denote by $D_P(m)$ and $I_P(m)$, respectively, the convex sets of $P$-invariant densities and $P$-invariant probability measures that are absolutely continuous with respect to $m$.
\begin{lem} \label{lema:apendix-ergodicty}
 Let $\phi \in D(m)$. Then, $P\phi=\phi$ if and only if ${P}m_{\phi}=m_{\phi}$.
 In particular,  $D_P(m)$ is identified with $I_{P}({m})$.
\end{lem}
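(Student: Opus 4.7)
The plan is to prove the equivalence by a direct calculation using the duality relation $\int P^{\ast}\psi \cdot \varphi\, dm = \int \psi \cdot P\varphi\, dm$ that defines $P^{\ast}$, combined with the defining formula $Pm_\phi(A) = \int P^{\ast}1_A\, dm_\phi$ and the relation $dm_\phi = \phi\, dm$.

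For the forward direction, assuming $P\phi = \phi$, I would compute, for any $A \in \mathscr{B}$,
\begin{equation*}
Pm_\phi(A) = \int P^{\ast}1_A\, dm_\phi = \int P^{\ast}1_A \cdot \phi\, dm = \int 1_A \cdot P\phi\, dm = \int_A \phi\, dm = m_\phi(A),
\end{equation*}
so $Pm_\phi = m_\phi$. For the reverse direction, assuming $Pm_\phi = m_\phi$, I would reverse the same chain: for any $A \in \mathscr{B}$,
\begin{equation*}
\int_A P\phi\, dm = \int 1_A \cdot P\phi\, dm = \int P^{\ast}1_A \cdot \phi\, dm = \int P^{\ast}1_A\, dm_\phi = Pm_\phi(A) = m_\phi(A) = \int_A \phi\, dm,
\end{equation*}
and since $A$ is arbitrary, $P\phi = \phi$ $m$-almost everywhere, i.e.\ in $L^1(m)$.

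For the ``in particular'' statement, the map $\phi \mapsto m_\phi$ is the standard bijection between $D(m)$ and the set of probability measures absolutely continuous with respect to $m$ (with inverse given by the Radon--Nikod\'ym derivative $\frac{d\mu}{dm}$). The first part of the lemma shows that this bijection restricts to a bijection between $D_P(m)$ and $I_m(P)$, which is moreover affine (it preserves convex combinations), giving the claimed identification.

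There is no real obstacle here; the only point requiring a little care is to make sure the duality is applied in the correct direction, noting that $1_A \in L^\infty(m)$ (so $P^{\ast}1_A$ makes sense and lies in $L^\infty(m)$) while $\phi \in L^1(m)$, so the pairing $\int P^{\ast}1_A \cdot \phi\, dm$ is well defined and equals $\int 1_A \cdot P\phi\, dm$ by definition of the adjoint.
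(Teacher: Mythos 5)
Your proposal is correct and follows essentially the same route as the paper: both directions are the same duality computation $\int P^{*}1_A\cdot\phi\,dm=\int_A P\phi\,dm$, with the converse concluded either by "$A$ arbitrary" (yours) or by identifying $P\phi$ as the Radon--Nikod\'ym derivative of $m_\phi$ (the paper's phrasing), which is the same argument.
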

\begin{proof}
If $P\phi=\phi$, then for any $A\in\mathscr{B}$,
$$
Pm_{\phi}(A)=\int P^*1_A \, dm_{\phi} =  \int P^*1_A  \phi \, dm = \int_A P\phi \, dm =  \int_A \phi \, dm =m_\phi (A).$$
Conversely, if $Pm_{\phi} =m_{\phi}$, then for any $A\in \mathscr{B}$,
$$
   m_\phi(A) = \int P^*1_A \phi \, dm = \int_A P\phi \, dm.
$$
Thus $P\phi$ is the Radon--Nikod\'{y}m derivative of $m_\phi$ with respect to $m$. Since this derivative is exactly $\phi$, we have $P\phi=\phi$.
\end{proof}

Recall that an invariant probability measure $\nu$ of a transformation $g$ of a measurable space $(Y,\mathscr{A})$, i.e.,  $\nu=g_*\nu$, is said to be \emph{ergodic} if $\nu(A)\in \{0,1\}$ for all set $A\in\mathscr{A}$ such that
{$A \subset g^{-1}(A)$. Equivalently, for all $A\in \mathscr{A}$ such that $A=g^{-1}(A)$.}  Moreover, it is also well known that one can weaken both previous conditions of invariance of the set $A$ by simply asking that the relation holds up to a $\mu$-null set. {That is, if
$\mathcal{L}^*_g1_A \geq 1_A$ or $\mathcal{L}^*_g1_A = 1_A$, respectively, for $\mu$-almost everywhere  where $\mathcal{L}^*_g\psi=\psi \circ g$ for $\psi\in L^\infty(m)$ is the adjoint  Perron--Frobenius operator of $g$. The following proposition shows some of these equivalences in a more general setting.}

\begin{prop}\label{thm:apendix-ergodic}
  Let $\mu\in I_P(m)$. Then the following conditions are equivalent:
  \begin{enumerate}
    \item $\mu(A)\in \{0,1\}$ for any $A\in \mathscr{B}$ such that $P^*1_A \geq 1_A$ for $m$-almost everywhere;
    \item $\mu(A)\in \{0,1\}$ for any $A\in \mathscr{B}$ such that $P^*1_A \geq 1_A$ for $\mu$-almost everywhere;
    \item $\mu(A)\in \{0,1\}$ for any $A\in \mathscr{B}$ such that $P^*1_A = 1_A$ for $\mu$-almost everywhere.
\end{enumerate}
\end{prop}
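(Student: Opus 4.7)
The plan is to prove the cycle $(2) \Rightarrow (1)$, $(2) \Leftrightarrow (3)$, and $(1) \Rightarrow (2)$. The implication $(2) \Rightarrow (1)$ is immediate from $\mu \ll m$, and $(2) \Rightarrow (3)$ is trivial since equality is stronger than inequality. For $(3) \Rightarrow (2)$, I would suppose $P^*1_A \geq 1_A$ $\mu$-a.e.\ and integrate against $\mu$: using the $P$-invariance of $\mu$ (so that $\int P^*1_A\,d\mu = P\mu(A) = \mu(A)$), the nonnegative function $P^*1_A - 1_A$ has zero $\mu$-integral and therefore vanishes $\mu$-a.e. Condition $(3)$ then delivers $\mu(A) \in \{0,1\}$.

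The main content is the implication $(1) \Rightarrow (2)$. Let $h = d\mu/dm$, which is a $P$-invariant density by Lemma~\ref{lema:apendix-ergodicty}, and set $S = \supp h$. The first key fact I need is that $P^*1_S \geq 1_S$ $m$-a.e.\ on $X$: by Lemma~\ref{supp} applied to the nonnegative function $h$ together with $Ph = h$,
\[
\supp P1_S = \supp Ph = \supp h = S
\]
up to an $m$-null set, so $P1_S = 0$ $m$-a.e.\ on $X\setminus S$. Dualizing this equality yields $\int_S P^*1_{X\setminus S}\,dm = 0$, whence $P^*1_{X\setminus S} = 0$ $m$-a.e.\ on $S$; equivalently $P^*1_S \geq 1_S$ $m$-a.e. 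Now given $A$ with $P^*1_A \geq 1_A$ $\mu$-a.e., I would introduce the truncated set $A^* := A \cap S$. Since $\mu(X \setminus S) = 0$, $\mu(A^*) = \mu(A)$, so it suffices to check $P^*1_{A^*} \geq 1_{A^*}$ $m$-a.e.\ on $X$ and invoke $(1)$. On $X \setminus S$ the inequality is trivial as $1_{A^*} = 0$ there. On $S$, splitting $P^*1_A = P^*1_{A^*} + P^*1_{A\setminus S}$ and using $A \setminus S \subseteq X \setminus S$ together with the identity above gives $P^*1_{A\setminus S} = 0$ $m$-a.e.\ on $S$, so $P^*1_A = P^*1_{A^*}$ $m$-a.e.\ on $S$. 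Because $h > 0$ on $S$, $\mu$-a.e.\ statements on $X$ coincide with $m$-a.e.\ statements on $S$, and $1_A = 1_{A^*}$ on $S$, so the hypothesis transfers to $P^*1_{A^*} \geq 1_{A^*}$ $m$-a.e.\ on $S$.

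The main obstacle is precisely this transfer from a $\mu$-a.e.\ inequality, which a priori conveys no information about the complement of $\supp h$, to an $m$-a.e.\ inequality on all of $X$. The $P$-invariance of the support $S$ (itself a consequence of the $P$-invariance of $h$ and Lemma~\ref{supp}) is what makes the truncation $A \cap S$ behave correctly off of $S$: mass of $P^*$ cannot spill from $S$ to its complement, which allows the truncated set to be absorbing in the stronger $m$-a.e.\ sense required by condition $(1)$.
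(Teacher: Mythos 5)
Your proposal is correct and follows essentially the same route as the paper: the easy implications are handled by inclusion of hypotheses and by integrating $P^*1_A-1_A$ against the invariant measure, and the key direction $(1)\Rightarrow(2)$ is obtained by truncating $A$ to $A\cap\supp h$ and using that $P^*$ cannot move mass from $\supp h$ to its complement (the paper cites $P^*1_{X\setminus S}\leq 1_{X\setminus S}$ from its restriction results, while you rederive the equivalent fact $P^*1_{X\setminus S}=0$ $m$-a.e.\ on $S$ directly from Lemma~\ref{supp} and duality, which is only a cosmetic difference).
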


\begin{proof}
(2) $\Rightarrow$ (3): Obvious.

(3) $\Rightarrow$ (2): Let us consider $A\in \mathscr{B}$ such that $P^*1_A \geq 1_A$ for $\mu$-almost everywhere. Since $\mu \in I_P(m)$, then
$\mu(A)=\int P^*1_A \, d\mu$. On the other hand, $\mu(A)=\int 1_A \,d\mu$. Thus, since $P^*1_A \geq 1_A$ for $\mu$-almost everywhere, we get $0\leq \int P^*1_A -1_A \, d\mu =0$. From this follows that $P^*1_A=1_A$ for $\mu$-almost everywhere. Consequently, it immediately follows that (2) implies (3).

(2) $\Rightarrow$ (1): Let us consider $A\in \mathscr{B}$ such that $P^*1_A \geq 1_A$ for $m$-almost everywhere. This means that there is $B\in \mathscr{B}$ such that $m(B)=0$ and $P^*1_A(x) \geq 1_A(x)$ for all $x\in X\setminus B$. Since $\mu$ is absolutely continuous with respect to $m$, then $\mu(B)=0$. Thus, $P^*1_A \geq 1_A$ for $\mu$-almost everywhere. From this, it immediately follows that (3) implies (2).

(1) $\Rightarrow$ (2): Let us consider $A\in \mathscr{B}$ such that $P^*1_A \geq 1_A$ for $\mu$-almost everywhere. Since $\mu\in I_P(m)$, by Lemma~\ref{lema:apendix-ergodicty}, we have $\phi\in D_P(m)$ such that $\mu=m_\phi$ (i.e., $d\mu=\phi \, dm$). Then, by Proposition~\ref{prop:invariant}, we have that  $P^*1_{X\setminus S} \leq 1_{X\setminus S}$ for $m$-almost everywhere where $S=\supp \phi$. Then,
\begin{equation}\label{eq:1SP}
    1_SP^*1_A=1_SP^*1_{S\cap A} + 1_S P^*1_{(X\setminus S)\cap A}=1_SP^*1_{S\cap A} \leq P^*1_{S\cap A}
\end{equation}
for $m$-almost everywhere. But observe that  $\supp 1_SP^*1_A \subset S$  and $m$ and $\mu$ are equivalent on $S$. Thus, since $P^*1_A \geq 1_A$ for $\mu$-almost everywhere we have $1_{S\cap A}=1_{S}1_A \leq  1_SP^*1_A$ $m$-almost everywhere. Putting this together with~\eqref{eq:1SP}, we get that $1_{S\cap A} \leq P^*1_{S\cap A}$ $\mu$-almost everywhere. By assumption, $m(S\cap A)m(X\setminus (S\cap A))=0$. Consequently, since $\mu(A)=\mu(S\cap A)$ and $\mu(X\setminus A)=\mu(X\setminus (S\cap A))$, one has that $\mu(A)\mu(X\setminus A)=0$.
\end{proof}

Let $h \in D_P(m)$. From, Lemma~\ref{lema:apendix-ergodicty},  we can apply Proposition~\ref{thm:apendix-ergodic} to  $\mu=m_{h}$.

\begin{dfn} \label{def:ergodic}
Let $h \in D_P(m)$ and set $\mu =m_h$. If any of the equivalent items in Proposition~\ref{thm:apendix-ergodic} holds, we say that \emph{$\mu$ is an ergodic $P$-invariant measure}, \emph{$h$ is an ergodic $P$-invariant density}, \emph{$(P,\mu)$ is ergodic} or \emph{$(P,h)$ is ergodic}.\footnote{The reference measure $m$ is always implicit when we mention $P$.}
\end{dfn}

Observe that if $P1_X=1_X$, then according to Lemma~\ref{lema:apendix-ergodicty}, it holds that $Pm=m$, i.e., $m\in I_P(m)$. Then, Proposition~\ref{thm:apendix-ergodic} implies that $1_X$ is an ergodic $P$-invariant density if and only if $m(A)\in \{0,1\}$ for all $A\in\mathscr{B}$ with $P^*1_A=1_A$. Motivated by this last condition and following Krengel~\cite[page~126]{Krengel}, we introduce the following definition:

\begin{dfn} A Markov operator $P:L^1(m)\to L^1(m)$ is called
\emph{ergodic in the sense of Krengel} if the set of $P^*$-invariant sets  $\mathscr{B}_i=\{A\in \mathscr{B}: P^*1_A=1_A\ \text{$m$-almost everywhere}\}$ is trivial, i.e., $\mathscr{B}_i=\{\emptyset, X\}$ up to an $m$-null set.
\end{dfn}

In view of this definition, the above observation can be written as follows:
\begin{rem} \label{rem:apendix}
If $P1_X=1_X$, then the following are equivalent:
\begin{enumerate}
  \item $(P,1_X)$ is ergodic (or in other words, $(P,m)$ is ergodic);
  \item $P$ is ergodic in the sense of Krengel.
\end{enumerate}
In general,  $P$ could be ergodic in the sense of Krengel and $1_X \not\in D_P(m)$. See, for instance, Remark~\ref{rem:reciprocal}. Note that Definition~\ref{def:ergodic} requires that $1_X$ is a $P$-invariant density to be ergodic.
\end{rem}

Now,  recalling the theory of restriction of Markov operators introduced in Section~\ref{sec:Markov-restriction} and generalized in Appendix~\ref{sec:Markov-restriction2} we have the following:

\begin{thm} \label{thm:ergodic2}
Let $h\in D_P(m)$ and $S=\supp h$. Then the following conditions are equivalent:
 \begin{enumerate}
  \item $(P,h)$ is ergodic (or in other words, $(P,m_h)$ is ergodic);
  \item $(P_h,1_S)$ is ergodic (or in other words, $(P_h,m_h)$ is ergodic);
  \item $P_h$ is ergodic in the sense of Krengel;
  \item $P_S$ is ergodic in the sense of Krengel.
  \end{enumerate}
  \end{thm}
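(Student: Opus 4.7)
The plan is to establish the chain (1) $\Leftrightarrow$ (2) $\Leftrightarrow$ (3) $\Leftrightarrow$ (4) in sequence, using the preliminary facts about the restricted operators developed in Proposition~\ref{prop:P_h-Markov}, together with Proposition~\ref{thm:apendix-ergodic}. First I would record the basic facts needed: since $h\in D_P(m)$ with $S=\supp h$, Remark~\ref{prop:invariant} gives $P^*1_S\geq 1_S$ $m$-a.e., so by Proposition~\ref{prop:P_h-Markov}, $P_h$ is a Markov operator on $L^1(m_h)$, $P_S$ is a Markov operator on $L^1(m_S)$, and crucially $\supp P^*1_{X\setminus S}\subset X\setminus S$ (equivalently $1_S P^*1_{X\setminus S}=0$). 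A direct computation shows $P_h1_S=1_S$ (because $P h=h$), and, using the formula $P_h\phi=1_SP(1_S\phi h)/h$, a duality calculation analogous to the one done in the text for $P_S^*$ yields $P_h^*\psi=1_SP^*(1_S\psi)=P_S^*\psi$ for all $\psi\in L^\infty(m_h)=L^\infty(m_S)$ (the two $L^\infty$-spaces coincide since $m_h$ and $m_S$ are mutually absolutely continuous on~$S$).

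For (1) $\Leftrightarrow$ (2), I would compare the two ergodicity conditions using Proposition~\ref{thm:apendix-ergodic}. For (1) $\Rightarrow$ (2): take $A\in\mathscr{B}_S$ with $P_h^*1_A=1_A$ $m_h$-a.e.; then $1_SP^*1_A=1_A$ on $S$, and since $A\subset S$ we have $P^*1_A=1_A$ $m_h$-a.e., so (1) gives $m_h(A)\in\{0,1\}$. For (2) $\Rightarrow$ (1): take $A\in\mathscr{B}$ with $P^*1_A=1_A$ $m_h$-a.e., and set $A'=A\cap S\in\mathscr{B}_S$. Splitting $1_A=1_{A'}+1_{A\setminus S}$ and multiplying by $1_S$, the key identity $1_SP^*1_{A\setminus S}\leq 1_SP^*1_{X\setminus S}=0$ cancels the contribution from outside $S$, yielding $1_SP^*1_{A'}=1_{A'}$ $m_h$-a.e., i.e.\ $P_h^*1_{A'}=1_{A'}$. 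Then (2) gives $m_h(A')\in\{0,1\}$, and since $m_h(A)=m_h(A')$ we conclude.

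The equivalence (2) $\Leftrightarrow$ (3) is immediate from Remark~\ref{rem:apendix}: $P_h$ is a Markov operator on $L^1(m_h)$ with $P_h1_S=1_S$, so the reference density $1_S$ plays the role of $1_X$ in that remark, and ergodicity of $(P_h,1_S)$ is equivalent to $P_h$ being ergodic in the sense of Krengel. For (3) $\Leftrightarrow$ (4), I would use the identity $P_h^*=P_S^*=1_SP^*1_S$ established at the outset: the sets $\{A\in\mathscr{B}_S:P_h^*1_A=1_A\ m_h\text{-a.e.}\}$ and $\{A\in\mathscr{B}_S:P_S^*1_A=1_A\ m_S\text{-a.e.}\}$ coincide because $m_h$ and $m_S$ are equivalent on $S$ (so $m_h$-a.e.\ agrees with $m_S$-a.e.\ and the notions of being trivial agree as well).

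The main technical obstacle is the direction (2) $\Rightarrow$ (1), where one must handle an arbitrary $A\in\mathscr{B}$ that is not contained in $S$. The essential ingredient is the invariance-of-complement property $\supp P^*1_{X\setminus S}\subset X\setminus S$ supplied by Proposition~\ref{prop:P_h-Markov}; without it, the portion of $A$ outside $S$ could contribute to $P^*1_A$ on $S$ and break the reduction to an invariant set inside $S$. Once this cancellation is in hand, the rest of the argument is essentially bookkeeping between the $m$, $m_h$ and $m_S$ null sets.
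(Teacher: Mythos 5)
Your proposal is correct and follows essentially the same route as the paper's proof: the same key identities ($P_h^*\psi=1_SP^*(1_S\psi)=P_S^*\psi$, $P_h1_S=1_S$), the same cancellation $1_SP^*1_{X\setminus S}=0$ coming from Proposition~\ref{prop:P_h-Markov} to reduce an invariant set $A$ to $A\cap S$ in the direction (2) $\Rightarrow$ (1), Remark~\ref{rem:apendix} for (2) $\Leftrightarrow$ (3), and the equivalence of $m_h$ and $m_S$ on $S$ for (3) $\Leftrightarrow$ (4). The only differences are cosmetic: you run the cycle as (1) $\Leftrightarrow$ (2) directly instead of (1) $\Rightarrow$ (3) $\Leftrightarrow$ (2) $\Rightarrow$ (1), and you invoke the equality form of Proposition~\ref{thm:apendix-ergodic} where the paper uses the inequality form, which is immaterial since that proposition makes them equivalent.
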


\begin{proof} Recall that $P^*_h \psi =1_S P^*(1_S\psi)$ for all $\psi \in L^\infty(m_h)$.

(1) $\Rightarrow$ (3): Let us consider $A\in \mathscr{B}_S$ such that $P^*_h1_A = 1_A$ for $\mu$-almost everywhere. Then $1_S P^*1_A=1_A$ $\mu$-almost everywhere and hence also $m$-almost everywhere.  In particular, $P^*1_A \geq 1_A$ $m$-almost everywhere and thus, by assumption, $m_h(A)\in \{0,1\}$.

(3) $\Leftrightarrow$ (2): Notice that $P_h1_S=1_S$. Thus, the equivalence follows from Remark~\ref{rem:apendix}.

(2) $\Rightarrow$ (1):  Let us consider $A\in \mathscr{B}$ such that $P^*1_A \geq  1_A$ for $\mu$-almost everywhere. Since $h\in D_P(m)$, we have $P^*1_{X\setminus S} \leq 1_{X\setminus S}$ for $m$-almost everywhere. Hence, as argued in the previous theorem (see~\eqref{eq:1SP}),
 $$1_{S\cap A} \leq 1_SP^*1_A=1_SP^*1_{S\cap A} + 1_S P^*1_{(X\setminus S)\cap A}=1_SP^*1_{S\cap A} = P_h^*1_{S\cap A}$$
 $m$-almost everywhere. Then, by assumption we have that $m_h(S\cap A)\in \{0,1\}$. But, since $m_h(A)=m_h(S\cap A)$ we conclude the implication.

 (3) $\Leftrightarrow$ (4): Notice that $P^*_S$ and $P^*_h$ are both of the form $1_SP^*1_S$  acting, respectively, on $L^1(m_S)$ and $L^1(m_h)$. Since both measures $m_S$ and $m_h$ are equivalent, we have that $P^*_S1_A=1_A$ $m_S$-almost everywhere if and only if $P^*_h1_A=1_A$ $m_h$-almost everywhere. From this, it immediately follows the equivalence.
\end{proof}

\begin{dfn}
A Markov operator $P:L^1(m)\to L^1(m)$ is called
\emph{conservative}  if there is $\varphi\in L^1(m)$ such that $\varphi>0$ on $X$ and
$$
X=\left\{x \in X:  \sum_{i=0}^\infty P^i\varphi(x)=\infty\right\} \quad \text{up to an $m$-null set}.
$$
\end{dfn}

\begin{prop} \label{prop:conservative}
Let $h\in D(m)$ be a $P$-invariant density and set $S=\supp h$. Then both, $P_h$ and $P_S$ are conservative Markov operators.
In particular, if $h\in D_P(m)$ is ergodic then,
\begin{enumerate}[leftmargin=0.75cm]
\item $P_S:L^1(m_S) \to L^1(m_S)$ is a conservative and ergodic Markov operator  and  $D_{P_S}(m_S)=\{h\}$;
\item $P_h:L^1(m_h) \to L^1(m_h)$ is a conservative and ergodic Markov operator and \mbox{$D_{P_h}(m_h)=\{1_S\}$.}
\end{enumerate}
\end{prop}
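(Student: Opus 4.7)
\begin{pf}[Proof proposal for Proposition~\ref{prop:conservative}]
The plan is to establish the conservativity of each of $P_h$ and $P_S$ by producing an explicit invariant density with full support, and then upgrade this to uniqueness under the ergodicity hypothesis via Theorem~\ref{thm:ergodic2} together with the classical fact that two distinct ergodic invariant densities of a Markov operator have disjoint supports.

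First, I would show that $P_h$ admits $1_S$ as an invariant density in $D(m_h)$. Note that $1_S \in D(m_h)$ because $m_h(S)=1$ (as $\supp h=S$), and a direct calculation using the definition $P_h\phi = 1_S \cdot P(1_S\phi h)/h$ gives
\[
P_h 1_S = \frac{1_S\,P(1_S h)}{h} = \frac{1_S\,Ph}{h} = \frac{1_S\, h}{h} = 1_S
\]
since $Ph=h$ and $\supp h = S$. Taking $\varphi = 1_S$ (which is strictly positive on $S$ viewed as the ambient space for $P_h$), we get $\sum_{i\ge 0} P_h^i 1_S = \infty$ everywhere on $S$, i.e.~$m_h$-almost everywhere, proving that $P_h$ is conservative.

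Second, for $P_S$ I would work with the rescaling $\tilde h\coloneqq m(S)\, h$. Since $\int \tilde h\,dm_S = m(S)\int h\,dm_S = 1$, we have $\tilde h\in D(m_S)$, and Proposition~\ref{prop:wap} gives $P_S h = h$, hence $P_S\tilde h = \tilde h$. As $\tilde h>0$ $m_S$-almost everywhere on $S$, the series $\sum_{i\ge 0}P_S^i\tilde h = \sum_{i\ge 0}\tilde h$ diverges $m_S$-almost everywhere, proving conservativity of $P_S$.

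Next, assume $(P,h)$ is ergodic. By Theorem~\ref{thm:ergodic2}, both $P_h$ and $P_S$ are ergodic in the sense of Krengel, and in particular $(P_h,1_S)$ is ergodic. It remains to show uniqueness of the invariant density: $D_{P_h}(m_h)=\{1_S\}$ and $D_{P_S}(m_S)=\{\tilde h\}$ (the latter read with the obvious normalization). The key step is the classical fact that two distinct ergodic invariant densities of a Markov operator must have mutually disjoint supports (this is used repeatedly throughout the paper via Corollary~\ref{prop:ergodic} / Proposition~\ref{prop:ergodic} in Appendix~\ref{appendix:B}, and ultimately rests on Lemma~\ref{supp} combined with the ergodicity characterization in Proposition~\ref{thm:apendix-ergodic}). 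Concretely, if $g\in D(m_h)$ were a second ergodic invariant density of $P_h$ different from $1_S$, then $\supp g\subset S$ would be $m_h$-disjoint from $\supp 1_S = S$, forcing $m_h(\supp g)=0$, contradicting $g\in D(m_h)$. The same argument applies to $P_S$ with $\tilde h$ in place of $1_S$. A marginal subtlety I would need to confirm is that ergodicity of $P_S$ and $P_h$ in the sense of Krengel (rather than ergodicity of a particular invariant density in the sense of Definition~\ref{def:ergodic}) is enough to apply the disjoint-supports principle; this reduces via Remark~\ref{rem:apendix} to noting that $P_h1_S=1_S$ (and $P_S\tilde h=\tilde h$), so Krengel-ergodicity of $P_h$ is equivalent to ergodicity of the density $1_S$ itself, and similarly for $P_S$ after normalizing. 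The main obstacle is bookkeeping: keeping track of the normalizations between $L^1(m)$, $L^1(m_S)$ and $L^1(m_h)$ consistently, and making sure the disjoint-support argument is formulated for the correct reference measure in each case.
\end{pf}
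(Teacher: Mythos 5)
Your treatment of conservativity and ergodicity coincides with the paper's: you exhibit the invariant densities ($P_h1_S=1_S$ by direct computation, $P_Sh=h$ via Proposition~\ref{prop:wap}), observe they are strictly positive on $S$, so the series $\sum_{i\geq 0}P_h^i1_S$ and $\sum_{i\geq 0}P_S^ih$ diverge on $S$, and then invoke Theorem~\ref{thm:ergodic2} for Krengel-ergodicity. Your attention to the normalization (replacing $h$ by $m(S)\,h$ to land in $D(m_S)$) is a reasonable reading of the statement. The problem is the uniqueness step.

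For $D_{P_h}(m_h)=\{1_S\}$ and $D_{P_S}(m_S)=\{h\}$ you appeal to the disjoint-supports principle, citing Proposition~\ref{prop:ergodic} of Appendix~\ref{appendix:B}. But in this paper that proposition is \emph{proved using} Proposition~\ref{prop:conservative}: both its extremality direction and its disjoint-supports statement are deduced from the equalities $D_{P_S}(m_S)=\{h\}$ (respectively $\{g_1\}$) that you are trying to establish. So as written your argument is circular relative to the paper's logical order, and your claim that the disjoint-supports fact ``ultimately rests on Lemma~\ref{supp} and Proposition~\ref{thm:apendix-ergodic}'' is not what the paper does. The paper closes this step differently: having shown that $P_S$ and $P_h$ are conservative and ergodic, it invokes the classical theorem of Foguel (Theorem~A in Chapter~VI of \cite{foguel2007ergodic}) that a conservative ergodic Markov operator has at most one invariant density, which is external to Appendix~\ref{appendix:B} and breaks the circle. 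To repair your proof you would either cite that (or an equivalent Hopf/Chacon--Ornstein-type uniqueness result) directly, or give an independent proof of the disjoint-supports principle, e.g.\ via the lattice argument that $h_1\wedge h_2$ is again invariant for a Markov operator. Note also a secondary gap: your contradiction argument only excludes a second \emph{ergodic} invariant density, whereas $D_{P_h}(m_h)=\{1_S\}$ requires excluding arbitrary invariant densities; the uniqueness theorem for conservative ergodic operators handles this, but your version as stated does not.
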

\begin{proof} Since $h$ is $P$-invariant, according to Proposition~\ref{prop:wap} and Proposition~\ref{prop:P_h-Markov}, $P_S$ and $P_h$ are Markov operators and $P_Sh=h$. Also, it is not difficult to check that $P_h1_S=1_S$. On the other hand, $P_S$ and $P_h$ are conservative since $h>0$ and $1_S>0$ and $\sum_{i\geq 0} P_S^i h =\infty$ and $\sum_{i\geq 0} P_h^i1_S=\infty$ on $S$.
If in addition $h$ is ergodic, then from Theorem~\ref{thm:ergodic2} we get that $P_S$ and $P_h$ are conservative and ergodic Markov operators.
Finally, from~\cite[Theorem~A in Chapter~VI]{foguel2007ergodic} it follows that $D_{P_S}(m_S)=\{h\}$ and $D_{P_h}=\{1_S\}$.
\end{proof}

\begin{rem} \label{rem:reciprocal}
In view of the above proposition, if $h$ is an ergodic $P$-invariant density with $h>0$ on $X$ and $1_X \not = h$, then $P$ is ergodic in the sense of Krengel but \mbox{$1_X \not \in D_{P}(m)=\{h\}$}.
\end{rem}

{The next proposition is well-known for the case of transformations. One can generalize this to the case for Markov operators in terms of invariant densities.}

\begin{prop} \label{prop:ergodic}
 Let $h \in D_P(m)$. Then, $h$ is ergodic if and only if  $h$ is an extremal point of $D_P(m)$. That is, it cannot be
decomposed as
\[
\text{$h = th_1 +(1-t)h_2$ with $t  \in  (0, 1)$ and $h_1, h_2 \in D_P(m)$.}
\]
 Moreover, if $h_1 \in D_P(m)$ is ergodic and $h_2\in D_P(m)$, then either
 \[
 h_1=\frac{1_{\supp h_1}}{\int_{\supp h_1}h_2dm}\,h_2 \quad  \text{or} \quad m(\supp h_1 \cap \supp h_2) =0.
 \]
 In particular, if $h_1\in D_P(m)$ and $h_2\in D_P(m)$ are both ergodic, then either $h_1=h_2$ or they have mutually disjoint supports up to an $m$-null set.
\end{prop}
\begin{proof}
Suppose that $h$ is ergodic and set $S=\supp h$.  By Proposition~\ref{prop:conservative}, $P_S$ is a Markov operator and $D_{P_S}(m_S)=\{h\}$. In particular, we have $P_Sg = P(1_Sg)$ for all $g \in L^1(m_S)$. See Proposition~\ref{prop:wap} or Proposition~\ref{prop:P_h-Markov}.
Now,
if $h=ah_1+(1-a)h_2$ for some $0<a<1$ and $h_1,h_2\in D_P(m)$, then $h=h_1=h_2$. Indeed, observe that since $h_i$ is a $P$-invariant density, according again to Proposition~\ref{prop:conservative},  $P_{S_i}$ is a Markov operator and  $P_{S_i}h_i=h_i$ where $S_i=\mathrm{supp}\, h_i$ for $i=1,2$.  Then, since $S_i\subset S$, we have $P_Sh_i=P(1_Sh_i) =P(1_{S_i}h_i)=P_{S_i}h_i=h_i$ for $i=1,2$. Consequently, it follows $h=h_1=h_2$ since $D_{P_S}(m_S)=\{h\}$. This proves that $h$ is an extremal point of $D_P(m)$.

Now, we will prove the converse. Suppose $h$ is not ergodic and denote by $\mu:=m_{h}$.
Take a set $A\in \mathscr{B}$ such that $P^*1_A \geq 1_A$  and $0<\mu(A) < 1$.  Hence, since $\mu(S\setminus A) =1 -\mu(A)$,  we can write  $h$ as a convex combination as follows
\begin{align*}
h=\mu(A)\cdot\frac{1_Ah}{\mu(A)} + \mu({S\setminus A})\cdot\frac{1_{S\setminus A}h}{\mu({S\setminus A})}.
\end{align*}
Moreover, since $ (1_Ah)/\mu(A)$ and  $(1_{S\setminus A}h)/\mu(S\setminus A)$ have both $L^1$-norm equals one, to show that $h$ is  not an extremal point in $D_P(m)$ it suffices to prove that $h 1_A$ and $h 1_{S\setminus A}$ are $P$-invariant. To prove this, observe first that since $h=1_A h+1_{S\setminus A} h$ and $h=Ph=P(1_Ah) + P( 1_{S\setminus A}h)$, one can write
\begin{equation}\label{eq:anpendix-final}
   1_A h - P(1_Ah) = P(1_{S\setminus A}h) -1_{S\setminus A} h.
\end{equation}
From Lemma~\ref{lem:P_S-Markov}, it follows that $\supp P1_A \subseteq A$ up to an $m$-null set. Morevover, since $\supp h = S = \supp 1_{S}$, by the $P$-invariance of $h$ and Lemma~\ref{supp}, it follows that $  S=\supp  h = \supp Ph  = \supp P1_{S}$ up to an $m$-null set. Hence,   $\supp P1_{S\setminus A}\subseteq S\setminus A$. Additionally, since   $\supp (1_Ah) \subseteq \supp 1_A$ and $\supp (1_{S\setminus A}h) \subseteq \supp 1_{S\setminus A}$, Lemma~\ref{supp} implies that $\supp P(1_A h) \subseteq \supp P1_A \subseteq A$ and $\supp P(1_{S\setminus A}h) \subseteq \supp P1_{S\setminus A} \subseteq S\setminus A$.
Consequentely,  $\supp (1_A h - P(1_Ah))  \subseteq A$ and  $\supp (P(1_{S\setminus A}h) - 1_{S\setminus A}h ) \subseteq X\setminus A$ and thus it follows from~\eqref{eq:anpendix-final}  that
\begin{align*}
P(1_Ah)=1_Ah \quad \text{ and } \quad P(1_{S\setminus A}h)=1_{S\setminus A}h
\end{align*}
as desired.

Finally, we will prove the second part of the proposition.  Let us assume $h_1$ and $h_2$ as in the statement. Clearly if $h_1=h_2$ then $m(S)>0$ where $S=\supp h_1 \cap \supp h_2$. Conversely, suppose that $m(S)>0$. Since $P1_S \leq P1_{\supp h_i}$, by Lemma~\ref{supp},  $\supp P1_{S} \subset \supp P1_{\supp h_i} = \supp Ph_i = \supp h_i$ for $i=1,2$ and thus $\supp P1_S \subset S$.
Furthermore, the ergodicity of $h_1$ implies $S=\supp h_1$ up to an $m$-null set.
According to Proposition~\ref{prop:P_h-Markov}, $P_S$ is a Markov operator, $P_S\phi=P(1_S\phi)$ and
\begin{equation} \label{eq:contra}
\int_{S} P(1_{S}\phi) \, dm = \int_{S} 1_{S}\phi \, dm
\end{equation}
for all $\phi\in L^1(m)$.  This implies that $1_Sh_i$ is $P$-invariant. Indeed, since $Ph_i=h_i$, it follows that $1_SP(1_Sh_i)=P_Sh_i=P(1_{S}h_i) \leq  h_i$. Hence $P(1_Sh_i) \leq 1_Sh_i$. On the other hand,
if $P(1_{S}h_i)<1_{S}h_i$ on a set $A\in\mathscr{B}$ of positive $m$-measure, then
$$ \int_{S} P(1_{S}h_i) \, dm < \int_{A} 1_Sh_i \, dm + \int_{S\setminus A} 1_Sh_i \, dm = \int_{S} 1_{S}h_i \, dm$$
contradicting~\eqref{eq:contra}.  Therefore, we have $m(A)=0$ and $P(1_Sh_i)=1_Sh_i$.
Then, $g_i=\frac{1_Sh_i}{H_i} \in D_P(m)$ where $H_i=\int_S h_i \,dm$ (notice that $g_1=h_1$ since $S=\supp h_1$).
Moreover, $m_{g_1}$ is absolutely continuous with respect to $m_{h_1}$ since $dm_{g_1}= \frac{1_S}{H_1}\, dm_{h_1}$. In particular, since $m_{h_1}$ is ergodic, we also have that $m_{g_1}$ is ergodic. Since $S=\supp g_1$, we obtain from Proposition~\ref{prop:conservative} that $D_{P_S}(m_S)=\{g_1\}$. But, since $S=\supp g_2$ and $g_2$ is $P$-invariant, we also have that $g_2 \in D_{P_S}(m_S)$. Thus, $g_2=g_1$, or $h_1=\frac{1_Sh_2}{H_2}$ as desired.
Moreover, from this, if both $h_1$ and $h_2$ are ergodic, then if follows that $h_1=h_2$ which completes the proof.
\end{proof}

\begin{rem} \label{rem:disjoin-support}
Recall that the support of $h \in L^1(m)$ is only well-defined up to a set of measure zero. Thus, without loss of generality, from the above corollary, we can assume that the supports of any two ergodic $P$-invariant densities $h_1$ and $h_2$ are identical or disjoint.
\end{rem}

To conclude, we will relate the previous definition of ergodicity for invariant measures of Markov operators with the classical approach in random dynamical systems and probability theory. First of all, let us consider an annealed Perron--Frobenius operator $\mathcal{L}_f:X\to X$ associated with a random map $f:T\times X\to X$ (with respect to a Bernoulli probability measure $\mathbb{P}$). Recall that, as we showed in Remark~\ref{rem:P=Lf}, when $X$ is a Polish space, any Markov operator can be represented as an annealed Perron--Frobenius operator. As explained in Section~\ref{sec:(UC)}, associated with this map we have a transition probability $P(x,A)$ which coincides, for each $A\in \mathscr{B}$, with $\mathcal{L}_f^*1_A$ on $m$-almost everywhere.
Let us consider an absolutely continuous probability measure $\mu$ on $X$ which is $\mathcal{L}_f$-invariant. A classical approach to introducing the ergodicity of $\mu$ is first to leave this measure to an invariant measure of a deterministic dynamical system that represents $f$ in some sense. Then, ergodicity is defined throughout this leaf measure. A deterministic dynamical system that represents $f$ is its associated skew-product given by
$$ F:\Omega \times X \to \Omega \times X, \quad F(\omega,x)=(\sigma\omega, f_\omega (x))$$
with  $f_\omega = f(t,\cdot)$ if $\omega_0=t$,  $\omega=(\omega_i)_{i\in \mathbb{Z}} \in \Omega=T^{\mathbb{N}}$ and $\sigma:\Omega\to \Omega$ the shift operator.
The measure $\mu$ is lifted to the $F$-invariant measure $\mathbb{P}\times \mu$. But we can also consider the shift operator acting on  $(X^\mathbb{Z},\mathscr{B}^\mathbb{Z})$.  By Kolmogorov extension theorem, one can construct a shift-invariant probability measure $\mathbb{P}_{\mu}$ on $X^\mathbb{Z}$  from the consistent sequence of measures $\{\mathbb{P}_{\mu}^n\}_{n\geq 0}$ where $\mathbb{P}_{\mu}^n$ is defined on $X^{2n+1}$ by
$$
  \int \varphi \, d\mathbb{P}^n_{\mu}= \int \varphi(x_{-n},\dots, x_{n}) \, P(x_{n-1},dx_{n})\dots P(x_{-n},dx_{-n+1})\mu(dx_{-n}).
$$
The following result follows basically from classical results
of Markov chain and random dynamical systems. We refer to~\cite[Section~5.2]{hairer2006ergodic} and~\cite[Proposition~1.3]{liu2006smooth} (see also~\cite{Kifer1986,morita1988deterministic,O83}) for more details.

\begin{thm}
Let $f:T\times X\to X$ be a random  map (with respect to a Bernoulli probability measure $\mathbb{P}$) and denote by $\mathcal{L}_f:L^1(m)\to L^1(m)$ the associated annealed Perron--Frobenius operator. Let $\mu \in I_m(\mathcal{L}_f)$.   Then the following conditions are equivalent:
\begin{enumerate}
\item
 $(\mathcal{L}_f,\mu)$ is ergodic;
    \item
     $\mathbb{P}_\mu$ is an ergodic shift invariant measure on $X^\mathbb{Z}$;
    \item
      $\mathbb{P}\times \mu$ is an ergodic $F$-invariant probability measure on $\Omega\times X$.
\end{enumerate}
\end{thm}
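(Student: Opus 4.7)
The proof rests on two standard identifications. First, by Remark~\ref{rem:anexo}, for Bernoulli $\mathbb{P}$ the hypothesis $\mu\in I_m(\mathcal{L}_f)$ is equivalent to $\mu$ being an $f$-stationary measure, which in turn is equivalent to $F$-invariance of $\mathbb{P}\times\mu$. So in all three items the ambient measure is genuinely invariant under the underlying dynamics, and I may speak of ergodicity in the classical sense. My strategy is then to show (3)$\Leftrightarrow$(1) by matching $F$-invariant sets with $\mu$-almost-invariant sets for the Markov operator, and (3)$\Leftrightarrow$(2) by constructing a measure-theoretic isomorphism between $(\Omega\times X,\mathbb{P}\times\mu,F)$ and the Markov shift $(X^{\mathbb{Z}},\mathbb{P}_\mu,\text{shift})$.

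For (3)$\Leftrightarrow$(1), I would use the correspondence $A\in\mathscr{B}\longleftrightarrow\Omega\times A\subset\Omega\times X$. The identity $\mathcal{L}_f^{\,*}1_A(x)=P(x,A)=\mathbb{P}(\{\omega:f_\omega(x)\in A\})$ shows that $\mathcal{L}_f^{\,*}1_A=1_A$ on a set of full $\mu$-measure precisely when $F^{-1}(\Omega\times A)=\Omega\times A$ modulo $\mathbb{P}\times\mu$-null sets. By Proposition~\ref{thm:apendix-ergodic} and Definition~\ref{def:ergodic}, this immediately yields (3)$\Rightarrow$(1). For the converse, given an $F$-invariant set $B\subset\Omega\times X$, I would argue via a reverse-martingale / tail-$\sigma$-algebra argument that $1_B$ is $\mathbb{P}\times\mu$-a.e.\ equivalent to a function of $x$ alone; that function must then be the indicator $1_A$ of some $A\in\mathscr{B}$ with $\mathcal{L}_f^{\,*}1_A=1_A$ $\mu$-a.e., whence $\mu(A)\in\{0,1\}$ by~(1) and $B$ is trivial modulo null sets.

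For (3)$\Leftrightarrow$(2), I would define $\Phi\colon\Omega\times X\to X^{\mathbb{Z}_{\geq 0}}$ by $\Phi(\omega,x)=(f^n_\omega(x))_{n\geq 0}$, check via Lemma~\ref{lem:A2}(1) and the Bernoulli product structure that $\Phi_{*}(\mathbb{P}\times\mu)$ equals the one-sided Kolmogorov measure $\mathbb{P}_\mu^{+}$, and that $\Phi$ conjugates $F$ with the one-sided shift $\sigma_+$. On the $\mu$-generic fiber $\Phi$ is injective by the Markov property, giving a measure-theoretic isomorphism. This reduces the equivalence to the statement that ergodicity of $(\sigma_+,\mathbb{P}_\mu^{+})$ on $X^{\mathbb{Z}_{\geq 0}}$ coincides with ergodicity of $(\text{shift},\mathbb{P}_\mu)$ on $X^{\mathbb{Z}}$, which is the classical fact that Rokhlin's natural extension preserves ergodicity.

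The main obstacle is the forward direction of (3)$\Leftrightarrow$(1): rigorously showing that every $F$-invariant set, modulo $\mathbb{P}\times\mu$-null sets, has the product form $\Omega\times A$. The cleanest route is to exploit that $1_B=1_B\circ F^n$ depends only on $\sigma^n\omega$ and on $f_\omega^n(x)$, so conditioning on $(\omega_1,\ldots,\omega_n)$ and letting $n\to\infty$ (reverse martingale convergence) produces a limit that is $\mathbb{P}$-a.s.\ independent of $\omega$, because the Bernoulli factor has trivial tail $\sigma$-algebra. After this the remaining implications and the construction of $\Phi$ are routine bookkeeping with the Chapman--Kolmogorov relation and Fubini's theorem.
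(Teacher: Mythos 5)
The paper gives no proof of this theorem at all — it is stated with a pointer to classical references (Hairer's notes, Liu--Qian, Kifer, Ohno) — so you are reconstructing the classical argument, and your reduction of (3)$\Rightarrow$(1) via the correspondence $A\leftrightarrow\Omega\times A$ together with Proposition~\ref{thm:apendix-ergodic} is correct. The first genuine gap is in (2)$\Leftrightarrow$(3): the map $\Phi(\omega,x)=(f^n_\omega(x))_{n\geq 0}$ is only a factor map onto the one-sided path space, and it is \emph{not} injective in $\omega$, not even $\mu$-generically. If, say, $f_t\equiv g$ for every $t$ with $T$ nontrivial, then $\Phi$ collapses the entire Bernoulli factor, and $(\Omega\times X,\mathbb{P}\times\mu,F)$ (which is $\sigma\times g$) is certainly not isomorphic to the natural extension of $(g,\mu)$. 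So ``$\Phi$ is injective by the Markov property'' is false; what survives is that $(X^{\mathbb{Z}_{\geq 0}},\mathbb{P}^+_\mu)$ with the one-sided shift is a factor of the skew product, which gives (3)$\Rightarrow$(2) (a factor of an ergodic system is ergodic, and the two-sided Markov shift is the natural extension of the one-sided one), but gives nothing for (2)$\Rightarrow$(3). The standard repair is to prove (2)$\Rightarrow$(1) directly: if $\mathcal{L}_f^{\,*}1_A=1_A$ $\mu$-a.e., the cylinder $C=\{(x_n):x_0\in A\}$ satisfies $\mathbb{P}_\mu(C\,\Delta\,S^{-1}C)=\int_A P(x,X\setminus A)\,d\mu+\int_{X\setminus A}P(x,A)\,d\mu=0$, where $S$ is the shift, so $\mu(A)=\mathbb{P}_\mu(C)\in\{0,1\}$; then close the cycle with (1)$\Rightarrow$(3).

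The second gap is the mechanism you propose for (1)$\Rightarrow$(3). Conditioning on $(\omega_1,\dots,\omega_n)$ is an \emph{increasing} filtration, so the (forward) martingale limit is $1_B$ itself, not an $\omega$-independent function; and the genuine reverse martingale, conditioning on $\sigma(\omega_{n+1},\omega_{n+2},\dots)\otimes\mathscr{B}$, converges to $\tilde\psi(x)\coloneqq\int 1_B(\omega,x)\,d\mathbb{P}(\omega)$, which is not a.e.\ equal to $1_B$ without further input: triviality of the Bernoulli tail alone cannot do it, because $1_B(\omega,x)=1_B(\sigma^n\omega,f^n_\omega(x))$ still depends on $\omega_1,\dots,\omega_n$ through $f^n_\omega(x)$. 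The classical proof needs three ingredients your sketch omits: (a) by $F$-invariance and independence of $\sigma^n\omega$ from $(\omega_1,\dots,\omega_n,x)$, one computes $\mathbb{E}\bigl[1_B\mid\sigma(\omega_1,\dots,\omega_n)\otimes\mathscr{B}\bigr]=\tilde\psi(f^n_\omega(x))$; (b) the same computation with $n=1$ shows $\mathcal{L}_f^{\,*}\tilde\psi=\tilde\psi$, and ergodicity (1), applied to the level sets of $\tilde\psi$ (via the standard subharmonicity argument for $(\tilde\psi-t)^+$ together with $\mu$-invariance), forces $\tilde\psi$ to be $\mu$-a.e.\ constant, say $c$; (c) stationarity of $\mu$ makes $f^n_\omega(x)$ distributed as $\mu$, so $\tilde\psi(f^n_\omega(x))=c$ a.s.\ for every $n$, and forward martingale convergence then gives $1_B=c$ a.s., i.e.\ $(\mathbb{P}\times\mu)(B)\in\{0,1\}$. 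Without (a)--(c), the assertion that the limit is ``independent of $\omega$'' is not justified, and the product-form claim for $F$-invariant sets is exactly what remains to be proved.
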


\section{Mixing and exactness of invariant densities} \label{appendix:D}

In this appendix we recall definition of mixing and exactness for  (deterministic)  dynamics, and relate them with mixing and exactness for  Markov operators defined in Section~\ref{sec:1.4}.
Although the content of this appendix should be a folklore among experts, we include it for the convenience of the reader.

Let $(X, \mathscr B, m)$ be a probability space.
Let $g: X\to X$ be an $m$-nonsingular measurable map,
and consider a $g$-invariant probability measure $\mu$ on $X$.
Then, $\mu$ is called {\emph mixing} if $\mu (g^{-n} A \cap B)\to \mu(A)\mu(B)$ as $n\to\infty$ for any $A, B\in\mathscr B$ (cf.~\cite{W2000}).
Recall also that   $\mu$ is said to be \emph{exact}  if
\[
\bigcap_{n\ge0}g^{-n}\mathscr{B}=\{\emptyset,X\}\pmod{\mu}.
\]
From Lin's theorem (\cite{Lin}), $\mu$ is exact if and only if
\[
\lim_{n\to\infty}\lV\mathcal{L}_g^n\varphi\rV_{L^1(\mu)}=0 \quad \text{for any $\varphi\in L^1_0(\mu) \coloneqq\left\{\psi\in L^1(\mu):\int_X\psi \, d\mu =0\right\}$.}
\]
Below as in Appendix \ref{sec:Markov-restriction2}, we identify $\varphi\in L^1(m_S)$ with
$1_S\varphi \in L^1(m)$ for an $m$-positive measure set $S$.
\begin{lem}
Assume that $\mu$   is a $g$-invariant probability measure absolutely continuous with respect to $m$ with the density map $h$.
Let    $S\coloneqq\supp h$.
Then, $\mu$ is mixing if and only if
\[
\lim_{n\to\infty}\mathcal{L}_g^n\varphi=h\int_X\varphi dm \quad \text{weakly in $L^1(m)$  for any $\varphi\in L^1(m_S)$.}
\]
Furthermore, $\mu$ is exact if and only if
\begin{equation}\label{eq:0701ex2}
\lim_{n\to\infty}\mathcal{L}_g^n\varphi=h\int_X\varphi \, dm \quad \text{strongly in $L^1(m)$  for any $\varphi\in L^1(m_S)$.}
\end{equation}
\end{lem}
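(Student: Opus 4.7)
The strategy is to exploit the duality $\int \psi \cdot \mathcal{L}_g^n \varphi \, dm = \int (\psi \circ g^n) \cdot \varphi \, dm$, valid for $\psi \in L^\infty(m)$ and $\varphi \in L^1(m)$, to translate both the weak and strong convergence of $\mathcal{L}_g^n \varphi$ into standard statements about correlation decay along the orbit under $g$.

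For the mixing equivalence, the first step is to observe that functions of the form $fh$ with $f \in L^\infty(m)$ supported in $S$ are dense in $L^1(m_S)$: given $\varphi \in L^1(m_S)$, the truncations $f_k = (\varphi/h)\,\mathbf{1}_{\{h > 1/k,\ |\varphi/h| \leq k\}}$ are bounded and satisfy $f_k h \to \varphi$ pointwise $m$-almost everywhere on $S$ with $|f_k h| \leq |\varphi|$, so $f_k h \to \varphi$ in $L^1(m)$ by dominated convergence. Assuming $\mu$ is mixing, for $\varphi = fh$ with $f$ bounded and any $\psi \in L^\infty(m)$, the identity
\[
\int \psi \cdot \mathcal{L}_g^n (fh)\, dm = \int (\psi \circ g^n)\, f\, d\mu
\]
together with the standard $L^\infty$-extension of mixing gives convergence to $\int \psi\, d\mu \int f\, d\mu = \int \psi h\, dm \int fh\, dm$. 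A $3\varepsilon$-argument combined with $\|\mathcal{L}_g^n\|_{\mathrm{op}} \leq 1$ then extends this to arbitrary $\varphi \in L^1(m_S)$. For the converse, applying the assumed weak convergence with $\varphi = 1_B h \in L^1(m_S)$ and $\psi = 1_A$ yields $\mu(g^{-n}A \cap B) = \int 1_A \cdot \mathcal{L}_g^n(1_B h) \, dm \to \int 1_A h\, dm \cdot \int 1_B h\, dm = \mu(A)\mu(B)$.

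For the exactness equivalence, I would interpret the $\mathcal{L}_g$ appearing in the quoted version of Lin's theorem as the Perron--Frobenius operator $\mathcal{L}_{g,\mu}$ of $g$ with respect to the invariant measure $\mu$, and establish the intertwining $\mathcal{L}_{g,\mu}\tilde\phi = \mathcal{L}_g(\tilde\phi h)/h$ on $S$. Iterating this identity, using $\mathcal{L}_g h = h$ together with Lemma~\ref{supp} (applied to $\supp(\tilde\phi h) \subset \supp h$) to ensure $\supp \mathcal{L}_g^n(\tilde\phi h) \subset S$ for every $n$, one obtains the norm identity $\|\mathcal{L}_{g,\mu}^n \tilde\phi\|_{L^1(\mu)} = \|\mathcal{L}_g^n (\tilde\phi h)\|_{L^1(m)}$. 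The map $\tilde\phi \mapsto \phi = \tilde\phi h$ is a bijection between $L^1_0(\mu)$ and $\{\phi \in L^1(m_S) : \int \phi\, dm = 0\}$, so Lin's criterion becomes: $\mu$ is exact if and only if $\|\mathcal{L}_g^n \phi\|_{L^1(m)} \to 0$ for all such $\phi$. The final step is to write an arbitrary $\varphi \in L^1(m_S)$ as $\varphi = \phi + (\int \varphi \, dm)\, h$ with $\phi$ mean-zero, and use $\mathcal{L}_g h = h$ to convert the criterion into the desired convergence~\eqref{eq:0701ex2}.

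The main technical point will be the density step in the mixing part, which requires a careful truncation because $h$ can be arbitrarily small on $S$; the rest of the argument is a routine dualization paired with the verification of the intertwining between $\mathcal{L}_g$ and $\mathcal{L}_{g,\mu}$.
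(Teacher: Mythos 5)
Your proposal is correct and follows essentially the same route as the paper: dualize against $L^\infty(m)$, use the correspondence $\tilde\phi \leftrightarrow \tilde\phi h$ between $L^1(\mu)$ and $L^1(m_S)$, apply Lin's theorem, and reduce the exactness statement to the mean-zero decomposition $\varphi=\phi+\big(\int\varphi\,dm\big)h$ together with $\mathcal{L}_g h=h$. The only difference is one of detail: you transfer Lin's criterion from $\mu$ to $m$ via the explicit intertwining $\mathcal{L}_{g,\mu}\tilde\phi=\mathcal{L}_g(\tilde\phi h)/h$ and prove the $L^\infty\times L^1$ upgrade of mixing by a truncation argument, whereas the paper invokes these as standard facts (heredity of exactness between equivalent measures and the usual $L^1(\mu)$--$L^\infty(\mu)$ formulation of mixing).
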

\begin{proof}
Note first that the mixing property of $\mu$  is equivalent to requiring that
\[
\int \psi \circ g^{n}  \cdot \varphi\,d\mu \to \int \psi \, d\mu \int \varphi \,d\mu \quad \text{as $n\to\infty$}
\]
for any $\psi \in L^\infty(\mu )$ and $\varphi \in L^1(\mu )$.
Due to the duality between $\mathcal L_g$ and $\psi \mapsto \psi \circ g$, this can be written as
\[
\int \psi  \cdot  \mathcal L_g^n( \varphi h)\,dm \to \int \psi \left(h \int (\varphi h )\, dm \right) dm \quad \text{as $n\to\infty$}.
\]
Hence, we immediately get the claim for mixing.

We next show the claim for exactness.
Since exactness is a hereditary property between absolutely continuous measures, it follows from Lin's theorem mentioned above that $\mu$ is exact if and only if
\begin{equation}\label{eq:0701ex}
\lim_{n\to\infty}\lV\mathcal{L}_g^n\varphi\rV_{L^1(m_S)}=0 \text{ for any $\varphi\in L^1_0(m_S)$.}
\end{equation}
Thus, it suffices to show the equivalence between \eqref{eq:0701ex} and \eqref{eq:0701ex2}.

\eqref{eq:0701ex}$\Rightarrow$\eqref{eq:0701ex2}:
Note that $\mathcal{L}_g^n\varphi-h\int_X\varphi dm=\mathcal{L}_g^n(\varphi-h\int_X\varphi dm)$ and
\[
\int_X\left(\varphi-h\int_X\varphi dm\right)dm=\frac{1}{m(S)}\int_X\varphi dm_S-\int_Xhdm\cdot\frac{1}{m(S)}\int_X\varphi dm_S=0.
\]

\eqref{eq:0701ex2}$\Rightarrow$\eqref{eq:0701ex}: It is straightforward if we take $\varphi\in L^1_0(m_S)$.
\end{proof}

We finally remark that, if a map $g:X\to X$  admits an ergodic   invariant probability    measure $\mu$,
then it holds by Birkhoff's pointwise ergodic theorem
\[
\lim _{n\to\infty}\int  \frac{1}{n} \sum _{j=0}^{n-1} \varphi \circ  g^j  \cdot \psi \, d(\mathbb P\times \mu) = \int \psi d\mu \int \varphi d\mu
\]
for any $\varphi \in L^\infty(\mu )$, $\psi \in L^1 (\mu )$.
Thus, in the case when $\mu =h\, dm$ with some   $h\in D(m)$,
\[
\lim _{n\to\infty}   \frac{1}{n} \sum _{j=0}^{n-1} \mathcal L_{ g} ^j\psi = h\int \psi d\mu   \quad \text{weakly in $L^1(m)$ for each $\psi \in L^1(m_S)$.}
\]
On the other hand, since  $h$ is an ergodic invariant density and the restriction of   $\mathcal L_{ g }$   on $L^1(m_S)$ satisfies {\tt (WAP)}, it follows from Yosida--Kakutani's mean ergodic theorem (\cite[Theorem 1]{yosida1941operator}) that
\[
\lim _{n\to\infty}   \frac{1}{n} \sum _{j=0}^{n-1}  \mathcal L_{ g } ^j\psi = h\int \psi d\mu   \quad \text{strongly in $L^1(m)$ for each $\psi \in L^1(m_S)$.}
\]

\section*{Acknowledgement}
We are sincerely grateful to V\'ictor Ara\'ujo, Hiroki Sumi, Mitsuhiro Shishikura and Masayuki Asaoka for their valuable comments.
We are also grateful to the anonymous referees for their valuable comments. 
P.~G.~Barrientos was supported by grant PID2020-113052GB-I00 funded by MCIN,
PQ 305352/2020-2 (CNPq) and JCNE E-26/201.305/2022 (FAPERJ).
F.~Nakamura was supported by JSPS KAKENHI Grant Number 19K21834.
Y.~Nakano was supported by JSPS KAKENHI Grant  Numbers 19K14575, 19K21834, 21K03332 and 22K03342.
H.~Toyokawa was supported by JSPS KAKENHI Grant Numbers 19K21834 and 21K20330.

\bibliographystyle{siam}
\bibliography{bibliography}

\end{document}